\documentclass[leqno]{article}
\usepackage{amsmath, amscd, amsthm, amssymb, graphics, xypic, mathrsfs, setspace, fancyhdr, times, bm, enumitem}
\usepackage[colorlinks=true,pagebackref=true]{hyperref} 


\setlength{\textwidth}{6.0in}             
\setlength{\textheight}{8.25in}
\setlength{\topmargin}{-0.125in}
\setlength{\oddsidemargin}{0.25in}
\setlength{\evensidemargin}{0.25in}

\newcommand{\tensor}{\otimes}

\newcommand{\Spec}{\operatorname{Spec}}

\newcommand{\isomto}{{\stackrel{\sim}{\;\longrightarrow\;}}}
\newcommand{\isomt}{{\stackrel{{\scriptscriptstyle{\sim}}}{\;\rightarrow\;}}}

\newcommand{\sma}{{\scriptstyle{\wedge}}}
\newcommand{\coker}{\operatorname{coker}}
\newcommand{\longhookrightarrow}{\lhook\joinrel\longrightarrow}

\renewcommand{\O}{{\mathcal O}}
\renewcommand{\hom}{\operatorname{Hom}}
\newcommand{\real}{{\mathbb R}}
\newcommand{\cplx}{{\mathbb C}}
\newcommand{\Q}{{\mathbb Q}}
\newcommand{\Z}{{\mathbb Z}}

\newcommand{\A}{{\mathbb A}}

\newcommand{\aone}{{\mathbb A}^1}

\newcommand{\gm}[1]{{\mathbb{G}_{\op{m}}^{#1}}}

\newcommand{\MW}{\mathrm{MW}}

\newcommand{\et}{\mathrm{\acute{e}t}}
\newcommand{\ho}[1]{\mathscr{H}({#1})}
\newcommand{\hop}[1]{\mathscr{H}_{\bullet}({#1})}

\newcommand{\bpi}{\bm{\pi}}

\newcommand{\Nis}{\operatorname{Nis}}
\newcommand{\Zar}{\operatorname{Zar}} 

\newcommand{\Sm}{\mathrm{Sm}}
\newcommand{\Spc}{\mathrm{Spc}}
\newcommand{\K}{{{\mathbf K}}}

\newcommand{\op}{\operatorname}

\newcommand{\Singaone}{\operatorname{Sing}^{\aone}\!\!}

\newcommand{\Addresses}{{
  \bigskip
  \footnotesize

  A.~Asok, \textsc{Department of Mathematics, University of Southern California,
    Los Angeles, CA 90089-2532, United States;} \textit{E-mail address:} \url{asok@usc.edu}

  \medskip

  M.~Hoyois, \textsc{Department of Mathematics, Massachusetts Institute of Technology,
    Cambridge, MA 02139-4307, United States;} \textit{E-mail address:} \url{hoyois@mit.edu}

  \medskip

  M.~Wendt, \textsc{} \textit{E-mail address:} \url{m.wendt.c@gmail.com}

}}

\newcounter{intro}
\setcounter{intro}{1}

\theoremstyle{plain}
\newtheorem{thm}{Theorem}[subsection]

\newtheorem{lem}[thm]{Lemma}
\newtheorem{cor}[thm]{Corollary}
\newtheorem{prop}[thm]{Proposition}
\newtheorem*{claim*}{Claim}  

\newtheorem*{thm*}{Theorem}
\newtheorem*{problem*}{Problem}

\newtheorem{thmintro}{Theorem}

\theoremstyle{definition}
\newtheorem{defn}[thm]{Definition}

\theoremstyle{remark}
\newtheorem{rem}[thm]{Remark}
\newtheorem{remintro}[thmintro]{Remark}

\newtheorem{ex}[thm]{Example}

\numberwithin{equation}{section}

\begin{document}
\pagestyle{fancy}
\renewcommand{\sectionmark}[1]{\markright{\thesection\ #1}}
\fancyhead{}
\fancyhead[LO,R]{\bfseries\footnotesize\thepage}
\fancyhead[RO]{\bfseries\footnotesize\rightmark}
\chead[]{}
\cfoot[]{}
\setlength{\headheight}{1cm}

\author{Aravind Asok\thanks{Aravind Asok was partially supported by National Science Foundation Award DMS-1254892.} \and Marc Hoyois\thanks{Marc Hoyois was partially supported by National Science Foundation Award DMS-1508096}\and Matthias Wendt\thanks{Matthias Wendt was partially supported by EPSRC grant EP/M001113/1 and DFG SPP 1786.}}

\title{{\bf Generically split octonion algebras \\ and $\aone$-homotopy theory}}
\date{}
\maketitle

\begin{abstract}
We study generically split octonion algebras over schemes using techniques of ${\mathbb A}^1$-homotopy theory.  By combining affine representability results with techniques of obstruction theory, we establish classification results over smooth affine schemes of small dimension.  In particular, for smooth affine schemes over algebraically closed fields, we show that generically split octonion algebras may be classified by characteristic classes including the second Chern class and another ``mod $3$" invariant.  We review Zorn's ``vector matrix" construction of octonion algebras, generalized to rings by various authors, and show that generically split octonion algebras are always obtained from this construction over smooth affine schemes of low dimension.  Finally, generalizing P. Gille's analysis of octonion algebras with trivial norm form, we observe that generically split octonion algebras with trivial associated spinor bundle are automatically split in low dimensions.
\end{abstract}

\begin{footnotesize}
\tableofcontents
\end{footnotesize}

\section{Introduction}
In this paper, we analyze problems related to ``generically split" octonion algebras using techniques of $\aone$-homotopy theory \cite{MV}.  In particular we: (i) study classification of generically split octonion algebras over schemes, (ii) analyze when generically split octonion algebras may be realized in terms of a generalization of M. Zorn's construction of the octonions, and (iii) study when generically split octonion algebras are determined by their norm forms.

Recall that the group scheme $\op{G}_2$ may be identified as the automorphism group scheme of a split octonion algebra (see, e.g., \cite[Chapter 2]{SpringerVeldkamp}).  More generally, the set of isomorphism classes of octonion algebras over a scheme $X$, pointed by the split octonion algebra, is in natural (pointed) bijection with the pointed set $\op{H}^1_{\et}(X;\op{G}_2)$ parameterizing \'etale locally trivial torsors under the split group scheme $\op{G}_2$; we refer the reader to, e.g., \cite[\S 2.4]{Giraud} for more details about non-abelian cohomology.  Generically split octonion algebras are those lying in the kernel of the restriction map $\op{H}^1_{\et}(X;\op{G}_2)\to \op{H}^1_{\et}(k(X);\op{G}_2)$ and, by a result of Nisnevich, correspond precisely to Nisnevich locally trivial $\op{G}_2$-torsors (at least over regular schemes).  We will briefly recall definitions regarding octonion algebras over schemes in Section \ref{ss:octonionalgebrasdefs} and recall a suitably categorified version of the dictionary between octonion algebras and torsors under $\op{G}_2$.

Granted the identification of generically split octonion algebras in terms of Nisnevich locally trivial $\op{G}_2$-torsors, they may be investigated by appeal to the techniques of \cite{AffineRepresentabilityI,AffineRepresentabilityII}.  In particular, the pointed set of Nisnevich locally trivial torsors under $\op{G}_2$ over a smooth affine scheme $X$ (over an infinite field) is naturally in bijection with the set of maps in the $\aone$-homotopy category from $X$ to a suitable classifying space ${\op{B}_{\Nis}}\op{G}_2$.  We may then analyze the set of $\aone$-homotopy classes of maps $[X,{\op{B}_{\Nis}}\op{G}_2]_{\aone}$ using techniques of obstruction theory.

\subsubsection*{Classification of generically split octonion algebras}
Our initial goal is to discuss classification results for generically split octonion algebras over schemes.  Recall that the classification of octonion algebras over fields is completely controlled by a single cohomological invariant living in the Galois cohomology group $\op{H}^3_{\et}(k;\mathbb{Z}/2\mathbb{Z})$ (see, e.g., \cite[Appendix 2.3.3]{Serre}).  Much less is known about the classification of octonion algebras over schemes.

Since an octonion algebra over a scheme $X$ consists of a finite rank vector bundle over $X$ equipped with a suitable multiplication, it is natural to expect that any classification will depend on invariants of the underlying vector bundle, e.g., Chern classes. For example, one may define a second Chern class of an octonion algebra (this is related but unequal to the second Chern classes of the underlying vector bundle; see Proposition~\ref{prop:secondstageBG2} and Remark~\ref{rem:secondchernclass} for more details).  In our context, the natural invariants that appear arise from $k$-invariants in the $\aone$-Postnikov tower of ${\op{B}_{\Nis}}\op{G}_2$.  These $k$-invariants may be described using $\aone$-homotopy groups of ${\op{B}_{\Nis}}\op{G}_2$, which we compute in low degrees.  For example, we are able to establish the following classification result.

\begin{thmintro}[{See \ref{thm:g2classification}}]
\label{thm:main1}
Assume $k$ is an infinite field and $X$ is a smooth affine $k$-scheme.
\begin{enumerate}[noitemsep,topsep=1pt]
\item If $X$ has dimension $\leq 2$, then the map $c_2: \op{H}^1_{\Nis}(X;\op{G}_2) \to \op{CH}^2(X)$ is a bijection, i.e., generically split octonion algebras are determined by their second Chern class.
\item If $X$ has dimension $\leq 3$ and $k$ has characteristic unequal to $2$, then there is an exact sequence of the form:
\[
\op{coker}\left(\Omega k_3:\op{H}^1_{\Nis}(X;\K^{\op{M}}_2)\to\op{H}^3_{\Nis}(X;\K^{\op{M}}_4/3)\right) \longrightarrow \op{H}^1_{\Nis}(X;\op{G}_2) \stackrel{c_2}{\longrightarrow} \op{CH}^2(X) \longrightarrow 0.
\]
In particular, if $k$ is algebraically closed and has characteristic unequal to $2$, then $c_2$ is bijective.
\end{enumerate}
\end{thmintro}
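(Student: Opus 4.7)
My plan is to apply the affine representability theorem of \cite{AffineRepresentabilityI,AffineRepresentabilityII} to convert the problem to an $\aone$-homotopy classification: for $X$ smooth affine over an infinite field, $\op{H}^1_{\Nis}(X;\op{G}_2) \cong [X,\op{B}_{\Nis}\op{G}_2]_{\aone}$. I would then compute the right-hand side via the first two or three stages of the $\aone$-Postnikov tower of $\op{B}_{\Nis}\op{G}_2$. Since $\op{G}_2$ is split and $\aone$-connected, $\op{B}_{\Nis}\op{G}_2$ is $\aone$-$1$-connected, and by Proposition~\ref{prop:secondstageBG2} its first nontrivial $\aone$-homotopy sheaf is $\bpi^{\aone}_2(\op{B}_{\Nis}\op{G}_2) \cong \K^{\op{M}}_2$. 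The resulting canonical map $\op{B}_{\Nis}\op{G}_2 \to K(\K^{\op{M}}_2,2)$ induces $c_2$ upon application of $[X,-]_{\aone}$, using Bloch's formula $\op{H}^2_{\Nis}(X;\K^{\op{M}}_2) \cong \op{CH}^2(X)$.

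For part (1), I would invoke Morel's cohomological vanishing theorem: the Nisnevich cohomology of a smooth affine $k$-scheme of dimension $d$ with coefficients in a strictly $\aone$-invariant sheaf vanishes in degrees $>d$. When $\dim X \leq 2$, all obstructions and indeterminacies to lifting a map $X \to \tau_{\leq 2}\op{B}_{\Nis}\op{G}_2$ through higher Postnikov stages live in groups of cohomological degree $\geq 3$, hence vanish, so the projection $[X,\op{B}_{\Nis}\op{G}_2]_{\aone} \to [X,\tau_{\leq 2}\op{B}_{\Nis}\op{G}_2]_{\aone} \cong \op{CH}^2(X)$ is a bijection.

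For part (2), one additional Postnikov layer must be incorporated. By Proposition~\ref{prop:secondstageBG2} (using $\op{char}(k) \neq 2$), $\bpi^{\aone}_3(\op{B}_{\Nis}\op{G}_2) \cong \K^{\op{M}}_4/3$, and there is a fibration
\[
\tau_{\leq 3}\op{B}_{\Nis}\op{G}_2 \longrightarrow K(\K^{\op{M}}_2,2) \stackrel{k_3}{\longrightarrow} K(\K^{\op{M}}_4/3,4).
\]
Since $\dim X \leq 3$, Morel's vanishing again implies that the higher Postnikov stages contribute nothing, so $\op{H}^1_{\Nis}(X;\op{G}_2) \cong [X,\tau_{\leq 3}\op{B}_{\Nis}\op{G}_2]_{\aone}$. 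Applying $[X,-]_{\aone}$ to the fibration and invoking the Puppe long exact sequence of pointed sets produces
\[
\op{H}^1_{\Nis}(X;\K^{\op{M}}_2) \xrightarrow{\Omega k_3} \op{H}^3_{\Nis}(X;\K^{\op{M}}_4/3) \longrightarrow \op{H}^1_{\Nis}(X;\op{G}_2) \xrightarrow{c_2} \op{CH}^2(X) \xrightarrow{k_3} \op{H}^4_{\Nis}(X;\K^{\op{M}}_4/3).
\]
The rightmost group vanishes by dimension, yielding surjectivity of $c_2$, and the orbit-stabilizer interpretation of the Puppe sequence identifies the preimage of the split class with $\op{coker}(\Omega k_3)$, giving the stated exact sequence. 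When $k$ is algebraically closed, a separate vanishing argument for $\op{H}^3_{\Nis}(X;\K^{\op{M}}_4/3)$ on smooth affine $3$-folds over such bases (e.g., via the Gersten resolution and the divisibility properties of Milnor $K$-theory of function fields over an algebraically closed field) forces the first term of the exact sequence to vanish, making $c_2$ bijective.

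The principal obstacle is the input Proposition~\ref{prop:secondstageBG2}: the identifications of $\bpi^{\aone}_1(\op{G}_2)$ and $\bpi^{\aone}_2(\op{G}_2)$, and control over the $k$-invariant $k_3$. I expect this to proceed by comparison with the (better understood) lower $\aone$-homotopy sheaves of the spin groups via the exceptional embedding $\op{G}_2 \hookrightarrow \op{Spin}_7$, whose quotient is a smooth affine quadric, together with the classical appearance of $3$-torsion in the integral cohomology of the classifying space of $\op{G}_2$.
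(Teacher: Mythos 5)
Your skeleton — affine representability, the identification $\tau_{\leq 2}\op{B}_{\Nis}\op{G}_2 \simeq \op{K}(\K^{\op{M}}_2,2)$, and obstruction theory along the next Postnikov stage with the relevant cohomology groups killed by dimension — is exactly the paper's route (Theorem \ref{thm:representability}, Proposition \ref{prop:secondstageBG2}, and the proof of Theorem \ref{thm:g2classification}). The gap is in the input you assert for part (2): it is \emph{not} true that $\bpi_3^{\aone}(\op{B}_{\Nis}\op{G}_2) \cong \K^{\op{M}}_4/3$ (and Proposition \ref{prop:secondstageBG2} says nothing about this sheaf). By the paper's Theorem \ref{thm:pi3bg2}, $\bpi_3^{\aone}(\op{B}_{\Nis}\op{G}_2)$ sits in a short exact sequence
\[
0 \longrightarrow \mathbf{S}_4/3 \longrightarrow \bpi_3^{\aone}(\op{B}_{\Nis}\op{G}_2) \longrightarrow \K^{\op{ind}}_3 \longrightarrow 0,
\]
with $\K^{\op{ind}}_3$ a non-torsion quotient, so your fibration $\tau_{\leq 3}\op{B}_{\Nis}\op{G}_2 \to \op{K}(\K^{\op{M}}_2,2) \to \op{K}(\K^{\op{M}}_4/3,4)$ is not the actual Postnikov stage. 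The statement of the theorem survives only because of two further coefficient reductions carried out in the paper: $\K^{\op{ind}}_3$ dies under a single contraction (Lemma \ref{lem:k3indsheaf}), so by the Gersten resolution $\op{H}^i(X,\K^{\op{ind}}_3)=0$ for $i\geq 1$; and the epimorphism $\K^{\op{M}}_4/3 \to \mathbf{S}_4/3$ is an isomorphism after two contractions, hence induces $\op{H}^3(X,\K^{\op{M}}_4/3)\isomt \op{H}^3(X,\mathbf{S}_4/3)$. Only after these steps does $\op{H}^3(X,\bpi_3^{\aone}(\op{B}_{\Nis}\op{G}_2))$ become $\op{H}^3(X,\K^{\op{M}}_4/3)$.

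Moreover, the appearance of the ``$/3$'' is itself the hard point and cannot be read off from the embedding $\op{G}_2 \hookrightarrow \op{Spin}_7$ alone, which you flag only as an expectation. In the paper it comes from combining the homotopy Cartesian square relating $\op{B}_{\Nis}\op{SL}_3$, $\op{B}_{\Nis}\op{Spin}_6$, $\op{B}_{\Nis}\op{G}_2$, $\op{B}_{\Nis}\op{Spin}_7$ (Proposition \ref{prop:square}) with the computation $\bpi_3^{\aone}(\op{B}_{\Nis}\op{Spin}_7)\cong\K^{\op{ind}}_3$ (Proposition \ref{prop:pi3bspin7}) and, crucially, the realization computation of Lemma \ref{lem:nontrivialq7toq6}, which identifies the map $\mathbf{W}=\bpi_{3,4}^{\aone}(\op{Q}_6)\to\bpi_{3,4}^{\aone}(\op{B}_{\Nis}\op{SL}_3)\cong\Z/6\Z$ as rank mod $2$ followed by the inclusion $\Z/2\Z\subset\Z/6\Z$; this is what forces the image of $\mathbf{S}_4$ in $\bpi_3^{\aone}(\op{B}_{\Nis}\op{G}_2)$ to be $\mathbf{S}_4/3$ rather than $\mathbf{S}_4$ or $\mathbf{S}_4/2$. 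Without this computation (or a substitute), both the coefficient sheaf in your exact sequence and the ``mod $3$'' in the statement are unjustified. The rest of your argument — vanishing of $\op{H}^4$ for $\dim X\leq 3$, the Puppe sequence, and the vanishing of $\op{H}^3(X,\K^{\op{M}}_4/3)$ over algebraically closed fields via $3$-divisibility of residue fields — matches the paper and is fine.
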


\begin{remintro}
It follows from the classification of octonion algebras over fields that if $k(X)$ has \'etale $2$-cohomological dimension $\leq 2$, then all $\op{G}_2$-torsors over $X$ are generically trivial and the evident inclusion $\op{H}^1_{\Nis}(X;\op{G}_2) \to \op{H}^1_{\et}(X;\op{G}_2)$ is a bijection.  Thus, for example if $k$ is algebraically closed and $X$ has dimension $\leq 2$, then any octonion algebra over $X$ is determined by its second Chern class.
\end{remintro}

\begin{remintro}
Our appeal to techniques of $\aone$-homotopy theory imposes restrictions that, at this stage, seem unavoidable: our classification results will only work for smooth affine schemes over a field, which at the moment must be infinite.  The restriction that the base field be infinite arises from the fact that our classification results in \cite{AffineRepresentabilityII} require an infinite base field (the current proof of $\aone$-invariance of $\op{G}_2$-torsors requires this assumption).
\end{remintro}

\begin{remintro}
If the base field $k$ has characteristic $0$, then we may augment our analysis by appeal to topological realization functors.  More precisely, if $X$ is a smooth affine scheme, then we may compare the algebraic classification problem studied above with the classification of principal bundles under the groups $\op{G}_2(\cplx)$ or $\op{G}_2(\real)$ over $X(\cplx)$ or $X(\real)$ (these have the homotopy type of finite CW complexes). Example~\ref{ex:c2examples} provides smooth affine surfaces with uncountably many octonion algebras that become holomorphically trivial after complex realization (this furthermore uses a version of the Grauert--Oka principle asserting that on Stein manifolds, the holomorphic and topological classification of principal bundles under the complex reductive group $\op{G}_2(\cplx)$ coincide).
\end{remintro}

\subsubsection*{Comparison of octonion algebras and Zorn algebras}
M. Zorn \cite{Zorn} gave a construction of the split octonions using what one now calls ``vector matrices".  Loosely speaking, one may realize the split octonions using ``matrices" built using the classical cross product on a $3$-dimensional vector space.  Zorn's construction can be globalized; our treatment follows \cite[\S 3]{Petersson}, though related constructions appear elsewhere in the literature (see, e.g., \cite{KnusParimalaSridharan}).  In the end, one obtains a procedure that associates with an oriented rank $3$ vector bundle over a scheme $X$ (i.e., a vector bundle equipped with a trivialization of its determinant bundle) an octonion algebra over $X$; we will refer to octonion algebras that arise in this way as Zorn algebras (see Section~\ref{sec:octonionprelims} for a review of these constructions).

Another interpretation of Zorn's construction is as follows.  There is a homomorphism of algebraic groups $\op{SL}_3 \hookrightarrow \op{G}_2$ corresponding to the ``embedding of the long roots" and this homomorphism yields a natural transformation of functors
\[
\op{Zorn}(-):\op{H}^1_{\Nis}(-;\op{SL}_3) \longrightarrow \op{H}^1_{\Nis}(-;\op{G}_2).
\]
Zorn algebras are precisely those octonion algebras associated with $\op{G}_2$-torsors in the image of this map. Two natural questions arise as regards the map $\op{Zorn}$: when is it surjective, and when is it injective?  The former question amounts to asking: when is every generically split octonion algebra a Zorn algebra, and the latter amounts to asking: if an octonion algebra is a Zorn algebra, does it admit a unique presentation as such?  Our next main result answers this question in various situations.

\begin{thmintro}[{See Theorem~\ref{thm:primaryobstructiontobeingazornalgebra} and Corollary~\ref{cor:zornsecond}}]
\label{thm:main2}
Assume $k$ is an infinite field.  If $X$ is a smooth affine scheme over $k$, then the morphism $\op{Zorn}$ is surjective under any of the following hypotheses:
\begin{enumerate}[noitemsep,topsep=1pt]
\item $\dim X\leq 3$, or
\item $k$ algebraically closed and $\dim X \leq 4$, or
\item $k$ algebraically closed having characteristic $0$, $\dim X\leq 5$ and $\op{CH}^4(X)/2 = 0$.
\end{enumerate}
Furthermore, the morphism $\op{Zorn}$ is bijective if $\dim X\leq 2$.
\end{thmintro}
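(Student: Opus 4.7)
The plan is to use the affine representability theorems of \cite{AffineRepresentabilityI,AffineRepresentabilityII} to translate the surjectivity and injectivity statements for $\op{Zorn}$ into a lifting problem in the $\aone$-homotopy category, and then to apply Moore--Postnikov obstruction theory to the fibre sequence
\[
\op{G}_2/\op{SL}_3 \longrightarrow {\op{B}_{\Nis}}\op{SL}_3 \longrightarrow {\op{B}_{\Nis}}\op{G}_2.
\]
Since $k$ is infinite and $X$ is smooth affine, both Nisnevich cohomology sets $\op{H}^1_{\Nis}(X;\op{SL}_3)$ and $\op{H}^1_{\Nis}(X;\op{G}_2)$ coincide with $\aone$-homotopy classes of maps to the respective classifying spaces, and $\op{Zorn}$ is induced by ${\op{B}_{\Nis}}\op{SL}_3\to {\op{B}_{\Nis}}\op{G}_2$. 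For a given $\op{G}_2$-torsor $P$ with classifying map $f\colon X\to{\op{B}_{\Nis}}\op{G}_2$, lifts of $f$ up to $\aone$-homotopy correspond bijectively to $\aone$-homotopy classes of sections of the twisted fibre bundle $P\times^{\op{G}_2}(\op{G}_2/\op{SL}_3)\to X$.

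The key geometric input is that $\op{G}_2$ acts transitively on the affine quadric of norm-$1$ trace-zero octonions with stabiliser $\op{SL}_3$, identifying $\op{G}_2/\op{SL}_3$ with a smooth $6$-dimensional affine quadric, which in turn is $\aone$-weakly equivalent to the motivic sphere $S^{3,3}$. By Morel's computations, this space is $\aone$-$2$-connected with $\bpi_3^{\aone}(\op{G}_2/\op{SL}_3)\cong\K^{\op{MW}}_3$, and the next few $\aone$-homotopy sheaves admit descriptions (in characteristic zero and in a stable range) via the motivic stable homotopy sheaves of spheres. Standard obstruction theory along the Moore--Postnikov tower of the above fibration then says that the existence of a lift of $f$ is obstructed by classes in the twisted Nisnevich cohomology groups $\op{H}^{i+1}(X;\bpi_i^{\aone}(\op{G}_2/\op{SL}_3))$ for $i\geq 3$, while the uniqueness of such a lift (for the bijectivity statement) is controlled by $\op{H}^i(X;\bpi_i^{\aone}(\op{G}_2/\op{SL}_3))$ for $i\geq 3$.

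Each of the three cases of surjectivity, and the bijectivity claim, now reduces to a vanishing statement for these twisted cohomology groups. For (a) the Nisnevich cohomological dimension of $X$ is $\leq\dim X\leq 3$, so every existence obstruction vanishes by degree; the bijectivity assertion when $\dim X\leq 2$ follows identically from the vanishing of $\op{H}^i(X;-)$ for $i\geq 3$. For (b) only the first obstruction $\op{H}^4(X;\K^{\op{MW}}_3)$ can be nonzero; unwinding the short exact sequence $0\to\I^4\to\K^{\op{MW}}_3\to\K^{\op{M}}_3\to 0$ exhibits it as a quotient of $\op{H}^4(X;\I^4)$, which is arranged to vanish over an algebraically closed base field in this dimension range. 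For (c) one additionally needs $\op{H}^5(X;\bpi_4^{\aone}(\op{G}_2/\op{SL}_3))=0$; identifying $\bpi_4^{\aone}(S^{3,3})$ (built from $\eta$-multiplication on $\K^{\op{MW}}_3$) in characteristic zero identifies this obstruction with a subquotient of $\op{CH}^4(X)/2$, and the extra hypothesis kills it.

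The main obstacle is case (c): pinning down $\bpi_4^{\aone}(\op{G}_2/\op{SL}_3)$ precisely enough to identify the final obstruction with a $2$-torsion Chow class requires the motivic stable homotopy sheaves of spheres in the relevant bidegrees, and this is the origin of the characteristic zero hypothesis. Once these sheaves are in hand, the remaining work is bookkeeping with the Moore--Postnikov obstruction classes, using only Nisnevich cohomological dimension bounds and the special vanishing of Witt-sheaf cohomology over algebraically closed fields to conclude.
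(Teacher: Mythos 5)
Your overall strategy---affine representability plus Moore--Postnikov obstruction theory for the fibration $\op{Q}_6 \to \op{B}_{\Nis}\op{SL}_3 \to \op{B}_{\Nis}\op{G}_2$---is exactly the paper's, and your treatment of cases (1), (2) and of bijectivity in dimension $\leq 2$ matches the proof of Theorem \ref{thm:primaryobstructiontobeingazornalgebra}. The gap is in case (3), where you have the two obstructions playing the wrong roles. On a smooth affine $5$-fold over an algebraically closed field the primary obstruction in $\op{H}^4(X,\K^{\MW}_3)\cong \op{coker}\bigl(\op{CH}^3(X)\to \op{H}^4(X,\mathbf{I}^4)\bigr)$ does \emph{not} vanish automatically, in contrast with the $4$-dimensional case: since $\op{H}^i(X,\mathbf{I}^5)=0$ for $i\geq 4$, one gets $\op{H}^4(X,\mathbf{I}^4)\cong \op{CH}^4(X)/2$, so the hypothesis $\op{CH}^4(X)/2=0$ is precisely what is needed to kill the \emph{primary} obstruction (equivalently, one needs $\op{Sq}^2:\op{CH}^3(X)\to\op{CH}^4(X)/2$ to be surjective). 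Your sketch instead assumes the primary obstruction is disposed of as in case (2) and spends the hypothesis on the secondary obstruction, so as written it leaves a genuinely nonvanishing obstruction unaddressed.

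Moreover, your identification of the secondary obstruction group $\op{H}^5(X,\bpi_4^{\aone}(\op{Q}_6))$ with a subquotient of $\op{CH}^4(X)/2$ is not correct. By the Asok--Wickelgren--Williams computation used in Proposition \ref{prop:vanishingofsecondaryobstruction}, $\bpi_4^{\aone}(\op{Q}_6)$ is an extension of a Grothendieck--Witt sheaf $\mathbf{GW}^3_4$ (killed by $5$-fold contraction) by a sheaf receiving an epimorphism from $\K^{\op{M}}_5/24$ that becomes an isomorphism after $4$-fold contraction; hence the degree-$5$ cohomology is controlled by $\op{H}^5(X,\mathbf{GW}^3_4)$ and $\op{H}^5(X,\K^{\op{M}}_5/24)$, and both vanish \emph{unconditionally} for $X$ of dimension $\leq 5$ over an algebraically closed field of characteristic $0$---no hypothesis on $\op{CH}^4(X)/2$ enters there. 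To complete case (3) you should therefore prove the unconditional vanishing of the secondary obstruction via this structure of $\bpi_4^{\aone}(\op{Q}_6)$ (this is where the characteristic-$0$ and algebraically closed hypotheses are used), and reserve $\op{CH}^4(X)/2=0$ for the primary obstruction in dimension $5$.
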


\begin{remintro}
The dimension bound in Theorem~\ref{thm:main2} over general base fields is sharp in a homotopical sense: Proposition~\ref{prop:zornreductionex} provides an exact sequence
\[
\op{H}^1_{\Nis}(\op{Q}_8;\op{SL}_3) \longrightarrow \op{H}^1_{\Nis}(\op{Q}_8;\op{G}_2) \longrightarrow  \mathbf{I}(k)\to 0
\]
which shows that for fields with non-trivial fundamental ideal $\mathbf{I}(k)$ there are octonion algebras over $\op{Q}_8$ that are not isomorphic to Zorn algebras.  Note also that if $X$ is a smooth affine scheme with $\dim X\geq 3$, even if a generically split octonion algebra is a Zorn algebra, its presentation as such may be non-unique.  This follows from, e.g., Theorem~\ref{thm:main1} since the third Chern class of the oriented vector bundle is not visible in the classification of octonion algebras.
\end{remintro}

\subsubsection*{Octonion algebras and their norm forms}
Generalizing the fact that octonion algebras over a field come equipped with a norm, the vector bundle underlying an octonion algebra over a scheme comes equipped with a norm form giving it the structure of a quadratic space of rank $8$.  As above this operation can be reinterpreted using the theory of algebraic groups.  There is a homomorphism $\op{G}_2 \to \op{O}_8$ such that the induced natural transformation of functors
\[
\op{H}^1_{\et}(-;\op{G}_2) \longrightarrow \op{H}^1_{\et}(-;\op{O}_8)
\]
is the one sending an octonion algebra to its underlying norm form.

It is a classical result of Springer--Veldkamp that octonion algebras over fields are determined by their associated norm forms (see, e.g., \cite[Theorem 1.7.1]{SpringerVeldkamp}).  R. Bix extended this result to octonion algebras over local rings in which $2$ is invertible \cite[Lemma 1.1]{Bix}.  In a 2012 lecture in Lens, H. Petersson posed the question of whether this same fact remained true over more general bases.  Petersson's question may be recast, at least in the context of generically split octonion algebras, as asking whether the natural transformation of the previous paragraph is injective.

The inclusion $\op{G}_2 \to \op{Spin}_8$ naturally factors through an embedding $\op{Spin}_7 \to \op{Spin}_8$ (corresponding to restricting the norm form to ``purely imaginary" octonions).  Therefore, one may recast the question about determination by norm forms in terms of a map analogous to that above, but associated with the embedding $\op{G}_2 \to \op{Spin}_7$ (we will refer to the $\op{Spin}_7$-torsor associated with a $\op{G}_2$-bundle as the associated spinor bundle).  Consequently, a first step towards understanding injectivity is understanding the kernel of the above natural transformation.  We analyze this question in the context of generically split octonion algebras, i.e., we would like to understand those generically split octonion algebra with trivial associated spinor bundle.

\begin{thmintro}[{See Theorem~\ref{thm:trivialnormform}}]
\label{thm:main3}
Assume $k$ is an infinite field.  If $X$ is a smooth affine $k$-variety, then generically split octonion algebras with trivial spinor bundles are split if all oriented rank $3$ vector bundles which become free after addition of a  free rank $1$ summand are already free.  In particular, generically split octonion algebras with trivial spinor bundle are always split if either
\begin{enumerate}[noitemsep,topsep=1pt]
\item $X$ has dimension $\leq 2$, or
\item $X$ has dimension $3$, and the base field satisfies one of the following hypotheses: (a) $k$ has characteristic unequal to $2$ or $3$ and has \'etale $2$ and $3$-cohomological dimension $\leq 2$, (b) $k$ is perfect of characteristic $2$ and has \'etale $3$-cohomological dimension $\leq 1$, (c) $k$ is perfect of characteristic $3$ and has \'etale $2$-cohomological dimension $\leq 1$, or
\item $k$ is algebraically closed, has characteristic unequal to $2$ or $3$ and $X$ has dimension $\leq 4$.
\end{enumerate}
\end{thmintro}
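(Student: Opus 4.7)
The plan is to reduce the statement to a cancellation question for stably trivial oriented rank $3$ vector bundles via a comparison of $\aone$-fiber sequences, and then to verify this cancellation in each of the listed subcases.

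\emph{Matching fiber sequences.} The key input is a commuting square of closed subgroup inclusions
\[
\begin{array}{ccc} \op{SL}_3 & \hookrightarrow & \op{SL}_4 \\ \downarrow & & \downarrow \\ \op{G}_2 & \hookrightarrow & \op{Spin}_7 \end{array}
\]
in which the upper horizontal is block-diagonal and the right vertical uses the exceptional isomorphism $\op{SL}_4\cong\op{Spin}_6$ followed by the standard $\op{Spin}_6\hookrightarrow\op{Spin}_7$. Commutativity up to inner automorphism of $\op{Spin}_7$ (which suffices at the level of classifying spaces) is checked by restricting the $8$-dimensional spin representation along both routes: both yield the $\op{SL}_3$-representation $V\oplus V^{\vee}\oplus\mathbf{1}^{\oplus 2}$, where $V$ is the standard representation. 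Applying $\op{B}_{\Nis}$ yields a map of $\aone$-fiber sequences whose fibers are $\op{SL}_4/\op{SL}_3\cong\mathbb{A}^4\setminus 0$ and $\op{Spin}_7/\op{G}_2$. Both fibers are $\aone$-equivalent to the smooth affine quadric $\op{Q}_7$ (a motivic $7$-sphere)---the second identification being a motivic analogue of the classical fact $\op{Spin}(7)/\op{G}_2\cong S^7$---and the induced map between them is an $\aone$-equivalence.

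\emph{Reduction to cancellation.} Exactness of the two Puppe-type sequences of pointed sets
\[
[X,\op{Q}_7]_{\aone}\longrightarrow\op{H}^1_{\Nis}(X;\op{SL}_3)\longrightarrow\op{H}^1_{\Nis}(X;\op{SL}_4),
\]
\[
[X,\op{Q}_7]_{\aone}\longrightarrow\op{H}^1_{\Nis}(X;\op{G}_2)\longrightarrow\op{H}^1_{\Nis}(X;\op{Spin}_7),
\]
combined with their compatibility via $\op{Zorn}$, implies that if $\mathscr{A}\in\op{H}^1_{\Nis}(X;\op{G}_2)$ has trivial spinor bundle, then it lies in the image of $[X,\op{Q}_7]_{\aone}$ in the lower row. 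Passing to the upper row produces an oriented rank $3$ bundle $\mathcal{V}$ with $\op{Zorn}(\mathcal{V})\cong\mathscr{A}$ and $\mathcal{V}\oplus\O\cong\O^{\oplus 4}$. The cancellation hypothesis then forces $\mathcal{V}\cong\O^{\oplus 3}$, so $\mathscr{A}\cong\op{Zorn}(\O^{\oplus 3})$ is the split octonion algebra.

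\emph{Verification in the listed subcases.} Each reduces to showing that stably trivial oriented rank $3$ bundles on $X$ are trivial. In case (1), $\dim X\leq 2$ so the rank strictly exceeds the dimension and Bass--Serre cancellation applies immediately. In case (2), $\dim X=3$ and the obstruction to triviality of a stably trivial oriented rank $3$ bundle lives in an $\op{H}^3_{\Nis}$ of an $\aone$-homotopy sheaf of $\op{Q}_7$, controlled by Milnor--Witt $K$-theory; the subcases (a), (b), (c) enumerate exactly those characteristic-plus-cohomological-dimension hypotheses under which the relevant Chow--Witt obstruction group vanishes, via Euler-class computations of Fasel and Asok--Fasel in the appropriate characteristic. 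In case (3), $\dim X\leq 4$ over an algebraically closed field of characteristic different from $2$ and $3$, one is outside the naive stable range but the splitting-beyond-the-stable-range results of Asok--Fasel still furnish triviality.

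\emph{Main obstacle.} The subtlest ingredient is the motivic identification $\op{Spin}_7/\op{G}_2\simeq_{\aone}\op{Q}_7\simeq_{\aone}\op{SL}_4/\op{SL}_3$ together with commutativity of the square of embeddings and verification that the induced map on fibers is an $\aone$-equivalence: spin and sign conventions require care, and one must identify the $\op{SL}_4$-orbit of the distinguished spinor inside $\op{Q}_7$. After that, the cancellation inputs for (2) and (3) draw on refined $\aone$-Euler-class technology, and the asymmetry of the subcases (2)(a)--(c) reflects precisely the characteristic-dependent vanishing conditions for the relevant Chow--Witt obstruction groups.
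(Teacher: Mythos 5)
Your proposal follows essentially the same route as the paper: the homotopy Cartesian square relating $\op{B}_{\Nis}\op{SL}_3 \to \op{B}_{\Nis}\op{SL}_4 \cong \op{B}_{\Nis}\op{Spin}_6$ and $\op{B}_{\Nis}\op{G}_2 \to \op{B}_{\Nis}\op{Spin}_7$, with common $\aone$-fiber $\op{Q}_7$, reduces the statement to the hypothesis that oriented rank $3$ bundles trivialized by adding a free rank $1$ summand are already free, exactly as in Theorem~\ref{thm:trivialnormform} and Proposition~\ref{prop:explicitconstruction}. The only (minor) discrepancy is in the inputs for cases (2) and (3): the paper invokes Suslin's cancellation theorem in dimension $3$ (whose hypotheses are precisely (a)--(c)) and the Fasel--Rao--Swan theorem together with Suslin's cancellation over algebraically closed fields in dimension $4$, rather than the Chow--Witt Euler-class computations and Asok--Fasel splitting results you gesture at, which concern splitting off a free summand rather than the cancellation statement actually needed.
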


In \cite{GilleOctonion}, P. Gille gave examples of octonion algebras over seven-dimensional schemes that are not determined by their norm forms. Gille's argument was topologically inspired, detecting the examples by appeal to the computation $\pi_7(BG_2) \cong\mathbb{Z}/3\Z$.  Our analysis is, in spirit, similar.  In fact, our results may be viewed as an algebro-geometric ``lift" of the topological computations: we appeal to obstruction theory in the $\aone$-homotopy category associated with the map ${\op{B}_{\Nis}}\op{G}_2\to {\op{B}_{\Nis}}\op{Spin}_7$.  Moreover, the proof shows Gille's examples are ``universal examples" of octonion algebras with trivial norm form.

Gille also asked for minimal dimensional varieties carrying non-trivial octonion algebras with trivial associated norm form \cite[Concluding Remarks (1)]{GilleOctonion}.  We complement Theorem~\ref{thm:main3} by showing our dimension estimates are ``best-possible".  Indeed, using constructions of stably free non-free bundles by Mohan Kumar \cite{MohanKumarstablyfree}, we observe in Theorem~\ref{thm:mohankumar} that there exist smooth affine varieties of dimension $4$ over a $C_1$-field or dimension $5$ over an algebraically closed field carrying a generically split octonion algebra with trivial associated norm form.

\begin{remintro}
For special varieties, the computations can be made extremely explicit.  We refer the reader to Examples~\ref{ex:q6}, \ref{ex:nontrivialiftingclass} and \ref{ex:q6q7} for analysis of octonion algebras over low-dimensional split affine quadrics.  In fact, we may even classify generically split octonion algebras with trivial spinor bundles in the revelant examples.
\end{remintro}

Furthermore, using our techniques, we are able to show that octonion algebras are determined by their norm forms in low-dimensional situations, which shows that an analog of the result of Springer--Veldkamp does indeed hold in some cases (there is a variant of the second point that holds in case the base field has characteristic $2$ as well).

\begin{thmintro}[{See Theorem \ref{thm:octonionalgebrasaredeterminedbytheirnormforms}}]
\label{thmintro:main4}
Assume $k$ is an infinite field and $X$ is an irreducible smooth affine $k$-scheme of dimension $d \leq 2$.
\begin{enumerate}[noitemsep,topsep=1pt]
\item Any generically split octonion algebra on $X$ is determined by its norm form.
\item If, furthermore, $k$ has characterstic unequal to $2$ and $k(X)$ has $2$-cohomological dimension $\leq 2$, then {\em any} octonion algebra on $X$ is determined by its norm form.
\end{enumerate}
\end{thmintro}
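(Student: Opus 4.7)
The plan is to reduce both parts to the classification in Theorem~\ref{thm:main1}(1), which says that for $X$ smooth affine of dimension $\leq 2$, generically split octonion algebras on $X$ are in bijection, via the second Chern class, with $\op{CH}^2(X)$.

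For Part (1), given two such algebras $C_1, C_2$ with isomorphic norm forms, it suffices to show $c_2(C_1) = c_2(C_2)$. My preferred approach is a twisting argument: twist the $\op{G}_2$-classification problem by $C_1$ so that $C_1$ becomes the trivial (split) algebra, whereupon $C_2$ becomes a generically split octonion algebra (under an inner form of $\op{G}_2$) whose norm form is split. In dimension $\leq 2$ a connectivity argument forces the associated spinor $\op{Spin}_7$-torsor of $C_2$ to be trivial: the reduction of an $\op{O}_8$-torsor through the tower $\op{O}_8 \leftarrow \op{SO}_8 \leftarrow \op{Spin}_8 \leftarrow \op{Spin}_7$ is governed by maps into the quotient $\op{Spin}_8/\op{Spin}_7$ (a motivic $7$-sphere) and by low-degree $\Z/2\Z$-cohomology, and the octonionic structure pins down the residual discrete choices. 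Theorem~\ref{thm:main3} then forces $C_2$ to be split, giving $C_1 \cong C_2$. Alternatively, one may directly identify the octonionic $c_2$ with the corresponding class of the spinor $\op{Spin}_7$-torsor: the inclusion $\op{G}_2 \hookrightarrow \op{Spin}_7$ induces an isomorphism $\piaone_1(\op{G}_2) \isomto \piaone_1(\op{Spin}_7) \cong \K^{\op{MW}}_2$ (Morel's theorem for simply connected almost-simple groups), so the primary $\aone$-obstruction is pulled back from $\op{B}_{\Nis}\op{Spin}_7$, and hence is an invariant of the norm form on a surface.

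Part (2) follows from Part (1) via generic splitting. By the classical classification of octonion algebras over a field $F$ of characteristic $\neq 2$ via the Arason invariant in $\op{H}^3_{\et}(F; \Z/2\Z)$ (cf.~\cite[Appendix 2.3.3]{Serre}), the hypothesis $\op{cd}_2(k(X)) \leq 2$ forces every octonion algebra over $k(X)$ to be split; hence every octonion algebra on $X$ is generically split, and Part (1) applies. The principal technical hurdle is the step ``same norm form implies same spinor bundle'' in dimension $\leq 2$, requiring careful control of the $\Z/2\Z$-ambiguities in the spin lifts and the $\aone$-connectivity of $\op{Spin}_8/\op{Spin}_7$; the dimension hypothesis $d \leq 2$ is indispensable here.
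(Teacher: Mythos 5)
Your reduction of Part (2) to Part (1) via generic splitting matches the paper, and reducing Part (1) to equality of second Chern classes is reasonable, but your main route for Part (1) has two genuine gaps. First, the twisting step: after twisting by $C_1$ the relevant objects are torsors under the inner twist ${}^{C_1}\op{G}_2$, an $X$-group scheme which is in general not the split group, and every input you then invoke --- the fiber sequence $\op{Q}_7 \to \op{B}_{\Nis}\op{G}_2 \to \op{B}_{\Nis}\op{Spin}_7$, affine representability, and Theorem \ref{thm:trivialnormform} (whose proof rests on unimodular rows of length $4$ and cancellation for oriented rank $3$ modules, all for the split groups) --- is established only for the split $\op{G}_2$ and Nisnevich-locally trivial torsors. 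You give no argument that these transfer to the twisted form, nor that the classical torsion bijection of \'etale non-abelian cohomology is compatible with the Nisnevich-local/$\aone$-homotopical classification and with generic splitness. This is precisely the difficulty the paper's proof avoids: since $\op{B}_{\Nis}\op{G}_2$ and $\op{B}_{\Nis}\op{Spin}_7$ are $\aone$-$1$-connected and $\dim X \leq 2$, Proposition \ref{prop:groupstructures} puts functorial abelian group structures on $[X,\op{B}_{\Nis}\op{G}_2]_{\aone}$ and $[X,\op{B}_{\Nis}\op{Spin}_7]_{\aone}$ making the comparison map a homomorphism, so ``trivial kernel implies injective'' is legitimate with no twisting at all, and triviality of the kernel is exactly Theorem \ref{thm:trivialnormform}. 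Second, the step you yourself flag as the ``principal technical hurdle'' --- that isometric norm forms force isomorphic (after twisting: trivial) spinor bundles, i.e.\ control of the $\Z/2\Z$-ambiguities in lifting through $\op{O}_8 \leftarrow \op{Spin}_8 \leftarrow \op{Spin}_7$ --- is asserted, not proved; as written it is a placeholder, whereas the paper works directly at the level of the spinor bundle, treating determination by norm form and by spinor bundle as equivalent via the factorization of $\op{B}_{\Nis}\op{G}_2 \to \op{B}_{\Nis}\op{O}_8$ through $\op{B}_{\Nis}\op{Spin}_7$.

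Your parenthetical alternative is much closer to what the paper actually establishes: because the $\aone$-homotopy fiber $\op{Q}_7$ of $\op{B}_{\Nis}\op{G}_2 \to \op{B}_{\Nis}\op{Spin}_7$ is $\aone$-$2$-connected, that map is an $\aone$-$2$-equivalence (cf.\ Proposition \ref{prop:lowdegree}), so for $\dim X \leq 2$ the class $c_2$, which classifies generically split octonion algebras by Theorem \ref{thm:g2classification}(1), is computed from the associated $\op{Spin}_7$-torsor; this gives determination by the spinor bundle by obstruction theory, with no twisting and no group structure. Note, however, that $\piaone_1(\op{G}_2) \cong \piaone_1(\op{Spin}_7) \cong \K^{\op{M}}_2$, not $\K^{\MW}_2$, and even this route still requires the norm-form-to-spinor-bundle comparison discussed above.
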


\begin{remintro}
\label{rem:alsaodygilleremark}
If $R$ is a commutative unital ring, then S. Alsaody and P. Gille \cite{AlsaodyGille} have recently studied the isomorphism classes of octonion algebras with isometric norm forms, i.e., the fibers of the map $\op{H}^1_{\et}(\Spec R,\op{G}_2) \to \op{H}^1_{\et}(\Spec R,\op{SO}_8)$.  Their perspective is completely different from ours and, for example, they identify the fibers of the above map using the notion of {\em isotope} of an octonion algebras \cite[Corollary 6.7]{AlsaodyGille}.
\end{remintro}

\subsubsection*{Overview of sections}
In Section~\ref{sec:octonionprelims}, we recall basic properties of octonion algebras, Zorn algebras and relate these notions to torsors for algebraic groups, associated vector bundles and associated homogeneous spaces.  In Section~\ref{sec:a1computations}, we review relevant facts from $\aone$-homotopy theory, compute homotopy sheaves of $\op{B}_{\Nis}\op{G}_2$ and compare these computations with corresponding topological analogs.  Finally, Section~\ref{sec:octonionclassification} puts everything together to establish the results described above.   We refer the reader to the beginning of each section for a more precise description of its contents.

\subsubsection*{Acknowledgements}
We would like to thank Philippe Gille for email exchanges on classification results for octonion algebras long ago.  We also thank him for sharing an early draft of \cite{AlsaodyGille}.  We would also like to thank Brian Conrad for discussions about $\op{G}_2$ and related homogeneous spaces over $\Z$; in particular, he kindly provided Lemma~\ref{lem:reductiontogeometricfibers} and its proof to us.  Finally, we thank Mike Hopkins for helpful discussions.

\subsubsection*{Preliminaries/Notation}
Throughout, $k$ will denote a fixed commutative unital base ring; further restrictions will be imposed along the way.  We write $\Sm_k$ for the category of schemes that are smooth and have finite type over $k$.  We write $\op{Q}_n$ for the affine quadric hypersurfaces defined for $n = 2m-1$ by $\sum_{i=1}^m x_iy_i = 1$, and for $n = 2m$ by $\sum_i x_iy_i = z(1+z)$; these quadrics appear throughout the paper and are smooth over $\Spec \Z$.

We write $\Spc_k$ for the category of simplicial presheaves on $\Sm_k$ and $\Spc_{k,\bullet}$ for the corresponding pointed variant.  We write $\ho{k}$  (resp. $\hop{k}$) for the Morel-Voevodsky $\aone$-homotopy category \cite{MV}; this category is obtained by performing a Bousfield localization of $\Spc_k$ (resp. $\Spc_{k,\bullet}$).  For the sake of convenience, this is done in two steps: first, one Nisenevich localizes (see \cite[\S 3.1-2]{AffineRepresentabilityI}), and then one $\aone$-localizes (see \cite[\S 5.1]{AffineRepresentabilityI}).  This construction differs slighty from that of Morel--Voevodsky (who use simplicial sheaves instead of presheaves), but has an equivalent outcome.

If $\mathscr{X}$ and $\mathscr{Y}$ are two spaces, then we set
\[
[\mathscr{X},\mathscr{Y}]_{\A^1}:= \op{Hom}_{\ho{k}}(\mathscr{X},\mathscr{Y});
\]
similar notation will be used for maps in the pointed homotopy categories.  We write $\op{S}^i$ for the simplicial $i$-sphere, i.e., the constant presheaf associated with the simplicial $i$-sphere.  Then, given a pointed space $(\mathscr{X},x)$, we define $\aone$-homotopy sheaves $\bpi_i^{\aone}(\mathscr{X},x)$ as the Nisnevich sheaves on $\Sm_k$ associated with the presheaves
\[
U\mapsto [\op{S}^i \wedge U_+,(\mathscr{X},x)]_{\A^1}.
\]
The sheaves $\bpi_i^{\aone}(\mathscr{X},x)$ are sheaves of groups for $i \geq 1$ and sheaves of abelian groups for $i \geq 2$.  Isomorphisms in $\ho{k}$ will be called $\aone$-weak equivalences.

All group schemes that appear will be assumed pointed by their identity section.  In particular, we write $\gm{}$ for the usual multiplicative group, viewed as a pointed space.  In that case, for $j \geq 0$ we may form the motivic spheres $\op{S}^i \wedge \gm{\sma j}$.  We will routinely use the fact \cite[Theorem 2]{AsokDoranFasel} that the scheme $\op{Q}_{2n-1}$ is $\aone$-weakly equivalent to $\op{S}^{n-1} \wedge \gm{\sma n}$ while the scheme $\op{Q}_{2n}$ is $\aone$-weakly equivalent to $\op{S}^n \wedge \gm{\sma n}$.  Analogous to the homotopy sheaves considered above, we write $\bpi_{i,j}^{\aone}(\mathscr{X},x)$ for the sheaves obtained by replacing $\op{S}^i$ by $\op{S}^i \wedge \gm{\sma j}$ in the definition of the previous paragraph.

If $\op{G}$ is a Nisevich sheaf of groups, then we write ${\op{B}}\op{G}$ for the usual simplicial bar construction on $\op{G}$.  We write ${\op{B}_{\Nis}}\op{G}$ for a Nisnevich local replacement of ${\op{B}}\op{G}$; this space classifies Nisnevich locally trivial torsors in a suitable sense (see \cite[\S 2.2]{AffineRepresentabilityII} for more details about this construction).  However, the space ${\op{B}_{\Nis}}\op{G}$ is not typically $\aone$-local; if it is we will say that $\op{G}$ is a strongly $\aone$-invariant sheaf of groups.

If $\op{G}$ is furthermore a smooth group scheme, then one may form the geometric classifying space ${\op{B}_{\op{gm}}}\op{G}$; this classifying space is called geometric because it may be realized as an explicit colimit of smooth schemes and was studied independently by Morel--Voevodsky \cite[\S 4]{MV} and Totaro \cite{Totaro}.  In general, i.e., for group-schemes that are not ``special" in the sense of Serre, ${\op{B}_{\Nis}}\op{G}$ is not $\aone$-weakly equivalent to ${\op{B}_{\op{gm}}}\op{G}$.

If $\mathbf{A}$ is a Nisnevich sheaf of abelian groups on $\Sm_k$, and $i \geq 0$ is an integer, then we write $\op{K}(\mathbf{A},i)$ for the (Nisnevich local) Eilenberg--Mac Lane space.  We will also write $\op{H}^i(\op{X},\mathbf{A})$ for cohomology of $\mathbf{A}$ restricted to the small Nisnevich site of $\op{X}$.  Sometimes we will need to mention Zariski or \'etale cohomology as well, and in those cases, cohomology groups will be decorated with a subscript of $\Zar$ or $\et$ as necessary.  In the situations we consider $\mathbf{A}$ will frequently be a {\em strictly $\aone$-invariant sheaf}; equivalently, $\op{K}(\mathbf{A},i)$ will already be $\aone$-local for every $i \geq 0$.  In that case, $\op{K}(\mathbf{A},i)$ has precisely one non-trivial $\aone$-homotopy sheaf, which appears in degree $i$ and is isomorphic to $\mathbf{A}$.  Moreover, we freely use the fact that if $X$ is a smooth $k$-scheme, and $\mathbf{A}$ is strictly $\aone$-invariant then $[X,\op{K}(\mathbf{A},i)]_{\aone} = \op{H}^i(X,\mathbf{A})$.

When we study obstruction theory, we will use additional facts about strongly and strictly $\aone$-invariant sheaves, all due to F. Morel in \cite{MField} (though see \cite[\S 2.2-3]{AsokWickelgrenWilliams} for a convenient summary). In particular, if $(\mathscr{X},x)$ is a pointed space that is pulled back from one defined over a perfect field, then the sheaf $\bpi_i^{\aone}(\mathscr{X},x)$ is strongly $\aone$-invariant for $i \geq 1$, and strictly $\aone$-invariant for $i \geq 2$ (or $i \geq 1$ if it is already abelian).

We also use the contraction construction: if $\mathbf{G}$ is a strongly $\aone$-invariant sheaf, we write $\mathbf{G}_{-1}$ for the sheaf defined by
\[
\mathbf{G}_{-1}(U) := \ker(\mathbf{G}(\gm{} \times U) \stackrel{(1 \times \op{id})^*}{\longrightarrow} \mathbf{G}(U));
\]
the sheaves $\mathbf{G}_{-i}$ are defined by iterating the contraction construction.  By \cite[Lemmas 2.32 and 7.33]{MField}, the assignment $(-)_{-1}$ defines an endo-functor of the category of strictly (or strongly) $\aone$-invariant sheaves that preserves exact sequences.  Moreover, by \cite[Theorem 6.13]{MField}, we know  $\bpi_{i,j}^{\aone}(\mathscr{X},x) = \bpi_i^{\aone}(\mathscr{X})_{-j}$.

\section{Octonion algebras, algebra and geometry}
\label{sec:octonionprelims}
In this section, we recall basic definitions and constructions related to octonion algebras over schemes.  Definitions of such notions over general rings go back to (among others) McCrimmon, Knus and Petersson, but we follow \cite{Petersson} and \cite{Conrad}.  In particular, we highlight the key theorems that we use over general base rings; our main sources for such results are \cite{LoosPeterssonRacine} and \cite{Conrad}.  There are a number of classical geometric facts related to subgroups, embeddings and homogeneous spaces of $\op{G}_2$: the existence of a closed immersion group homomorphisms $\op{SL}_3 \to \op{G}_2$ going back to \cite{Jacobson}, the embedding $\op{G}_2 \to \op{Spin}_7$ (resp. $\op{G}_2 \to \op{Spin}_8$) mentioned in the introduction, and explicit descriptions of the homogeneous spaces $\op{G}_2/\op{SL}_3$ and $\op{Spin}_7/\op{G}_2$ as explicit quadric hypersurfaces.  Many of these results are well-known over fields (see, e.g,. \cite{SpringerVeldkamp}), but they hold over more general rings too, and our main contribution here is to establish the results in their natural generality and formulate them in a way convenient for later homotopy-theoretic applications; related results have also been obtained by Alsaody and Gille \cite{AlsaodyGille}.

\subsection{Octonion algebras and Zorn's vector matrices}
\label{ss:octonionalgebrasdefs}
We begin by recalling some definitions related to octonion algebras.  There are many sources for the results we need over fields, but it is harder to find the analogous results over more general bases without additional hypotheses (which are frequently unnecessary).  In particular, we review the Zorn algebra construction (Definition \ref{defn:zorn}), the relationship between octonion algebras and torsors under the split simply-connected simply group scheme of type $\op{G}_2$ (Theorem \ref{thm:g2torsorsandoctonionalgebras}), and the so-called ``long root" embedding $\op{SL}_3 \to \op{G}_2$ (Theorem \ref{thm:longrootembedding}).

\subsubsection*{Octonion algebras}
We refer the reader to \cite{Knus} for discussion of quadratic spaces.

\begin{defn}
\label{defn:octonionalgebra}
Let $R$ be a commutative ring with unit. An \emph{octonion algebra} over $R$ is a quadruple $(O,\circ,1_0,\op{N}_O)$ where (i) $O$ is a rank $8$ projective $R$-module, (ii) $\circ: O \times O \to O$ is a binary composition, (iii) $1_0: R \to O$ is an $R$-module homomorphism acting as a two-sided unit for $\circ$, and (iv) $\op{N}_O: O \to R$ is an $R$-module homomorphism making the pair $(O,\op{N}_O)$ into a non-degenerate quadratic space such that the composition identity:
\[
\op{N}_O(x\circ y)=\op{N}_O(x)\cdot \op{N}_O(y)
\]
is satisfied for all $x,y\in O$.  The pair $(O,\op{N}_O)$ is called the \emph{associated norm form} of the octonion algebra.
\end{defn}

\begin{rem}
What we have called octonion algebras following Springer--Veldkamp \cite[1.11]{SpringerVeldkamp} are referred to in the literature also as Cayley algebras \cite{KnusParimalaSridharan} or \cite[\S 33]{BookofInvolutions}.
\end{rem}

Suppose we are given an octonion algebra $O$ (as usual, we will suppress the additional data).  Write $\langle-,-\rangle$ for the symmetric bilinear form associated with the norm form $\op{N}_O$.  Given this notation, one introduces the following notions:
\begin{enumerate}[noitemsep,topsep=1pt]
\item a {\em conjugation} involution:
\[
\begin{split}
(-)^\vee:O &\longrightarrow O \\
x &\longmapsto x^\vee:=\langle x,1_O\rangle-x;
\end{split}
\]
one checks that conjugation preserves $1_O$.
\item An $R$-linear {\em trace map}:
\[
\begin{split}
\op{Tr}_O:O &\longrightarrow R \\
 x &\longmapsto \op{Tr}_O(x):=\langle x,1_O\rangle.
\end{split}
\]
Alternatively, one checks that the formula $\op{Tr}_O(x)=x+x^\vee$ holds.
\end{enumerate}

\begin{ex}
\label{ex:splitoctonion}
The basic example of an octonion algebra is given by the \emph{split octonion algebra}.  If $R$ is any commutative unital ring, consider the free rank $8$ module $\op{M}_2(R) \oplus \op{M}_2(R)$.  If $x \in \op{M}_2(R)$ is a $2 \times 2$-matrix, then write $x^* \in \op{M}_2(R)$ for the matrix obtained by conjugation with the matrix
\begin{footnotesize}$\begin{pmatrix}0 & 1 \\ -1 & 0\end{pmatrix}$\end{footnotesize}.  Define a composition on $\op{M}_2(R) \oplus \op{M}_2(R)$ by means of the formula
\[
(x,y) \circ (z,w) = (xz + wy^*,x^*w + zy)
\]
and define $N := \det x - \det y$.  Using the fact that $xx^* = \det x$ on $\op{M}_2(R)$ and $x^* = -x$ on the subset of $\op{M}_2(R)$ consisting of trace zero matrices, one checks that setting $1_O := (\op{Id}_2,0) \in \op{M}_2(R) \oplus \op{M}_2(R)$ defines an octonion algebra.  The conjugation operation in $O$ is given by $(x,y)^{\vee} = (x^*,-y)$ and the trace map is defined by $\op{Tr}_O(x,y) = \op{Tr}(x)$.
\end{ex}

\begin{rem}
\label{rem:split}
We will call an octonion algebra {\em split} if it is isomorphic as an octonion algebra to the algebra in Example \ref{ex:splitoctonion}.  This definition conflicts with the notion of {\em split} used in \cite[1.8]{Petersson} (where it means ``contains a split complex sub-algebra").  Our terminology is, however, consistent with usage of the term ``split" in the theory of algebraic groups; see Theorems~\ref{thm:octonionsandg2} and \ref{thm:g2torsorsandoctonionalgebras} and Example \ref{ex:splitoctonionsaszorn} for further discussion of this point.
\end{rem}

\begin{rem}
\label{rem:homomtoO8}
For later use, we also require the following fact.  If $x$ is an octonion, then $xx^{\vee} = \op{N}_O(x) = x^{\vee}x$.  Moreover, for any octonion $y$, the identities $x(x^{\vee}y) = \op{N}_O(x)y = (yx)x^{\vee}$ hold.  In particular, we conclude that if $x$ is a unit norm octonion, then $x^{\vee}$ provides both a left and right inverse for $x$.  Thus, the operations of left or right multiplication by a unit norm octonion provide invertible $R$-linear maps $O \to O$.  Such automorphisms automatically preserve $\op{N}_O$ by virtue of the fact that $\op{N}_O$ satisfies the composition identity.
\end{rem}

As usual, one defines a category $\mathrm{Oct}(R)$ of octonion algebras over $R$.  If $R \to S$ is a morphism of commutative unital rings, then there are ``extension-of-scalars" functors $(-) \tensor_R S: \mathrm{Oct}(R) \to \mathrm{Oct}(S)$.  If $R$ is an integral domain with fraction field $K$, then an octonion algebra $O$ over $R$ is {\em generically split} if the octonion algebra obtained by extension of scalars $O \otimes_R K$ is isomorphic to the split octonion algebra over $K$.  Since the norm form of a split octonion algebra is a hyperbolic form, if $O$ is a generically split octonion algebra, its norm form is generically hyperbolic.  The following key result shows that octonion algebras are locally split in the \'etale topology.  Moreover, it also shows that if an octonion algebra has a norm form that is generically hyperbolic, then it is in fact generically split.

\begin{ex}
\label{ex:holomorphicsetting}
We may speak of octonion algebras in the complex analytic setting using this terminology.  In particular, if $X$ is a Stein manifold, then by a holomorphic octonion algebra over $X$, we mean an octonion algebra over the ring $R^{\op{hol}}$ of holomorphic functions on $X$.  In particular if $X = \Spec R$ is a smooth affine $\cplx$-scheme and $R^{\op{hol}}$ is the ring of holomorphic functions on $X$, then extension-of-scalars along the map $R \to R^{\op{hol}}$ yields a functor from octonion algebras over $R$ to holomorphic octonion algebras.
\end{ex}

\begin{thm}[See {\cite[Theorem B.5]{Conrad}}]
\label{thm:g2zarloctriv}
If an octonion algebra over a local ring $R$ has underlying quadratic space that is split, then it is isomorphic to a split octonion algebra.  Every octonion algebra over a ring $S$ becomes split after extension of scalars along a ring map $S \to S'$ such that $\Spec S' \to \Spec S$ is an affine \'etale cover.
\end{thm}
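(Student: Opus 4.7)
The plan is to handle the two assertions separately: the first by reduction to the residue field, the second by a two-step étale descent to the first.

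For the first assertion, suppose $R$ is local with residue field $\kappa$ and maximal ideal $\mathfrak{m}$, and let $O$ be an octonion algebra over $R$ whose norm form is split. I would first pass to the residue field: $\bar O := O \otimes_R \kappa$ is a composition algebra of rank $8$ whose norm form is hyperbolic, and by the classical Hurwitz/Cayley--Dickson theory over fields (cf.\ \cite[\S 1.7]{SpringerVeldkamp}) such an algebra is necessarily split. In particular $\bar O$ contains a non-trivial idempotent $\bar e$ with $\op{N}_{\bar O}(\bar e) = 0$ (for example, the element $(e_{11},0)$ in the Zorn realization of Example~\ref{ex:splitoctonion}). Since $R$ is local, Hensel's lemma applied to the polynomial $x^2 - x$ (whose derivative $2x-1$ is a unit at $\bar e$) lifts $\bar e$ to an idempotent $e \in O$. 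I would then read off the Peirce decomposition
\[
O = eOe \oplus eO(1-e) \oplus (1-e)Oe \oplus (1-e)O(1-e),
\]
whose diagonal summands are free of rank $1$ and whose off-diagonal summands are projective of rank $3$, the ranks being determined by the mod-$\mathfrak{m}$ reduction. Since $R$ is local the off-diagonal summands are free, and the octonion multiplication --- using the composition identity and the standard Peirce relations --- reassembles on these four pieces into exactly the Zorn vector-matrix multiplication on a free rank $3$ module, identifying $O$ with the split octonion algebra.

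For the second assertion, I would first split the norm form étale-locally and then invoke the first assertion. Any non-degenerate quadratic form of rank $8$ becomes hyperbolic after a faithfully flat affine étale cover of the base: the scheme parametrizing hyperbolic decompositions is a torsor under an orthogonal stabilizer and is smooth by smoothness of the split form of $\op{O}_8$, hence admits étale-local sections. After pulling back $O$ along such a cover $S \to S''$, the norm form is split and the first assertion applies at each local ring of $S''$. The condition of containing a non-trivial idempotent of vanishing norm is open on $\Spec S''$, so it extends from stalks to a principal-open cover; composing with the étale cover yields the required affine étale cover over which $O$ becomes split.

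The step I expect to be the main obstacle is the residue-field input --- showing that an octonion algebra over a field with hyperbolic norm form is split. The proof requires constructing an explicit Zorn presentation from an isotropic vector, which is classical but not formal; any cleaner proof of the theorem will rely on this fact as a black box. The lifting of idempotents and the Peirce calculation are then routine, and the étale-local hyperbolization of quadratic forms is standard via smoothness of $\op{O}_8$.
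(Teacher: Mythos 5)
The step that breaks is the Hensel lifting. You apply Hensel's lemma to lift the idempotent $\bar e \in O\otimes_R\kappa$ to an idempotent of $O$, but $R$ is an \emph{arbitrary} local ring, not a Henselian one, and the kernel $\mathfrak{m}O$ of $O \to O\otimes_R\kappa$ is not nil; idempotents simply do not lift across this reduction in general. A concrete counterexample in exactly this octonionic setting: let $O$ be Coxeter's order of integral octonions inside the anisotropic Cayley algebra over $\Q$, base-changed to $\Z_{(p)}$. Its reduction mod $p$ is an octonion algebra over $\mathbb{F}_p$, hence split, so it has nontrivial idempotents of norm zero; but $O$ itself has none, since its norm form is positive definite over $\Q$. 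Note the structural symptom: as written, your argument only ever uses that the norm form is split over the residue field $\kappa$, never that it is split over $R$ --- and the statement with the weaker hypothesis is false, as the example shows. The repair is to use the hypothesis over $R$ to build the idempotent directly, with no lifting: choose a Lagrangian direct summand $L\subset O$ for the hyperbolic norm form; since $\op{N}(1_O)=1$, the image of $1_O$ in $O\otimes_R\kappa$ does not lie in the reduction of $L=L^\perp$, so $\op{Tr}=\langle -,1_O\rangle$ restricts to a surjective linear functional on $L$; any $e\in L$ with $\op{Tr}(e)=1$ satisfies $e^2-\op{Tr}(e)e+\op{N}(e)1_O=0$, i.e.\ $e^2=e$, and is a nontrivial norm-zero idempotent over $R$. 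From there your Peirce decomposition and the reconstruction of the Zorn multiplication on the free rank~$3$ off-diagonal pieces is the right continuation (this is the ``reduced octonion algebras are Zorn algebras'' theorem of Loos--Petersson--Racine/Petersson, which you should cite for the ring case rather than only Springer--Veldkamp for fields). Be aware also that the paper itself does not prove this statement: it is quoted from Conrad's Theorem B.5, with the remark that Bix handles local rings with $2$ invertible.

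Your second paragraph is fine in outline once the first assertion is secured: splitting the norm form on an affine \'etale cover via sections of the smooth affine $\op{O}_8$-torsor of isometries is standard, and splitness at each local ring spreads out to a Zariski cover by principal opens (an isomorphism with the split algebra involves only finitely many denominators), which by quasi-compactness can be taken finite, so the composite is again an affine \'etale cover. Just say the spreading-out explicitly in terms of the splitting isomorphism (or the idempotent \emph{together with} the trivialization of the resulting rank-$3$ summand) rather than only ``containing a norm-zero idempotent is an open condition'', since openness alone does not immediately give splitness on the open set without rerunning the local argument there.
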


\begin{rem}
The first part of the above result for local rings in which $2$ is invertible follows from results of Bix \cite[Theorem 1.3]{Bix}.
\end{rem}

\subsubsection*{Zorn's vector matrices}
We now recall another standard construction of octonion algebras going back to Zorn \cite{Zorn}.  Roughly speaking, Zorn constructed the ``classical" octonions using the standard cross product on a $3$-dimensional real vector space.  This construction was given in much greater generality in \cite[\S 3]{Petersson} or \cite[\S 4.2]{LoosPeterssonRacine}.  In fact, the construction works for locally ringed spaces and thus over schemes, but we use it in this generality only in passing.

Recall that an oriented projective module $P$ over a commutative unital ring $R$ is a pair $(P,\varphi)$ consisting of a projective $R$-module $P$ and an isomorphism $\varphi: \det P \isomt R$. Write $P^*$ for the $R$-module dual of $P$.  Using the orientation, one may construct analogs of the usual cross product on $P$ and $P^*$:
\[
\begin{split}
\times_{\varphi}: P \times P &\longrightarrow P^{*}, \text{ and } \\
\times_{\varphi}: P^* \times P^* &\longrightarrow P.
\end{split}
\]
Abusing notation, write $\langle -,- \rangle$ for the usual evaluation bilinear forms $P \times P^* \to R$ and $P^* \times P \to R$.  Using the operations just described, one may equip the rank $8$ projective $R$-module $R^{\oplus 2} \oplus P \oplus P^*$ with an octonion algebra structure.

\begin{defn}
\label{defn:zorn}
Suppose $R$ is a commutative unital ring, and $(P,\varphi)$ is an oriented rank $3$ projective $R$-module.  Write $\op{Zorn}((P,\varphi))$ for the projective $R$-module
\[
\begin{bmatrix}
R & P \\
P^* & R
\end{bmatrix}
\]
equipped with the product
\[
\begin{pmatrix}
a_1 & x^+ \\
x^- & a_2
\end{pmatrix}\circ\begin{pmatrix}
b_1 & y^+ \\
y^- & b_2
\end{pmatrix}
= \begin{pmatrix}
a_1b_1 - \langle x^+,y^- \rangle & a_1y^+ + b_2x^+ + x^- \times_{\varphi} y^- \\
b_1x^- + a_2y^- + x^+ \times_{\varphi} y^+ & -\langle x^-,y^-\rangle + a_2b_2
\end{pmatrix};
\]
the unit given by
\begin{footnotesize}$\begin{pmatrix}
1 & 0\\
0 & 1
\end{pmatrix}$
\end{footnotesize} and norm map given by:
\[
\op{N}\left(\begin{pmatrix}
a_1 & x^+ \\
x^- & a_2
\end{pmatrix}\right) = a_1a_2 + \langle x^-,x^+ \rangle.
\]
\end{defn}

\begin{rem}
\label{rem:traceformzorn}
The trace map for $\op{Zorn}((P,\varphi))$ admits a very simple presentation: it is given by the usual matrix trace, i.e.,
\[
\op{Tr}\left(\begin{pmatrix}
a_1 & x^+ \\
x^- & a_2
\end{pmatrix}\right) = a_1 + a_2.
\]
\end{rem}

\begin{rem}
\label{rem:zornisafunctor}
The construction of the Zorn algebra of an oriented projective module yields a functor from the category of oriented projective modules of rank $3$ over $R$ to the category $\mathrm{Oct}(R)$.  This functor is compatible with extension of scalars in the following sense: if $R \to S$ is a ring homomorphism, then there is a natural isomorphism of the form:
\[
\op{Zorn}((P,\varphi) \tensor_R S) \cong \op{Zorn}((P,\varphi)) \tensor_R S.
\]
\end{rem}

\begin{ex}
\label{ex:splitoctonionsaszorn}
If we equip $R^{\oplus 3}$ with the standard orientation, then $\op{Zorn}(R^{\oplus 3})$ is a split octonion algebra (see Example~\ref{ex:splitoctonion}).  We caution the reader that terminology varies from source to source: for example, in Petersson's terminology any Zorn algebra is split \cite[\S 1.8 and Theorem 3.5]{Petersson}, while Loos--Racine--Petersson refer to algebras of the form $\op{Zorn}((P,\varphi))$ as reduced.  We will simply refer to octonion algebras in the image of $\op{Zorn}$ as {\em Zorn algebras}.
\end{ex}

\subsection{Octonion algebras and \texorpdfstring{$\op{G}_2$}{G2}-torsors}
Over a field, it is well-known that the automorphism group of a split octonion algebra is isomorphic to the split semi-simple algebraic group $\op{G}_2$ (see, e.g., \cite[Theorem 2.3.5]{SpringerVeldkamp}).  Over a field having characteristic unequal to $2$, there is a well-known embedding $\op{SL}_3 \to \op{G}_2$, see \cite[Theorem 4]{Jacobson}.  In terms of Dynkin diagrams, this embedding corresponds to the observation that the Dynkin diagram $\op{A}_2$ is a sub-diagram of that of $\op{G}_2$ and, in terms of root systems, corresponds to the inclusion of the long roots.  We review these constructions here in greater generality.

\subsubsection*{Octonion algebras and $\op{G}_2$-torsors}
The relationship between octonion algebras and $\op{G}_2$-torsors holds over an arbitrary base ring (see \cite[Appendix B]{Conrad} or \cite[\S 3]{GilleOctonion}).  If $O$ and $O'$ are two octonion algebras over a ring $R$, then for any $R$-algebra $S$, we can consider the assignment
\[
S \longmapsto \underline{\op{Isom}}(O,O')(S) := \op{Isom}(O \tensor_R S,O' \tensor_R S).
\]
We write $\op{Aut}_{O/R}$ for $\underline{\op{Isom}}(O,O)$. 

\begin{thm}[See {\cite[Theorem B.14 and Corollary B.15]{Conrad}}]
\label{thm:octonionsandg2}
For any octonion algebra $O$ over a commutative unital ring $R$, the affine finite-type automorphism scheme $\op{Aut}_{O/R}$ of the algebra is a semi-simple $R$-group scheme of type $\op{G}_2$; this $R$-group scheme is the Chevalley group of type $\op{G}_2$ (i.e., split and simply connected) if $O$ is isomorphic to a split octonion algebra. Moreover, the assignment $O \mapsto \op{Aut}_{O/R}$ determines a bijection between the set of isomorphism classes of octonion algebras over $R$ and the set of isomorphism classes of semi-simple $R$-group schemes of type $\op{G}_2$.
\end{thm}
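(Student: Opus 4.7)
The overall plan is to leverage the étale-local splitting of octonion algebras (Theorem \ref{thm:g2zarloctriv}) to reduce all statements to the split case, to identify the automorphism scheme of the split octonion algebra with the Chevalley $\op{G}_2$ over $\Z$, and then to combine faithfully flat descent with the classification of semisimple group schemes of a given type (SGA3).

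First, I would check that $\op{Aut}_{O/R}$ is representable by an affine $R$-group scheme of finite type, by realizing it as the closed subscheme of $\op{GL}(O)$ cut out by the polynomial conditions enforcing preservation of multiplication and of the unit; preservation of the norm form is then automatic, since by Remark \ref{rem:homomtoO8} $\op{N}_O$ is encoded by the composition identity together with the unit and trace. Next, I would identify $\op{Aut}_{\op{Zorn}(\Z^{\oplus 3})/\Z}$ with the split simply connected Chevalley group scheme of type $\op{G}_2$ over $\Z$. On geometric fibres this is the classical theorem of Springer--Veldkamp \cite[Theorem 2.3.5]{SpringerVeldkamp}; to upgrade this to a statement over $\Z$, I would verify that $\op{Aut}_{\op{Zorn}(\Z^{\oplus 3})/\Z}$ is $\Z$-smooth with geometrically connected semisimple fibres of constant root datum, and then invoke the uniqueness of the split Chevalley form with prescribed root datum. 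Base change yields the analogous identification for the split octonion algebra over any commutative ring $R$, proving the ``split'' clause of the theorem.

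For a general octonion algebra $O$ over $R$, Theorem \ref{thm:g2zarloctriv} supplies an étale cover $R \to R'$ trivializing $O$. Faithfully flat descent applied to $\op{Aut}_{O/R}$ (which becomes isomorphic to the split $\op{G}_2$ over $R'$) shows that $\op{Aut}_{O/R}$ is an étale twisted $R$-form of the Chevalley $\op{G}_2$, hence is semisimple of type $\op{G}_2$. The same descent datum attaches to $O$ a class in $\op{H}^1_{\et}(R;\op{G}_2)$; conversely, given a $\op{G}_2$-torsor $T$ one twists the split octonion algebra via the contracted product $T \times^{\op{G}_2} O_{\op{split}}$. These two constructions are inverse to one another and produce a bijection between the isomorphism classes of octonion algebras over $R$ and $\op{H}^1_{\et}(R;\op{G}_2)$. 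On the other hand, the classification of semisimple group schemes of a given type identifies the isomorphism classes of semisimple $R$-group schemes of type $\op{G}_2$ with $\op{H}^1_{\et}(R;\op{Aut}(\op{G}_2))$; since the Dynkin diagram of $\op{G}_2$ admits no non-trivial automorphisms and the Chevalley $\op{G}_2$ is both simply connected and adjoint, $\op{Aut}(\op{G}_2) = \op{G}_2$. Thus both sides of the desired bijection are canonically identified with $\op{H}^1_{\et}(R;\op{G}_2)$, and one checks these identifications are compatible with the assignment $O \mapsto \op{Aut}_{O/R}$.

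The main obstacle I anticipate is the integral split-case identification. The field-theoretic equality $\op{Aut}(O_{\op{split}}) = \op{G}_2$ is classical, but establishing it uniformly over $\Z$—so that the subsequent descent arguments yield genuine $R$-isomorphisms rather than merely fibrewise ones—requires controlling smoothness of $\op{Aut}_{\op{Zorn}(\Z^{\oplus 3})/\Z}$ and constancy of the root datum across $\Spec \Z$, and then applying Chevalley's uniqueness theorem. Once this integral identification is in hand, the remainder is a routine exercise in non-abelian étale cohomology and faithfully flat descent.
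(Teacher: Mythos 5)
This theorem is not proved in the paper at all---it is imported verbatim from \cite[Theorem B.14 and Corollary B.15]{Conrad}---and your outline reconstructs essentially the argument of that reference: representability of $\op{Aut}_{O/R}$ inside $\op{GL}(O)$, identification of the automorphism scheme of the split octonions over $\Z$ with the Chevalley group of type $\op{G}_2$, \'etale descent using the local splitting of Theorem \ref{thm:g2zarloctriv}, and the identification of both sides of the bijection with $\op{H}^1_{\et}(R;\op{G}_2)$ via the fact that $\op{G}_2$ is simply connected, adjoint, and has no Dynkin diagram automorphisms. The only caveats are that the genuinely hard content---smoothness of the automorphism scheme over $\Z$ with semisimple fibers of type $\op{G}_2$ in every characteristic, notably $2$ and $3$---is precisely the step you defer to a fibral computation (this is the substance of Conrad's Theorem B.14), and the automatic preservation of the norm by unital algebra automorphisms is \cite[Proposition B.1]{Conrad} rather than Remark \ref{rem:homomtoO8}.
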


Theorem \ref{thm:octonionsandg2} in conjunction with \cite[Theorem 4.10]{LoosPeterssonRacine}, yields the following result.

\begin{thm}
\label{thm:g2torsorsandoctonionalgebras}
If $Z$ is a split octonion algebra, then the assignment $O \mapsto \underline{\op{Isom}}(Z,O)$ determines an equivalence between the groupoid $\op{Oct}(R)$ and the groupoid of \'etale locally trivial torsors under the split group scheme $\op{G}_2$.
\end{thm}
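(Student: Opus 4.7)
The plan is to recognize this as the standard twisting equivalence between forms of a fixed object and torsors under its automorphism sheaf. The functor $F: O \mapsto \underline{\op{Isom}}(Z,O)$ carries a natural right action of $\op{Aut}_{Z/R}$ via precomposition; by Theorem~\ref{thm:octonionsandg2}, since $Z$ is split, this automorphism scheme is the Chevalley group of type $\op{G}_2$. Étale-local triviality of $F(O)$ is then immediate from Theorem~\ref{thm:g2zarloctriv}: some affine étale cover $\Spec R' \to \Spec R$ splits $O$, and any resulting isomorphism $Z \otimes_R R' \cong O \otimes_R R'$ determines a section of $F(O)$ over $R'$, which forces the right action to be simply transitive. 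So $F$ lands in étale-locally trivial $\op{G}_2$-torsors.

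For the quasi-inverse, I would use the associated-bundle construction $G: T \mapsto T \times^{\op{G}_2} Z$, where $\op{G}_2 = \op{Aut}_{Z/R}$ acts on $Z$ through octonion algebra automorphisms. Because the multiplication, unit and norm on $Z$ are $\op{G}_2$-equivariant by construction, the entire octonion algebra structure descends: on a cover that trivializes $T$ the twist is identified with $Z$ with its standard structure, and one then glues via the $\op{G}_2$-equivariant cocycle data encoded by $T$. This descent step rests on octonion algebras forming a stack for the étale topology, which is the content that the citation to \cite[Theorem 4.10]{LoosPeterssonRacine} supplies over an arbitrary base ring.

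To verify that $F$ and $G$ are mutually quasi-inverse, one checks that the canonical evaluation map $F(O) \times^{\op{G}_2} Z \to O$ and the canonical map $T \to \underline{\op{Isom}}(Z, T \times^{\op{G}_2} Z)$ are isomorphisms. Both claims are étale-local on $\Spec R$, so they reduce to the trivial torsor / split algebra case where they are tautological. Full faithfulness is similar: an étale-locally-defined isomorphism $O \to O'$ matches a $\op{G}_2$-equivariant map $F(O) \to F(O')$, and descent along an étale cover upgrades this to an isomorphism of octonion algebras over $R$, again by the stack property.

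The main obstacle is the essential-surjectivity half, namely checking that $G$ of an arbitrary étale-locally trivial $\op{G}_2$-torsor lands in $\op{Oct}(R)$ rather than merely in $\op{Oct}$ of some cover. This effectivity of descent, in the generality of an arbitrary commutative ring $R$, is precisely what the stack property encodes; granted this, the remaining verifications are formal consequences of the identification $\op{Aut}_{Z/R} \cong \op{G}_2$ from Theorem~\ref{thm:octonionsandg2} and the étale-local splitting from Theorem~\ref{thm:g2zarloctriv}.
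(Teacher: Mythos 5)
Your argument is correct, but it takes a genuinely more self-contained route than the paper. The paper's proof is very short: it invokes \cite[Theorem 4.10]{LoosPeterssonRacine} precisely for the statement that $\underline{\op{Isom}}(Z,O)$ is an \'etale-locally trivial torsor under $\op{Aut}_{Z/R}$, identifies $\op{Aut}_{Z/R}$ with the split $\op{G}_2$ via Theorem~\ref{thm:octonionsandg2}, and then only verifies functoriality of the assignment (an isomorphism $O \to O'$ induces an isomorphism of torsors), leaving the rest of the equivalence to the cited result. You instead run the full twisting/descent argument yourself: torsor structure and \'etale-local triviality from Theorem~\ref{thm:g2zarloctriv}, an explicit quasi-inverse $T \mapsto T \times^{\op{G}_2} Z$, and \'etale-local verification of the unit and counit, with effectivity of descent for octonion algebras carrying the real weight. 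That effectivity is indeed true over an arbitrary commutative ring: the underlying rank $8$ projective module descends by fppf descent of quasi-coherent modules, the multiplication, unit and norm are module homomorphisms and so descend with it, and nondegeneracy of the norm is an \'etale-local condition; hence your proof is complete. The one inaccuracy is attributional rather than mathematical: the paper cites \cite[Theorem 4.10]{LoosPeterssonRacine} for the torsor statement, not for the stack property you ascribe to it, so you should either invoke standard faithfully flat descent directly for that step or cite the torsor statement as the paper does. The trade-off is clear: your route makes the mechanism (and in particular essential surjectivity) transparent and independent of the precise formulation in the reference, while the paper's route is shorter but leaves the quasi-inverse implicit in the literature.
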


\begin{proof}
\cite[Theorem 4.10]{LoosPeterssonRacine} shows that $\underline{\op{Isom}}(Z,O)$ is an \'etale locally trivial torsor under $\op{Aut}_{Z/R}$.  By Theorem \ref{thm:octonionsandg2}, $\op{Aut}_{Z/R}$ is the split form of $\mathrm{G}_2$.  It remains to observe that the assignment of the statement is functorial.  Indeed, if $O \to O'$ is an isomorphism of octonion algebras, the induced map of functors is a natural equivalence and thus the associated map of torsors is an isomorphism as well.
\end{proof}

In the sequel, we use the above result to pass back and forth between analysis of \'etale locally trivial $\mathrm{G}_2$-torsors and octonion algebras.  The following example shows how this observation will be used.

\begin{ex}
Suppose $R$ is a regular domain.  Since $\op{G}_2$ is a semi-simple group scheme, it follows from \cite{Nisnevich} that $\op{G}_2$-torsors over $R$ that are generically trivial are automatically Nisnevich locally trivial (the converse is immediate).  As a consequence, it follows from the identifications above that the groupoid of Nisnevich locally trivial $\mathrm{G}_2$-torsors over $R$ is, under the equivalence of Theorem \ref{thm:g2torsorsandoctonionalgebras}, identified with the subcategory of $\op{Oct}(R)$ consisting of generically split octonion algebras.
\end{ex}

\subsubsection*{$\mathrm{G}_2$-torsors over fields}
The classification of $\mathrm{G}_2$-torsors over a field can be given in explicit cohomological terms, see \cite[Appendix 2.3.3]{Serre}.  Indeed, a result of Springer--Veldkamp \cite[Theorem 1.7.1]{SpringerVeldkamp} shows that two octonion algebras over a field are isomorphic if and only if their norm forms are isometric.  Since octonion algebras are composition algebras of dimension $8$, their associated norm forms are so-called $3$-fold Pfister forms (in characteristic $2$, they are quadratic Pfister forms).  One then observes that $3$-fold Pfister forms over a field may be classified in terms of Galois cohomology.

If one works over a field $k$ having characteristic unequal to $2$, the isometry classes of $3$-fold Pfister forms are determined by decomposable elements in $\op{H}^3_{\et}(k,\mu_2^{\tensor 3})$, i.e., elements of the form $\alpha_1 \cup \alpha_2 \cup \alpha_3$ where $\alpha_i \in \op{H}^1_{\et}(k,\mu_2) = k^{\times}/k^{\times 2}$ (see, e.g., \cite[Theorem 33.25]{BookofInvolutions}).  In other words, there is an injective map
\[
\op{H}^1_{\et}(K,\op{G}_2) \longhookrightarrow \op{H}^3_{\et}(k,\mu_2^{\tensor 3})
\]
whose image is contained in the subset of decomposable elements.  In fact, the image coincides with the subset of decomposable elements; see \cite[Appendix 2.3.3]{Serre} for further discussion.  A similar result holds in characteristic $2$.  As before, classifying $\op{G}_2$-torsors over a field $k$ can be achieved in terms of quadratic Pfister forms \cite[Th\'eor\`eme 11]{SerreUpdate}, though one must replace $\op{H}^3_{\et}(k,\mu_2^{\tensor 3})$ by a group defined in terms of differential forms.  As a consequence, one deduces the following result.

\begin{lem}
\label{lem:generictrivialitylowdimensions}
Assume $k$ is a field having characteristic unequal to $2$.  If $X = \Spec R$ is a smooth affine $k$-scheme, and $k(X)$ has \'etale $2$-cohomological dimension $\leq 2$, then all octonion algebras over $X$ are generically split.
\end{lem}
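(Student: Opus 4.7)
The plan is to reduce the statement to a cohomological vanishing at the generic point and then invoke the Springer--Veldkamp classification in Galois cohomology that was recalled immediately before the lemma.

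First, let $O$ be an octonion algebra over $X = \Spec R$. By Theorem~\ref{thm:g2torsorsandoctonionalgebras}, $O$ corresponds to an \'etale locally trivial $\op{G}_2$-torsor, and pulling back along $\Spec k(X) \to X$ yields a class $[O_{k(X)}] \in \op{H}^1_{\et}(k(X); \op{G}_2)$ classifying the octonion algebra $O \otimes_R k(X)$ over the function field. To show $O$ is generically split, it suffices to show this class is trivial.

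Next, I would appeal to the injection
\[
\op{H}^1_{\et}(k(X); \op{G}_2) \longhookrightarrow \op{H}^3_{\et}(k(X); \mu_2^{\tensor 3})
\]
sending an octonion algebra to the Arason invariant of its norm form, whose existence is recalled just before the statement of the lemma (based on \cite[Appendix 2.3.3]{Serre} and \cite[Theorem 33.25]{BookofInvolutions}; this uses the characteristic $\neq 2$ hypothesis so that the norm form is an ordinary $3$-fold Pfister form rather than a quadratic Pfister form in characteristic $2$).

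Finally, since $k(X)$ has \'etale $2$-cohomological dimension $\leq 2$ by assumption, we have $\op{H}^3_{\et}(k(X); \mu_2^{\tensor 3}) = 0$ (this is immediate from the definition of $2$-cohomological dimension, as $\mu_2^{\tensor 3}$ is a $2$-torsion sheaf). Combining with the injection above gives $\op{H}^1_{\et}(k(X); \op{G}_2) = \ast$, so $[O_{k(X)}]$ is trivial and $O$ is generically split. There is no real obstacle here: the proof is essentially a one-line deduction from the Springer--Veldkamp classification together with the cohomological dimension hypothesis, and the role of the lemma is mainly to record this reduction for later use.
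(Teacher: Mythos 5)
Your proof is correct and follows the same route as the paper: the paper's proof simply passes to $K = k(X)$ and invokes the Springer--Veldkamp classification of $\op{G}_2$-torsors over fields (i.e., the injection into $\op{H}^3_{\et}(K,\mu_2^{\otimes 3})$ recalled just before the lemma), which vanishes when the $2$-cohomological dimension is $\leq 2$. Your write-up just spells out this reduction in slightly more detail.
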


\begin{proof}
If $K$ is a field having characteristic unequal to $2$ and \'etale $2$-cohomological dimension $\leq 2$ (e.g., if $K$ is a function field of a smooth variety of dimension $\leq 2$ over a separably closed field having characteristic unequal to $2$), then all octonion $K$-algebras are trivial by appeal to the classification of $\op{G}_2$-torsors over fields.  Now, apply this to $K = k(X)$.
\end{proof}

\begin{rem}
\label{rem:characteristic2generictriviality}
See \cite[\S 10.3 Corollaire]{SerreUpdate} for a suitable variant of this statement over fields having characteristic $2$.
\end{rem}

\subsubsection*{The long root embedding}
The ``long root embedding" may be generalized using octonion algebras that arise via Zorn's vector matrix construction.  Over an arbitrary commutative unital ground ring, a construction was given in \cite{LoosPeterssonRacine}; we recall this construction here.  To begin, consider the presentation of $O$ as the Zorn algebra of $R^{\oplus 3}$ as in  Example~\ref{ex:splitoctonionsaszorn}.  There is an embedding of the algebra $C := R \oplus R$ in $O$ as diagonal matrices (the notation is meant to be suggestive of the Cayley--Dickson doubling process: the algebra $C$ is a copy of the ``split complex numbers" sitting inside $O$).  We write $\op{Aut}_{O/C}$ for the sub-functor of $\op{Aut}_{O/R}$ consisting of automorphisms of $O$ that restrict to the identity on $C$.

\begin{thm}
\label{thm:longrootembedding}
Suppose $R$ is a commutative unital ring.  There is an isomorphism $\Phi: \op{SL}_3 \isomt \op{Aut}_{O/C}$ of $R$-group schemes where for any $R$-algebra $S$, and $g \in \op{SL}_3(S)$, $\Phi(g)$ is given by the formula
\[
\Phi(g)\begin{pmatrix}
a_1 & x^+ \\
x^- & a_2
\end{pmatrix} = \begin{pmatrix}
a_1 & gx^+ \\
{}^{\op{t}}g^{-1}x^- & a_2
\end{pmatrix}.
\]
In particular, the immersion $\op{Aut}_{O/C} \hookrightarrow \op{Aut}_{O/R}$ yields a closed immersion group homomorphism $\op{SL}_3 \to \op{G}_2$.
\end{thm}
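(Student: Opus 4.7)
The plan is to construct $\Phi: \op{SL}_3 \to \op{Aut}_{O/C}$ as a functorial bijection on $S$-points for every $R$-algebra $S$, promote this to an isomorphism of $R$-group schemes by Yoneda, and then deduce the closed immersion $\op{SL}_3 \hookrightarrow \op{G}_2$ by noting that $\op{Aut}_{O/C} \hookrightarrow \op{Aut}_{O/R} = \op{G}_2$ is cut out by closed conditions. Here $O = \op{Zorn}(R^{\oplus 3})$ and $C \subset O$ is spanned by the orthogonal idempotents $e_1 = \mathrm{diag}(1,0)$, $e_2 = \mathrm{diag}(0,1)$.

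The first step is to verify that for each $g \in \op{SL}_3(S)$ the formula defines an octonion automorphism of $O_S$ fixing $C_S$. That the diagonal entries are untouched, so $C_S$ is fixed, is immediate. Preservation of the Zorn multiplication reduces, term by term, to three identities valid for $g \in \op{SL}_3(S)$:
\[
\langle g x^+,\; {}^t g^{-1} y^- \rangle = \langle x^+, y^- \rangle, \quad g x^+ \times_\varphi g y^+ = {}^t g^{-1}(x^+ \times_\varphi y^+), \quad {}^t g^{-1} x^- \times_\varphi {}^t g^{-1} y^- = g(x^- \times_\varphi y^-).
\]
The first is the adjointness defining the duality pairing $P \times P^* \to S$. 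The other two are the transformation laws of the cross product under $g$: starting from $\langle z, x \times_\varphi y \rangle = \varphi(z \wedge x \wedge y)$ and using that $g$ acts on $\wedge^3 P \cong S$ by multiplication by $\det g = 1$, both fall out. Functoriality in $S$ and the group homomorphism property $\Phi(gh) = \Phi(g)\Phi(h)$ are then direct from the formula.

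The next step is to show $\Phi$ is a bijection on $S$-points. Injectivity is transparent: $\Phi(g) = \mathrm{id}$ forces $g x^+ = x^+$ for every $x^+$, hence $g = 1$. For surjectivity, fix $\phi \in \op{Aut}_{O_S/C_S}$, so $\phi$ fixes both $e_1$ and $e_2$. A direct computation in the Zorn algebra shows that $P_S$ is characterized as $\{X \in O_S : e_1 \circ X = X = X \circ e_2\}$ and $P_S^*$ symmetrically; hence $\phi$ preserves the Peirce-type decomposition
\[
O_S = S e_1 \oplus P_S \oplus P_S^* \oplus S e_2
\]
and restricts to some $g \in \op{GL}(P_S)$ on $P_S$ and some $h \in \op{GL}(P_S^*)$ on $P_S^*$. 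Compatibility of $\phi$ with the product $P_S \times P_S^* \to S e_1$ forces $\langle g x^+, h y^- \rangle = \langle x^+, y^- \rangle$, so $h = {}^t g^{-1}$; compatibility with the cross product $P_S \times P_S \to P_S^*$ then yields $\det g \cdot {}^t g^{-1}(x^+ \times_\varphi y^+) = h(x^+ \times_\varphi y^+) = {}^t g^{-1}(x^+ \times_\varphi y^+)$, forcing $\det g = 1$. Thus $\phi = \Phi(g)$.

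Finally, $\op{Aut}_{O/C}$ is by construction the subfunctor of $\op{Aut}_{O/R} = \op{G}_2$ cut out by the closed conditions $\phi(e_1) = e_1$ and $\phi(e_2) = e_2$ on the affine finite-presentation $R$-scheme $\op{G}_2$, so the inclusion $\op{Aut}_{O/C} \hookrightarrow \op{G}_2$ is a closed immersion; composing with $\Phi^{-1}$ produces the asserted closed immersion $\op{SL}_3 \hookrightarrow \op{G}_2$. The main obstacle is the surjectivity step: executing the Peirce-type decomposition uniformly over an arbitrary commutative ring (using alternativity of $O$ in place of associativity) and extracting both $h = {}^tg^{-1}$ and $\det g = 1$ from the pairing and cross-product constraints without recourse to characteristic or local hypotheses.
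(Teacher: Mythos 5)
Your proposal is correct, but it takes a genuinely different route from the paper. The paper's proof is essentially a citation: it specializes \cite[Theorem 5.9]{LoosPeterssonRacine} (the identification of $\op{Aut}_{O/C}$ for a reduced algebra obtained from the Zorn construction) to the split octonion algebra $O = \op{Zorn}(R^{\oplus 3})$, and then invokes Theorem \ref{thm:octonionsandg2} to identify $\op{Aut}_{O/R}$ with the split $\op{G}_2$, leaving only the (implicit) remark that fixing $C$ is a closed condition. You instead prove the statement from scratch: you verify that $\Phi(g)$ is an algebra automorphism via the transformation law $(gx)\times_\varphi(gy) = \det g\cdot {}^{\op{t}}g^{-1}(x\times_\varphi y)$, and for surjectivity you use the Peirce-type characterization $P_S = \{X : e_1\circ X = X = X\circ e_2\}$ (and its dual) to see that any $\phi \in \op{Aut}_{O/C}(S)$ preserves $Se_1\oplus P_S\oplus P_S^*\oplus Se_2$, after which the duality pairing forces $h = {}^{\op{t}}g^{-1}$ and the cross-product constraint forces $\det g = 1$; the closed-immersion claim follows since $\phi(e_1)=e_1$, $\phi(e_2)=e_2$ are closed conditions on the affine finite-type scheme $\op{Aut}_{O/R}$. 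All of these identities are explicit in vector-matrix coordinates and valid over an arbitrary commutative ring, so the ``main obstacle'' you flag at the end is not actually an obstacle: no alternativity, characteristic, or locality hypotheses enter, and $\det g = 1$ already follows from evaluating the cross-product constraint on $e_1\times_\varphi e_2$, since ${}^{\op{t}}g^{-1}$ is invertible. What the paper's citation buys is brevity and placement within the general Loos--Petersson--Racine framework; what your argument buys is a self-contained verification of the precise formula, including a transparent reason why the determinant-one condition appears. One small slip in wording: to obtain $\op{SL}_3\to\op{G}_2$ you should compose the closed immersion $\op{Aut}_{O/C}\hookrightarrow\op{G}_2$ with $\Phi$ itself (equivalently, precompose by $\Phi$), not with $\Phi^{-1}$; since $\Phi$ is an isomorphism this is immaterial mathematically.
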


\begin{proof}
This is a specialization of \cite[Theorem 5.9]{LoosPeterssonRacine} to the case of the split octonion algebra; the second part of the statement is then an immediate consequence of Theorem \ref{thm:octonionsandg2}.
\end{proof}

\begin{rem}
\label{rem:orthogonalembedding}
For later use, we will recall some facts about $\op{Aut}_{O/C}$.  Consider
\[
C^{\perp} := \{ x \in O \mid \op{Tr}_O( x\bar{y}) = 0 \;\forall y \in O\}.
\]
Because $O$ is an alternative algebra, $C^{\perp}$ admits the structure of a (left) $C$-module.   There is a $C$-valued Hermitian form on $C^{\perp}$ preserved by every element of $\op{Aut}_{O/C}$.  In more detail, conjugation restricted to $z \in C$ is given by the formula $\bar{z} := (a_2,a_1)$.  If we identify $C^{\perp}$ with $C^{\oplus 3}$, then given $u = (u_1,u_2,u_3)$ and $w = (w_1,w_2,w_3)$ in $C^{\oplus 3}$, the Hermitian form is given by the explicit formula
\[
\Phi(z,w) = z_1\bar{w}_1 + z_2 \bar{w}_2 + z_3 \bar{w}_3.
\]
In fact, Theorem \ref{thm:longrootembedding} can be phrased as saying that $\op{Aut}_{O/C}$ is identified with the special unitary group of this $C$-valued Hermitian form.
\end{rem}

\subsection{Homogeneous spaces related to \texorpdfstring{$\op{G}_2$}{G2} and octonions}
\label{ss:homogeneousspaces}
We now analyze various geometric structures related to the group scheme $\op{G}_2$ and the long root embedding $\op{SL}_3 \to \op{G}_2$.  Then we study the structure of homogeneous spaces associated with these embeddings.  The main results of this section are Proposition \ref{prop:spin7g2}, Theorem \ref{thm:g2sl3andspin7} and Proposition \ref{prop:spin8g2}, which are algebro-geometric versions of well-known results for compact Lie groups.

\subsubsection*{Geometry of some group homomorphisms}
By Theorem \ref{thm:octonionsandg2}, we may identify $\op{G}_2$ as the automorphism group scheme of a split octonion algebra $O$ over $\Z$ and thus over any base ring.  The norm form $\op{N}_O$ of the split octonion algebra is the split quadratic form of rank $8$.  Since any automorphism of an octonion algebra preserves the norm \cite[Proposition B.1]{Conrad}, it follows that over any base ring there is an embedding
\[
\op{G}_2 \longrightarrow \op{O}_8,
\]
where we write $\op{O}_8$ for the orthogonal group of the split quadratic form $\op{N}_O$.

We write $\op{SO}_n$ for the special orthogonal group of a split form of rank $n$, see \cite[Appendix C]{ConradReductive} or \cite[Chapter IV]{Knus} for discussion of special orthogonal groups over rings.  By \cite[Theorem C.2.11]{ConradReductive}, the group scheme $\op{SO}_n$ is smooth with connected fibers.  Since $\op{G}_2$ is smooth with connected fibers by construction, it follows that the above inclusion factors through $\op{SO}_8$. Note that in any characteristic, the even special orthogonal group is the connected component containing the identity section of $\op{O}_8$ by \cite[Corollary C.3.1]{ConradReductive}.

Now the split form of rank $8$ is a non-degenerate quadratic space.  As mentioned above, any algebra automorphism of $O$ preserves the trace form as well.  The subscheme $O' \subset O$ consisting of matrices with trace $0$ is a trivial rank $7$ vector bundle over the base.  In the Zorn algebra presentation of $O$, the trace form is given by the matrix trace, see Remark \ref{rem:traceformzorn}.  Therefore, in suitable coordinates, the formula from Definition \ref{defn:zorn} guarantees that the restriction of the trace form to the locus of octonions of trace zero is the standard split form on $O'$.  Combining these observations, there is an embedding $\op{G}_2 \hookrightarrow \op{O}_7$ (even in characteristic $2$).  Once again, since $\op{G}_2$ has connected fibers, the image of the above group homomorphism must be contained in the connected component of the identity in $\op{O}_7$, which by \cite[Proposition C.3.5]{Conrad} coincides with $\op{SO}_7$.  Thus, combining Theorem \ref{thm:longrootembedding} with Remark \ref{rem:orthogonalembedding} and the observations just made, one obtains the following result.

\begin{prop}
\label{prop:cartesiang2sl3}
If $R$ is any commutative unital ring, then there is a Cartesian square of closed immersion group homomorphisms of the form:
\[
\xymatrix{
\op{SL}_3 \ar[r]\ar[d] & \op{SO}_6 \ar[d] \\
\op{G}_2  \ar[r] & \op{SO}_7;
}
\]
the top horizontal map is conjugate to the standard ``hyperbolic" map $\op{SL}_3 \to \op{SO}_6$.
\end{prop}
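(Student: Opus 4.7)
The plan is to realize the entire square concretely inside the split Zorn algebra $O := \op{Zorn}(R^{\oplus 3})$ and then read everything off from the explicit formula in Theorem~\ref{thm:longrootembedding}. By Remark~\ref{rem:orthogonalembedding}, the orthogonal complement $C^\perp \subset O$ of the diagonal Cayley--Dickson subalgebra $C \cong R \oplus R$ (with respect to the polarized norm form) is the rank $6$ submodule $\{(0,0,x^+,x^-)\} \cong P \oplus P^*$ with $P = R^{\oplus 3}$, on which the restricted norm form is the standard hyperbolic form $q(x^+,x^-) = \langle x^-, x^+\rangle$. I therefore take $\op{SO}_6 := \op{SO}(C^\perp, q|_{C^\perp})$, and embed it into $\op{SO}_7 = \op{SO}(O', \op{N}_O|_{O'})$ as the stabilizer of the trace-zero, unit-norm element $e_0 := \diag(1,-1) \in O' \cap C$.

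The commutativity of the square and the identification of the top arrow are then formal. Since $\op{SL}_3 = \op{Aut}_{O/C}$ fixes $C$ pointwise, it in particular fixes $e_0$, so the composite $\op{SL}_3 \hookrightarrow \op{G}_2 \hookrightarrow \op{SO}_7$ factors through the stabilizer $\op{SO}_6$. By Theorem~\ref{thm:longrootembedding}, the induced action of $g \in \op{SL}_3$ on $C^\perp$ is $(x^+, x^-) \mapsto (gx^+, {}^{\op{t}}g^{-1} x^-)$, which is literally the standard hyperbolic embedding $\op{SL}_3 \hookrightarrow \op{SO}(P \oplus P^*) = \op{SO}_6$.

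It remains to verify the Cartesian property, i.e.\ that the fiber product $T := \op{G}_2 \times_{\op{SO}_7} \op{SO}_6$---concretely, the stabilizer of $e_0$ inside $\op{G}_2$---coincides with $\op{SL}_3$. Over any base $S$ in which $2$ is a unit this is easy: any $\phi \in T(S)$ fixes $1_O$ (as an algebra automorphism) and $e_0$, hence fixes $S \cdot 1_O + S \cdot e_0$, which equals $C \otimes_R S$ via $\diag(1,0) = \tfrac{1}{2}(1_O+e_0)$ and $\diag(0,1) = \tfrac{1}{2}(1_O-e_0)$, so $\phi \in \op{SL}_3(S)$. The main obstacle, and the technical heart of the argument, is characteristic $2$: there $e_0$ reduces to $1_O$, the polarized form on $O'$ acquires a one-dimensional radical, and the naive two-element argument collapses. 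I would handle this by descending to $\Z$: all four group schemes and the maps between them are defined, smooth, and have connected fibers over $\Spec \Z$, so $\op{SL}_3$ and $T$ are smooth closed subgroup schemes of $\op{G}_2/\Z$ that coincide over $\Spec \Z[1/2]$ by the above. Equality on the remaining geometric fiber at $\mathbb{F}_2$---most cleanly established via the identification of $\op{Aut}_{O/C}$ with the special unitary group of the $C$-valued Hermitian form on $C^\perp \cong C^{\oplus 3}$ from Remark~\ref{rem:orthogonalembedding}---together with a Lemma~\ref{lem:reductiontogeometricfibers}-type descent then yields equality over $\Z$, and hence over any base $R$.
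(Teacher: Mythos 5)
Your overall route is the same as the paper's: realize everything inside the split Zorn algebra, get the commutative square and the identification of the top arrow from the explicit formula of Theorem~\ref{thm:longrootembedding}, and reduce the Cartesian property to the structure of $\op{Aut}_{O/C}$ as described in Remark~\ref{rem:orthogonalembedding}. When $2$ is invertible your argument is complete and matches the intended one. The problem is that the only genuinely nontrivial content of the proposition over an arbitrary $R$ is the residue characteristic $2$ case, and there your proposal stops at a plan rather than an argument, while the concrete description of the fiber product that you work with is false exactly there. In characteristic $2$ one has $e_0=\diag(1,-1)=1_O$, so $\mathrm{Stab}_{\op{G}_2}(e_0)$ is all of $\op{G}_2$; likewise $e_0$ spans the radical of the polar form on $O'$ and $\op{SO}_7$ has no nontrivial characters, so every point of $\op{SO}_7$ fixes $e_0$ and the image of $\op{SO}_6\to\op{SO}_7$ is \emph{not} the stabilizer of $e_0$. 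Thus $T:=\op{G}_2\times_{\op{SO}_7}\op{SO}_6$ must instead be described as the elements acting block-diagonally on $O'=R\,e_0\oplus C^{\perp}$ with $C^{\perp}$-block in $\op{SO}_6$ (in characteristic $2$: Dickson invariant $0$), and identifying this with $\op{SL}_3$ on the $\overline{\mathbb{F}}_2$-fiber is precisely the point that needs proof. For instance, the automorphism of the split Zorn algebra swapping the two diagonal idempotents and the two vector slots (with a sign) fixes $1_O$, fixes $e_0$ in characteristic $2$, and preserves $C^{\perp}$; it is excluded from $T$ only because its restriction to $C^{\perp}$ has Dickson invariant $1$ — equivalently because it is $C$-semilinear rather than $C$-linear, which is exactly the role the $\op{SU}_3$-identification of Remark~\ref{rem:orthogonalembedding} plays in the paper. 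Your sentence ``most cleanly established via the identification of $\op{Aut}_{O/C}$ with the special unitary group'' gestures at this but does not carry it out.

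Two further points about the descent step. Lemma~\ref{lem:reductiontogeometricfibers} as stated is about quotient maps being isomorphisms, not about closed subgroup schemes, so it does not literally apply; what you need is the (standard, but should be stated) fibral criterion: a closed immersion $\op{SL}_3\hookrightarrow T$ over $\Z$ with $\op{SL}_3$ flat is an isomorphism once it is an isomorphism on every fiber. And for that criterion the equality at $2$ must be an equality of the fibers \emph{as schemes} over $\overline{\mathbb{F}}_2$ (a priori $T_{\overline{\mathbb{F}}_2}$ could be non-reduced or have extra components), not merely a bijection on points; so the characteristic $2$ computation has to produce the scheme-theoretic stabilizer, again via the unitary/Dickson description rather than fixed vectors. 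In short: right framework, complete away from $2$, but the characteristic $2$ fiber identification — which is the actual content of the Cartesian claim over a general ring — is missing.
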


\begin{proof}
The existence of a commutative square is immediate from Theorem~\ref{thm:longrootembedding} combined with the discussion preceding the statement.  To establish the fact that $\op{SL}_3$ may be realized as the fiber product of $\op{G}_2$ and $\op{SO}_6$ over $\op{SO}_7$, we appeal to Remark~\ref{rem:orthogonalembedding}.  The final statement is immediate from the formula defining $\op{SL}_3 \to \op{G}_2$, which is given in Theorem~\ref{thm:longrootembedding} combined with Remark~\ref{rem:orthogonalembedding}.
\end{proof}

The groups $\op{G}_2$ and $\op{SL}_3$ are split simply-connected semi-simple group schemes.  On the other hand, the special orthogonal groups $\op{SO}_n$ are not simply-connected.  The simply-connected covering groups of $\op{SO}_n$ are the spin groups $\op{Spin}_n$; we refer the reader to \cite[\S 4.5]{GroupesClassiques} for a discussion of spin groups over an arbitrary base.  Over a field, one may lift the horizontal embeddings in Proposition \ref{prop:cartesiang2sl3} to embeddings of $\op{SL}_3$ in $\op{Spin}_6$ and $\op{G}_2 \to \op{Spin}_7$ \cite[\S 3.6]{SpringerVeldkamp}; in essence this follows from the fact that $\op{SL}_3$ and $\op{G}_2$ are simply-connected in the appropriate sense.  The same result also holds over an arbitrary base; indeed, the following result refines Proposition \ref{prop:cartesiang2sl3}.

\begin{prop}
\label{prop:spin7g2}
If $R$ is a commutative unital ring, then there is a Cartesian diagram of closed immersion group homomorphisms
\[
\xymatrix{
\op{SL}_3 \ar[r]\ar[d] & \op{Spin}_6 \ar[d] \\
\op{G}_2  \ar[r] & \op{Spin}_7.
}
\]
The map $\op{SL}_3 \to \op{Spin}_6$ is given by the hyperbolic map.
\end{prop}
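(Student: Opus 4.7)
The strategy is to upgrade Proposition~\ref{prop:cartesiang2sl3} along the simply-connected covers $\op{Spin}_n \to \op{SO}_n$. Since both $\op{SL}_3$ and $\op{G}_2$ are split, simply-connected semisimple $\Z$-group schemes, and each $\op{Spin}_n \to \op{SO}_n$ is a central $\mu_2$-isogeny, the plan is to lift the two horizontal maps of that square uniquely to the spin groups over $\Z$ (and hence over any $R$ by base change), and then verify Cartesianness of the resulting square.

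First, I would lift $\op{G}_2 \hookrightarrow \op{SO}_7$ to a closed immersion group homomorphism $\op{G}_2 \to \op{Spin}_7$. The pullback $\op{G}_2 \times_{\op{SO}_7} \op{Spin}_7 \to \op{G}_2$ is a central $\mu_2$-extension of group schemes; since $\op{G}_2$ is split, simply-connected and semisimple, every such central extension is trivial (equivalently, the functoriality of the simply-connected cover for homomorphisms of split semisimple Chevalley group schemes over $\Z$ produces the desired lift directly). Uniqueness of the lift is immediate from $\op{Hom}_{\op{gp}}(\op{G}_2,\mu_2)=0$, which holds because simply-connected semisimple groups have no non-trivial characters. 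The same argument, applied to the hyperbolic embedding $\op{SL}_3 \hookrightarrow \op{SO}_6$ supplied by Proposition~\ref{prop:cartesiang2sl3}, yields a unique group-homomorphism lift $\op{SL}_3 \to \op{Spin}_6$, which is the spin-version of the hyperbolic map by construction. Both lifts are closed immersions: pulling back the closed immersion along the finite map $\op{Spin}_n \to \op{SO}_n$ gives a closed subscheme of $\op{Spin}_n$, into which the splitting of the trivial $\mu_2$-extension embeds the source as a section.

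Commutativity of the desired square then reduces to comparing the two composites $\op{SL}_3 \to \op{Spin}_7$, one through $\op{G}_2$ and the other through $\op{Spin}_6$. Both cover the common map $\op{SL}_3 \to \op{SO}_7$ under the $\mu_2$-isogeny $\op{Spin}_7 \to \op{SO}_7$, and the set of such lifts is a torsor under $\op{Hom}_{\op{gp}}(\op{SL}_3,\mu_2)=0$, so the two composites coincide.

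Finally, to verify Cartesianness, I would compare $P := \op{G}_2 \times_{\op{Spin}_7} \op{Spin}_6$ with $\op{SL}_3 = \op{G}_2 \times_{\op{SO}_7} \op{SO}_6$ from Proposition~\ref{prop:cartesiang2sl3}. The natural projection $P \to \op{SL}_3$ is a base change of the $\mu_2$-isogeny $\op{Spin}_6 \to \op{SO}_6$, while the compatible lifts constructed above produce a section $\op{SL}_3 \to P$ of this cover. The principal obstacle is to see that the section is an isomorphism; this amounts to checking that the kernels $\mu_2 \subset \op{Spin}_6$ and $\mu_2 \subset \op{Spin}_7$ are compatibly identified under the lifted embedding $\op{Spin}_6 \hookrightarrow \op{Spin}_7$ (obtained from the same functoriality applied to $\op{SO}_6 \hookrightarrow \op{SO}_7$). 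Granted this compatibility, the non-identity $\mu_2$-translate of a point of $\op{SL}_3 \subset P$ in the $\op{Spin}_6$-factor fails the matching condition in $\op{Spin}_7$, so the section $\op{SL}_3 \to P$ is an isomorphism and the square is Cartesian.
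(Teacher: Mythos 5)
Your overall route is the paper's: start from Proposition~\ref{prop:cartesiang2sl3} and lift through the covers $\op{Spin}_n \to \op{SO}_n$, using that $\op{SL}_3$ and $\op{G}_2$ are simply connected to produce the lifts and $\op{Hom}(-,\mu_2)=0$ to get uniqueness and hence commutativity; the paper does exactly this, citing \cite[Exercise 6.5.2(iii)]{ConradReductive} and pulling back the whole $\op{SO}$-level square along $\op{Spin}_7 \to \op{SO}_7$. The difference is in how Cartesianness is concluded, and that is where your write-up has a genuine gap: you reduce everything to the claim that the central $\mu_2 = \ker(\op{Spin}_6 \to \op{SO}_6)$ maps isomorphically onto $\ker(\op{Spin}_7 \to \op{SO}_7)$ under the lifted map $\op{Spin}_6 \to \op{Spin}_7$, and then proceed ``granted this compatibility''. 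That compatibility is precisely the non-formal content of the statement (it is equivalent to $\op{Spin}_6$ being the full preimage of $\op{SO}_6$ in $\op{Spin}_7$), and nothing in the unique-lifting formalism you invoke rules out the alternative that the lift kills the central $\mu_2$; in that scenario $P=\op{G}_2\times_{\op{Spin}_7}\op{Spin}_6$ would be $\op{SL}_3\times\mu_2$ rather than $\op{SL}_3$, so the principal obstacle you yourself identify is left unresolved. There is also a small misstatement: $P \to \op{SL}_3$ is not the base change of $\op{Spin}_6 \to \op{SO}_6$; that base change is $\op{SL}_3 \times_{\op{SO}_6} \op{Spin}_6$, of which $P$ is the closed subgroup cut out by the matching condition in $\op{Spin}_7$, and it is exactly the kernel compatibility that identifies $P$ with the image of your section.

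To close the gap: if the lift $\op{Spin}_6 \to \op{Spin}_7$ annihilated the central $\mu_2$, it would factor through $\op{SO}_6$ and hence split the central extension $\op{SO}_6 \times_{\op{SO}_7} \op{Spin}_7 \to \op{SO}_6$; this can be ruled out on a single geometric fiber, where the preimage of $\op{SO}_6$ in $\op{Spin}_7$ is connected (equivalently, the stabilization map induces an isomorphism on fundamental groups, a root-datum computation, or the classical fact over $\cplx$). With that supplied, your argument becomes a correct and somewhat more explicit variant of the paper's proof, which instead pulls back the entire Cartesian square of Proposition~\ref{prop:cartesiang2sl3} along $\op{Spin}_7 \to \op{SO}_7$ and concludes via the unique splitting of central $\mu_2$-extensions of simply connected group schemes together with uniqueness of the lifts.
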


\begin{proof}
Granted Proposition \ref{prop:cartesiang2sl3}, it remains to show that the closed immersion group homomorphisms $\op{SL}_3 \to \op{SO}_6$ and $\op{G}_2 \to \op{SO}_7$ lift along the universal covering morphisms $\op{Spin}_n \to \op{SO}_n$.   If we pull back all the closed immersion group homomorphisms in Proposition \ref{prop:cartesiang2sl3} along $\op{Spin}_7 \to \op{SO}_7$, then we obtain a corresponding Cartesian square. However, given a connected semi-simple group-scheme $\op{G}$ over an arbitrary base, any central extension of $\op{G}$ by $\mu_2$ is uniquely split and by \cite[Exercise 6.5.2(iii)]{ConradReductive} over an arbitrary base, any homomorphism of semi-simple group schemes uniquely lifts to simply-connected covering groups.  In particular, the stabilization map $\op{SO}_6 \to \op{SO}_7$ lifts uniquely to a morphism $\op{Spin}_6 \to \op{Spin}_7$.  Moreover, since $\op{SL}_3$ and $\op{G}_2$ are already simply-connected, it follows that we have unique lifts of the morphisms in the first sentence of this paragraph to morphisms $\op{SL}_3 \to \op{Spin}_6$ and $\op{G}_2 \to \op{Spin}_7$ and uniqueness guarantees that we obtain a Cartesian diagram as in the statement.
\end{proof}

\begin{rem}
\label{rem:leftmultinspin8}
If $O$ is the split octonion algebra over a base, then the locus of octonions of norm $1$ is isomorphic to the quadric $\op{Q}_7$.  By Remark \ref{rem:homomtoO8}, left multiplication by a unit norm octonion defines a morphism $\op{Q}_7 \longrightarrow \op{O}_8$.  Because $\op{Q}_7$ has connected fibers, this morphism automatically lifts through $\op{SO}_8$.  In fact, using the explicit formulas defining the spin group, left multiplication by a unit norm octonion actually defines a morphism $\op{Q}_7 \to \op{Spin}_8$.
\end{rem}

\subsubsection*{Compatibility of the hyperbolic map and stabilization}
Above, we studied a homomorphism $\op{SL}_3 \to \op{Spin}_6$.  There is a well-known exceptional isomorphism $\op{Spin}_6 \isomt \op{SL}_4$ induced by a ``half-spin" representation.  Details of this construction may be found in \cite[C.6.6]{ConradReductive}.  It suffices to construct this isomorphism over $\Z$.  If $\op{V}$ is a free $\Z$-module of rank $4$ equipped with the standard trivialization of its determinant, then there is an induced $\op{SL}(\op{V})$ action on $\Lambda^2 \op{V}$.  If we fix an identification $\det \op{V} \cong \Z$, then $\Lambda^2 \op{V}$ admits the structure of a quadratic space with the quadratic form defined by $q(w) = \frac{1}{2}(w \wedge w)$.  The resulting assignment factors through an isomorphism $\op{SL}_4/\mu_2 \isomt \op{SO}(q)$.  By \cite[Exercise 6.5.2(iii)]{ConradReductive}, this isomorphism lifts uniquely through a map $\op{SL}_4 \isomt \op{Spin}_6$, which is known to be an isomorphism.  There is a standard embedding $\op{SL}_3 \to \op{SL}_4$ corresponding to sending $X \in \op{SL}_3(R)$ to the block matrix $\op{diag}(X,1)$.  One obtains the following result by direct computation.

\begin{prop}
\label{prop:hyperbolicvsstabilization}
The composite map
\[
\op{SL}_3 \longrightarrow \op{Spin}_6 \isomto \op{SL}_4
\]
is conjugate to the standard embedding described above.
\end{prop}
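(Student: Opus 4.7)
The plan is to compare the two closed-immersion group homomorphisms $\op{SL}_3 \to \op{SL}_4$---the standard embedding $\iota: g \mapsto \op{diag}(g,1)$ and the composite $\phi: \op{SL}_3 \to \op{Spin}_6 \isomto \op{SL}_4$---by pulling back the standard rank-$4$ representation $V$ of $\op{SL}_4$ along each and showing the resulting $\op{SL}_3$-representations are isomorphic. The pullback $\iota^*V$ is manifestly $V_3 \oplus R$, where $V_3$ is the defining rank-$3$ representation of $\op{SL}_3$ and $R$ is the trivial line. Under the exceptional isomorphism constructed above, $V$ corresponds to one of the two half-spin representations of $\op{Spin}_6$, so $\phi^*V$ is the restriction of that half-spin representation along the map $\op{SL}_3 \to \op{Spin}_6$ from Proposition \ref{prop:spin7g2}.

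To compute this restriction I would work on the split maximal torus $T \subset \op{SL}_3$ of diagonal matrices $\op{diag}(t_1,t_2,t_3)$, with characters $\chi_i(t)=t_i$ satisfying $\chi_1 + \chi_2 + \chi_3 = 0$. The hyperbolic embedding $\op{SL}_3 \to \op{SO}_6$ acts on the standard rank-$6$ representation with weights $\pm\chi_1, \pm\chi_2, \pm\chi_3$, and since $\op{SL}_3$ is simply connected this map lifts uniquely through $\op{Spin}_6 \to \op{SO}_6$ by the uniqueness argument invoked in the proof of Proposition \ref{prop:spin7g2}. The four weights of a half-spin representation of $\op{Spin}_6$ are the characters $\tfrac{1}{2}(\epsilon_1\chi_1 + \epsilon_2\chi_2 + \epsilon_3\chi_3)$ with $\epsilon_i \in \{\pm 1\}$ and $\prod \epsilon_i = +1$; substituting $\chi_3 = -\chi_1 - \chi_2$ into each yields exactly the multiset $\{0, \chi_1, \chi_2, \chi_3\}$, which are precisely the weights of $V_3 \oplus R$.

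Having matched characters on a split maximal torus, standard representation theory of split reductive group schemes (passing to $\Q$ and observing that both sides are $\op{SL}_3$-stable $\Z$-lattices in the same $\Q$-representation) produces an $\op{SL}_3$-equivariant isomorphism $\phi^*V \isomto \iota^*V$, equivalently an element $g \in \op{GL}_4(\Z)$ with $\phi = g\, \iota\, g^{-1}$; modifying $g$ if necessary by the centralizing element $\op{diag}(1,1,1,-1)$, which commutes with $\iota(\op{SL}_3)$, yields the desired conjugator in $\op{SL}_4(\Z)$. The main obstacle is pinning down the chirality of the half-spin representation: the opposite chirality produces the weight set $\{0, -\chi_1, -\chi_2, -\chi_3\}$, corresponding to $V_3^* \oplus R$, which is not conjugate to $V_3 \oplus R$ in $\op{SL}_4$. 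The chirality consistent with the statement is precisely the one dictated by the convention fixed in constructing the isomorphism $\op{SL}_4 \isomto \op{Spin}_6$ via the action on $\Lambda^2 V$ rather than $\Lambda^2 V^*$, and once this convention is tracked through the construction, the weight calculation above forces the claimed conjugacy.
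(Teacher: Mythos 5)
Your weight computation is correct as far as it goes: it shows that the composite $\op{SL}_3 \to \op{Spin}_6 \isomt \op{SL}_4$ pulls the standard representation $V$ back to either $V_3 \oplus R$ or $V_3^* \oplus R$, i.e.\ that the composite is conjugate either to the standard embedding $\iota(g)=\op{diag}(g,1)$ or to its twist by the outer automorphism $x \mapsto {}^{\op{t}}x^{-1}$. The genuine gap is the chirality determination, which is exactly the content of the proposition (as you note, the two candidates are not conjugate even in $\op{GL}_4$), and the resolution you offer does not suffice: the choice of $\Lambda^2 V$ rather than $\Lambda^2 V^*$ in constructing $\op{SL}_4 \isomt \op{Spin}_6$ cannot by itself dictate the chirality, because both candidates induce the \emph{same} restriction of the vector representation, namely $\Lambda^2(V_3\oplus R)\cong V_3^*\oplus V_3\cong \Lambda^2(V_3^*\oplus R)$, the hyperbolic representation in either case. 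What pins down the answer is a comparison between the specific embedding $\op{SL}_3\to\op{SO}_6$ coming from the octonion construction (Theorem~\ref{thm:longrootembedding}, Remark~\ref{rem:orthogonalembedding}, Proposition~\ref{prop:cartesiang2sl3}) and the identification of $(\Lambda^2 V, q)$ with the split rank-$6$ quadratic space: one must exhibit an isometry in $\op{SO}_6$, not merely $\op{O}_6$, carrying one to the other, since conjugation by an improper isometry lifts to the outer automorphism of $\op{Spin}_6\cong\op{SL}_4$ and interchanges precisely the two candidates. That verification is the ``direct computation'' the proposition refers to; asserting that ``tracking the convention forces the claimed conjugacy'' is circular at exactly this point, because the labeling $\prod\epsilon_i=+1$ has no meaning until that identification is fixed.

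A cleaner way to close the gap, which also repairs a secondary weakness, is to run the comparison in the other direction: compute that $\iota$ followed by $\op{SL}_4\to\op{SO}(\Lambda^2 V)$ acts on the span of the $e_i\wedge e_4$ ($i\le 3$) as $V_3$ and on $\Lambda^2\langle e_1,e_2,e_3\rangle\cong V_3^*$ as ${}^{\op{t}}g^{-1}$, with $q(w)=\tfrac12(w\wedge w)$ pairing these two totally isotropic summands hyperbolically; any equivariant isometry matching this with the paper's $P\oplus P^*$ is of the form $(\lambda,\lambda^{-1})$ on the two summands and hence has determinant $1$, so the match is special. Uniqueness of lifts along $\op{Spin}_6\to\op{SO}_6$ (already used in Proposition~\ref{prop:spin7g2}; $\op{SL}_3$ admits no nontrivial homomorphism to $\mu_2$) then identifies the two maps into $\op{Spin}_6$ up to conjugacy, and the conclusion holds directly over $\Z$. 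This also avoids your passage from equality of characters over $\Q$ to a conjugator in $\op{GL}_4(\Z)$: two $\op{SL}_3$-stable lattices in the same $\Q$-representation need not be isomorphic merely for that reason, and since the proposition is invoked over fields of arbitrary (including positive) characteristic in the proofs of Theorem~\ref{thm:g2sl3andspin7} and Theorem~\ref{thm:pi3bg2}, you really need the integral (or at least characteristic-free) statement, not just the statement over $\Q$.
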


\subsubsection*{The homogeneous spaces $\op{G}_2/\op{SL}_3$ and $\op{Spin}_7/\op{G}_2$}
The compact Lie group $G_2$ contains the subgroup $SU(3)$ and quotient space $G_2/SU(3)$ and Borel showed that the quotient is diffeomorphic to the $6$-sphere $S^6$.  Borel also observed that $Spin(7)$ acts transitively on $S^7$ and the stabilizer of a point is isomorphic to $G_2$ (we refer the reader to \cite[Th{\'e}or{\`e}me 3]{BorelSpheres} or \cite[Theorem 5.5]{Adams} for these facts).  We now provide algebro-geometric analogs of these results, valid over arbitrary base rings.

\begin{thm}
\label{thm:g2sl3andspin7}
Suppose $R$ is a commutative unital base ring and consider the Cartesian square of closed immersion homomorphisms:
\[
\xymatrix{
\op{SL}_3 \ar[r]\ar[d] & \op{Spin}_6 \ar[d] \\
\op{G}_2  \ar[r] & \op{Spin}_7.
}
\]
The following statements hold.
\begin{enumerate}[noitemsep,topsep=1pt]
\item The quotients of the horizontal and vertical homomorphisms exist as smooth affine $R$-schemes, i.e., the quotients $\op{G}_2/\op{SL}_3$, $\op{Spin}_7/\op{Spin}_6$, $\op{Spin}_6/\op{SL}_3$ and $\op{Spin}_7/\op{G}_2$ all exist as smooth affine $R$-schemes.
\item The induced map
\[
\op{G}_2/\op{SL}_3 \longrightarrow \op{Spin}_7/\op{Spin}_6
\]
is an isomorphism of $R$-schemes.  In particular, over any field, $\op{G}_2/\op{SL}_3 \cong \op{Q}_6$.
\item The induced map
\[
\op{SL}_4/\op{SL}_3 \cong \op{Spin}_6/\op{SL}_3 \longrightarrow \op{Spin}_7/\op{G}_2
\]
is an isomorphism of $R$-schemes.  In particular, over any field, $\op{Spin}_7/\op{G}_2 \cong \op{Q}_7$.
\item The torsors $\op{G}_2 \to \op{G}_2/\op{SL}_3$, $\op{Spin}_6 \to \op{Spin}_6/\op{SL}_3$, $\op{Spin}_7 \to \op{Spin}_7/\op{Spin}_6$ and $\op{Spin}_7 \to \op{Spin}_7/\op{G}_2$ are Nisnevich locally trivial.
\end{enumerate}
\end{thm}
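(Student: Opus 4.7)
The plan is to identify each of the four quotients with an explicit split affine quadric ($\op{Q}_6$ or $\op{Q}_7$); this yields parts (1)--(3) simultaneously. Part (4) I would deduce separately, invoking speciality of $\op{SL}_n$ for three of the four torsors and a generic triviality argument for the $\op{G}_2$-torsor.

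For parts (1)--(3), I would construct an orbit map from each $G/H$ to the relevant quadric. For $\op{Spin}_7/\op{Spin}_6 \to \op{Q}_6$, push the $\op{Spin}_7$-action through $\op{Spin}_7 \to \op{SO}_7$ to the standard rank-$7$ representation and take the orbit of a unit-norm vector; the stabilizer in $\op{Spin}_7$ equals $\op{Spin}_6$ because the preimage of $\op{SO}_6 \subset \op{SO}_7$ in $\op{Spin}_7$ is connected and contains $\op{Spin}_6$ by Proposition~\ref{prop:spin7g2}. For $\op{Spin}_7/\op{G}_2 \to \op{Q}_7$, identify the $8$-dimensional spin representation with the octonions $O$ via Remark~\ref{rem:leftmultinspin8} and take the orbit of $1_O$; since $\op{G}_2 = \op{Aut}_{O/R}$ fixes $1_O$, the stabilizer is precisely $\op{G}_2$. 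For $\op{SL}_4/\op{SL}_3 \to \op{Q}_7$, consider the $\op{SL}_4$-action on pairs $(v,w) \in \A^4 \oplus (\A^4)^*$ with $\langle v,w\rangle = 1$---this variety is by definition $\op{Q}_7$---and check that the stabilizer of $(e_4, e_4^*)$ is exactly the standard upper-left block embedding of $\op{SL}_3$ from Proposition~\ref{prop:hyperbolicvsstabilization}. Finally for $\op{G}_2/\op{SL}_3 \to \op{Q}_6$, choose a trace-zero octonion $v$ generating a subalgebra isomorphic to $C = R \oplus R$ (so $v^2 = 1$, placing $v$ on the relevant quadric inside $O'$); by Theorem~\ref{thm:longrootembedding} the stabilizer of $v$ in $\op{G}_2$ is $\op{SL}_3$.

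To promote each orbit map to an isomorphism of $R$-schemes, I would work over $\Z$-models: each map is a monomorphism between smooth affine $\Z$-schemes whose fibral dimensions agree ($\dim \op{Q}_6 = 6 = \dim \op{Spin}_7 - \dim \op{Spin}_6 = \dim \op{G}_2 - \dim \op{SL}_3$ and $\dim \op{Q}_7 = 7 = \dim \op{Spin}_7 - \dim \op{G}_2 = \dim \op{SL}_4 - \dim \op{SL}_3$). Applying Lemma~\ref{lem:reductiontogeometricfibers} reduces the question to checking that the orbit maps are isomorphisms on geometric fibers over $\Spec \Z$, which in turn reduces to the classical identifications of homogeneous spheres noted in \cite[Th\'eor\`eme 3]{BorelSpheres} and \cite[Theorem 5.5]{Adams}. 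This simultaneously establishes part (1) (existence of each quotient as a smooth affine $R$-scheme after base change from $\Z$) and the identifications in parts (2) and (3); the matching of the induced maps $\op{G}_2/\op{SL}_3 \to \op{Spin}_7/\op{Spin}_6$ and $\op{SL}_4/\op{SL}_3 \cong \op{Spin}_6/\op{SL}_3 \to \op{Spin}_7/\op{G}_2$ with the quadric isomorphisms follows from the functoriality of the Cartesian square of Proposition~\ref{prop:spin7g2}.

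For part (4), three of the four torsors have structure group $\op{SL}_n$: both $\op{G}_2 \to \op{G}_2/\op{SL}_3$ and $\op{Spin}_6 \to \op{Spin}_6/\op{SL}_3$ are $\op{SL}_3$-torsors, and $\op{Spin}_7 \to \op{Spin}_7/\op{Spin}_6$ is an $\op{SL}_4$-torsor via the exceptional isomorphism $\op{Spin}_6 \cong \op{SL}_4$ from Proposition~\ref{prop:hyperbolicvsstabilization}. Since $\op{SL}_n$ is special in Serre's sense, all three are Zariski locally trivial, hence a fortiori Nisnevich locally trivial. The remaining case---the $\op{G}_2$-torsor $\op{Spin}_7 \to \op{Q}_7$---requires more work because $\op{G}_2$ is not special. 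For this I would use the Bruhat decomposition of $\op{Spin}_7$ to produce a section of $\op{Spin}_7 \to \op{Q}_7$ over a dense Zariski open of $\op{Q}_7$, establishing generic triviality, and then appeal to Nisnevich's theorem \cite{Nisnevich} to conclude that a generically trivial $\op{G}_2$-torsor over the regular base $\op{Q}_7$ is Nisnevich locally trivial. I expect the main obstacle to be this last step: making the Bruhat-style section argument work uniformly over $\Z$ rather than just over fields; the other three parts follow essentially formally from speciality of $\op{SL}_n$.
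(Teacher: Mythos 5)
Your overall architecture for (1)--(3) (orbit maps onto explicit quadrics, stabilizer identification, then Lemma~\ref{lem:reductiontogeometricfibers} to promote to isomorphisms over $\Z$) is the same as the paper's, but the step where you actually check the geometric fibers has a genuine gap. Lemma~\ref{lem:reductiontogeometricfibers} requires the fibral check over algebraically closed fields of \emph{every} characteristic, including $2$ and $3$, since the base is $\Spec \Z$; reducing this to Borel's and Adams's theorems about compact Lie groups and real spheres only covers (at best, after a comparison argument) characteristic $0$, and says nothing about positive characteristic. This is exactly where the paper does real work: transitivity is proved algebro-geometrically via Springer--Veldkamp's ``special pairs'', with a separate characteristic-$2$ argument. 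Indeed your explicit model of the $\op{G}_2$-orbit for $\op{G}_2/\op{SL}_3$ (trace-zero $v$ with $v^2=1$) is not even the right locus in characteristic $2$: there a trace-zero element does not generate a split complex subalgebra, and the paper instead uses the locus $\op{Tr}=1$, $\op{N}=0$ (which is why $\op{Q}_6$ is defined by $\sum_i x_iy_i = z(1+z)$). Likewise, your claim that the stabilizer of $1_O$ in $\op{Spin}_7$ ``is precisely $\op{G}_2$'' only gives the containment $\op{G}_2 \subseteq \op{Stab}(1_O)$; equality is the nontrivial point, which the paper establishes via Matsushima's theorem (the identity component of the stabilizer is reductive), a dimension count ($21-7=14$), and a check of connectedness. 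Finally, note that Lemma~\ref{lem:reductiontogeometricfibers} \emph{presupposes} that the quotients exist as schemes, so part (1) cannot be obtained ``simultaneously'' with (2) and (3): the paper first gets existence from Anantharaman's theorem and affineness from Matsushima's criterion.

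For part (4), your treatment of the three torsors under $\op{SL}_3$ and $\op{SL}_4 \cong \op{Spin}_6$ via speciality agrees with the paper. But the proposed argument for the $\op{G}_2$-torsor $\op{Spin}_7 \to \op{Spin}_7/\op{G}_2$ does not work as stated: $\op{G}_2$ is not a parabolic subgroup of $\op{Spin}_7$, so the Bruhat decomposition does not produce a section of this quotient over a dense open, and even granting generic triviality you would then have to justify applying Nisnevich's theorem over the $\Z$-base. The paper sidesteps both issues with a local argument: a morphism $X \to \op{Spin}_7/\op{G}_2$ is the same as a $\op{G}_2$-torsor on $X$ together with a trivialization of the associated $\op{Spin}_7$-torsor, hence (over affines) an octonion algebra whose norm form is split; Theorem~\ref{thm:g2zarloctriv} (Conrad) says an octonion algebra over a local ring with split underlying quadratic space is itself split, so the torsor is Zariski-locally (a fortiori Nisnevich-locally) trivial. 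If you want to salvage your route, you would at minimum need a different argument for generic triviality (e.g.\ Springer--Veldkamp over the function field) and a version of the purity/Grothendieck--Serre-type input valid over the mixed-characteristic base, which is considerably more delicate than the paper's appeal to the local splitting theorem.
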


\begin{rem}
Point (3) of the above result and Zariski local triviality of the torsor $\op{Spin}_7 \to \op{Spin}_7/\op{G}_2$  are also established by Alsaody and Gille \cite[Theorem 6.3]{AlsaodyGille}; we refer the reader there for an alternative treatment.
\end{rem}

Before establishing this result, we state one general result about quotients and homogeneous spaces; we thank Brian Conrad for suggesting this result and giving us its proof.

\begin{lem}
\label{lem:reductiontogeometricfibers}
Let $S$ be a scheme, and $\op{G}$ a smooth $S$-group scheme with connected fibers.  Assume $\op{H} \to \op{G}$ is a homomorphism of $S$-group schemes, and $\op{G}' \to \op{G}$ is a closed immersion homomorphism of $S$-group schemes with $\op{G}’$ fppf and $\op{H}$ smooth over S with connected fibers.  Assume the fiber product $\op{H}' := \op{G}' \times_{\op{G}} \op{H}$ is $S$-flat.  If the quotients $\op{H}/\op{H}'$ and $\op{G}/\op{G}'$ exist as schemes, then the induced morphism of schemes
\[
f:\op{H}/\op{H}' \longrightarrow \op{G}/\op{G}'
\]
is an open immersion if the fibral dimensions agree.  In particular, $f$ is an isomorphism if it induces isomorphisms of geometric fibers over $S$.
\end{lem}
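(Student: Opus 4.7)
The plan is to combine a fibral criterion for open immersions with the classical fact that a maximal-dimension orbit of a connected algebraic group acting on an irreducible variety is open.

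First I would verify that both quotients $\op{H}/\op{H}'$ and $\op{G}/\op{G}'$, which exist as schemes by hypothesis, are $S$-flat and locally of finite presentation. Since $\op{H}'$ is $S$-flat by assumption and inherits being locally of finite presentation from $\op{G}'$ (which is fppf over $S$), the map $\op{H} \to \op{H}/\op{H}'$ is an fppf torsor under $\op{H}'$. Flatness and local finite presentation then descend from $\op{H}$, which is smooth over $S$, to $\op{H}/\op{H}'$ along this fpqc cover. The same reasoning applies to $\op{G}/\op{G}'$ using fppf-ness of $\op{G}'$. In particular, the induced morphism $f$ is a morphism between two $S$-flat, locally finitely presented $S$-schemes and is itself locally of finite presentation.

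Next I would invoke the fibral criterion for open immersions from EGA IV$_4$, 17.9.5 (or the Stacks Project): a morphism between $S$-schemes that are $S$-flat and locally of finite presentation is an open immersion if and only if its restriction to every fiber over $S$ is an open immersion. This reduces the claim to the case $S = \Spec \kbar$ for $\kbar$ an algebraically closed field, i.e., to a verification on geometric fibers.

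Over an algebraically closed field we are in the classical setting. Postcomposing $\op{H} \to \op{G}$ with the quotient $\op{G} \to \op{G}/\op{G}'$ gives an action of $\op{H}$ on $\op{G}/\op{G}'$ whose orbit of the base point is the image of $f$; the orbit-stabilizer theorem identifies $\op{H}/\op{H}'$ with this orbit as an $\op{H}$-equivariant locally closed immersion. Because $\op{G}$ is smooth with connected (hence geometrically irreducible) fibers, $\op{G}/\op{G}'$ is irreducible of dimension $\dim \op{G} - \dim \op{G}'$, and $\op{H}/\op{H}'$ is irreducible of dimension $\dim \op{H} - \dim \op{H}'$. The fibral dimension hypothesis forces these to coincide, so the orbit—an irreducible locally closed subscheme of full dimension inside an irreducible target—is open, proving $f$ is an open immersion on fibers. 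The main technical hurdle I anticipate is checking carefully that the fppf quotients inherit $S$-flatness and local finite presentation so that the EGA fibral criterion genuinely applies; the orbit-dimension step, while classical, must also be formulated uniformly enough that it survives base change to every geometric fiber. The ``in particular'' clause is then immediate, since a surjective open immersion is an isomorphism and a fiberwise isomorphism is in particular surjective.
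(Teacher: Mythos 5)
Your proposal is correct and follows essentially the same route as the paper: reduce to geometric fibers via the fibral criterion of EGA IV$_4$, 17.9.5 (after checking the quotients are flat and locally of finite presentation, which the paper phrases as smoothness inherited fppf-locally), then identify $\op{H}/\op{H}'$ with the locally closed $\op{H}$-orbit of the identity coset in $\op{G}/\op{G}'$ and use the equal-dimension hypothesis to conclude openness, with the ``in particular'' clause following because a surjective open immersion is an isomorphism. The only cosmetic difference is that you use irreducibility plus dimension to see the orbit is dense hence open, while the paper factors the locally closed immersion as a closed immersion followed by an open immersion and invokes smoothness and equal dimension; both hinge on the same smoothness/reducedness of the quotients.
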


\begin{proof}
By construction the formation of the quotients appearing in the statement commutes with base change on $S$.  Moreover, since smoothness may be checked locally in the fppf-topology, it follows that -- assuming the quotients exist -- they are necessarily smooth over $S$ since $\op{H}$ and $\op{G}$ are smooth over $S$ by assumption.  By appeal to  \cite[Corollaire 17.9.5]{EGAIV.4} to prove the map $f$ is an open immersion, it suffices to prove this fiberwise.  Therefore, we may and do reduce to the case where $S = \Spec k$, for $k$ a field.  Furthermore, since the property of being an open immersion is fppf-local on the base, we can assume without loss of generality that $k$ is algebraically closed.

The left action of $\op{H}$ on $\op{G}/\op{G}'$ has, by assumption, stabilizer scheme at the identity coset $\op{H}'$.  Therefore, $\op{H}/\op{H}'$ maps isomorphically onto the (locally closed) $\op{H}$-orbit through the identity coset.  In other words, the morphism $f$ is a locally closed immersion.  However, any locally closed immersion may be written as the composition of a closed immersion followed by an open immersion.  Since both the source and target of $f$ are smooth of the same dimension, it follows immediately that $f$ is an open immersion.

To conclude, observe that if $f$ is a surjective open immersion, then it is bijective and hence a homeomorphism.  In particular it is quasi-compact.  Therefore, by \cite[Lemma 34.22.1]{stacks-project} we may conclude that $f$ is an isomorphism.
\end{proof}

\begin{proof}[Proof of Theorem \ref{thm:g2sl3andspin7}]
Since all group schemes in the statement are defined over $\Z$, we establish the results in that case; the results in general follow from that case by pullback.\newline

\noindent {\bf Point 1}.  All group schemes appearing in the statement of Proposition \ref{prop:spin7g2} are split reductive and are thus pulled back from reductive group schemes over $\Spec \Z$.  Then, if $\op{H} \to \op{G}$ is a closed immersion of reductive $\Z$-group schemes, then the quotient $\op{G}/\op{H}$ exists as a smooth $\Z$-scheme, e.g., by appeal to \cite[Th\'eor\`eme 4.C]{Anantharaman}.  Furthermore, the morphism $\op{G} \to \op{G}/\op{H}$ is an $\op{H}$-torsor and therefore, smoothness of $\op{G}/\op{H}$ is inherited from smoothness of $\op{G}$ (as mentioned in the proof of Lemma \ref{lem:reductiontogeometricfibers}).  Finally, the fact that the quotient $\op{G}/\op{H}$ is affine is standard from geometric invariant theory; see, e.g., \cite[Corollary 9.7.5]{Alper}.\newline

\noindent {\bf Point 2}.  We want to show that the induced map $\op{G}_2/\op{SL}_3 \to \op{Spin}_7/\op{Spin}_6$ is an isomorphism of schemes.  by appeal to Lemma \ref{lem:reductiontogeometricfibers}, it suffices to check this upon passing to geometric fibers, and without loss of generality we may assume that we are working over an algebraically closed field and make arguments at the level of $k$-points.  Then, we appeal to a more invariant presentation of the quotient.  Consider the space $X$ parameterizing split complex subalgebras of the split octonion algebra $O$.  We already identified $\op{SL}_3$ as $\op{Aut}_{O/C}$ and therefore, the stabilizer of a point in $X$ is isomorphic to $\op{SL}_3$.    We claim that $\op{G}_2$ acts transitively on $X$.  To see this, we treat two cases, depending on the characteristic $p$ of the base field. \newline

\noindent {\bf Case $p \neq 2$}. If $k$ has characteristic unequal to $2$, then the choice of any element $a \in O$ with $\op{Tr}(a) = 0$ and $\op{N}(a) \neq 0$ determines a split complex sub-algebra.  By fixing any other element $b \in O$ with $\langle a,b \rangle = 0$, $\op{Tr}(b) = 0$ and $\op{N}(b) \neq 0$, we obtain a special $(\op{N}(a),\op{N}(b))$-pair in the sense of \cite[Definition 1.7.4]{SpringerVeldkamp}.  By \cite[Corollary 1.7.5]{SpringerVeldkamp}, $\op{G}_2$ acts transitively on the set of such pairs.  In particular, $\op{G}_2$ acts transitively on the set of $a \in O$ such that $\op{Tr}(a) = 0$ and $\op{N}(a) = 1$, which, unwinding the formulas for norm and trace, forms a quadric isomorphic to $\op{Q}_6$ (note: the resulting quadric is isomorphic to $\op{Q}_6$ assuming $2$ is invertible, which holds by the assumption on the characteristic).  Observe that the elements $1_O$ and $a$ span a $2$-dimensional non-singular subspace of $O$.  In particular, $a$ lies in the orthogonal complement to the line through $1_O$, and $\op{Spin}_7$ acts on this orthogonal complement with stabilizer of the line through $a$ isomorphic to $\op{Spin}_6$.\newline

\noindent {\bf Case $p = 2$}. If $k$ has characteristic equal to $2$, then the choice of any element $a \in O$ with $\op{Tr}(a) \neq 0$ determines a split complex subalgebra of $O$ by \cite[Lemma 1.6.1]{SpringerVeldkamp}.  Fix such an element $a$ with $\op{Tr}(a) = 1$ and $\op{N}(a) = 0$.  By fixing any other element $b \in O$ with $\langle a,b \rangle = 0$, $\op{Tr}(b) = 0$, $\op{N}(b) = 1$, we obtain a special $(0,1)$-pair in the sense of \cite[Definition 1.7.4]{SpringerVeldkamp}.  Again, by \cite[Corollary 1.7.5]{SpringerVeldkamp}, $\op{G}_2$ acts transitively on the set of such pairs.  In particular, $\op{G}_2$ acts transitively on the set of $a \in O$ such that $\op{Tr}(a) = 1$ and $\op{N}(a) = 0$, which, unwinding the formulas for the norm and trace, defines the quadric $\op{Q}_6$ (on the nose!).  In particular, we conclude that $\op{G}_2/\op{SL}_3$ is isomorphic to $\op{Q}_6$, even over fields having characteristic $2$.  Note that, since $k$ has characteristic $2$, $\langle 1_O,1_O \rangle = 0$.  On the other hand, $1_O$ and $a$ span a $2$-dimensional subspace of $O$ with a well-defined orthogonal complement that is a free module of rank $6$ equipped with a non-degenerate form.  The space spanned by this $6$-dimensional space together with $a$ is a $7$-dimensional quadratic subspace of $O$ (with respect to the norm form) and $\op{Spin}_7$ acts transitively on the set of such subspaces, with stabilizer of the line through $a$ isomorphic to $\op{Spin}_6$.\newline

\noindent {\bf Point 3.} We want to show that the induced map $\op{Spin}_6/\op{SL}_3 \to \op{Spin}_7/\op{G}_2$ is an isomorphism.  Again, by appeal to Lemma \ref{lem:reductiontogeometricfibers}, it suffices to check this upon passing to geometric fibers, and without loss of generality we may assume that we are working over an algebraically closed field and make arguments at the level of $k$-points.  To this end, recall that $\op{G}_2$ begins life as a subgroup of $\op{Spin}_8$, which acts on $O$, which we think of as an $8$-dimensional $k$-vector space equipped with the structure of a split non-degenerate quadratic space.  In particular, $\op{Spin}_8$ preserves a split quadric hypersurface in $O$: the hypersurface of norm $1$ octonions, which is isomorphic to $\op{Q}_7$.

We claim that both $\op{Spin}_6$ and $\op{Spin}_7 \subset \op{Spin}_8$ act transitively on $\op{Q}_7$.  It suffices to prove this for $\op{Spin}_6$.  In that case, the isomorphism $\op{Spin}_6 \isomt \op{SL}_4$ equips $\op{Q}_7$ with an $\op{SL}_4$-action.  Moreover, the inclusion $\op{SL}_4 \to \op{Spin}_8$ is conjugate under this isomorphism to the hyperbolic embedding $\op{SL}_4 \to \op{Spin}_8$.  By appeal to Proposition \ref{prop:hyperbolicvsstabilization}, we conclude that the inclusion $\op{SL}_3 \to \op{Spin}_6 \to \op{SL}_4$ coincides with the usual stabilization map.  In that case, $\op{SL}_4/\op{SL}_3$ is isomorphic to the quadric defined by setting the hyperbolic form equal to $1$, which is precisely the copy of $\op{Q}_7$ considered above.  Thus, it follows that $\op{Spin}_6$ acts transitively on $\op{Q}_7$ with stabilizer at a point isomorphic to $\op{SL}_3$.  Since $\op{Spin}_6 \subset \op{Spin}_7$ it also follows that the latter acts transitively on $\op{Q}_7$.

Identity $\op{Q}_7$ with the closed subscheme of $O$ consisting of elements of norm $1$.  The element $1_O$ defines a point on $\op{Q}_7$; write $\op{H}$ for the stabilizer scheme of this point.  Clearly $\op{G}_2 \subset \op{H}$ since any algebra automorphism of $O$ preserves the norm, the trace and the unit.  Since $\op{Q}_7$ is affine, it follows that the identity connected component $\op{H}^{\circ}$ of $\op{H}$ is a reductive group by Matsushima's theorem (see, e.g., \cite[Corollary 9.7.7]{Alper}).  Since $\op{Spin}_7$ has dimension $21$, this stabilizer scheme must have dimension $14$.  In other words, $\op{H}^{\circ} = \op{G}_2$.  One may check by direct computation that $\op{H} = \op{H}^{\circ}$ and thus $\op{H} = \op{G}_2$.\newline

\noindent {\bf Point 4}.  Finally, we need to establish Zariski-local triviality of all the relevant torsors.  This is clear for the torsors $\op{G}_2 \to \op{G}_2/\op{SL}_3$ and $\op{Spin}_6 \to \op{Spin}_6/\op{SL}_3$ since $\op{SL}_3$ is a special group.  Likewise, the Zariski-local triviality of the torsor $\op{Spin}_7 \to \op{Spin}_7/\op{Spin}_6$ is immediate upon making the exceptional identification $\op{Spin}_6 \cong \op{SL}_4$.

It remains to establish Zariski-local triviality of the $\op{G}_2$-torsor $\op{Spin}_7 \to \op{Spin}_7/\op{G}_2$.  For this, we appeal to Theorem \ref{thm:g2zarloctriv}, which shows that it suffices to know that the quadratic space underlying the octonion algebra is split over local rings.  Now, if $X$ is a scheme, then specifying a morphism $X \to \op{Spin}_7/\op{G}_2$ is equivalent to specifying a $\op{G}_2$-torsor on $X$ together with a trivialization of the associated $\op{Spin}_7$-torsor via the homomorphism $\op{G}_2 \to \op{Spin}_7$ (pull back the universal $\op{G}_2$-torsor $\op{Spin}_7 \to \op{Spin}_7/\op{G}_2$ along the given morphism).  Therefore, given a morphism $X \to \op{Spin}_7/\op{G}_2$, one also obtains a $\op{G}_2$-torsor on $X$ equipped with a trivialization of the associated $\op{O}_7$ and $\op{O}_8$-torsors coming from the homomorphisms $\op{Spin}_7 \to \op{O}_7 \to \op{O}_8$.  Now, if $X = \Spec R$ is an affine scheme, by Theorem \ref{thm:g2torsorsandoctonionalgebras}, specifying a $\op{G}_2$-torsor on $X$ is equivalent to specifying an octonion algebra on $R$ and the associated $\op{O}_8$-torsor is identified as the torsor of automorphisms of the quadratic space underlying the octonion algebra.  In particular, saying that this $\op{O}_8$-torsor is trivial is the same as saying that the norm form of the octonion algebra is split.  If $X = \Spec R$ is furthermore a local scheme, it follows from Theorem \ref{thm:g2zarloctriv} that an octonion algebra over a local ring with split associated norm form is necessarily itself split.  In particular, this holds for local rings of $\op{Spin}_7/\op{G}_2$ and the result follows.
\end{proof}

\subsubsection*{The homogeneous space $\op{Spin}_8/\op{G}_2$}
As observed above, the inclusion $\op{G}_2 \to \op{SO}_8$ factors through $\op{G}_2 \to \op{SO}_7$ and lifts through a map $\op{G}_2 \to \op{Spin}_7$.  Alternatively, the homomorphism $\op{G}_2 \to \op{SO}_8$ lifts uniquely through a morphism $\op{G}_2 \to \op{Spin}_8$ by \cite[Exercise 6.5.2]{Conrad}.  Either way, one may construct a closed immersion group homomorphism $\op{G}_2 \to \op{Spin}_8$ as the composite
\[
\op{G}_2 \longrightarrow \op{Spin}_7 \longrightarrow \op{Spin}_8,
\]
where the second map is the standard map, corresponding to the standard ``stabilization" embedding $\op{SO}_7 \to \op{SO}_8$ as block matrices of suitable form.  We now identify the quotient $\op{Spin}_8/\op{G}_2$ as a product of quadrics; this result essentially goes back to Jacobson \cite[p. 93]{JacobsonJordanII} and provides an analog of the diffeomorphism $Spin(8)/G_2 \cong S^7 \times S^{7}$ (see, e.g., \cite[Theorem 14.69]{Harvey}).

\begin{prop}
\label{prop:spin8g2}
If $R$ is any base ring, then there is an isomorphism of schemes of the form
\[
\op{Spin}_7/\op{G}_2 \times \op{Spin}_8/\op{Spin}_7 \isomto \op{Spin}_8/\op{G}_2
\]
where the map $\op{Spin}_7/\op{G}_2 \to \op{Spin}_8/\op{G}_2$ is induced by the standard embedding.  In particular, $\op{Spin}_8/\op{G}_2 \cong \op{Q}_7 \times \op{Q}_7$.  The quotient map $\op{Spin}_8 \to \op{Spin}_8/\op{G}_2$ is Zariski locally trivial.
\end{prop}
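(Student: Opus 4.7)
The plan is to explicitly trivialize the right $\op{Spin}_7$-torsor $\op{Spin}_8 \to \op{Spin}_8/\op{Spin}_7$ using the morphism of Remark~\ref{rem:leftmultinspin8}, and then to deduce all assertions by passing to $\op{G}_2$-quotients. First I would repeat the argument of Point~(3) of Theorem~\ref{thm:g2sl3andspin7} with $\op{Spin}_8$ in place of $\op{Spin}_7$: the vector representation $\op{Spin}_8 \to \op{SO}_8$ gives a transitive action on unit norm octonions over any algebraically closed field, with stabilizer of $1_O$ identified with the subgroup $\op{Spin}_7$. By Lemma~\ref{lem:reductiontogeometricfibers}, this produces an isomorphism $\op{Spin}_8/\op{Spin}_7 \isomto \op{Q}_7$ over $\Spec \Z$, hence over any base by pullback.

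Next, by Remark~\ref{rem:leftmultinspin8}, left multiplication by a unit norm octonion defines a morphism of schemes $L\colon\op{Q}_7\to\op{Spin}_8$. Since $L(x)$ acts on $O$ via the vector representation as left multiplication by $x$ and in particular sends $1_O$ to $x$, the composite $\op{Q}_7 \xrightarrow{L} \op{Spin}_8 \to \op{Spin}_8/\op{Spin}_7 \cong \op{Q}_7$ is the identity; i.e., $L$ is a section of the spin cover. Using $L$, the formula $(x,h) \mapsto L(x)\cdot h$ defines an isomorphism of schemes
\[
\Phi\colon \op{Q}_7 \times \op{Spin}_7 \isomto \op{Spin}_8
\]
which is $\op{Spin}_7$-equivariant for the right multiplication action on the second factor.

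To finish, I would pass to $\op{G}_2$-quotients via $\op{G}_2 \hookrightarrow \op{Spin}_7$: the isomorphism $\Phi$ descends to an isomorphism $\op{Q}_7 \times (\op{Spin}_7/\op{G}_2) \isomto \op{Spin}_8/\op{G}_2$, and combining this with the identification $\op{Spin}_7/\op{G}_2 \cong \op{Q}_7$ from Theorem~\ref{thm:g2sl3andspin7}(3) yields $\op{Spin}_8/\op{G}_2 \cong \op{Q}_7 \times \op{Q}_7$. Tracing through $\Phi$ shows that the natural map $\op{Spin}_7/\op{G}_2 \to \op{Spin}_8/\op{G}_2$ is identified with the slice inclusion $\{1_O\}\times \op{Q}_7 \hookrightarrow \op{Q}_7\times \op{Q}_7$, as asserted. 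For the Zariski local triviality statement, $\Phi$ identifies the $\op{G}_2$-torsor $\op{Spin}_8 \to \op{Spin}_8/\op{G}_2$ with $\op{id}_{\op{Q}_7} \times (\op{Spin}_7 \to \op{Spin}_7/\op{G}_2)$, so Zariski local triviality is inherited from the corresponding assertion in Theorem~\ref{thm:g2sl3andspin7}(4).

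The only delicate point is verifying that the morphism $L$ of Remark~\ref{rem:leftmultinspin8} genuinely gives a section of the spin cover and not merely a section twisted by the central $\mu_2 \subset \op{Spin}_8$; this reduces to checking $L(1_O)=1$, but the normalization is built into the spin-group construction of $L$ given in that remark, so it is an input rather than a real obstacle. Everything else amounts to formal manipulation with the explicit trivialization $\Phi$, which is the main calculational device and directly mirrors the classical topological fact that $\mathrm{Spin}(8) \cong \mathrm{Spin}(7) \times S^7$ via left multiplication by unit octonions.
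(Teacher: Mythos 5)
Your proposal is correct and follows essentially the same route as the paper: both arguments hinge on the section $\op{Q}_7 \to \op{Spin}_8$ of Remark~\ref{rem:leftmultinspin8} splitting the $\op{Spin}_7$-torsor $\op{Spin}_8 \to \op{Spin}_8/\op{Spin}_7$, and then deduce the product decomposition of $\op{Spin}_8/\op{G}_2$ and Zariski local triviality from Theorem~\ref{thm:g2sl3andspin7}. The only difference is presentational: you trivialize $\op{Spin}_8 \cong \op{Q}_7 \times \op{Spin}_7$ explicitly and pass to $\op{G}_2$-quotients, whereas the paper phrases the same splitting in terms of the associated fiber bundle $\op{Spin}_8/\op{G}_2 \to \op{Spin}_8/\op{Spin}_7$.
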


\begin{rem}
Alsaody and Gille \cite[Theorem 4.1]{AlsaodyGille} also establish an identification $\op{Spin}_8/\op{G}_2 \cong \op{Q}_7 \times \op{Q}_7$ and Zariski local triviality of the torsor $\op{Spin}_8 \to \op{Spin}_8/\op{G}_2$.
\end{rem}

\begin{proof}
Repeating the arguments at the beginning of the proof of Theorem \ref{thm:g2sl3andspin7}, we may first assume without loss of generality that $R = \Z$ and the result in general follows by base-change.  In that case, the relevant quotients exist by appeal, once again, to \cite[Th\'eor\`eme 4.C]{Anantharaman}.

There are induced maps of homogeneous spaces
\[
\op{Spin}_7/\op{G}_2 \longrightarrow \op{Spin}_8/\op{G}_2 \longrightarrow \op{Spin}_8/\op{Spin}_7.
\]
As the associated fiber bundle of the Zariski-locally trivial $\op{Spin}_7$-torsor $\op{Spin}_8 \to \op{Spin}_8/\op{Spin}_7$, the map
\[
\op{Spin}_8/\op{G}_2 \longrightarrow \op{Spin}_8/\op{Spin}_7
\]
is a Zariski locally trivial fiber bundle with fibers isomorphic to $\op{Spin}_7/\op{G}_2$. Since the map $\op{Spin}_7 \to \op{Spin}_7/\op{G}_2$ is Zariski-locally trivial by point (3) of Theorem \ref{thm:g2sl3andspin7}, the Zariski-local triviality statement follows.

In fact, we claim this Zariski fiber bundle is split.  To see this, we identify $\op{Spin}_8/\op{Spin}_7 \cong \op{Q}_7$ and identify $\op{Q}_7$ with the unit norm elements in a split octonion algebra $O$.  By the discussion of Remark \ref{rem:leftmultinspin8}, left multiplication by a unit norm octonion defines a morphism of schemes $\op{Q}_7 \to \op{Spin}_8$ and one checks that this morphism provides a section of the $\op{Spin}_7$-torsor $\op{Spin}_8 \to \op{Spin}_8/\op{Spin}_7$.  Therefore, the torsor $\op{Spin}_8 \to \op{Spin}_8/\op{Spin}_7$ is trivial, and this yields the splitting of the statement.
\end{proof}

\section{\texorpdfstring{$\A^1$}{A1}-homotopy sheaves of \texorpdfstring{$\op{B}_{\Nis}\op{G}_2$}{BG2}}
\label{sec:a1computations}

In this section, we study the $\aone$-homotopy types of Nisnevich classifying spaces of the group schemes that appeared in Section \ref{sec:octonionprelims}.  In particular, we compute low-degree $\aone$-homotopy sheaves of $\op{B}_{\Nis}\op{G}_2$ (see Proposition \ref{prop:lowdegree} and Theorem \ref{thm:pi3bg2}).  Our computation follows the computation of the low degree homotopy groups of the classifying space of the compact Lie group $G_2$.  Indeed, we use (i) an algebro-geometric analog of the fiber sequence
\[
S^6 \longrightarrow BSU(3) \longrightarrow BG_2,
\]
and (ii) analogs of results about homotopy groups of $BSU(3)$ and $S^6$.  Along the way, we also compute low degree homotopy sheaves of spin groups (Proposition \ref{prop:pi3bspin7} and Corollary \ref{cor:pi3bspinn}).  For the classical computation, we refer the reader to, e.g., \cite{Mimura}, which essentially uses the Cartan--Serre method of killing homotopy groups together with the Serre spectral sequence of the above fibrations. Over fields that may be embedded in $\cplx$, we may make explicit comparison between our results and the classical results.

\subsection{Some \texorpdfstring{$\aone$}{A1}-fiber sequences}
\label{ss:aonefibersequences}
We develop algebro-geometric analogs of the classical fiber sequences mentioned above (see Proposition \ref{prop:sl3g2fibration}).  In Proposition \ref{prop:square} we establish some compatibility results among our fiber sequences which will be important in the homotopy sheaf computations that follow.

\subsubsection*{The Zorn algebra construction as a map of classifying spaces}
Here, we analyze the construction of Remark \ref{rem:zornisafunctor} in terms of classifying spaces.  In Remark \ref{rem:zornisafunctor} we mentioned the functor $\op{Zorn}$ from the groupoid of oriented projective $R$-modules (and isomorphisms) to the groupoid of octonion algebras over $R$ (and isomorphisms).  The image of this functor landed in the subgroupoid of generically split octonion algebras, since oriented projective $R$-modules are always generically free.  One knows that the groupoid of oriented projective $R$-modules is naturally equivalent to the groupoid of $\op{SL}_3$-torsors over $R$ by the functor sending an oriented projective $R$-module to the functor of isomorphisms with the trivial oriented projective $R$-module.  On the other hand, we saw in Theorem \ref{thm:g2torsorsandoctonionalgebras} that the groupoid of octonion algebras is naturally equivalent to the groupoid of $\op{G}_2$-torsors over $R$.

Write $\op{Tors}_{\tau}G(R)$ for the groupoid of $G$-torsors over $\Spec R$ which are locally trivial in the topology $\tau$ and $\op{B}\op{Tors}_{\tau}G$ for the simplicial presheaf obtained by taking the nerve of this groupoid (see \cite[\S 2.2]{AffineRepresentabilityII} for some discussion about how to make this discussion precise).  There is a morphism of simplicial presheaves
\[
\op{B}_{\Nis}\op{G} \longrightarrow \op{B} \op{Tors}_{\Nis}\op{G};
\]
this morphism is functorial in the group $\op{G}$ and has the following properties: (1) it is a weak equivalence of simplicial presheaves, (2) it induces, after taking connected components, a bijection $\pi_0(\op{B}_{\Nis}\op{G})(-) \cong \op{H}^1_{\Nis}(-,\op{G})$ (see \cite[Lemma 2.2.2]{AffineRepresentabilityII}).

Since $\op{Zorn}$ is a functor, there is an induced morphism
\[
\op{Zorn}: \op{B} \op{Tors}_{\Nis}\op{SL}_3 \longrightarrow \op{B} \op{Tors}_{\Nis}\op{G}_2.
\]
On the other hand, the homomorphism of group schemes $\op{SL}_3 \to \op{G}_2$ described in the proof of Theorem \ref{thm:g2sl3andspin7} yields a morphism $\op{B}_{\Nis}\op{SL}_3 \to \op{B}_{\Nis}\op{G}_2$ (we avoid giving this a name momentarily).  Tracing through the constructions, the following diagram commutes:
\[
\xymatrix{
\op{B}_{\Nis}\op{SL}_3 \ar[r]\ar[d] & \op{B}_{\Nis}\op{G}_2 \ar[d] \\
\op{B} \op{Tors}_{\Nis}\op{SL}_3 \ar[r]^-{\op{Zorn}}& \op{B} \op{Tors}_{\Nis}\op{G}_2
}
\]
where the vertical morphisms are those discussed in the previous paragraph, and the top horizontal map is that induced by the morphism of group schemes.  For this reason, we will abuse notation and write
\[
\op{Zorn}: \op{B}_{\Nis}\op{SL}_3 \longrightarrow \op{B}_{\Nis}\op{G}_2
\]
for the morphism of classifying spaces induced by the group homomorphism $\op{SL}_3 \to \op{G}_2$.

\subsubsection*{Fiber sequences involving $\op{G}_2/\op{SL}_3$}
Now, we show how the homogeneous spaces described in Section \ref{ss:homogeneousspaces} fit into $\aone$-fiber sequences.  In essence, these results build on the theory developed in \cite{WendtTorsors}, though the following results are direct consequences of results in \cite{AffineRepresentabilityII}.

\begin{prop}
\label{prop:sl3g2fibration}
Assume $k$ is an infinite field.  There are $\aone$-fiber sequences of the form:
\[
\xymatrix @R=.1pc{
\op{Q}_6 \ar[r] & \op{B}\op{SL}_3 \ar[r]^-{\op{Zorn}}&{\op{B}_{\Nis}} \op{G}_2, \\
\op{Q}_7 \ar[r] & \op{B}_{\Nis}\op{G}_2 \ar[r] & \op{B}_{\Nis}\op{Spin}_7, \text{ and} \\
\op{Q}_7 \times \op{Q}_7 \ar[r] & \op{B}_{\Nis}\op{G}_2 \ar[r]& \op{B}_{\Nis}\op{Spin}_8.  \\
}
\]
\end{prop}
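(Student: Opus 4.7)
The plan is to obtain each of the three $\aone$-fiber sequences as an instance of the general principle that a Nisnevich-locally trivial principal $\op{H}$-bundle $\op{G}\to\op{G}/\op{H}$ gives rise to an $\aone$-fiber sequence
\[
\op{G}/\op{H} \longrightarrow \op{B}_{\Nis}\op{H} \longrightarrow \op{B}_{\Nis}\op{G}
\]
in the pointed $\aone$-homotopy category $\hop{k}$. The key input is the affine representability machine of \cite{AffineRepresentabilityII}: over an infinite field, the presheaves $\op{B}_{\Nis}\op{G}$ for $\op{G}\in\{\op{SL}_3,\op{G}_2,\op{Spin}_7,\op{Spin}_8\}$ classify Nisnevich-locally trivial torsors on smooth affines in $\hop{k}$, and in particular they are $\aone$-local after a suitable singular replacement. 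Granted this, for any closed immersion homomorphism $\op{H}\hookrightarrow\op{G}$ whose associated quotient torsor is Nisnevich locally trivial, one can pass through the associated bundle construction as in \cite{WendtTorsors} to produce the displayed $\aone$-fiber sequence; this reduces the three statements to identifications already carried out in Section \ref{ss:homogeneousspaces}.

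With this machine in hand, each sequence becomes a matter of bookkeeping. For the first, I apply the principle to the long-root embedding $\op{SL}_3\hookrightarrow\op{G}_2$ from Theorem \ref{thm:longrootembedding}: Theorem \ref{thm:g2sl3andspin7}(4) gives Nisnevich (in fact Zariski) local triviality of $\op{G}_2\to\op{G}_2/\op{SL}_3$, and Theorem \ref{thm:g2sl3andspin7}(2) identifies the fiber with $\op{Q}_6$. Since $\op{SL}_3$ is a special group, the canonical morphism $\op{B}\op{SL}_3\to\op{B}_{\Nis}\op{SL}_3$ is an $\aone$-weak equivalence, which rewrites the middle term as $\op{B}\op{SL}_3$; the resulting map is $\op{Zorn}$ by the diagram recorded just before the proposition. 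For the second sequence, I apply the principle to $\op{G}_2\hookrightarrow\op{Spin}_7$, using Theorem \ref{thm:g2sl3andspin7}(3)(4) to provide Nisnevich-local triviality of the resulting $\op{G}_2$-torsor and the identification $\op{Spin}_7/\op{G}_2\cong\op{Q}_7$. For the third, I apply the principle to the composite embedding $\op{G}_2\hookrightarrow\op{Spin}_7\hookrightarrow\op{Spin}_8$, invoking Proposition \ref{prop:spin8g2} for both the Zariski local triviality of $\op{Spin}_8\to\op{Spin}_8/\op{G}_2$ and the identification of the fiber with $\op{Q}_7\times\op{Q}_7$.

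The main obstacle is essentially bureaucratic rather than mathematical: one must justify the transition from a Nisnevich-locally trivial torsor to an $\aone$-fiber sequence in $\hop{k}$. Concretely, this requires invoking the affine representability of \cite{AffineRepresentabilityII} for all four groups involved, which in turn relies on $\aone$-invariance of Nisnevich-locally trivial torsors on smooth affines (the source of the hypothesis that $k$ be infinite). Given that all the groups here are split reductive and that the needed local-triviality statements were assembled in Section \ref{ss:homogeneousspaces}, no further substantive input is required.
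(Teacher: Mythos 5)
Your proposal is correct and follows essentially the same route as the paper: identify the quotients and their (Zariski/Nisnevich) local triviality via Theorem \ref{thm:g2sl3andspin7} and Proposition \ref{prop:spin8g2}, then convert each locally trivial torsor $\op{G}\to\op{G}/\op{H}$ into the $\aone$-fiber sequence $\op{G}/\op{H}\to\op{B}_{\Nis}\op{H}\to\op{B}_{\Nis}\op{G}$ using the machinery of \cite{AffineRepresentabilityII} (the paper cites its Theorem 2.1.5, with the isotropy/$\aone$-invariance hypotheses supplied by Theorems 3.3.1 and 3.3.6 since all groups are split), which is exactly the ``bureaucratic'' step you describe.
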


\begin{proof}
The three cases are established similarly.  Consider the quotients $\op{G}_2/\op{SL}_3 \cong \op{Q}_6$, $\op{Spin}_7/\op{G}_2 \cong \op{Q}_7$ and $\op{Spin}_8/\op{G}_2 \cong \op{Q}_7 \times \op{Q}_7$ (the relevant isomorphism are found in points 1 and 2 of Theorem \ref{thm:g2sl3andspin7} as well as Proposition \ref{prop:spin8g2}).  In the first two cases, the torsors $\op{G}_2 \to \op{SL}_3$ and $\op{Spin}_7 \to \op{Spin}_7/\op{G}_2$ are Zariski-locally trivial by point (3) of Theorem \ref{thm:g2sl3andspin7}.  Likewise, the torsor $\op{Spin}_8 \to \op{Spin}_8/\op{G}_2$ is Zariski-locally trivial by appeal to Proposition \ref{prop:spin8g2}.  As a consequence, in each case, the Zariski sheaf quotient coincides with the scheme quotient \cite[Lemma 2.3.1]{AffineRepresentabilityII}.

The fact that
\[
\xymatrix{
\op{Q}_6 \ar[r] & {\op{B}}\op{SL}_3 \ar[r]^-{\op{Zorn}}&{\op{B}_{\Nis}} \op{G}_2.
}
\]
is a simplicial fiber sequence is immediate from the definitions (the other two statements are established in a completely analogous fashion).  The result then follows immediately from \cite[Theorem 2.1.5]{AffineRepresentabilityII} where the additional hypotheses follow immediately from \cite[Theorems 3.3.1 and 3.3.6]{AffineRepresentabilityII} in view of the fact that $\op{G}_2$, $\op{Spin}_7$ and $\op{Spin}_8$ are split groups, and thus isotropic by definition.
\end{proof}

\begin{rem}
\label{rem:spinorbundles}
By construction, the morphism $\op{B}_{\Nis}\op{G}_2 \to \op{B}_{\Nis}\op{Spin}_7$ can be interpreted in terms of octonion algebras as that sending an octonion algebra $O$ to the spinor bundle associated to the quadratic space obtained by restricting the norm form to trace zero matrices; this is usually referred to as the associated spinor bundle of the octonion algebra.  A similar description holds for the morphism $\op{B}_{\Nis}\op{G}_2 \to \op{B}_{\Nis}\op{Spin}_8$.
\end{rem}

Repeating the proof of Proposition \ref{prop:sl3g2fibration} with evident changes, one establishes the following result.

\begin{prop}
\label{prop:spinstabilizationsequences}
Assume $k$ is an infinite field having characteristic unequal to $2$.  For any integer $n \geq 3$, there are $\aone$-fiber sequences of the form
\[
\op{Q}_n \longrightarrow \op{B}_{\Nis}\op{Spin}_n \longrightarrow \op{B}_{\Nis}\op{Spin}_{n+1}.
\]
\end{prop}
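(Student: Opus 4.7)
The plan is to follow the template of the proof of Proposition \ref{prop:sl3g2fibration}. First I would identify the scheme-theoretic quotient $\op{Spin}_{n+1}/\op{Spin}_n$ with the affine quadric $\op{Q}_n$. The standard action of $\op{SO}_{n+1}$ on the quadric $\op{Q}_n$ of unit-norm vectors in the split quadratic space of rank $n+1$ lifts via the spin cover to an action of $\op{Spin}_{n+1}$ on $\op{Q}_n$. Working over $\Spec \Z$ (the general case following by base change), the quotient exists as a smooth affine scheme by \cite[Th\'eor\`eme 4.C]{Anantharaman}, and Lemma \ref{lem:reductiontogeometricfibers} reduces the identification to verification on geometric fibers. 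Over an algebraically closed field, transitivity of $\op{SO}_{n+1}$ on $\op{Q}_n$ is Witt's extension theorem, the $\op{SO}_{n+1}$-stabilizer of the standard basepoint $e$ is $\op{SO}_n$, and the unique lift \cite[Exercise 6.5.2(iii)]{ConradReductive} of the stabilization $\op{SO}_n \hookrightarrow \op{SO}_{n+1}$ through simply-connected covers is $\op{Spin}_n \hookrightarrow \op{Spin}_{n+1}$ (exactly the argument used in the proof of Proposition \ref{prop:spin7g2}). The $\op{Spin}_{n+1}$-stabilizer of $e$ is therefore $\op{Spin}_n$, which yields the desired identification.

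Second, I would verify Nisnevich-local triviality of the $\op{Spin}_n$-torsor $\op{Spin}_{n+1} \to \op{Q}_n$. Since the total space and base are smooth, this reduces to checking that the map $\op{Spin}_{n+1}(\kappa) \to \op{Q}_n(\kappa)$ is surjective for every field $\kappa$. Given $x \in \op{Q}_n(\kappa)$, Witt's theorem produces $h \in \op{SO}_{n+1}(\kappa)$ with $h \cdot e = x$; the obstruction to lifting $h$ through the spin cover is the spinor norm $\op{sn}(h) \in \kappa^{\times}/\kappa^{\times 2}$. Since $h$ is determined only up to right multiplication by the stabilizer $\op{SO}_n(\kappa)$, and the restriction of the spinor norm to $\op{SO}_n(\kappa)$ is surjective for $n \geq 2$ (the split form of rank $n$ being universal, every class in $\kappa^{\times}/\kappa^{\times 2}$ is realized as $q(v)$ for some anisotropic $v$, hence by a reflection), one may adjust $h$ so that $\op{sn}(h)$ vanishes, producing a lift to $\op{Spin}_{n+1}(\kappa)$. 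Alternatively, one can invoke \cite{Nisnevich}: the torsor becomes trivial generically by what we have just shown, and $\op{Spin}_n$ is a simply connected isotropic semi-simple group, so Nisnevich-local triviality follows.

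The argument then concludes exactly as in Proposition \ref{prop:sl3g2fibration}: Nisnevich-local triviality of the torsor identifies the Nisnevich sheaf quotient with the scheme quotient $\op{Q}_n$, so that $\op{Q}_n \to \op{B}_{\Nis}\op{Spin}_n \to \op{B}_{\Nis}\op{Spin}_{n+1}$ is a Nisnevich-local fiber sequence of simplicial presheaves, and \cite[Theorem 2.1.5]{AffineRepresentabilityII} promotes it to the stated $\aone$-fiber sequence. The auxiliary hypotheses concerning $\aone$-invariance of $\op{Spin}_m$-torsors for $m = n, n+1$ are supplied by \cite[Theorems 3.3.1 and 3.3.6]{AffineRepresentabilityII}, since split spin groups are isotropic. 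The main technical hurdle is the surjectivity statement in the second paragraph, where the characteristic $\neq 2$ hypothesis enters via the usual definition of the spinor norm; everything else is formal once the quotient identification is in place.
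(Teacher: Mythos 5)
Your proposal is correct and follows the same skeleton as the paper's own (very terse) argument, which just says to repeat the proof of Proposition \ref{prop:sl3g2fibration}; the difference lies in how the two key inputs are obtained. The paper deduces $\op{Spin}_{n+1}/\op{Spin}_n \isomt \op{SO}_{n+1}/\op{SO}_n$ from Lemma \ref{lem:reductiontogeometricfibers} and then cites \cite[Lemma 3.1.7]{AffineRepresentabilityII} both for the identification of this quotient with $\op{Q}_n$ and for the local triviality needed to run the argument of Proposition \ref{prop:sl3g2fibration}, whereas you re-derive these facts by hand: Witt extension plus identification of stabilizers gives the quotient (checked on geometric fibers via Lemma \ref{lem:reductiontogeometricfibers}), and surjectivity of $\op{Spin}_{n+1}(\kappa) \to \op{Q}_n(\kappa)$ via the spinor norm (using that the spinor norm is onto $\kappa^{\times}/\kappa^{\times 2}$ already on $\op{SO}_n(\kappa)$ for the split form) gives Nisnevich-local triviality after Hensel lifting along the smooth torsor map. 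This buys self-containedness at the cost of length, and both routes use the characteristic $\neq 2$ hypothesis in the same place. One step worth making explicit: the claim that the $\op{Spin}_{n+1}$-stabilizer of $e$ is $\op{Spin}_n$ needs not only existence and uniqueness of the lift $\op{Spin}_n \to \op{Spin}_{n+1}$ via \cite[Exercise 6.5.2(iii)]{ConradReductive}, but also that the image of this lift is the \emph{full} preimage of $\op{SO}_n$ (equivalently, that the restriction of the double cover to $\op{SO}_n$ is non-split, so the preimage is connected); this is standard, e.g., via Clifford algebras, and is at the same level of detail as the paper's proof of Proposition \ref{prop:spin7g2}, but it does not follow formally from the uniqueness statement you cite.
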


\begin{proof}
Appealing to Lemma~\ref{lem:reductiontogeometricfibers} and \cite[Lemma 3.1.7]{AffineRepresentabilityII} we deduce the existence of isomorphisms $\op{Spin}_{n+1}/\op{Spin}_n \isomt \op{SO}_{n+1}/\op{SO}_n$.  Then, we may repeat the proof of Proposition \ref{prop:sl3g2fibration} and appeal to \cite[Lemma 3.1.7]{AffineRepresentabilityII} instead of Theorem \ref{thm:g2sl3andspin7}.  We leave the details to the reader.
\end{proof}

\subsubsection*{Compatibility of $\aone$-fiber sequences}
We now analyze compatibilities amongst the fiber sequences appearing in Proposition \ref{prop:sl3g2fibration}.  The next result is a straightforward combination of the results of Theorem \ref{thm:g2sl3andspin7} and Proposition \ref{prop:sl3g2fibration}.

\begin{prop}
\label{prop:square}
If $k$ is an infinite field, then the following diagram is $\aone$-homotopy Cartesian:
\[
\xymatrix{
\op{B}_{\Nis}\op{SL}_3 \ar[d]^-{\op{Zorn}}\ar[r] & \op{B}_{\Nis}\op{Spin}_6 \ar[d] \\
 \op{B}_{\Nis}\op{G}_2 \ar[r] & \op{B}_{\Nis}\op{Spin}_7
}
\]
\end{prop}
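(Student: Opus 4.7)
The plan is to verify that the square is $\aone$-homotopy Cartesian by checking that the induced map on $\aone$-homotopy fibers of one pair of parallel arrows is an $\aone$-weak equivalence; the vertical arrows seem to offer the most direct route.

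First, I would identify both vertical $\aone$-homotopy fibers. For the left vertical map, Proposition \ref{prop:sl3g2fibration} already exhibits the $\aone$-fiber sequence
\[
\op{G}_2/\op{SL}_3 \cong \op{Q}_6 \longrightarrow \op{B}_{\Nis}\op{SL}_3 \stackrel{\op{Zorn}}{\longrightarrow} \op{B}_{\Nis}\op{G}_2.
\]
For the right vertical map, I need the analogous statement for $\op{Spin}_6 \hookrightarrow \op{Spin}_7$. The argument of Proposition \ref{prop:sl3g2fibration} goes through verbatim once one knows that the torsor $\op{Spin}_7 \to \op{Spin}_7/\op{Spin}_6$ is Zariski-locally trivial: this is the case by the exceptional isomorphism $\op{Spin}_6 \cong \op{SL}_4$, which makes $\op{Spin}_6$ a special group (as invoked in the proof of point (4) of Theorem \ref{thm:g2sl3andspin7}); together with the fact that $\op{Spin}_7$ is split (hence isotropic), the hypotheses of \cite[Theorem 2.1.5]{AffineRepresentabilityII} are satisfied, yielding
\[
\op{Spin}_7/\op{Spin}_6 \cong \op{Q}_6 \longrightarrow \op{B}_{\Nis}\op{Spin}_6 \longrightarrow \op{B}_{\Nis}\op{Spin}_7.
\]
Note that no characteristic assumption on $k$ is needed here.

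Next I would identify the map induced between these two $\aone$-homotopy fibers. By naturality of the $\aone$-fiber sequence construction, the map on fibers is the canonical morphism of $k$-schemes
\[
\op{G}_2/\op{SL}_3 \longrightarrow \op{Spin}_7/\op{Spin}_6
\]
induced by the Cartesian square of closed immersion group homomorphisms of Proposition \ref{prop:spin7g2}. But this is precisely the map shown to be an isomorphism of schemes in point (2) of Theorem \ref{thm:g2sl3andspin7}. In particular, it is an $\aone$-weak equivalence, and the square is $\aone$-homotopy Cartesian.

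The main subtlety, which is where I would be most careful, is confirming that the map of $\aone$-homotopy fibers coming from the commutative square of classifying spaces coincides with the scheme-theoretic map $\op{G}_2/\op{SL}_3 \to \op{Spin}_7/\op{Spin}_6$ appearing in Theorem \ref{thm:g2sl3andspin7}(2), rather than merely being some abstract map between $\aone$-equivalent spaces; this is a naturality check for the fiber sequence construction of \cite[Theorem 2.1.5]{AffineRepresentabilityII} applied to the Cartesian diagram of group schemes. Alternatively, one could run the same argument on horizontal fibers, where the two fibers are identified with $\op{Spin}_6/\op{SL}_3$ and $\op{Spin}_7/\op{G}_2$, and appeal to Theorem \ref{thm:g2sl3andspin7}(3) for the isomorphism between them.
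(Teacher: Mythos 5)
Your proposal is correct and follows essentially the same route as the paper's proof: identify both vertical $\aone$-homotopy fibers with $\op{Q}_6$ (the right-hand one by running the argument of Proposition \ref{prop:sl3g2fibration} using Zariski-local triviality of $\op{Spin}_7 \to \op{Spin}_7/\op{Spin}_6$ from Theorem \ref{thm:g2sl3andspin7}), and observe that the induced map of fibers is precisely the scheme isomorphism $\op{G}_2/\op{SL}_3 \to \op{Spin}_7/\op{Spin}_6$ of Theorem \ref{thm:g2sl3andspin7}(2). The only point the paper makes explicit that you leave implicit is the $\aone$-connectedness of all four classifying spaces, which is what justifies testing homotopy Cartesian-ness on the homotopy fibers over the base-point alone.
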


\begin{proof}
Note that for $G$ any sheaf of groups, the space $\op{B}_{\Nis}G$ is $\aone$-connected; this is an immediate consequence of the unstable $\aone$-connectivity theorem \cite[\S 2 Corollary 3.22]{MV} since ${\op{B}}G$ is a reduced simplicial presheaf and the map ${\op{B}}G \to \op{B}_{\Nis}G$ is a Nisnevich-local weak equivalence.  As a consequence, the diagram in the statement is a diagram of $\aone$-connected spaces.

To check if the diagram in the statement is $\aone$-homotopy Cartesian, it suffices to show that the induced map of vertical (or horizontal) $\aone$-homotopy fibers is an $\aone$-weak equivalence (this can be checked on stalks, and thus follows from the corresponding classical fact for simplicial sets).  Since all spaces in the diagram are $\aone$-connected, it suffices to check that the induced map on $\aone$-homotopy fibers over the base-point is an $\aone$-weak equivalence.  Then, Proposition \ref{prop:sl3g2fibration} shows that the map of vertical homotopy fibers is a map $\op{Q}_6 \to \op{Q}_6$.  Unwinding the definitions, this map is precisely the map $\op{G}_2/\op{SL}_3 \to \op{Spin}_7/\op{Spin}_6$ shown to be an isomorphism in Theorem \ref{thm:g2sl3andspin7}.
\end{proof}

\subsection{Characteristic maps and strictly \texorpdfstring{$\aone$}{A1}-invariant sheaves}
In the above, we considered the morphisms $\op{G}_2 \to \op{G}_2/\op{SL}_3$ and $\op{Spin}_7 \to \op{Spin}_7/\op{Spin}_6$.  The former yields a rank $3$ vector bundle on $\op{G}_2/\op{SL}_3$, while under the exceptional isomorphism $\op{Spin}_6 \cong \op{SL}_4$, the latter yields a rank $4$ vector bundle on $\op{Spin}_7/\op{Spin}_6$.  In this section, we develop some techniques that will be useful in providing different ways of interpreting the classifying maps associated with these vector bundles.  In particular, we establish Propositions \ref{prop:mapsoutofspheresI} and \ref{prop:mapsoutofquadricsascohomology} which allow us to interpret maps out of motivic spheres in various useful ways.

\subsubsection*{The structure of some strictly $\aone$-invariant sheaves}
Before proceeding to the computations of $\aone$-homotopy sheaves of classifying spaces, we recall some facts about the strictly $\aone$-invariant sheaves that will arise in the statements and proofs.  We write $\K^{\op{Q}}_i$ for the Nisnevich sheaf associated with the Quillen K-theory presheaf on $\Sm_k$.  Using $\aone$-representability of algebraic K-theory, one observes that $\bpi_i^{\aone}({\op B}_{\Nis}\op{SL}_n)$ is isomorphic to $\K^{\op{Q}}_i$ for $2 \leq i \leq n-1$ (see, e.g., \cite[Theorem 3.2]{AsokFaselSpheres} for more details).

One also defines unramified Milnor K-theory sheaves $\K^{\op{M}}_i$.  For an irreducible $U \in \Sm_k$, the group of sections of $\K^{\op{M}}_i(U)$ coincides with unramified Milnor K-theory of $U$, i.e., the subgroup of $\K^{\op{M}}_i(k(U))$ consisting of elements unramified along every codimension $1$ point of $U$.  These sheaves arise naturally as the Nisnevich sheafification of the motivic cohomology presheaves $U \mapsto \op{H}^{n,n}(U,\Z)$ via the Nesterenko--Suslin--Totaro theorem.

There are canonical isomorphisms $\K^{\op{M}}_i \to \K^{\op{Q}}_i$ for $i \leq 2$.  Indeed, both $\K^{\op{M}}_0$ and $\K^{\op{Q}}_0$ coincide with the constant sheaf $\Z$, while $\K^{\op{M}}_1$ and $\K^{\op{Q}}_1$ coincide with the sheaf of units.  The identification $\K^{\op{M}}_2 \isomt \K^{\op{Q}}_2$ follows from Matsumoto's theorem identifying the Quillen $\op{K}_2$ of a field.  Using these canonical isomorphisms, one defines a ``natural" homomorphism
\[
\mu_i: \K^{\op{M}}_i \longrightarrow \K^{\op{Q}}_i.
\]
More precisely, there are multiplication maps $\K^{\op{M}}_i \times \K^{\op{M}}_j \to \K^{\op{M}}_{i+j}$ and $\K^{\op{Q}}_i \times \K^{\op{Q}}_j \to \K^{\op{Q}}_{i+j}$ and since the Steinberg relation is a degree $2$ relation, the product maps ${\K^{\op{M}}_1}^{\times n} \isomt {\K^{\op{Q}}_1}^{\times n} \to \K^{\op{Q}}_n$ factor through the morphism $\mu_n$; see \cite[Lemma 3.7]{AsokFaselSpheres} for more details.

\begin{lem}
\label{lem:k3indsheaf}
If we set $\K^{\op{ind}}_3 := \op{coker}(\mu_3)$, then $(\K^{\op{ind}}_3)_{-i} = 0$ for $i \geq 1$.
\end{lem}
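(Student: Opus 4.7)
The plan is to derive the statement by applying the contraction functor $(-)_{-1}$ to the defining short exact sequence of $\K^{\op{ind}}_3$ and reducing to the known isomorphism $\mu_2 : \K^{\op{M}}_2 \isomt \K^{\op{Q}}_2$ coming from Matsumoto's theorem. Since both $\K^{\op{M}}_3$ and $\K^{\op{Q}}_3$ are strictly $\aone$-invariant, the cokernel $\K^{\op{ind}}_3$ is as well, so the preceding discussion of contraction as an exact endofunctor on the category of strictly $\aone$-invariant sheaves applies.

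First I would write down the tautological exact sequence
\[
\K^{\op{M}}_3 \xrightarrow{\;\mu_3\;} \K^{\op{Q}}_3 \longrightarrow \K^{\op{ind}}_3 \longrightarrow 0
\]
and apply $(-)_{-1}$, which preserves exactness by the cited results of Morel. Next I would identify the contractions of the Milnor and Quillen K-theory sheaves with Milnor and Quillen K-theory one degree lower: $(\K^{\op{M}}_n)_{-1} \cong \K^{\op{M}}_{n-1}$ (a direct consequence of the splitting for Milnor K-theory of $\gm{} \times U$, which is standard and used elsewhere in the paper) and $(\K^{\op{Q}}_n)_{-1} \cong \K^{\op{Q}}_{n-1}$ (the fundamental theorem of algebraic K-theory, which gives $K_n(\gm{} \times U) \cong K_n(U) \oplus K_{n-1}(U)$ for regular $U$ and sheafifies appropriately).

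The key compatibility is that the contracted map $(\mu_3)_{-1}$ is identified with $\mu_2$, up to the standard identifications above. This is the step I expect to require the most care: one must unravel the construction of $\mu_n$ as an iterated product starting from $\K^{\op{M}}_1 \isomt \K^{\op{Q}}_1$, and verify that contraction, which is essentially given by evaluating one symbol slot at the coordinate of $\gm{}$, commutes with the product structures used to build $\mu_3$. Granted this compatibility, the contracted sequence becomes
\[
\K^{\op{M}}_2 \xrightarrow{\;\mu_2\;} \K^{\op{Q}}_2 \longrightarrow (\K^{\op{ind}}_3)_{-1} \longrightarrow 0,
\]
and since $\mu_2$ is an isomorphism by Matsumoto's theorem (as recalled just before the statement), we conclude $(\K^{\op{ind}}_3)_{-1} = 0$.

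Finally, the case $i \geq 2$ follows by iterating: contractions of the zero sheaf are zero, so $(\K^{\op{ind}}_3)_{-i} = ((\K^{\op{ind}}_3)_{-1})_{-(i-1)} = 0$ for all $i \geq 1$. The only nontrivial input beyond Morel's formal properties of contraction is the identification of $(\mu_3)_{-1}$ with $\mu_2$; everything else is a direct consequence of the defining short exact sequence and Matsumoto's theorem.
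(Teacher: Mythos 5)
Your argument is correct and is essentially the paper's own proof: contract the exact sequence defining $\K^{\op{ind}}_3$, use that $(-)_{-1}$ is exact, identify $(\mu_3)_{-1}$ with $\mu_2$, and invoke Matsumoto's theorem. The compatibility you flag as the delicate step is exactly what the paper quotes from \cite[Lemma 3.7]{AsokFaselSpheres} (in the form $(\mu_n)_{-j} = \mu_{n-j}$), so no new idea is needed beyond that citation.
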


\begin{proof}
This follows by combining the following facts: (i) $(\mu_n)_{-j} = \mu_{n-j}$ by \cite[Lemma 3.7]{AsokFaselSpheres} and (ii) the map $\mu_2$ is an isomorphism by Matsumoto's theorem.
\end{proof}

One also defines a morphism $\psi_n: \K^{\op{Q}}_n \to \K^{\op{M}}_n$ for $n \geq 2$.  To see this consider the quotient map $\op{SL}_n \to \op{SL}_n/\op{SL}_{n-1}$.  \cite[Theorem 6.40]{MField} identifies the first non-vanishing $\aone$-homotopy sheaf of a motivic sphere in terms of Milnor--Witt K-theory.  Using this identification, the quotient map induces a morphism
\[
\bpi_{n-1}^{\aone}(\op{SL}_n) \longrightarrow \bpi_{n-1}^{\aone}(\op{Q}_{2n-1}) = \K^{\MW}_n.
\]
Assuming we work over a field $k$ that has characteristic unequal to $2$, \cite[Lemma 3.5]{AsokFaselSpheres} allows us to conclude that this morphism factors through the stabilization morphism $\bpi_{n-1}^{\aone}(\op{SL}_n) \to \bpi_{n-1}^{\aone}(\op{SL}_{n+1}) = \K^{\op{Q}}_n$ to yield a morphism $\K^{\op{Q}}_n \to \K^{\MW}_n$.  Furthermore, there is a canonical quotient morphism $\K^{\MW}_n \to \K^{\op{M}}_n$ and the composite morphism $\K^{\op{Q}}_n \to \K^{\MW}_n \to \K^{\op{M}}_n$ is $\psi_n$.  We define $\mathbf{S}_n := \op{coker}(\psi_n)$.  The following result summarizes the properties of the sheaf $\mathbf{S}_n$ that we use in the sequel.

\begin{lem}[{\cite[Corollary 3.11]{AsokFaselSpheres}}]
\label{lem:structureofs4}
If $k$ is a field having characteristic unequal to $2$, then the epimorphism $\K^{\op{M}}_4 \to \mathbf{S}_4$ factors through an epimorphism $\K^{\op{M}}_4/6 \to \mathbf{S}_4$; this epimorphism becomes an isomorphism after $2$-fold contraction.
\end{lem}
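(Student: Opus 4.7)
The plan is to prove the two claims by reducing each to the single computation that the composite $\psi_4\circ\mu_4:\K^{\op{M}}_4\to\K^{\op{Q}}_4\to\K^{\op{M}}_4$ equals multiplication by $(n-1)! = 6$. Both parts then follow by formal manipulations with the contraction functor.

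First I would establish the formula $\psi_4\circ\mu_4=6\cdot\mathrm{id}$ on $\K^{\op{M}}_4$. By naturality, the composite is an endomorphism of the sheaf $\K^{\op{M}}_4$ of abelian groups coming from sheaf operations and Nesterenko--Suslin--Totaro, so it is necessarily multiplication by some integer $N$. To pin down $N$, unwind the definition of $\psi_4$: it factors through the connecting map $\K^{\op{Q}}_4 = \bpi_3^{\aone}(\op{SL}_\infty) \leftarrow \bpi_3^{\aone}(\op{SL}_4) \to \bpi_3^{\aone}(\op{Q}_7) = \K^{\MW}_4$ and the canonical epimorphism $\K^{\MW}_4\to\K^{\op{M}}_4$. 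Composing $\mu_4$ with this identifies the integer $N$ as the multiplicity appearing when the symbol $\{a_1,\dots,a_4\}$ is pushed to the $(2n-1)$-sphere; the classical formula relating the Milnor-to-Quillen and Quillen-to-Milnor maps yields $N = (n-1)! = 6$ in our case (this is a sheafification of the standard identity recovered via Adams operations on algebraic K-theory).

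Given the formula, the first statement is immediate: $6\cdot\K^{\op{M}}_4 \subset \mathrm{im}(\psi_4)$, so the quotient map $\K^{\op{M}}_4\twoheadrightarrow\mathbf{S}_4 = \K^{\op{M}}_4/\mathrm{im}(\psi_4)$ vanishes on $6\cdot\K^{\op{M}}_4$ and therefore factors through an epimorphism $\K^{\op{M}}_4/6\twoheadrightarrow\mathbf{S}_4$.

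For the second statement, apply the contraction functor $(-)_{-2}$ to the exact sequence
\[
\K^{\op{Q}}_4 \xrightarrow{\psi_4} \K^{\op{M}}_4 \longrightarrow \mathbf{S}_4 \longrightarrow 0.
\]
Since $(-)_{-1}$ is exact on strictly $\aone$-invariant sheaves (and iterates accordingly) and since $(\K^{\op{Q}}_n)_{-j}=\K^{\op{Q}}_{n-j}$ and $(\K^{\op{M}}_n)_{-j}=\K^{\op{M}}_{n-j}$, one obtains
\[
\K^{\op{Q}}_2 \xrightarrow{(\psi_4)_{-2}} \K^{\op{M}}_2 \longrightarrow (\mathbf{S}_4)_{-2} \longrightarrow 0.
\]
Using $(\mu_n)_{-j} = \mu_{n-j}$ (Lemma 3.7 of \cite{AsokFaselSpheres}), contracting the identity $\psi_4\circ\mu_4 = 6\cdot\mathrm{id}$ twice yields $(\psi_4)_{-2}\circ\mu_2 = 6\cdot\mathrm{id}$. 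Since Matsumoto's theorem gives $\mu_2:\K^{\op{M}}_2\isomto\K^{\op{Q}}_2$, this forces $(\psi_4)_{-2}$ to be multiplication by $6$ under this identification, and hence $(\mathbf{S}_4)_{-2}\cong\K^{\op{M}}_2/6$. Since $(\K^{\op{M}}_4/6)_{-2}=\K^{\op{M}}_2/6$ as well, a routine diagram chase shows that the natural map $\K^{\op{M}}_4/6\to\mathbf{S}_4$ from the first part contracts twice to the identity of $\K^{\op{M}}_2/6$, proving the desired isomorphism.

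The main obstacle is the integrality computation $\psi_4\circ\mu_4 = 6\cdot\mathrm{id}$; once in hand, everything else is formal contraction bookkeeping. The verification of this integer is delicate because it intertwines three distinct flavors of K-theory (Quillen, Milnor, and Milnor--Witt) and must be reconciled with the motivic sphere identifications $\op{SL}_n/\op{SL}_{n-1}\simeq\op{Q}_{2n-1}$ up to sign conventions in $\K^{\MW}$.
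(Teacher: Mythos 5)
Your argument is correct, but be aware that the paper does not prove this lemma at all: it is imported verbatim from \cite[Corollary 3.11]{AsokFaselSpheres}, and the proof given there is essentially the one you reconstruct --- the sheafified form of Suslin's theorem that $\psi_4\circ\mu_4$ is multiplication by $\pm(4-1)!=\pm 6$, followed by exactness of contractions, $(\mu_4)_{-2}=\mu_2$, and Matsumoto's theorem. Two points in your write-up carry the real weight and are only gestured at. First, the claim that any endomorphism of $\K^{\op{M}}_4$ is multiplication by an integer is not justified by ``sheaf operations and Nesterenko--Suslin--Totaro''; the correct justification is that $\hom(\K^{\op{M}}_4,\K^{\op{M}}_4)\cong\hom(\K^{\MW}_4,\K^{\op{M}}_4)\cong(\K^{\op{M}}_4)_{-4}(k)=\Z$, using the fact (quoted in Step 3 of the proof of Proposition \ref{prop:pi3bspin7}) that $\K^{\MW}_n\to\K^{\op{M}}_n$ induces a bijection on morphisms into any strictly $\aone$-invariant $\mathbf{M}$ with $\mathbf{M}_{-n-1}=0$. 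Second, identifying that integer as $6$ is the genuinely delicate step: one must compare the geometrically defined $\psi_4$ (via $\op{SL}_4\to\op{SL}_4/\op{SL}_3\cong\op{Q}_7$ and Morel's computation) with Suslin's field-level map before invoking the classical $(n-1)!$ formula; this comparison is precisely the content of the cited source (resting on the Suslin-matrix description of the quotient map) rather than something recovered by an appeal to Adams operations. Granting those two inputs, your deduction of the factorization through $\K^{\op{M}}_4/6$ and of the isomorphism $(\K^{\op{M}}_4/6)_{-2}\isomt(\mathbf{S}_4)_{-2}$ is exactly right, and the sign ambiguity in $\pm 6$ is harmless since only the image $6\K^{\op{M}}_4\subseteq\op{im}(\psi_4)$ is used.
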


\subsubsection*{Maps out of motivic spheres}
In analyzing the fiber sequences of Proposition \ref{prop:sl3g2fibration}, we will frequently need precise information about the maps on homotopy sheaves induced by a map from a motivic sphere to some other target.  We now give various different ways of capturing the required information: a map out of a motivic sphere can be realized (1) as a section of a suitable $\aone$-homotopy sheaf of the target, (2) in terms of sheaf cohomology on the source motivic sphere with coefficients in a suitable homotopy sheaf of the target, or, depending on the structure of the target, (3) in terms of geometric constructions (e.g., vector bundles) on the motivic sphere itself.

\begin{prop}
\label{prop:mapsoutofspheresI}
Assume $k$ is a field, and suppose $(\mathscr{X},x)$ is a pointed $\aone$-simply-connected space.  For any integer $m \geq 1$, there are identifications of the form
\[
\begin{split}
[\op{Q}_{2m},\mathscr{X}]_{\aone} \isomto &\bpi_{m,m}^{\aone}(\mathscr{X})(k) \isomto \bpi_m^{\aone}(\mathscr{X})_{-m}(k) \cong \hom(\K^{\MW}_m,\bpi_m^{\aone}(\mathscr{X})), \text{ and } \\
[\op{Q}_{2m+1},\mathscr{X}]_{\aone} \isomto &\bpi_{m,m+1}^{\aone}(\mathscr{X})(k) \isomto \bpi_m^{\aone}(\mathscr{X})_{-m-1}(k) \cong \hom(\K^{\MW}_{m+1},\bpi_m^{\aone}(\mathscr{X}))
\end{split}
\]
Moreover, the composite isomorphism is induced by applying $\bpi_m^{\aone}(-)$.
\end{prop}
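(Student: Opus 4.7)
The plan is to translate the geometric quadrics into motivic spheres and then apply Morel's framework for $\aone$-homotopy sheaves. First, invoke the $\aone$-weak equivalences $\op{Q}_{2m} \simeq_{\aone} \op{S}^m \wedge \gm{\sma m}$ and $\op{Q}_{2m+1} \simeq_{\aone} \op{S}^m \wedge \gm{\sma m+1}$ recalled in the preliminaries. Since $(\mathscr{X}, x)$ is $\aone$-simply-connected, the forgetful map from pointed to unpointed $\aone$-homotopy classes of maps from any pointed source is a bijection, and we may work throughout in $\hop{k}$.

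The first isomorphism in each row is essentially a matter of definitions. The sheaf $\bpi_{m,m}^{\aone}(\mathscr{X})$ is associated to the presheaf $U \mapsto [\op{S}^m \wedge \gm{\sma m} \wedge U_+, \mathscr{X}]_{\aone, \bullet}$; evaluating at $U = \Spec k$, the factor $U_+$ becomes trivial and Nisnevich sheafification is inert on the point, so one recovers $[\op{Q}_{2m}, \mathscr{X}]_{\aone}$. The $\op{Q}_{2m+1}$ case is identical. The second isomorphism is Morel's identification \cite[Theorem 6.13]{MField} of bigraded $\aone$-homotopy sheaves with iterated contractions, which is explicitly recorded in the preliminaries.

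The crux is the third isomorphism. Writing $\mathbf{A} := \bpi_m^{\aone}(\mathscr{X})$, which is strictly $\aone$-invariant since $\mathscr{X}$ is $\aone$-simply-connected, the claim reduces to the general statement that $\mathbf{A}_{-j}(k) \cong \hom(\K^{\MW}_j, \mathbf{A})$ for $j \in \{m, m+1\}$. I would establish this by computing $[\op{S}^m \wedge \gm{\sma j}, \mathscr{X}]_{\aone, \bullet}$ via the $\aone$-Postnikov tower of $\mathscr{X}$. Each Postnikov layer is an Eilenberg--Mac Lane space $\op{K}(\bpi_n^{\aone}(\mathscr{X}), n)$ for $n \geq 2$, contributing a term $\op{H}^n(\op{S}^m \wedge \gm{\sma j}, \bpi_n^{\aone}(\mathscr{X}))$ to the obstruction-theoretic assembly. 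By cellular cohomology of the motivic sphere, cohomology with strictly $\aone$-invariant coefficients $\mathbf{B}$ is concentrated in degree $n = m$, where it equals $\hom(\K^{\MW}_j, \mathbf{B})$; here one uses \cite[Theorem 6.40]{MField} to identify the $\aone$-Hurewicz sheaf of $\op{S}^m \wedge \gm{\sma j}$ as $\K^{\MW}_j$. Consequently only the $n = m$ layer contributes, yielding the desired identification.

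The ``moreover'' assertion is an unwinding: tracing through the chain shows that a homotopy class $f \colon \op{Q}_{2m} \to \mathscr{X}$ (resp.\ $\op{Q}_{2m+1} \to \mathscr{X}$) is sent to $\bpi_m^{\aone}(f)$ precomposed with the canonical identification $\K^{\MW}_m \cong \bpi_m^{\aone}(\op{Q}_{2m})$ (resp.\ $\K^{\MW}_{m+1} \cong \bpi_m^{\aone}(\op{Q}_{2m+1})$) of Theorem 6.40. I expect the main obstacle to lie in the third step, specifically in verifying the vanishing $\op{H}^n(\op{S}^m \wedge \gm{\sma j}, \mathbf{B}) = 0$ for $n \neq m$ and cleanly packaging the degree-$m$ cohomology as $\hom(\K^{\MW}_j, \mathbf{B})$. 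Both assertions are standard outputs of Morel's framework, but they must be invoked in concert, with careful attention to functoriality so as to yield the compatibility stated in the ``moreover'' clause.
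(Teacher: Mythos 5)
Your proposal is correct and follows essentially the same route as the paper: the paper likewise reduces the first two identifications to the equivalences $\op{Q}_{2m}\simeq \op{S}^m\wedge\gm{\sma m}$, $\op{Q}_{2m+1}\simeq\op{S}^m\wedge\gm{\sma m+1}$ and Morel's $\bpi_{i,j}^{\aone}=\bpi_i^{\aone}(-)_{-j}$, quotes the functor isomorphism $\hom(\K^{\MW}_j,-)\cong(-)_{-j}$ for the third, and obtains the ``moreover'' clause from the obstruction-theoretic Postnikov-tower argument (with the vanishing of $\op{H}^i(\op{Q},\mathbf{M})$ outside degrees $0$ and $m$) that it records separately as Proposition~\ref{prop:mapsoutofquadricsascohomology}. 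You simply inline that companion argument rather than factoring it out.
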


\begin{proof}
On the left hand side it makes no difference whether we use free or pointed $\aone$-homotopy classes because the forgetful map from free to pointed homotopy classes is a bijection under the hypothesis that $\mathscr{X}$ is $\aone$-$1$-connected \cite[Lemma 2.1]{AsokFaselSpheres}.
Since $\op{Q}_{2m} \cong \Sigma^m \gm{\sma m}$ or $\op{Q}_{2m+1} \cong \Sigma^m \gm{\sma m+1}$ the first isomorphism is then immediate.

Next, for any integer $m \geq 1$, there is an isomorphism of functors $\hom(\K^{\MW}_m,-) \cong (-)_{-m}$ where the hom is taken in the category of strictly $\aone$-invariant sheaves (though this hom coincides with that taken in the category of Nisnevich sheaves of abelian groups).  With this observation, we may rewrite the above identifications as:
\[
\begin{split}
[\op{Q}_{2m},\mathscr{X}]_{\aone} &\isomto \hom(\K^{\MW}_m,\bpi_m^{\aone}(\mathscr{X})), \text{ and } \\
[\op{Q}_{2m+1},\mathscr{X}]_{\aone} &\isomto \hom(\K^{\MW}_{m+1},\bpi_m^{\aone}(\mathscr{X})).
\end{split}
\]
The final statement follows by analyzing the proof of Proposition \ref{prop:mapsoutofquadricsascohomology}, which will be of independent interest.
\end{proof}

\begin{prop}
\label{prop:mapsoutofquadricsascohomology}
Assume $k$ is a field.  For any pointed $\aone$-simply-connected space $(\mathscr{X},x)$, there are canonical isomorphisms of the form
\[
\begin{split}
[\op{Q}_{2n},\mathscr{X}]_{\aone} &\isomto \op{H}^n(\op{Q}_{2n},\bpi_n^{\aone}(\mathscr{X})), \text{ and }\\
[\op{Q}_{2n+1},\mathscr{X}]_{\aone} &\isomto \op{H}^n(\op{Q}_{2n+1},\bpi_n^{\aone}(\mathscr{X})).
\end{split}
\]
\end{prop}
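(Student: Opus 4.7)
The plan is to derive the isomorphisms as formal consequences of Proposition~\ref{prop:mapsoutofspheresI}, by applying that proposition in two different ways: once to $\mathscr{X}$ itself, and once to an Eilenberg--Mac Lane space of the type $\op{K}(\bpi_n^{\aone}(\mathscr{X}),n)$.

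First, I would record the following special case of Proposition~\ref{prop:mapsoutofspheresI}. For $\mathbf{A}$ any strictly $\aone$-invariant sheaf, the Eilenberg--Mac Lane space $\op{K}(\mathbf{A},n)$ is $\aone$-simply-connected (for $n \geq 2$; the case $n = 1$ with $\mathbf{A}$ abelian is equally fine), and has $\bpi_n^{\aone}(\op{K}(\mathbf{A},n)) = \mathbf{A}$ as its unique nontrivial $\aone$-homotopy sheaf. Since maps into $\op{K}(\mathbf{A},n)$ compute cohomology, applying Proposition~\ref{prop:mapsoutofspheresI} yields
\[
\op{H}^n(\op{Q}_{2n},\mathbf{A}) = [\op{Q}_{2n},\op{K}(\mathbf{A},n)]_{\aone} \isomto \mathbf{A}_{-n}(k),
\]
and analogously $\op{H}^n(\op{Q}_{2n+1},\mathbf{A}) \isomto \mathbf{A}_{-n-1}(k)$.

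Second, taking $\mathbf{A} := \bpi_n^{\aone}(\mathscr{X})$ in the formula just recorded and combining it with Proposition~\ref{prop:mapsoutofspheresI} applied directly to $\mathscr{X}$ produces a composite isomorphism
\[
[\op{Q}_{2n},\mathscr{X}]_{\aone} \isomto \bpi_n^{\aone}(\mathscr{X})_{-n}(k) \isomto \op{H}^n(\op{Q}_{2n},\bpi_n^{\aone}(\mathscr{X})),
\]
and the same strategy, with the dimension shift $-n \rightsquigarrow -n-1$, handles $\op{Q}_{2n+1}$. The third and final step is to verify that the composite isomorphism agrees with the natural map that sends a homotopy class $[f]\colon\op{Q}_{2n}\to\mathscr{X}$ to the cohomology class obtained by push-forward along $\bpi_n^{\aone}(f)\colon\bpi_n^{\aone}(\op{Q}_{2n})\to\bpi_n^{\aone}(\mathscr{X})$. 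This follows from the last sentence of Proposition~\ref{prop:mapsoutofspheresI}, asserting that its composite identification is itself induced by applying $\bpi_n^{\aone}(-)$.

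The main obstacle I anticipate is not the bijection of pointed sets, which is essentially automatic once Proposition~\ref{prop:mapsoutofspheresI} is in hand, but rather pinning down that the map is truly the canonical (natural in $\mathscr{X}$) map. This matters in subsequent sections, where these isomorphisms are fed into obstruction-theoretic long exact sequences coming from fiber sequences such as those in Proposition~\ref{prop:sl3g2fibration}; the naturality is what permits one to read off, e.g., the Postnikov $k$-invariants as cohomology classes on $\op{Q}_{2n}$ or $\op{Q}_{2n+1}$.
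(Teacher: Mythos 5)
Your derivation of the two bijections is correct, but it takes a genuinely different route from the paper. The paper proves the statement directly by obstruction theory: since $\op{Q}_{2n}$ is $\aone$-$(n-1)$-connected, a pointed map $\op{Q}_{2n}\to\mathscr{X}$ factors uniquely through the $(n-1)$-connective cover $\mathscr{X}\langle n\rangle$, one composes with the projection to its first Postnikov stage $\op{K}(\bpi_n^{\aone}(\mathscr{X}),n)$ to define the comparison map, and then all obstruction and lifting groups vanish because $\op{H}^i(\op{Q}_{2n},\mathbf{M})$ (resp.\ $\op{H}^i(\op{Q}_{2n+1},\mathbf{M})$) is concentrated in degrees $0$ and $n$ for any strictly $\aone$-invariant $\mathbf{M}$. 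You instead bootstrap Proposition~\ref{prop:mapsoutofspheresI}, applying it once to $\mathscr{X}$ and once to the Eilenberg--Mac Lane space $\op{K}(\bpi_n^{\aone}(\mathscr{X}),n)$; this is an efficient way to get the bijection and even recovers the middle-degree cohomology of the quadrics without quoting the vanishing lemma. What the paper's route buys is an explicit construction of the map being claimed canonical, which is precisely the content that matters downstream.

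The caveat is your third step. To identify the composite bijection with the canonical map you invoke the final sentence of Proposition~\ref{prop:mapsoutofspheresI} (that its composite isomorphism is induced by applying $\bpi_m^{\aone}(-)$). In the paper that sentence is \emph{not} proved independently: the proof of Proposition~\ref{prop:mapsoutofspheresI} explicitly defers it, stating that it ``follows by analyzing the proof of Proposition~\ref{prop:mapsoutofquadricsascohomology}.'' So if your argument were spliced in place of the paper's proof, the compatibility you rely on would have no justification, and the two statements would support each other circularly. The bijection part of your argument (which only uses the displayed isomorphisms of Proposition~\ref{prop:mapsoutofspheresI}, proved independently) is unaffected, but to make the canonicity claim self-contained you must either prove directly that the identifications $[\op{Q}_{2m},\mathscr{X}]_{\aone}\cong\bpi_{m,m}^{\aone}(\mathscr{X})(k)\cong\hom(\K^{\MW}_m,\bpi_m^{\aone}(\mathscr{X}))$ are computed by applying $\bpi_m^{\aone}(-)$, or run the paper's connective-cover/Postnikov argument, which builds the canonical map first, proves bijectivity by the cohomological vanishing, and is then what licenses the final sentence of Proposition~\ref{prop:mapsoutofspheresI} (used later, e.g., in Proposition~\ref{prop:pi3bspin7}).
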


\begin{proof}
As above, the assumption that $\mathscr{X}$ is $\aone$-simply-connected allows us to not have to distinguish between free and pointed $\aone$-homotopy classes of maps.  The argument proceeds by obstruction theory.  The arguments for $\op{Q}_{2n}$ and $\op{Q}_{2n+1}$ are essentially identical, so we give the former and indicate the changes necessary for the latter.  Because $\op{Q}_{2n}$ is $\aone$-$(n-1)$-connected, any pointed morphism $\op{Q}_{2n} \to \op{\mathscr{X}}$ factors uniquely through the $\aone$-$(n-1)$-connective cover of $\mathscr{X}$, for which we shall write $\mathscr{X}\langle n \rangle$.  The first stage of the $\aone$-Postnikov tower of $\mathscr{X}\langle n \rangle$ is $K(\bpi_n^{\aone}(\mathscr{X}),n)$.  Therefore, composing these two maps we get the map in the statement.  To see that this map is an isomorphism, we use obstruction theory.  We want to show that any map from $\op{Q}_{2n}$ to the $n$-th stage of the $\aone$-Postnikov tower of $\mathscr{X}$ extends uniquely to $\mathscr{X}\langle n \rangle$.

To this end, suppose ${\mathbf M}$ is any strictly $\aone$-invariant sheaf.  Since $\op{Q}_{2n} \cong \Sigma \op{Q}_{2n-1}$, by the suspension isomorphism and appeal to \cite[Lemma 4.5]{AsokFaselSpheres} we know that
\[
\op{H}^i(\op{Q}_{2n},\mathbf{M}) = \begin{cases} {\mathbf M}(k) & \text{ if } i = 0 \\ {\mathbf M}_{-n}(k) & \text{ if } i = n\\ 0 & \text{ otherwise.}
\end{cases}
\]
Now, the groups housing obstructions to lifting and parameterizing lifts are both of the form $\op{H}^i(\op{Q}_{2n},\mathbf{M})$ for $i \geq n+1$ and therefore vanish by the above result.

The only change that one need make for the case $\op{Q}_{2n+1}$ is that one may appeal directly to \cite[Lemma 4.5]{AsokDoranFasel}.
\end{proof}

\subsection{Realization and characteristic maps}
In this section, we analyze the behavior of the maps $\op{Q}_6 \to \op{B}_{\Nis}\op{SL}_3$ and $\op{Q}_6 \to \op{B}_{\Nis}\op{Spin}_6$ from Proposition \ref{prop:sl3g2fibration} under complex and \'etale realization.  The key results are Lemmas \ref{lem:q6chernclass} and \ref{lem:nontrivialq7toq6}.

\subsubsection*{Complex realization}
For a discussion of real and complex realization we refer the reader to \cite[\S 3.3]{MV} or to \cite{DuggerIsaksen}.  In the first case, suppose $k$ is a field equipped with an embedding $\iota: k \to \cplx$.  Given a smooth $k$-scheme $X$, we may consider the assignment $X \mapsto X(\cplx)$ sending $X$ to $X(\cplx)$ with its usual structure of a complex analytic space.  This assignment extends to a functor $\hop{k} \to \mathscr{H}_{\bullet}$ and this functor preserves homotopy colimits.

\begin{lem}
\label{lem:complexrealizationofspheres}
Assume $k \subset \cplx$.  For any $n \geq 0$, the complex realization of $\op{Q}_n$ is homotopy equivalent  to $S^n$.
\end{lem}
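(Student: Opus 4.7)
The plan is to reduce the statement to a formal computation via the $\aone$-weak equivalences $\op{Q}_{2m-1} \simeq \op{S}^{m-1} \wedge \gm{\sma m}$ and $\op{Q}_{2m} \simeq \op{S}^{m} \wedge \gm{\sma m}$ recorded in the preliminaries. Complex realization $(-)(\cplx)\colon \hop{k} \to \mathscr{H}_{\bullet}$ is a colimit-preserving (hence smash-product-preserving) functor that sends $\aone = \Spec k[t]$ to the contractible space $\cplx$, so it factors through $\aone$-localization and carries $\aone$-weak equivalences to weak homotopy equivalences. It therefore suffices to realize the motivic sphere model appearing on the right hand side of each equivalence.

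Next I would observe that complex realization sends the constant simplicial sphere $\op{S}^{j}$ to the topological sphere $S^{j}$, and sends $\gm{} = \Spec k[t,t^{-1}]$ (pointed at $t=1$) to $\cplx^{\times}$, which deformation retracts onto the unit circle $S^{1}$. Taking smash products, which realization commutes with, and using associativity then yields
\[
\op{Q}_{2m-1}(\cplx) \simeq S^{m-1} \wedge (S^{1})^{\wedge m} \cong S^{2m-1}, \qquad \op{Q}_{2m}(\cplx) \simeq S^{m} \wedge (S^{1})^{\wedge m} \cong S^{2m},
\]
which is the desired statement.

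The only step requiring real input, rather than pure formalism, is the claim that complex realization on the pointed unstable category commutes with the smash-product decompositions of motivic spheres; this is a standard property of the realization functor constructed in \cite{MV} (see also \cite{DuggerIsaksen}), as cited in the excerpt, but it is the one ingredient beyond categorical manipulation. The low-dimensional cases provide a sanity check: $\op{Q}_{0}$ cut out by $z(1+z)=0$ is two reduced points, visibly $S^{0}$, and $\op{Q}_{1}\cong\gm{}$ realizes to $\cplx^{\times}\simeq S^{1}$.
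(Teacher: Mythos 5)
Your proposal is correct and is essentially the argument the paper intends: the lemma is stated without proof, but the evident argument (spelled out for the \'etale analogue in Lemma~\ref{lem:etalerealizationofquadrics}) is exactly yours, namely combine the $\aone$-weak equivalences $\op{Q}_{2m-1} \simeq \op{S}^{m-1}\wedge\gm{\sma m}$, $\op{Q}_{2m}\simeq \op{S}^{m}\wedge\gm{\sma m}$ with the facts that complex realization inverts $\aone$-weak equivalences, preserves smash products, and sends $\gm{}$ to $\cplx^{\times}\simeq S^1$. The only nitpick is that ``colimit-preserving hence smash-product-preserving'' also uses that realization preserves finite products of (pointed) spaces, which you correctly acknowledge as the one non-formal input from \cite{MV} and \cite{DuggerIsaksen}.
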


\begin{lem}
\label{lem:complexrealizationofclassifyingspaces}
Assume $k \subset \cplx$.  If $\op{G}$ is a split reductive $k$-group scheme, and $G$ is a maximal compact subgroup of $\op{G}(\cplx)$, then the complex realization of $\op{B}_{\Nis}\op{G}$ is homotopy equivalent to $BG$.
\end{lem}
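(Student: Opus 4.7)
The plan is to decompose the identification into two steps: first, identify the complex realization of $\op{B}_{\Nis}\op{G}$ with the topological classifying space $B\op{G}(\cplx)$ of the complex points; second, identify $B\op{G}(\cplx)$ with $BG$ via the inclusion of a maximal compact subgroup.

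For the first step, the canonical comparison map $\op{B}\op{G} \to \op{B}_{\Nis}\op{G}$ is by construction a Nisnevich-local weak equivalence, hence an $\aone$-local weak equivalence, so it suffices to compute the realization of the pre-localization bar construction $\op{B}\op{G}$. Since complex realization $\hop{k} \to \mathscr{H}_{\bullet}$ preserves homotopy colimits and sends a smooth $k$-scheme $X$ to $X(\cplx)$ with its usual analytic topology, the realization of the simplicial bar construction $[n] \mapsto \op{G}^n$ is the geometric realization of the simplicial topological space $[n] \mapsto \op{G}(\cplx)^n$, which is precisely a Milnor model for $B\op{G}(\cplx)$.

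For the second step, we appeal to the classical structure theory of complex reductive Lie groups. The Iwasawa decomposition realizes $\op{G}(\cplx)$ as diffeomorphic to a product $G \times V$ where $V$ is contractible; equivalently, the homogeneous space $\op{G}(\cplx)/G$ is contractible. In particular, the inclusion $G \hookrightarrow \op{G}(\cplx)$ is a homotopy equivalence of topological groups, and this induces a weak homotopy equivalence $BG \isomto B\op{G}(\cplx)$ on classifying spaces. Composing the two identifications produces the desired homotopy equivalence.

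The main subtlety is in Step~1, namely verifying that complex realization is well-defined on the $\aone$-homotopy category (so that it may be applied to the Nisnevich-local replacement $\op{B}_{\Nis}\op{G}$) and that it commutes appropriately with the bar construction at the level of simplicial presheaves. For both matters, we rely on the foundational treatment in \cite[\S 3.3]{MV} and \cite{DuggerIsaksen}, which guarantee that complex realization sends Nisnevich hypercovers of smooth $k$-schemes to (topological) hypercovers and sends the projection $X \times \aone \to X$ to the homotopy equivalence $X(\cplx) \times \cplx \to X(\cplx)$.
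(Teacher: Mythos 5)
Your argument is correct and is exactly the standard reasoning the paper relies on: the paper in fact states this lemma without proof, citing only \cite[\S 3.3]{MV} and \cite{DuggerIsaksen} for the existence and homotopy-colimit-preservation of complex realization. Your two-step decomposition (realize the bar construction to get $B\op{G}(\cplx)$ after noting $\op{B}\op{G}\to\op{B}_{\Nis}\op{G}$ is a Nisnevich-local, hence $\aone$-, weak equivalence; then use the Cartan/Iwasawa-type contractibility of $\op{G}(\cplx)/G$) supplies precisely the omitted details.
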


\subsubsection*{\'Etale realization}
For a discussion of \'etale realization we refer the reader to \cite{Isaksen}.  If $\ell$ is prime, then we may define the $\ell$-complete \'etale realization functor on the category of schemes.  Given a scheme $X$, its \'etale realization is an $\ell$-complete pro-simplicial set that we will denote by $\op{Et}(X)$.  This has the property that a morphism of schemes $f\colon X\to Y$ induces a weak equivalence $\op{Et}(X)\to\op{Et}(Y)$ if and only if $f^*\colon \op{H}^*_{\et}(Y;\Z/\ell)\to \op{H}^*_{\et}(X;\Z/\ell)$ is an isomorphism.  By \cite{Isaksen}, if $k$ is a field and and $\ell$ is different from the characteristic of $k$, the assignment $X \mapsto \op{Et}(X)$ on smooth $k$-schemes extends to a functor on $\hop{k}$ that we will also denote by $\op{Et}$.  If $k$ is furthermore separably closed, it follows from the K\"unneth isomorphism in \'etale cohomology with $\Z/\ell\Z$-coefficients that the functor $\op{Et}$ preserves finite products (and smash products of pointed spaces).

We now analyze the behavior of $\op{Q}_n$ and Nisnevich classifying spaces of split reductive groups under \'etale realization.  To this end, assume furthermore that $k$ is a separably closed field, and let $R$ be the ring of Witt vectors of $k$.  Choose an algebraically closed field $K$ and embeddings $R \hookrightarrow K$ and $\cplx \hookrightarrow K$.  These morphisms yield maps of the form:
\[
\op{G}_k \longrightarrow \op{G}_R \longleftarrow \op{G}_K \longrightarrow \op{G}_{\cplx}.
\]
We may use these maps to compare the \'etale realization of $\op{G}$ and the corresponding complex points.

\begin{lem}
\label{lem:etalerealizationofquadrics}
Assume $k$ is a separably closed field having characteristic $p$ and fix $\ell \neq p$.  For any integer $n \geq 0$, $\op{Et}(\op{Q}_n) \cong (S^n)^{\wedge}_{\ell}$.
\end{lem}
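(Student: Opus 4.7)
I would reduce the computation to understanding $\op{Et}$ on the smash factors of a motivic sphere decomposition of $\op{Q}_n$ and then reassemble.

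First, I would appeal to the $\aone$-weak equivalences $\op{Q}_{2m-1} \simeq \op{S}^{m-1} \wedge \gm{\sma m}$ and $\op{Q}_{2m} \simeq \op{S}^m \wedge \gm{\sma m}$ recalled in the preliminaries. Since $\op{Et}$ factors through $\hop{k}$ and (by the compatibility noted just before the lemma) preserves smash products of pointed spaces over a separably closed base, it suffices to identify $\op{Et}(\op{S}^1)$ and $\op{Et}(\gm{})$ individually. The simplicial factor is immediate: the presheaf $\op{S}^1$ is constant with value the simplicial circle, so $\op{Et}(\op{S}^1) \simeq (S^1)^\wedge_\ell$ tautologically.

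For $\gm{}$, the $\ell$-primary part of the \'etale fundamental group of $\gm{,k}$ is $\Z_\ell(1)$, visible from the Kummer tower $\gm{} \xrightarrow{t \mapsto t^{\ell^n}} \gm{}$ of finite \'etale Galois covers with Galois groups $\mu_{\ell^n}(k)$; after choosing a compatible system of $\ell$-power roots of unity, $\Z_\ell(1) \cong \Z_\ell$. Because $\gm{}$ is an affine curve over a separably closed field of characteristic different from $\ell$, its mod $\ell$ \'etale cohomological dimension is at most $1$, so its $\ell$-completed \'etale realization is a $K(\Z_\ell, 1)$, which is equivalent to $(S^1)^\wedge_\ell$. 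Combining these identifications, and iterating smash products,
\[
\op{Et}(\op{Q}_{2m-1}) \simeq \bigl(S^{m-1} \wedge (S^1)^{\wedge m}\bigr)^\wedge_\ell \simeq (S^{2m-1})^\wedge_\ell,
\]
and analogously $\op{Et}(\op{Q}_{2m}) \simeq (S^{2m})^\wedge_\ell$, giving the claim.

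The main obstacle is the identification $\op{Et}(\gm{})^\wedge_\ell \simeq (S^1)^\wedge_\ell$ uniformly in positive characteristic; an alternative to the direct Kummer-theoretic argument sketched above is to deduce it from the complex case via the chain of base change morphisms $\gm{,k} \to \gm{,R} \leftarrow \gm{,K} \to \gm{,\cplx}$ set up in the paragraph preceding the lemma, using Artin comparison on the $\cplx$-side and smooth base change in \'etale cohomology to propagate the equivalence across the diagram.
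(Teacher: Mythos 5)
Your proof is correct and follows the paper's overall strategy — decompose $\op{Q}_n$ as $\op{S}^i \wedge \gm{\sma j}$, use that $\op{Et}$ preserves smash products over a separably closed base, and reduce to identifying $\op{Et}(\gm{})$ — but your treatment of the key step $\op{Et}(\gm{})\simeq (S^1)^{\wedge}_{\ell}$ differs from the paper's. The paper transports the identification from topology: it runs $\gm{}$ through the comparison chain $\op{G}_k \to \op{G}_R \leftarrow \op{G}_K \to \op{G}_{\cplx}$ (Witt vectors plus Artin comparison), so that $\op{Et}(\gm{}) \simeq \gm{}(\cplx)^{\wedge}_{\ell} \simeq (S^1)^{\wedge}_{\ell}$; this is the same mechanism used in the neighboring Lemma \ref{lem:etalerealizationofclassifyingspaces}, so the two proofs share their input and nothing beyond the quoted comparison results is needed. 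You instead argue intrinsically in characteristic $p$: the Kummer tower exhibits the $\ell$-part of $\pi_1^{\et}(\gm{})$ as $\Z_\ell(1)\cong\Z_\ell$, and $\op{cd}_\ell(\gm{})\leq 1$ forces the $\ell$-complete realization to be a $K(\Z_\ell,1)=(S^1)^{\wedge}_{\ell}$. This buys self-containedness (no lifting to the Witt vectors or appeal to the complex points), at the cost of a recognition step you should make explicit: to invoke the criterion that mod-$\ell$ cohomology isomorphisms are equivalences, you want an actual comparison map, e.g.\ the map $\op{Et}(\gm{})\to K(\Z_\ell(1),1)$ classifying the Kummer tower, which your cohomology computation then shows is a mod-$\ell$ equivalence. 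Your closing remark that one could alternatively use the base-change chain is precisely the paper's argument, so the two routes converge; either is acceptable, and the paper's choice mainly keeps the quadric and classifying-space lemmas uniform.
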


\begin{proof}
There are weak equivalences $\op{Q}_n \cong \op{S}^i \wedge \gm{\sma j}$ for integers $i,j \geq 0$, with $i + j = n$.  One checks $\op{Et}(\gm{}) \cong {\gm{}(\cplx)}^{\wedge}_{\ell}\cong(S^1)^{\wedge}_{\ell}$ using the comparison maps described above.  The result then follows because \'etale realization preserves smash products of pointed spaces.
\end{proof}

\begin{lem}
\label{lem:etalerealizationofclassifyingspaces}
Assume $\op{G}$ is a split reductive group over $\Z$.  Suppose $k$ is a separably closed field having characteristic $p$ and fix $\ell \neq p$.  If $G$ is a maximal compact subgroup of $\op{G}(\cplx)$ then there are canonical equivalences
\[
\op{Et}(\op{B}_{\Nis}\op{G}_k) \cong (\op{B}\op{Et}(\op{G}_k))^{\wedge}_{\ell} \cong (BG)^{\wedge}_{\ell}.
\]
\end{lem}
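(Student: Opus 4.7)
The plan is to establish the two equivalences separately, using the formal properties of étale realization listed in the paper together with a base-change argument comparing $k$ with $\cplx$.

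For the first equivalence $\op{Et}(\op{B}_{\Nis}\op{G}_k) \cong (\op{B}\op{Et}(\op{G}_k))^{\wedge}_{\ell}$, I would first observe that the Nisnevich-local replacement map ${\op B}\op{G}_k \to \op{B}_{\Nis}\op{G}_k$ is by construction a Nisnevich-local weak equivalence, hence is also an étale-local weak equivalence. By the defining property of the $\ell$-complete étale realization functor recalled from \cite{Isaksen}, this map induces a weak equivalence $\op{Et}({\op B}\op{G}_k) \simeq \op{Et}(\op{B}_{\Nis}\op{G}_k)$. Next, writing ${\op B}\op{G}_k$ as the homotopy colimit over $\Delta^{\op{op}}$ of the bar construction with $n$-simplices $\op{G}_k^n$ and using that $\op{Et}$ preserves homotopy colimits, together with the fact (noted just before Lemma~\ref{lem:etalerealizationofquadrics}) that $\op{Et}$ preserves finite products over a separably closed base field, we obtain $\op{Et}({\op B}\op{G}_k)\simeq {\op B}\op{Et}(\op{G}_k)$ level-wise, and after realization this yields the first claimed equivalence.

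For the second equivalence $(\op{B}\op{Et}(\op{G}_k))^{\wedge}_{\ell} \cong (BG)^{\wedge}_{\ell}$, I would reduce to showing $\op{Et}(\op{G}_k) \simeq G^{\wedge}_{\ell}$ in the $\ell$-complete pro-category; the result then follows by applying the bar construction and using that $G$ (and hence $\op{G}(\cplx)$) is connected, so $BG$ is simply connected and $\ell$-completion commutes with $B$. To compare $\op{G}_k$ with $\op{G}_{\cplx}$ I would use the zig-zag of maps $\op{G}_k \to \op{G}_R \leftarrow \op{G}_K \to \op{G}_{\cplx}$ described immediately before the lemma statement. Each of these maps induces an isomorphism on étale cohomology with $\Z/\ell$-coefficients: the two maps $\op{G}_k \to \op{G}_R$ and $\op{G}_K \to \op{G}_R$ do so by smooth and proper base change, using that $R$ is the complete discrete valuation ring of Witt vectors of $k$ with residue field $k$ and fraction field embedded in the algebraically closed $K$; the map $\op{G}_K \to \op{G}_{\cplx}$ does so by invariance of étale cohomology under extension of algebraically closed fields. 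Finally, the classical Artin--Mazur comparison identifies $\op{Et}(\op{G}_{\cplx})$ with $\op{G}(\cplx)^{\wedge}_{\ell}$, and the inclusion $G\hookrightarrow \op{G}(\cplx)$ of a maximal compact is a homotopy equivalence.

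The main obstacle is the base-change step, because $\op{G}$ is not proper so the cleanest form of proper base change does not apply directly. One way to handle this is to work level-wise on the simplicial scheme ${\op B}\op{G}$, reducing to base change for $\op{G}^n$, and to exploit the Bruhat decomposition of a split reductive group, which presents $\op{G}$ via a stratification whose strata are affine bundles over proper flag varieties; proper and smooth base change then apply stratum-by-stratum. Alternatively, one can replace ${\op B}\op{G}$ by Totaro's geometric classifying space $\op{B}_{\op{gm}}\op{G}$ built from smooth quasi-projective schemes of the form $(V\setminus U)/\op{G}$, apply base change to these explicit schemes, and compare with ${\op B}\op{G}$ (noting that while $\op{B}_{\op{gm}}\op{G}$ may differ $\aone$-weakly from $\op{B}_{\Nis}\op{G}$ for non-special groups, the étale realizations agree since both compute the same étale cohomology, which is all that is needed for the $\ell$-complete statement).
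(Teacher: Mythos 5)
Your proposal is correct and follows essentially the same route as the paper: the same zig-zag $\op{G}_k \to \op{G}_R \leftarrow \op{G}_K \to \op{G}_{\cplx}$, the same use of preservation of finite products and homotopy colimits applied to the bar construction, and the same reduction to the complex points/maximal-compact comparison. The only real difference is that where you sketch a direct smooth/proper base-change argument (via the Bruhat stratification, or via the geometric classifying space) to get invariance of $\op{H}^*_{\et}(-,\Z/\ell)$ along the zig-zag for the non-proper scheme $\op{G}$, the paper simply cites Friedlander--Parshall for exactly this statement.
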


\begin{proof}
By \cite[Theorem 1]{FriedlanderParshall}, the maps
\[
\op{G}_k \longrightarrow \op{G}_R \longleftarrow \op{G}_K \longrightarrow \op{G}_{\cplx}.
\]
induce isomorphism on \'etale cohomology with $\Z/\ell\Z$ coefficients and thus also after \'etale realization.  In particular, there are canonical weak equivalences
\[
\op{Et}(\op{G}_k) \cong \op{Et}(\op{G}_{\cplx}) \cong \op{G}(\cplx)^{\wedge}_{\ell},
\]
naturally in $G$.  Since the map $G \to \op{G}(\cplx)$ is a weak equivalence, it follows that $G^{\wedge}_{\ell} \cong \op{G}(\cplx)^{\wedge}_{\ell}$ also.  Using the fact that \'etale realization preserves finite products and homotopy colimits we conclude that
\[
\op{Et}(\op{B}_{\Nis}\op{G}_k) \cong (\op{B}\op{Et}(\op{G}_k))^{\wedge}_{\ell}
\]
and the final statement follows immediately from Lemma \ref{lem:complexrealizationofclassifyingspaces}.
\end{proof}

\subsubsection*{An oriented rank $4$ bundles on $\op{Q}_6$}
Proposition \ref{prop:square} gives a homotopy commutative triangle of the form
\[
\xymatrix{
\op{Q}_6 \ar[r]\ar[dr] & \op{B}_{\Nis}\op{SL}_3 \ar[d]\\
 & \op{B}_{\Nis}\op{Spin}_6 \cong \op{B}_{\Nis}\op{SL}_4,
}
\]
where the horizontal map is the classifying map of the $\op{SL}_3$-torsor $\op{G}_2 \to \op{G}_2/\op{SL}_3$ and the diagonal map is the classifying map of the torsor $\op{Spin}_7 \to \op{Spin}_7/\op{Spin}_6$.  Since $\op{Q}_6 \cong \op{S}^3 \wedge \gm{\sma 3}$, the horizontal and diagonal maps yield elements of $\bpi_{3,3}^{\aone}(\op{B}_{\Nis}\op{SL}_3)$ and $\bpi_{3,3}^{\aone}(\op{B}_{\Nis}\op{SL}_4)$.  We now identify the class of the diagonal map.

\begin{lem}
\label{lem:diagonalmapgenerator}
Assume $k$ is a field.  There is an isomorphism $\bpi_{3,3}^{\aone}(\op{B}_{\Nis}\op{SL}_4) \cong \Z$ and the class of $\op{Q}_6 \to \op{B}_{\Nis}\op{SL}_4$ described above is a generator.
\end{lem}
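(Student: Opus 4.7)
The plan is to compute the homotopy sheaf first, then identify the class via topological realization. By the $\aone$-representability of algebraic K-theory recalled at the start of Section~\ref{ss:aonefibersequences}, one has $\bpi_3^{\aone}(\op{B}_{\Nis}\op{SL}_4) \cong \K^{\op{Q}}_3$ (we are in the stable range $2 \le i \le n-1$ with $n=4$, $i=3$). I would then triple-contract: applying $(-)_{-3}$ to the exact sequence $\K^{\op{M}}_3 \stackrel{\mu_3}{\to} \K^{\op{Q}}_3 \to \K^{\op{ind}}_3 \to 0$, and invoking Lemma~\ref{lem:k3indsheaf} (which gives $(\K^{\op{ind}}_3)_{-3}=0$) together with the exactness of contraction, one obtains a surjection
\[
\Z \cong \K^{\op{M}}_0 \cong (\K^{\op{M}}_3)_{-3} \twoheadrightarrow (\K^{\op{Q}}_3)_{-3} \cong \bpi_{3,3}^{\aone}(\op{B}_{\Nis}\op{SL}_4).
\]

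To show this surjection is an isomorphism and simultaneously identify the given class, I would apply a topological realization functor: complex realization when $k$ embeds in $\cplx$, and $\ell$-adic \'etale realization for a prime $\ell$ invertible in $k$ otherwise, using Lemmas~\ref{lem:complexrealizationofspheres}--\ref{lem:etalerealizationofclassifyingspaces} to control the realization of $\op{Q}_6$ and $\op{B}_{\Nis}\op{SL}_4$. Under this functor, the morphism $\op{Q}_6 \to \op{B}_{\Nis}\op{SL}_4$ becomes the topological classifying map $S^6 \to BSU(4)$ of the complex rank $4$ vector bundle associated with the principal $\op{Spin}_6 \cong SU(4)$-bundle $\op{Spin}_7 \to \op{Spin}_7/\op{Spin}_6 \cong S^6$ (this identification is exactly what the fiber sequence of Proposition~\ref{prop:sl3g2fibration} encodes, via the Cartesian square of Proposition~\ref{prop:square} and the half-spin isomorphism $\op{Spin}_6 \cong \op{SL}_4$).

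The remaining classical input is that this bundle represents a generator of $\pi_6(BSU(4)) \cong \pi_5(SU(4)) \cong \Z$. I would deduce this from the long exact homotopy sequence of the fibration $\op{Spin}_6 \to \op{Spin}_7 \to S^6$: since $\pi_5(\op{Spin}_7) = 0$ (e.g., by Bott stability, as $\pi_5(\op{Spin}_n) \cong \pi_5(\op{Spin}) = 0$ for $n \ge 7$), the boundary map $\pi_6(S^6) \to \pi_5(\op{Spin}_6)$ is surjective; as both groups are isomorphic to $\Z$, it is an isomorphism, so the clutching function of our bundle is a generator of $\pi_5(SU(4))$. Combining this with the first paragraph, the composite
\[
\Z \twoheadrightarrow \bpi_{3,3}^{\aone}(\op{B}_{\Nis}\op{SL}_4)(k) \longrightarrow \pi_6(BSU(4)) \cong \Z
\]
carries $1$ to a generator, which forces both arrows to be isomorphisms. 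This simultaneously upgrades the surjection of sheaves to an isomorphism $\bpi_{3,3}^{\aone}(\op{B}_{\Nis}\op{SL}_4) \cong \Z$ and identifies the class of $\op{Q}_6 \to \op{B}_{\Nis}\op{SL}_4$ as a generator. The main obstacle is precisely this last step: ruling out a nontrivial kernel on the algebraic side, which is where the topological computation is indispensable.
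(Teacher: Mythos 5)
Your argument is essentially the paper's own: identify $\bpi_3^{\aone}(\op{B}_{\Nis}\op{SL}_4)$ with $\K^{\op{Q}}_3$, contract to get $\Z$, and then use complex/\'etale realization (via the same realization lemmas) together with the topological fibration $\op{Spin}(6) \to \op{Spin}(7) \to S^6$, i.e.\ $\pi_5(\op{Spin}(7))=0$, to see that the class realizes to a generator of $\pi_6(BSU(4))$ and conclude. The only points where you diverge are harmless or elided: your detour through $\K^{\op{M}}_3 \to \K^{\op{Q}}_3 \to \K^{\op{ind}}_3$ yields only a surjection from $\Z$ where the paper directly cites $(\K^{\op{Q}}_3)_{-3}\cong\Z$ (your realization step repairs this), and you skip the reduction to a separably closed base field (using that the target sheaf is constant and the comparison is compatible with field extensions), which is needed before the \'etale-realization lemmas apply and before ``generator'' makes sense integrally rather than merely up to $\ell$-adic units.
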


\begin{proof}
By, e.g., \cite[Theorem 3.2]{AsokFaselSpheres}, we know that $\bpi_3^{\aone}(\op{B}_{\Nis}\op{SL}_4) = \K^{\op{Q}}_3$ and thus that $\bpi_{3,3}^{\aone}(\op{B}_{\Nis}\op{SL}_4) \cong (\K^{\op{Q}}_3)_{-3} = \Z$ (see, e.g., \cite[Lemma 2.7]{AsokFaselSpheres}).  Now, the diagonal map induces a morphism of sheaves
\[
\bpi_{3,3}^{\aone}(\op{Q}_6) \longrightarrow \bpi_{3,3}^{\aone}(\op{B}_{\Nis}\op{SL}_4).
\]
Since the sheaf on the right is the constant sheaf $\Z$, and the morphism is compatible with field extensions, we may assume without loss of generality that the base field $k$ is separably closed.  Thus, we may assume without loss of generality that $k$ is the algebraic closure of $\Q$ in $\cplx$ or the algebraic closure of a finite field.

First, we treat the characteristic $0$ case.  Then, by appeal to Lemmas~\ref{lem:complexrealizationofspheres} and \ref{lem:complexrealizationofclassifyingspaces} there is a commutative diagram of the form:
\[
\xymatrix{
\Z = \bpi_{3,3}^{\aone}(\op{Q}_6)(\cplx) \ar[r]\ar[d] & \bpi_{3,3}^{\aone}(\op{B}_{\Nis}\op{SL}_4) = \Z \ar[d] \\
\Z = \pi_6(S^6) \ar[r]& \pi_6(BSU(4)) = \Z;
}
\]
where the vertical maps are isomorphisms and the bottom horizontal map is an isomorphism.  It follows that the top horizontal map is an isomorphism as well.

Now, assume $k$ has non-zero characteristic and that $\ell$ is a prime different from the characteristic of $k$.  In that case, after evaluation at $k$ and appeal to Lemmas~\ref{lem:etalerealizationofquadrics} and \ref{lem:etalerealizationofclassifyingspaces} we conclude that there is a commutative square of the form
\[
\xymatrix{
\Z = \bpi_{3,3}^{\aone}(\op{Q}_6)(k) \ar[r]\ar[d] & \bpi_{3,3}^{\aone}(\op{B}_{\Nis}\op{SL}_4) = \Z \ar[d] \\
\Z_{\ell} = \pi_6(S^6)^{\wedge}_{\ell} \ar[r]& \pi_6(BSU(4))^{\wedge}_{\ell} = \Z_{\ell};
}
\]
both vertical maps are injective, while the bottom horizontal map is that induced by the topological fiber sequence $S^6 \to BSU(4) \to BSpin(7)$ and thus is an isomorphism.  It follows that the top horizontal map must send a generator to a generator, which is what we wanted to show.
\end{proof}

\begin{rem}[Canonical reduction of Suslin matrices]
Vector bundles on $\op{Q}_{2n}$ were studied in detail in \cite{AsokDoranFasel}.  In particular, there we described an algebro-geometric variant of the clutching construction providing a bijection between rank $n$ oriented vector bundles on $\op{Q}_{2n}$ and $\aone$-homotopy classes of maps $\op{Q}_{2n-1} \to \op{SL}_n$.  One knows that $\widetilde{K}_0(\op{Q}_{2n}) = \Z$, and \cite[Theorem 4.3.4]{AsokDoranFasel} shows that a generator of this group can be defined in terms of Suslin matrices see \cite[\S 5]{SuslinStablyFree}.  In particular, Suslin constructed a map $\op{Q}_5 \to \op{SL}_4$ and this morphism corresponds to a generator of $\widetilde{K}_0(\op{Q}_{6})$.

Suslin shows that this matrix is $\aone$-weakly equivalent to one factoring through the inclusion $\op{SL}_3 \hookrightarrow \op{SL}_4$, and thus, the rank $4$ vector bundle splits off a free rank $1$ summand, however this factorization is {\em not} unique in general.  Indeed, $\bpi_{3,3}^{\aone}(\op{B}_{\Nis}\op{SL}_3) = \Z \oplus k^{\times}/k^{\times 6}$ and thus over non-algebraically closed fields, there can be many inequivalent reductions.  The discussion before Lemma~\ref{lem:diagonalmapgenerator} gives, however, a {\em canonical} such lift.  The following result summarizes these observations.
\end{rem}

\begin{lem}
\label{lem:q6chernclass}
Work over an infinite base field $k$. Write $\mathscr{E}$ for the oriented rank $4$ vector bundle on $\op{Q}_6$ obtained as the associated vector bundle to the $\op{SL}_4$-torsor $\op{Spin}_7 \to \op{Spin}_7/\op{SL}_4 \cong \op{Q}_6$.
\begin{enumerate}[noitemsep,topsep=1pt]
\item The class $[\mathscr{E}] \in \tilde{K}_0(\op{Q}_6) = \Z$ is a generator.
\item The bundle $\mathscr{E}$ splits as $\mathscr{E}' \oplus \mathscr{O}_{\op{Q}_6}$; the rank $3$ bundle $\mathscr{E}'$ is oriented and is the associated vector bundle to $\op{G}_2 \to \op{G}_2/\op{SL}_3$.
\end{enumerate}
\end{lem}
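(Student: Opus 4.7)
The plan is to extract both parts from Lemma~\ref{lem:diagonalmapgenerator} together with the $\aone$-homotopy Cartesian square of Proposition~\ref{prop:square}.

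For point (1), the bundle $\mathscr{E}$ is by construction classified by the connecting map $\op{Q}_6 \to \op{B}_{\Nis}\op{Spin}_6 \cong \op{B}_{\Nis}\op{SL}_4$ appearing in the $\aone$-fiber sequence of Proposition~\ref{prop:spinstabilizationsequences} (equivalently, the diagonal map of Proposition~\ref{prop:square}). Lemma~\ref{lem:diagonalmapgenerator} says precisely that the class of this map in $\bpi_{3,3}^{\aone}(\op{B}_{\Nis}\op{SL}_4)(k) \cong \Z$ is a generator. I would then invoke Morel's stability computation $\bpi_n^{\aone}(\op{B}_{\Nis}\op{SL}_m) \cong \K^{\op{Q}}_n$ for $2 \leq n \leq m-1$ (cited in the paper in the form \cite[Theorem~3.2]{AsokFaselSpheres}), which implies that the stabilization maps $\op{B}_{\Nis}\op{SL}_n \to \op{B}_{\Nis}\op{SL}_{n+1}$ induce isomorphisms on $\bpi_3^{\aone}$ for $n \geq 4$, and hence on the triple contraction $\bpi_{3,3}^{\aone}$. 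Passing to the colimit identifies our generator with a generator of $[\op{Q}_6,\op{B}_{\Nis}\op{SL}_{\infty}]_{\aone} \cong \tilde{K}_0(\op{Q}_6) \cong \Z$, and this image is precisely the reduced $K$-theory class of $\mathscr{E}$.

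For point (2), I would apply Proposition~\ref{prop:square}: the square
\[
\xymatrix{
\op{B}_{\Nis}\op{SL}_3 \ar[d]\ar[r] & \op{B}_{\Nis}\op{Spin}_6 \ar[d] \\
\op{B}_{\Nis}\op{G}_2 \ar[r] & \op{B}_{\Nis}\op{Spin}_7
}
\]
is $\aone$-homotopy Cartesian. By Propositions~\ref{prop:sl3g2fibration} and~\ref{prop:spinstabilizationsequences}, the two vertical $\aone$-fiber sequences have the common fiber $\op{Q}_6$, compatibly matched via the isomorphism $\op{G}_2/\op{SL}_3 \isomto \op{Spin}_7/\op{Spin}_6$ of Theorem~\ref{thm:g2sl3andspin7}(2). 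Consequently the classifying map of $\mathscr{E}$ factors canonically as $\op{Q}_6 \to \op{B}_{\Nis}\op{SL}_3 \to \op{B}_{\Nis}\op{Spin}_6$, and the first arrow is the classifying map of the $\op{SL}_3$-torsor $\op{G}_2 \to \op{G}_2/\op{SL}_3 \cong \op{Q}_6$; call the associated oriented rank $3$ bundle $\mathscr{E}'$. To conclude, I would identify the induced map $\op{B}_{\Nis}\op{SL}_3 \to \op{B}_{\Nis}\op{SL}_4$ with the functor $V \mapsto V \oplus \mathcal{O}$ on associated bundles; this is Proposition~\ref{prop:hyperbolicvsstabilization}, which says that the composite $\op{SL}_3 \to \op{Spin}_6 \isomto \op{SL}_4$ is conjugate to the standard embedding $X \mapsto \diag(X,1)$. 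This yields the splitting $\mathscr{E} \cong \mathscr{E}' \oplus \mathcal{O}_{\op{Q}_6}$.

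The main obstacle is the identification in point (1) of the $\aone$-homotopical generator with the reduced $K$-theory generator: one must verify that the map from oriented rank $4$ bundles on $\op{Q}_6$ to $\tilde{K}_0(\op{Q}_6)$ is compatible, under affine representability, with $\op{SL}_4$-to-$\op{SL}_\infty$ stabilization on $\aone$-homotopy sheaves. All the needed ingredients (Morel's stability, affine representability, and Lemma~\ref{lem:diagonalmapgenerator}) are available, so this is essentially bookkeeping; point (2) is then a direct consequence of the Cartesian square and the block-diagonal description of the stabilization.
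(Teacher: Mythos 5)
Your proposal is correct and follows essentially the same route as the paper, which presents this lemma precisely as a summary of the homotopy commutative triangle coming from Proposition~\ref{prop:square}, the identification $[\op{Q}_6,\op{B}_{\Nis}\op{SL}_4]_{\aone}\cong[\op{Q}_6,\op{B}_{\Nis}\op{SL}]_{\aone}\cong\tilde{K}_0(\op{Q}_6)$ via stabilization (carried out in Step~1 of the proof of Proposition~\ref{prop:pi3bspin7}), Lemma~\ref{lem:diagonalmapgenerator}, and Proposition~\ref{prop:hyperbolicvsstabilization}, with affine representability converting the $\aone$-homotopy between classifying maps into the actual splitting $\mathscr{E}\cong\mathscr{E}'\oplus\mathcal{O}_{\op{Q}_6}$. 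The only cosmetic point is that you should lean on the description of the diagonal map via Proposition~\ref{prop:square} (as you do parenthetically) rather than on Proposition~\ref{prop:spinstabilizationsequences}, since the latter carries a characteristic-$\neq 2$ hypothesis that the lemma does not assume.
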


\subsubsection*{An oriented rank $3$ bundle on $\op{Q}_6$}
We now further analyze the map $\op{Q}_6 \to \op{B}_{\Nis}\op{SL}_3$ classifying the $\op{SL}_3$-torsor $\op{G}_2 \to \op{G}_2/\op{SL}_3$.  As above, there is an induced morphism
\[
\bpi_{3,4}^{\aone}(\op{Q}_6) \longrightarrow \bpi_{3,4}^{\aone}(\op{B}_{\Nis}\op{SL}_3).
\]
The group on the right is determined in \cite[Proposition 3.15]{AsokFaselSpheres} and is isomorphic to the constant sheaf $\Z/6\Z$ if we work over a base field having characteristic unequal to $2$.  Similarly, the sheaf on the right is isomorphic to $(\K^{\MW}_3)_{-4} \cong \mathbf{W}$.

\begin{lem}
\label{lem:nontrivialq7toq6}
Assume $k$ is a field having characteristic unequal to $2$.  The map
\[
\mathbf{W} := \bpi_{3,4}^{\aone}(\op{Q}_6) \longrightarrow \bpi_{3,4}^{\aone}(\op{B}_{\Nis}\op{SL}_3) \cong \Z/6\Z
\]
factors as the rank map $\mathbf{W} \to \Z/2\Z$ followed by the inclusion $\Z/2\Z \subset \Z/6\Z$.
\end{lem}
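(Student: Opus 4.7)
The plan is to split $\Z/6 \cong \Z/2 \oplus \Z/3$ by the Chinese Remainder Theorem and analyze the two components of $\phi \colon \mathbf{W} \to \Z/6$ separately, showing that the $\Z/3$-component vanishes identically while the $\Z/2$-component coincides with the rank map $\mathbf{W} \to \mathbf{W}/\mathbf{I} = \Z/2$.

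For the vanishing of the $\Z/3$-component, I would argue at the level of stalks. After base change to a separable closure $\bar k$ of $k$ (still of characteristic $\neq 2$), every residue field $F$ of a point on a smooth $\bar k$-scheme contains $\sqrt{-1}$, hence is not formally real, and by Pfister's theorem $W(F)$ is $2$-primary torsion. Since the Witt sheaf is unramified and morphisms of strictly $\aone$-invariant sheaves are determined by their values on residue fields, this yields $\hom_{Ab}(\mathbf{W}_{\bar k}, \Z/3) = 0$. The map $\phi$ arises naturally from the geometric construction $\op{Q}_6 \to \op{B}_{\Nis}\op{SL}_3$, which is defined over $\Spec \Z[\tfrac{1}{2}]$; its $\Z/3$-component therefore commutes with the base change $\Sm_k \to \Sm_{\bar k}$, and the vanishing over $\bar k$ propagates back to $k$ via the commutative square relating $\phi_k$ and $\phi_{\bar k}$.

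For the identification of the $\Z/2$-component with the rank map, I would use complex realization (together with étale realization at $\ell = 2$ for positive characteristic, via Lemma \ref{lem:etalerealizationofclassifyingspaces}). Under complex realization the map $\op{Q}_6 \to \op{B}_{\Nis}\op{SL}_3$ becomes the classifying map $S^6 \to BSU(3)$ of a generator of $\pi_6(BSU(3)) \cong \Z$, and the induced map $\pi_7(S^6) = \Z/2 \to \pi_7(BSU(3)) = \Z/6$ is classically the inclusion of the $2$-torsion subgroup; this can be extracted from the long exact sequence of the fibration $SU(2) \to SU(3) \to S^5$ combined with the $\eta$-multiplication structure on unstable homotopy of Lie groups. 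Since the rank map identifies $W(\cplx) = \Z/2$ with $\pi_7(S^6)$ under realization, the $\Z/2$-component of $\phi$ is nontrivial at the $\cplx$-point. A naturality argument (of the same flavor as above) shows that the rank map is the unique nontrivial morphism of sheaves $\mathbf{W} \to \Z/2$ — all sections $\langle a\rangle$ at a residue field $F$ become $\langle 1\rangle$ after base change to the separable closure, forcing any such morphism to factor through the rank — and this forces $\phi_2$ to equal the rank map.

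The main obstacle I anticipate is making the naturality/base-change argument fully rigorous for the vanishing of $\phi_3$ when $k$ is formally real, since then $W(k)$ itself has infinite-cyclic summands and stalk-level $2$-primariness fails over $\Sm_k$. Handling this requires a careful appeal to the fact that $\phi$ truly descends from a construction over $\Spec \Z[\tfrac{1}{2}]$, so that its decomposition into $\Z/2$- and $\Z/3$-components commutes with base change to $\bar k$ and the $\Z/3$-part inherits its vanishing from the separably closed case.
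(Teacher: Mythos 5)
Your proposal is correct and follows essentially the same route as the paper: reduce to separably closed fields using naturality of the map and the fact that the target is a constant sheaf (equivalently, that restriction $\mathbf{W}(F)\to\mathbf{W}(F_{\mathrm{sep}})=\Z/2\Z$ is the rank map), then pin down the resulting map $\Z/2\Z\to\Z/6\Z$ via complex realization, respectively \'etale realization at $\ell=2$ in positive characteristic, together with the classical fact that $\pi_7(S^6)\to\pi_7(BSU(3))$ is the inclusion $\Z/2\Z\subset\Z/6\Z$. Your Chinese-remainder decomposition and the Pfister-theorem argument for the $3$-primary part are a harmless but unnecessary detour, since passing to the separable closure (where $\mathbf{W}=\Z/2\Z$) simultaneously kills the $3$-part and identifies the $2$-part with the rank, which is exactly how the paper argues.
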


\begin{proof}
Since the sheaf $\Z/6\Z$ is a constant sheaf, and since the homomorphism of the statement is compatible with field extensions, we may assume without loss of generality that $k$ is separably closed.  In that case, the extension of scalars map $\mathbf{W}(k) \to \mathbf{W}(k^{\op{sep}}) = \Z/2\Z$ coincides with the rank map.  Therefore, it suffices to prove the second statement.

Assume first that $k \subset \cplx$.  In that case, by appeal to Lemmas ~\ref{lem:complexrealizationofspheres} and \ref{lem:complexrealizationofclassifyingspaces}, complex realization yields a diagram of the form:
\[
\xymatrix{
\bpi_{3,4}^{\aone}(\op{Q}_6)(k) \ar[r]\ar[d] & \bpi_{3,4}^{\aone}(\op{B}_{\Nis}\op{SL}_3)(k) \ar[d] \\
\pi_7(S^6) \ar[r] & \pi_7(BSU(3)).
}
\]
The map on the left sends the generator $\eta$ to the topological Hopf map $\eta$ and thus is an isomorphism.  The right vertical map is an isomorphism by \cite[Theorem 5.5]{AsokFaselSpheres}.  The bottom horizontal map is known classically to be the inclusion $\Z/2\Z \subset \Z/6\Z$ and the result follows.

If $k$ is now a separably closed field having positive characteristic unequal to $2$, then by appeal to Lemmas~\ref{lem:etalerealizationofquadrics} and \ref{lem:etalerealizationofclassifyingspaces}, we conclude there is a diagram as above with the bottom row replaced by its $2$-completion.  Once again, the leftmost vertical map is an isomorphism, and we deduce that the map
\[
\bpi_{3,4}^{\aone}(\op{Q}_6)(k) \longrightarrow \bpi_{3,4}^{\aone}(\op{B}_{\Nis}\op{SL}_3)(k)
\]
 is injective, as desired.
\end{proof}

\subsection{Some low degree \texorpdfstring{$\aone$}{A1}-homotopy sheaves of \texorpdfstring{${\op{B}_{\Nis}}\op{Spin}_n$}{BSpinn} and \texorpdfstring{${\op{B}_{\Nis}}\op{G}_2$}{BG2}}
In this section, we compute low degree $\aone$-homotopy sheaves of ${\op{B}_{\Nis}}\op{G}_2$ and deduce some consequences.  To do this, we appeal to Proposition \ref{prop:sl3g2fibration} and make use of connectivity estimates for the smooth affine schemes $\op{Q}_6$ and $\op{Q}_7$ appearing therein.

\subsubsection*{The first non-vanishing $\aone$-homotopy sheaf}
We begin by identifying the first non-vanishing $\aone$-homotopy sheaf of $\op{B}_{\Nis}\op{G}$ for $\op{G} = \op{SL}_3, \op{Spin}_6, \op{Spin}_7$ and $\op{G}_2$.  The following result is already essentially in \cite{WendtChevRep}, but we state and prove it here for convenience.
\begin{prop}
\label{prop:lowdegree}
Assume $k$ is an infinite field.  For $\op{G} = \op{SL}_3, \op{Spin}_6, \op{Spin}_7$ and $\op{G}_2$, $\op{B}_{\Nis}G$ is $\aone$-$1$-connected, and there are isomorphisms of the form
\[
\bpi_2^{\aone}(\op{B}_{\Nis}\op{G}) \isomto \K^{\op{M}}_2.
\]
\end{prop}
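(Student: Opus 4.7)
The plan is to reduce every case to a K-theoretic computation for $\op{SL}_n$ by transferring through the $\aone$-fiber sequences of Proposition~\ref{prop:sl3g2fibration} and the stabilization sequence of Proposition~\ref{prop:spinstabilizationsequences}, exploiting the high $\aone$-connectivity of the quadric fibers $\op{Q}_6$ and $\op{Q}_7$.

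First I would handle $\op{SL}_3$: since $\op{SL}_3$ is $\aone$-chain connected, $\bpi_0^{\aone}(\op{SL}_3) = \ast$, so $\bpi_1^{\aone}({\op B}_{\Nis}\op{SL}_3) = \ast$, giving $\aone$-$1$-connectedness. Using $\aone$-representability of algebraic K-theory, one has $\bpi_2^{\aone}({\op B}_{\Nis}\op{SL}_3) \cong \K^{\op Q}_2$, which equals $\K^{\op M}_2$ by Matsumoto's theorem (this is mentioned in the preliminary discussion before the Proposition). The case of $\op{Spin}_6$ is then immediate from the exceptional isomorphism $\op{Spin}_6 \cong \op{SL}_4$ used in Section~2.3 together with the same K-theory computation (the range $2 \leq i \leq n-1$ now permits $n=4$, $i=2$).

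Next I would treat $\op{G}_2$. The first $\aone$-fiber sequence of Proposition~\ref{prop:sl3g2fibration} is
\[
\op{Q}_6 \longrightarrow {\op B}_{\Nis}\op{SL}_3 \stackrel{\op{Zorn}}{\longrightarrow} {\op B}_{\Nis}\op{G}_2,
\]
and since $\op{Q}_6 \simeq \Sigma^3 \gm{\sma 3}$ is $\aone$-$2$-connected, its $\aone$-homotopy sheaves vanish in degrees $\leq 2$. The associated long exact sequence of $\aone$-homotopy sheaves (which is legitimate because ${\op B}_{\Nis}\op{G}_2$ is $\aone$-connected, and the relevant sheaves are strongly $\aone$-invariant in positive degree by Morel's results) then yields $\bpi_1^{\aone}({\op B}_{\Nis}\op{G}_2) = \ast$ and an isomorphism $\bpi_2^{\aone}({\op B}_{\Nis}\op{SL}_3) \isomto \bpi_2^{\aone}({\op B}_{\Nis}\op{G}_2)$, so $\bpi_2^{\aone}({\op B}_{\Nis}\op{G}_2) \cong \K^{\op M}_2$.

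Finally, for $\op{Spin}_7$ I would apply the same argument to the second fiber sequence of Proposition~\ref{prop:sl3g2fibration},
\[
\op{Q}_7 \longrightarrow {\op B}_{\Nis}\op{G}_2 \longrightarrow {\op B}_{\Nis}\op{Spin}_7,
\]
noting that $\op{Q}_7 \simeq \Sigma^3 \gm{\sma 4}$ is also $\aone$-$2$-connected, which again allows one to conclude $\bpi_i^{\aone}({\op B}_{\Nis}\op{G}_2) \isomto \bpi_i^{\aone}({\op B}_{\Nis}\op{Spin}_7)$ for $i \leq 2$. (Alternatively, under the hypothesis $\op{char} k \neq 2$, the stabilization sequence $\op{Q}_6 \to {\op B}_{\Nis}\op{Spin}_6 \to {\op B}_{\Nis}\op{Spin}_7$ of Proposition~\ref{prop:spinstabilizationsequences} gives the same conclusion starting from $\op{Spin}_6 \cong \op{SL}_4$.) The only genuinely delicate point, and the one I would pay most attention to, is the verification that the long exact sequence of $\aone$-homotopy sheaves applies to the fiber sequences in question; this is where one needs the $\aone$-connectedness of the base and the strong/strict $\aone$-invariance of the relevant homotopy sheaves from Morel's theory, as recorded in the Preliminaries.
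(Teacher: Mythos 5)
Your proposal is correct and follows essentially the same route as the paper: the paper also deduces all four cases from the $\op{SL}_3$ computation (which it attributes to Morel, and which is equivalent to the K-theory representability plus Matsumoto argument you give, as recorded in the paper's own preliminaries to Section 3) together with the $\aone$-$2$-connectivity of the quadric fibers $\op{Q}_6$ and $\op{Q}_7$ in the fiber sequences of Proposition~\ref{prop:sl3g2fibration}, which makes the relevant maps of classifying spaces $\aone$-$2$-equivalences. Your minor variations (using $\op{Spin}_6\cong\op{SL}_4$ directly, and the sequence $\op{Q}_7\to\op{B}_{\Nis}\op{G}_2\to\op{B}_{\Nis}\op{Spin}_7$ rather than spin stabilization, thereby avoiding any characteristic hypothesis) are harmless and consistent with the paper's statement.
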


\begin{proof}
The result for $G = \op{SL}_3$ is due to F. Morel (it follows from \cite[Theorem 7.20]{MField}).  Now, we simply appeal to Proposition \ref{prop:sl3g2fibration} and the fact that both $\op{Q}_6$ and $\op{Q}_7$ are $\aone$-$2$-connected.  Indeed, as both $\op{Q}_6$ and $\op{Q}_7$ $3$-fold simplicial suspensions, they are necessarily simplicially $2$-connected and thus by appeal to \cite[Theorem 6.38]{MField} are $\aone$-$2$-connected as well; see \cite[Theorem 2.1.12]{AsokWickelgrenWilliams} for a detailed proof of the latter assertion.  We conclude that the maps $\op{B}_{\Nis}\op{SL}_3 \to \op{B}_{\Nis}\op{G}_2$, $\op{B}_{\Nis}\op{SL}_3 \to \op{B}_{\Nis}\op{Spin}_6$ and $\op{B}_{\Nis}\op{Spin}_6 \to \op{B}_{\Nis}\op{Spin}_7$ are $\aone$-$2$-equivalences, and the result follows.
\end{proof}

\subsubsection*{The sheaf $\bpi_3^{\aone}(\op{B}_{\Nis}\op{Spin}_n)$ }
In order to compute the next non-vanishing $\aone$-homotopy sheaf of $\op{B}_{\Nis}\op{G}_2$, we will need to analyze the maps on homotopy groups induced by the maps in Proposition \ref{prop:square}.  We make a systematic analysis of $\bpi_3^{\aone}(\op{B}_{\Nis}\op{Spin}_n)$.  Because of the exceptional isomorphisms $\op{Spin}_3 \cong \op{SL}_2$, $\op{Spin}_4 \cong \op{SL}_2 \times \op{SL}_2$, $\op{Spin}_5 \cong \op{Sp}_4$ and $\op{Spin}_5 \cong \op{SL}_4$, the third $\aone$-homotopy sheaf of $\op{B}_{\Nis}\op{Spin}_n$ has already been computed for $3 \leq n \leq 6$.  Indeed, $\bpi_3^{\aone}(\op{B}_{\Nis}\op{SL}_2)$ is computed in \cite[Theorem 3.3]{AsokFasel} and $\bpi_3^{\aone}(\op{B}_{\Nis}\op{Spin}_4) \cong \bpi_3^{\aone}(\op{B}_{\Nis}\op{SL}_2)^{\oplus 2}$.  Also, $\bpi_3^{\aone}(\op{B}_{\Nis}\op{Sp}_4) \cong \K^{\op{Sp}}_3$ by \cite[Theorem 2.6]{AsokFasel}, while $\bpi_3^{\aone}(\op{B}_{\Nis}\op{SL}_4) \cong \K^{\op{Q}}_3$, e.g., by \cite[Theorem 6.8(i)]{WendtChevRep} or \cite[Theorem 3.2]{AsokFaselSpheres}.

Therefore, we begin by analyzing $\bpi_3^{\aone}(\op{B}_{\Nis}\op{Spin}_7)$. The right vertical map $\op{B}_{\Nis}\op{Spin}_6 \to \op{B}_{\Nis}\op{Spin}_7$ in the diagram appearing in Proposition \ref{prop:square} is precisely the stabilization map in the Spin groups so we investigate the map on $\aone$-homotopy sheaves induced by that identification.

\begin{prop}
\label{prop:pi3bspin7}
If $k$ is an infinite base field, then there is an isomorphism $\K_3^{\op{ind}} \isomt \bpi_3^{\aone}(\op{B}_{\Nis}\op{Spin}_7)$.
\end{prop}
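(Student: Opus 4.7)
The plan is to identify the relevant $\aone$-fiber sequence and read off the target homotopy sheaf from the associated long exact sequence.  Specifically, I will combine the $\aone$-homotopy Cartesian square of Proposition~\ref{prop:square} with the fiber sequence $\op{Q}_6 \to \op{B}_{\Nis}\op{SL}_3 \to \op{B}_{\Nis}\op{G}_2$ of Proposition~\ref{prop:sl3g2fibration}.  This yields an $\aone$-fiber sequence
\[
\op{Q}_6 \longrightarrow \op{B}_{\Nis}\op{Spin}_6 \longrightarrow \op{B}_{\Nis}\op{Spin}_7
\]
without any hypothesis on the characteristic.  Using the exceptional isomorphism $\op{Spin}_6 \cong \op{SL}_4$, the $\aone$-representability of algebraic K-theory identifies $\bpi_3^{\aone}(\op{B}_{\Nis}\op{Spin}_6) \cong \K^{\op{Q}}_3$, while Morel's identification of the first non-vanishing $\aone$-homotopy sheaf of a motivic sphere gives $\bpi_3^{\aone}(\op{Q}_6) \cong \K^{\MW}_3$ since $\op{Q}_6 \cong \op{S}^3 \wedge \gm{\sma 3}$.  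Since $\op{Q}_6$ is $\aone$-$2$-connected, the long exact sequence of $\aone$-homotopy sheaves truncates to an exact sequence
\[
\K^{\MW}_3 \stackrel{\delta}{\longrightarrow} \K^{\op{Q}}_3 \longrightarrow \bpi_3^{\aone}(\op{B}_{\Nis}\op{Spin}_7) \longrightarrow 0,
\]
reducing the proposition to the identification $\op{coker}(\delta) \cong \K^{\op{ind}}_3 = \op{coker}(\mu_3)$.

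The main step, and the central difficulty, is to identify the map $\delta$ with $\mu_3 \circ p$, where $p: \K^{\MW}_3 \twoheadrightarrow \K^{\op{M}}_3$ is the canonical quotient.  I would argue this using a rigidity principle: because $\K^{\MW}_3$ represents the functor $G \mapsto G_{-3}(k)$ on strictly $\aone$-invariant sheaves (cf.\ Proposition~\ref{prop:mapsoutofspheresI}), one has $\hom(\K^{\MW}_3,\K^{\op{Q}}_3) \cong (\K^{\op{Q}}_3)_{-3}(k) = \K^{\op{Q}}_0(k) = \Z$, so a map $\K^{\MW}_3 \to \K^{\op{Q}}_3$ of strictly $\aone$-invariant sheaves is determined by a single integer.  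Applying Proposition~\ref{prop:mapsoutofspheresI} again to convert between $[\op{Q}_6,\op{B}_{\Nis}\op{SL}_4]_{\aone}$ and $\hom(\K^{\MW}_3,\K^{\op{Q}}_3)$, Lemma~\ref{lem:diagonalmapgenerator} shows that $\delta$ corresponds to a generator $\pm 1 \in \Z$.  On the other hand, a direct triple contraction of the composite $\mu_3 \circ p$ computes its associated integer as $1$ (since $p_{-3}: \op{GW} \to \Z$ sends $\langle 1 \rangle$ to $1$ and $(\mu_3)_{-3} = \mu_0$ is the identity of $\Z$).  Consequently $\delta = \pm\,\mu_3 \circ p$ as maps of strictly $\aone$-invariant sheaves.

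To conclude, since $p$ is surjective, the image of $\delta$ coincides with the image of $\mu_3$, so
\[
\bpi_3^{\aone}(\op{B}_{\Nis}\op{Spin}_7) \;=\; \op{coker}(\delta) \;=\; \op{coker}(\mu_3) \;=\; \K^{\op{ind}}_3,
\]
which yields the asserted isomorphism.  The hardest ingredient is the identification of $\delta$ as a specific generator; this is where Lemma~\ref{lem:diagonalmapgenerator}, along with the rigidity coming from Morel's representability theorem for $\K^{\MW}_n$, does the essential work.  The remaining inputs, in particular Morel's identification $\bpi_3^{\aone}(\op{Q}_6) \cong \K^{\MW}_3$, the exceptional isomorphism $\op{Spin}_6 \cong \op{SL}_4$, and the vanishing $\bpi_2^{\aone}(\op{Q}_6) = 0$, are routine given the material already collected above.
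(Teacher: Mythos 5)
Your proof is correct, and its skeleton coincides with the paper's: the same fiber sequence $\op{Q}_6 \to \op{B}_{\Nis}\op{Spin}_6 \to \op{B}_{\Nis}\op{Spin}_7$, the same truncated long exact sequence $\K^{\MW}_3 \to \K^{\op{Q}}_3 \to \bpi_3^{\aone}(\op{B}_{\Nis}\op{Spin}_7) \to 0$, and the same essential geometric input (Lemma~\ref{lem:diagonalmapgenerator}, which the paper invokes through Lemma~\ref{lem:q6chernclass}) to see that the fiber-inclusion map is a generator of $\hom(\K^{\MW}_3,\K^{\op{Q}}_3) \cong \Z$. Where you genuinely diverge is in the normalization step, i.e., in identifying that generator with $\mu_3$ (up to the projection $p$ and a sign): the paper spends Steps 1--4 of its proof routing through the identification $[\op{Q}_6,\op{B}_{\Nis}\op{SL}_4]_{\aone} \cong \tilde{K}_0(\op{Q}_6)$, the factorization of any morphism $\K^{\MW}_3 \to \K^{\op{Q}}_3$ through $\K^{\op{M}}_3$, and the explicit Suslin-matrix generator together with \cite[Lemma 3.8]{AsokFaselSpheres}, whereas you replace all of this by a two-line contraction computation: under Morel's isomorphism $\hom(\K^{\MW}_3,\K^{\op{Q}}_3)\cong(\K^{\op{Q}}_3)_{-3}(k)$ (triple contraction evaluated at $\langle 1\rangle$), the composite $\mu_3\circ p$ goes to $1$ because $p_{-3}$ is the rank map and $(\mu_3)_{-3}=\mu_0=\op{id}_{\Z}$. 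This is legitimate --- the identities you need, $(\mu_n)_{-j}=\mu_{n-j}$ and the contraction of $0\to\mathbf{I}^4\to\K^{\MW}_3\to\K^{\op{M}}_3\to 0$ down to $0\to\mathbf{I}\to\op{GW}\to\Z\to 0$, are standard and partly used in the paper's own proof of Lemma~\ref{lem:k3indsheaf}, and the sign ambiguity is harmless since $-1$ is an automorphism of $\K^{\op{Q}}_3$, so the cokernel is unaffected. What your shortcut buys is brevity; what the paper's longer route buys is the geometric dividend of Steps 1--3 (the $\tilde{K}_0(\op{Q}_6)$ description of maps $\op{Q}_6\to\op{B}_{\Nis}\op{SL}_4$, the canonical-reduction-of-Suslin-matrices discussion, and the $\K^{\op{M}}_3$-factorization), which the paper reuses elsewhere, e.g., in the analysis leading to Theorem~\ref{thm:pi3bg2}.
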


\begin{proof}
Consider the $\aone$-fiber sequence
\[
\xymatrix{
\op{Q}_6 \ar[r] & \op{B}_{\Nis}\op{Spin}_6 \ar[r] & \op{B}_{\Nis}\op{Spin}_7.
}
\]
Taking homotopy sheaves we obtain an associated long exact sequence; using the computations mentioned above, this exact sequence takes the form:
\[
\K^{\MW}_3 \longrightarrow \K^{\op{Q}}_3 \longrightarrow \bpi_3^{\aone}(\op{B}_{\Nis}\op{Spin}_7) \longrightarrow \bpi_2^{\aone}(\op{Q}_6) = 0.
\]
In particular, we conclude $\bpi_3^{\aone}(\op{B}_{\Nis}\op{Spin}_7) \cong \op{coker}(\K^{\MW}_3 \to \K^{\op{Q}}_3)$, and our goal is to identify this cokernel.  To this end, we make a general analysis of morphisms $\K^{\MW}_3 \to \K^{\op{Q}}_3$.  Observe that $\hom(\K^{\MW}_3,\K^{\op{Q}}_3) \cong (\K^{\op{Q}}_3)_{-3} = \Z$, e.g., by \cite[Lemma 5.1.3.1]{AsokWickelgrenWilliams}.  We give a geometric identification of the class of the morphism $\K^{\MW}_3 \to \K^{\op{Q}}_3$ in the statement.\newline

\noindent {\bf Step 1.} We begin by showing that any map $\op{Q}_6 \to \op{B}_{\Nis}\op{SL}_4$ is uniquely determined by a class in $\tilde{K}_0(\op{Q}_6)$; we do this in two stages.  \newline

\noindent {\bf Step 1a.} First we claim that any map $\op{Q}_6 \to \op{B}_{\Nis}\op{SL}_4$ is uniquely determined by its extension $\op{Q}_6 \to \op{B}_{\Nis}\op{SL}_{\infty}$.  Indeed, the $\aone$-homotopy fiber of the map $\op{B}_{\Nis}\op{SL}_n \to \op{B}_{\Nis}\op{SL}_{n+1}$ is ${\mathbb A}^{n+1} \setminus 0$, which is $\aone$-$(n-1)$-connected. When $n \geq 4$, the $\aone$-homotopy fiber is at least $\aone$-$3$-connected.  Therefore, any map $\op{Q}_6 \to {\mathbb A}^n \setminus 0$ is null $\aone$-homotopic.  It follows that the map
\[
[\op{Q}_6,\op{B}_{\Nis}\op{SL}_n]_{\aone} \longrightarrow [\op{Q}_6,\op{B}_{\Nis}\op{SL}_{n+1}]_{\aone}
\]
induced by the stabilization is bijective for every $n \geq 4$.  It follows immediately that there is an induced bijection $[\op{Q}_6,\op{B}_{\Nis}\op{SL}_4]_{\aone} \isomt [\op{Q}_6,\op{B}_{\Nis}\op{SL}]_{\aone}$ as claimed.  \newline

\noindent {\bf Step 1b.} Next, we claim that the class in $[\op{Q}_6,\op{B}_{\Nis}\op{SL}]_{\aone}$ determined by our given class in $[\op{Q}_6,\op{B}_{\Nis}\op{SL}_{4}]_{\aone}$ is equivalent to a class in $\tilde{K}_0(\op{Q}_6)$.  To this end, observe that there is a fiber sequence of the form
\[
\op{B}_{\Nis}\op{SL} \longrightarrow \op{B}_{\Nis}\op{GL} \stackrel{\det}{\longrightarrow} \op{B}_{\Nis}\gm{}
\]
and that this fiber sequence is split.  In particular,
\[
\tilde{K}_0(\op{Q}_6) = [\op{Q}_6,\op{B}_{\Nis}\op{GL}]_{\aone} \cong [\op{Q}_6,\op{B}_{\Nis}\op{SL}]_{\aone} \oplus [\op{Q}_6,\op{B}_{\Nis}\gm{}]_{\aone}.
\]
Since $[\op{Q}_6,\op{B}_{\Nis}\gm{}]_{\aone} = \op{Pic}(\op{Q}_6) = 0$, we obtain the required identification.  However, $\tilde{K}_0(\op{Q}_6) = \Z$ generated by the class of an explicit rank $3$ vector bundle on $\op{Q}_6$.  \newline

\noindent {\bf Step 2.} Granted the conclusion of Step 1, observe that there is a diagram of the form
\[
\tilde{K}_0(\op{Q}_6) \stackrel{\sim}{\longrightarrow} [\op{Q}_6,\op{B}_{\Nis}\op{SL}_4]_{\aone} \isomto \hom(\K^{\MW}_3,\K^{\op{Q}}_3),
\]
where the arrow on the left is the inverse of that from Step 1, while the arrow on the right is given to us by the conclusion of Proposition \ref{prop:mapsoutofspheresI}. \newline

\noindent {\bf Step 3.}  We claim that any morphism $\K^{\MW}_3 \to \K^{\op{Q}}_3$ factors uniquely through a morphism $\K^{\op{M}}_3 \to \K^{\op{Q}}_3$.  More precisely, in conjunction with the conclusion of Step 2, we obtain an identification
\[
\tilde{K}_0(\op{Q}_6) \stackrel{\sim}{\longrightarrow} [\op{Q}_6,\op{B}_{\Nis}\op{SL}_4]_{\aone} \isomto \hom(\K^{\op{M}}_3,\K^{\op{Q}}_3).
\]
Indeed, we claim that if ${\mathbf M}$ is any strictly $\aone$-invariant sheaf such that ${\mathbf M}_{-n-1} = 0$, then the canonical epimorphism $\K^{\MW}_n \to \K^{\op{M}}_n$ induces a bijection $\hom(\K^{\op{M}}_n,{\mathbf M}) \isomt \hom(\K^{\MW}_n,{\mathbf M})$; this follows from \cite[Lemma 5.1.5.2]{AsokWickelgrenWilliams}.  Applying this fact with $n = 3$ and ${\mathbf M} = \K^{\op{Q}}_3$, we conclude. \newline

\noindent {\bf Step 4.} Now, we claim that the morphism $\mu_3: \K^{\op{M}}_3 \to \K^{\op{Q}}_3$ discussed above Lemma \ref{lem:k3indsheaf} yields a generator of $\hom(\K^{\op{M}}_3,\K^{\op{Q}}_3) = \Z$. Indeed, by the conclusion of Step 3, the class of $\mu_3$ corresponds to a class in $\tilde{K}_0(\op{Q}_6)$.  By \cite[Theorem 4.3.4]{AsokDoranFasel}, a generator of $\tilde{K}_0(\op{Q}_6)$ is provided by applying the clutching construction to an explicit morphism $\op{Q}_5 \to \op{SL}_3$ defined using Suslin matrices.  By Lemma \ref{prop:mapsoutofspheresI} such a morphism yields a homomorphism $\K^{\MW}_3 \to \bpi_2^{\aone}(\op{SL}_3) \to \K^{\op{Q}}_3$.  By Step 3 again, this morphism factors through a morphism $\K^{\op{M}}_3 \to \K^Q_3$.  That this morphism coincides (up to a sign) with $\mu_3$ then follows by appeal to \cite[Lemma 3.8]{AsokFaselSpheres}.\newline

\noindent {\bf Step 5.} By Lemma \ref{lem:k3indsheaf}, the cokernel of $\mu_3$ is $\K^{\op{ind}}_3$.  Therefore, to conclude the proof it remains to observe that the $\op{SL}_4$-bundle over $\op{Q}_6$ corresponding to $\op{Spin}_7 \to \op{Spin}_7/\op{Spin}_6$ yields a generator of $\tilde{K}_0(\op{Q}_6)$; this follows immmediately from Lemma \ref{lem:q6chernclass}.
\end{proof}

\begin{cor}
\label{cor:pi3bspinn}
Assume $k$ is an infinite field having characteristic unequal to $2$.  For any integer $n \geq 7$, there are canonical isomorphisms
\[
\K^{\op{ind}}_3 \isomto \bpi_3^{\aone}(\op{B}_{\Nis}\op{Spin}_n).
\]
\end{cor}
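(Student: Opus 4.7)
The plan is to induct on $n$, with base case $n=7$ furnished by Proposition~\ref{prop:pi3bspin7}. The inductive step will exploit the $\aone$-fiber sequence
\[
\op{Q}_n \longrightarrow \op{B}_{\Nis}\op{Spin}_n \longrightarrow \op{B}_{\Nis}\op{Spin}_{n+1}
\]
provided by Proposition~\ref{prop:spinstabilizationsequences}. Extracting the relevant portion of the associated long exact sequence of $\aone$-homotopy sheaves, I aim to show that the connecting map $\bpi_3^{\aone}(\op{Q}_n) \to \bpi_3^{\aone}(\op{B}_{\Nis}\op{Spin}_n)$ vanishes and that $\bpi_2^{\aone}(\op{Q}_n) = 0$, which together force the stabilization map on $\bpi_3^{\aone}$ to be an isomorphism and feed the induction.

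For $n \geq 8$ both vanishing statements reduce to connectivity: the $\aone$-weak equivalences $\op{Q}_{2m} \simeq \op{S}^m \wedge \gm{\sma m}$ and $\op{Q}_{2m-1} \simeq \op{S}^{m-1} \wedge \gm{\sma m}$ combined with Morel's connectivity theorem show that $\op{Q}_n$ is $\aone$-$3$-connected as soon as $n \geq 8$. Thus $\bpi_3^{\aone}(\op{Q}_n) = \bpi_2^{\aone}(\op{Q}_n) = 0$ and the induction closes automatically beyond that range.

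The genuine content of the argument lies at $n = 7$: the space $\op{Q}_7 \simeq \op{S}^3 \wedge \gm{\sma 4}$ is only $\aone$-$2$-connected and its first nontrivial $\aone$-homotopy sheaf is $\bpi_3^{\aone}(\op{Q}_7) \cong \K^{\MW}_4$ by Morel's computation of the first nontrivial homotopy sheaf of a motivic sphere. I would handle the connecting map $\K^{\MW}_4 \to \bpi_3^{\aone}(\op{B}_{\Nis}\op{Spin}_7) \cong \K^{\op{ind}}_3$ abstractly via the adjunction identity $\hom(\K^{\MW}_j, \mathbf{M}) \cong \mathbf{M}_{-j}$ for strictly $\aone$-invariant sheaves $\mathbf{M}$ (already invoked in the proof of Proposition~\ref{prop:mapsoutofspheresI}); specializing to $j = 4$ and $\mathbf{M} = \K^{\op{ind}}_3$, Lemma~\ref{lem:k3indsheaf} gives $(\K^{\op{ind}}_3)_{-4} = 0$, so the connecting map is automatically zero. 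Since $\bpi_2^{\aone}(\op{Q}_7) = 0$ again by connectivity, the exact sequence collapses to the desired isomorphism $\K^{\op{ind}}_3 \isomto \bpi_3^{\aone}(\op{B}_{\Nis}\op{Spin}_8)$, and the induction then runs unobstructed from $n = 8$ upward.

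The only delicate point I anticipate is verifying that the one potentially nonzero connecting map, in the transition $n = 7 \to n = 8$, actually dies; everything else is routine connectivity bookkeeping. Once Lemma~\ref{lem:k3indsheaf} is brought to bear on the contraction $(\K^{\op{ind}}_3)_{-4}$, this obstacle evaporates and the stated canonical isomorphism follows for all $n \geq 7$ by the naturality of the stabilization maps in the fiber sequences of Proposition~\ref{prop:spinstabilizationsequences}.
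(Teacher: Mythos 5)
Your proposal is correct and follows essentially the same route as the paper: connectivity of $\op{Q}_n$ for $n \geq 8$ handles the stable range, and the $n=7$ transition is killed by identifying the connecting map as an element of $\hom(\K^{\MW}_4,\K^{\op{ind}}_3) \cong (\K^{\op{ind}}_3)_{-4} = 0$ via Lemma~\ref{lem:k3indsheaf}. No substantive differences from the paper's argument.
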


\begin{proof}
For $n \geq 3$, under the hypotheses on the characteristic, by Proposition \ref{prop:spinstabilizationsequences} there are $\aone$-fiber sequences of the form $\op{Q}_n \to \op{B}_{\Nis}\op{Spin}_n \to \op{B}_{\Nis}\op{Spin}_{n+1}$.  For $n \geq 8$, $\op{Q}_n$ is at least $\aone$-$3$-connected, so the maps
\[
\bpi_3^{\aone}(\op{B}_{\Nis}\op{Spin}_n) \longrightarrow \bpi_3^{\aone}(\op{B}_{\Nis}\op{Spin}_{n+1})
\]
are isomorphisms for $n \geq 8$.  It remains to treat the case $n = 7$.  In that case, we have the exact sequence
\[
\bpi_3^{\aone}(\op{Q}_7) \longrightarrow \bpi_3^{\aone}(\op{B}_{\Nis}\op{Spin}_7) \longrightarrow \bpi_3^{\aone}(\op{B}_{\Nis}\op{Spin}_8) \longrightarrow 0.
\]
Since $\bpi_3^{\aone}(\op{Q}_7) = \K^{\MW}_4$ and since $\bpi_3^{\aone}(\op{B}_{\Nis}\op{Spin}_7) \cong \K^{\op{ind}}_3$ by Proposition \ref{prop:pi3bspin7}, the left hand arrow is a map $\K^{\MW}_3 \to \K^{\op{ind}}_3$.  However, $\hom(\K^{\MW}_4,\K^{\op{ind}}_3) = (\K^{\op{ind}}_3)_{-4} = 0$.  Thus, every such homomorphism is trivial, and we conclude that $\bpi_3^{\aone}(\op{B}_{\Nis}\op{Spin}_8) \cong \K^{\op{ind}}_3$ as well.
\end{proof}

The composite map
\[
\op{B}_{\Nis}\op{Spin}_n \longrightarrow \op{B}_{\Nis}\op{O}_n \longrightarrow \op{B}_{\et}\op{O}_n \cong \op{B}_{\op{gm}}\op{O}_n
\]
is compatible with stabilization and yields a map
\[
\op{B}_{\Nis}\op{Spin} \longrightarrow \op{B}_{\op{gm}}\op{O}.
\]
In \cite[Corollary 3]{AsokFaselKO}, we showed that $\bpi_3^{\aone}(\op{B}_{\op{gm}}\op{O}) = \K^{\op{ind}}_3$.  Thus, by appeal to Corollary \ref{cor:pi3bspinn}, the map on $\bpi_3^{\aone}(-)$ induced by the above map is an endomorphism of $\K^{\op{ind}}_3$.

\begin{cor}
\label{cor:computationscoincide}
The map
\[
\K^{\op{ind}}_3 = \bpi_3^{\aone}(\op{B}_{\Nis}\op{Spin}) \longrightarrow \bpi_3^{\aone}(\op{B}_{\op{gm}}\op{O}) = \K^{\op{ind}}_3
\]
is the identity.
\end{cor}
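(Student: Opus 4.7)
The plan is to reduce the assertion to a statement about a single map $\K^{\op{Q}}_3 \to \K^{\op{ind}}_3$ and then identify that map as the canonical quotient from two different viewpoints.

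First, I would observe that morphisms into $\K^{\op{ind}}_3$ from $\K^{\op{Q}}_3$ admit a rigid description. Since $\hom(\K^{\op{M}}_3, \K^{\op{ind}}_3) \cong (\K^{\op{ind}}_3)_{-3} = 0$ by Lemma~\ref{lem:k3indsheaf}, every morphism $\K^{\op{Q}}_3 \to \K^{\op{ind}}_3$ vanishes after restriction along $\mu_3 : \K^{\op{M}}_3 \to \K^{\op{Q}}_3$ and therefore factors uniquely through the canonical surjection $q : \K^{\op{Q}}_3 \twoheadrightarrow \K^{\op{ind}}_3$. Since $q$ is an epimorphism, precomposition with $q$ yields a bijection $\hom(\K^{\op{ind}}_3, \K^{\op{ind}}_3) \isomto \hom(\K^{\op{Q}}_3, \K^{\op{ind}}_3)$. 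It therefore suffices to show that the composite $f \circ q : \K^{\op{Q}}_3 \to \K^{\op{ind}}_3$ equals $q$, where $f$ denotes the map in the statement.

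Next, by the proof of Proposition~\ref{prop:pi3bspin7} together with the stabilization isomorphism of Corollary~\ref{cor:pi3bspinn}, the quotient $q$ is precisely the map induced on $\bpi_3^{\aone}$ by the composite
\[
\op{B}_{\Nis}\op{SL}_4 \isomto \op{B}_{\Nis}\op{Spin}_6 \longrightarrow \op{B}_{\Nis}\op{Spin}_7 \longrightarrow \op{B}_{\Nis}\op{Spin},
\]
where the first map is the exceptional isomorphism. Consequently, $f\circ q$ is the map on $\bpi_3^{\aone}$ induced by
\[
\op{B}_{\Nis}\op{SL}_4 \isomto \op{B}_{\Nis}\op{Spin}_6 \longrightarrow \op{B}_{\Nis}\op{Spin} \longrightarrow \op{B}_{\op{gm}}\op{O}.
\]

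To identify this composite, I would factor it through the natural comparison between Nisnevich and geometric classifying spaces, using the group homomorphisms $\op{SL}_4 \cong \op{Spin}_6 \to \op{SO}_6 \hookrightarrow \op{SO} \hookrightarrow \op{O}$. Since $\op{SL}_4$ is a special group, $\op{B}_{\Nis}\op{SL}_4 \simeq \op{B}_{\op{gm}}\op{SL}_4$, and the required factorization exists by naturality of the change-of-topology functor $\op{B}_{\Nis}(-) \to \op{B}_{\op{gm}}(-) \simeq \op{B}_{\et}(-)$. One then appeals to \cite{AsokFaselKO}: in that reference the identification $\bpi_3^{\aone}(\op{B}_{\op{gm}}\op{O}) \cong \K^{\op{ind}}_3$ is produced precisely by identifying the image of $\bpi_3^{\aone}(\op{B}_{\op{gm}}\op{SL}) = \K^{\op{Q}}_3$ with the canonical quotient, so the induced map $\K^{\op{Q}}_3 \to \K^{\op{ind}}_3$ from the factored composite coincides with $q$. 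Combining this with the previous paragraph gives $f \circ q = q$, and the first paragraph then forces $f = \op{id}$.

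The main obstacle is verifying the compatibility with the computation in \cite{AsokFaselKO}: one must check that the map on $\bpi_3^{\aone}$ induced by $\op{B}_{\op{gm}}\op{SL}_4 \to \op{B}_{\op{gm}}\op{O}$ (through the half-spin representation $\op{SL}_4 \to \op{SO}_6$ followed by stabilization) indeed realizes the canonical surjection $\K^{\op{Q}}_3 \to \K^{\op{ind}}_3$ that defines the identification in that reference. Since $\hom(\K^{\op{Q}}_3, \K^{\op{ind}}_3) \cong \op{End}(\K^{\op{ind}}_3)$ is typically generated by the identity, the verification should reduce to checking that the composite is nonzero on an explicit generator, e.g., via complex realization and a comparison with the classical stabilization $\pi_3(BSU(4)) \to \pi_3(BO)$.
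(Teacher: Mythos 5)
Your overall skeleton is the same as the paper's: both identifications of $\K^{\op{ind}}_3$ (the one for $\bpi_3^{\aone}(\op{B}_{\Nis}\op{Spin})$ from Proposition \ref{prop:pi3bspin7} and Corollary \ref{cor:pi3bspinn}, and the one for $\bpi_3^{\aone}(\op{B}_{\op{gm}}\op{O})$ from \cite{AsokFaselKO}) are surjections from $\bpi_3^{\aone}(\op{B}_{\Nis}\op{SL}) = \K^{\op{Q}}_3$, and one compares them. Your first reduction is correct and is a nice formalization: since $(\K^{\op{ind}}_3)_{-3} = 0$ by Lemma \ref{lem:k3indsheaf}, every map $\K^{\op{Q}}_3 \to \K^{\op{ind}}_3$ factors through the canonical surjection $q$, so it suffices to prove $f \circ q = q$.

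The genuine gap is exactly the step you flag as ``the main obstacle'' and then do not close: you must show that the composite $\op{B}_{\Nis}\op{SL}_4 \isomto \op{B}_{\Nis}\op{Spin}_6 \to \op{B}_{\Nis}\op{Spin} \to \op{B}_{\op{gm}}\op{O}$ induces the \emph{canonical} surjection $q$ on $\bpi_3^{\aone}$, i.e.\ that it matches the hyperbolic map $\op{B}_{\Nis}\op{SL} \to \op{B}_{\op{gm}}\op{O}$ through which \cite{AsokFaselKO} fixes the identification. Your proposed verification would fail: the assertion that $\op{End}(\K^{\op{ind}}_3)$ is ``typically generated by the identity'' is unsubstantiated, and even granting $\op{End}(\K^{\op{ind}}_3) = \Z\cdot\op{id}$, checking that the composite is \emph{nonzero} on a generator does not distinguish $q$ from $n\cdot q$ (nor $+\op{id}$ from $-\op{id}$). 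Worse, the realization check you suggest cannot see anything: the groups $\pi_3(BSU(4))$ and $\pi_3(BO)$ you cite vanish, and since all contractions of $\K^{\op{ind}}_3$ are zero (Lemma \ref{lem:k3indsheaf}), evaluating on motivic spheres and realizing detects only the trivial group, so an endomorphism of $\K^{\op{ind}}_3$ is not accessible this way. The paper closes this step without any realization by unwinding definitions: the surjection used in Proposition \ref{prop:pi3bspin7} comes from the $\op{SL}_4$-torsor $\op{Spin}_7 \to \op{Q}_6$ of Lemma \ref{lem:q6chernclass}, and by Proposition \ref{prop:hyperbolicvsstabilization} together with the proof of Theorem \ref{thm:g2sl3andspin7}, the embedding $\op{SL}_4 \cong \op{Spin}_6 \hookrightarrow \op{Spin}$ is compatible (up to conjugacy, hence up to homotopy on classifying spaces) with the stabilized hyperbolic map $\op{SL} \to \op{Spin} \to \op{O}$; thus both surjections from $\K^{\op{Q}}_3$ are literally the same map, giving $f \circ q = q$ on the nose. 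Your argument becomes complete once you substitute this compatibility (already available in the paper) for the realization step.
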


\begin{proof}
We unwind the definitions.  Indeed, \cite[Corollary 3]{AsokFaselKO} identifies $\bpi_3^{\aone}(\op{B}_{\op{gm}}\op{O})$ using the hyperbolic map $H: \op{B}_{\Nis}\op{SL} \to \op{B}_{\op{gm}}\op{O}$; this map evidently factors through $\op{B}_{\Nis}\op{O}$.  Now it suffices to observe that in Proposition \ref{prop:pi3bspin7} we identified $\bpi_3^{\aone}(\op{B}_{\Nis}\op{Spin}_7)$ as a quotient of $\bpi_3^{\aone}(\op{B}_{\Nis}\op{SL}_4) = \bpi_3^{\aone}(\op{B}_{\Nis}\op{SL})$.
\end{proof}

\begin{rem}
For an arbitrary algebraic group $\op{G}$, there is always the pointed map $\op{B}_{\Nis}\op{G} \to \op{B}_{\et}\op{G}$ (so the base-point in the latter corresponds to the trivial torsor).  This morphism is the inclusion of the connected compoment of the base-point in the Nisnevich local homotopy category by \cite[\S 4 Corollary 1.16]{MV}.  The results of this section suggest that in low degrees the induced maps on $\aone$-homotopy sheaves are isomorphisms for the orthogonal groups.  However, we do not know what the induced maps on $\aone$-homotopy sheaves look like in general.
\end{rem}

\subsubsection*{The third $\aone$-homotopy sheaf of $\op{B}_{\Nis}\op{G}_2$}
In order to describe $\bpi_3^{\aone}(\op{B}_{\Nis}\op{G}_2)$, we will need to recall a bit the computation of $\bpi_3^{\aone}({\op{B}_{\Nis}}\op{SL}_3)$ from \cite[Theorem 1.1]{AsokFaselSpheres}.  There, it was observed that there is an exact sequence of the form
\[
0 \longrightarrow \mathbf{S}_4 \longrightarrow \bpi_3^{\aone}(\op{B}_{\Nis}\op{SL}_3) \longrightarrow \K^{\op{Q}}_3 \longrightarrow 0,
\]
where the sheaf $\mathbf{S}_4$ is described in Lemma \ref{lem:structureofs4}.

\begin{thm}
\label{thm:pi3bg2}
If $k$ is an infinite field having characteristic unequal to $2$, then there is an exact sequence of the form:
\[
0 \longrightarrow \mathbf{S}_4/3 \longrightarrow \bpi_3^{\aone}(\op{B}_{\Nis}\op{G}_2) \longrightarrow \K^{\op{ind}}_3 \longrightarrow 0;
\]
furthermore, there is an epimorphism $\K^{\op{M}}_4/3 \to \mathbf{S}_4/3$ that becomes an isomorphism after $2$-fold contraction.
\end{thm}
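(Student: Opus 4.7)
My plan is to extract $\bpi_3^{\aone}(\op{B}_{\Nis}\op{G}_2)$ from the $\aone$-fiber sequence $\op{Q}_6 \to \op{B}_{\Nis}\op{SL}_3 \to \op{B}_{\Nis}\op{G}_2$ of Proposition~\ref{prop:sl3g2fibration}. Since $\op{Q}_6 \simeq \op{S}^3 \wedge \gm{\sma 3}$ is $\aone$-$2$-connected, the long exact sequence of $\aone$-homotopy sheaves reduces at degree $3$ to
\[
\K^{\MW}_3 = \bpi_3^{\aone}(\op{Q}_6) \longrightarrow \bpi_3^{\aone}(\op{B}_{\Nis}\op{SL}_3) \longrightarrow \bpi_3^{\aone}(\op{B}_{\Nis}\op{G}_2) \longrightarrow 0,
\]
which I combine with the extension $0 \to \mathbf{S}_4 \to \bpi_3^{\aone}(\op{B}_{\Nis}\op{SL}_3) \to \K^{\op{Q}}_3 \to 0$ of \cite[Theorem 1.1]{AsokFaselSpheres}.

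The next task is to pin down the composite $\K^{\MW}_3 \to \bpi_3^{\aone}(\op{B}_{\Nis}\op{SL}_3) \to \K^{\op{Q}}_3$. By Proposition~\ref{prop:square}, this composite is induced on $\bpi_3^{\aone}$ by $\op{Q}_6 \to \op{B}_{\Nis}\op{Spin}_6 \cong \op{B}_{\Nis}\op{SL}_4$, namely the classifying map of the rank $4$ bundle $\mathscr{E}$ of Lemma~\ref{lem:q6chernclass}. Since $[\mathscr{E}]$ generates $\tilde{K}_0(\op{Q}_6) \cong \Z$, running through Steps $2$-$4$ of the proof of Proposition~\ref{prop:pi3bspin7} identifies this composite (up to sign) with $\mu_3 \circ \varphi$, where $\varphi : \K^{\MW}_3 \twoheadrightarrow \K^{\op{M}}_3$ is the canonical quotient and $\mu_3 : \K^{\op{M}}_3 \to \K^{\op{Q}}_3$ is the natural homomorphism whose cokernel is $\K^{\op{ind}}_3$ by definition.

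Applying the snake lemma to the diagram with upper row $0 \to 0 \to \K^{\MW}_3 \to \K^{\MW}_3 \to 0$ (the identity on the nonzero piece) and lower row $0 \to \mathbf{S}_4 \to \bpi_3^{\aone}(\op{B}_{\Nis}\op{SL}_3) \to \K^{\op{Q}}_3 \to 0$, with the verticals just identified, yields
\[
0 \longrightarrow \mathbf{S}_4/J \longrightarrow \bpi_3^{\aone}(\op{B}_{\Nis}\op{G}_2) \longrightarrow \K^{\op{ind}}_3 \longrightarrow 0,
\]
where $J \subset \mathbf{S}_4$ is the image of the connecting morphism $\ker(\mu_3 \circ \varphi) \to \mathbf{S}_4$. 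Once $J = 3\mathbf{S}_4$ is established, the final assertion about $\K^{\op{M}}_4/3 \twoheadrightarrow \mathbf{S}_4/3$ becoming an isomorphism after $2$-fold contraction follows by reducing the epimorphism $\K^{\op{M}}_4/6 \twoheadrightarrow \mathbf{S}_4$ of Lemma~\ref{lem:structureofs4} modulo $3$.

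The main obstacle is the identification $J = 3\mathbf{S}_4$. Since $\mathbf{S}_4$ is $6$-torsion by Lemma~\ref{lem:structureofs4}, $3\mathbf{S}_4$ is precisely its $2$-primary component, so the claim $\mathbf{S}_4/J \cong \mathbf{S}_4/3$ has a clean $p$-primary meaning. My strategy is two-fold. First, I will use the commutative diagram of $\aone$-fiber sequences $\op{Q}_7 \to \op{B}_{\Nis}\op{SL}_3 \to \op{B}_{\Nis}\op{SL}_4$ and $\op{Q}_7 \to \op{B}_{\Nis}\op{G}_2 \to \op{B}_{\Nis}\op{Spin}_7$ coming from Proposition~\ref{prop:square} to realize $\mathbf{S}_4/J$ as the image of $\K^{\MW}_4 = \bpi_3^{\aone}(\op{Q}_7)$ in $\bpi_3^{\aone}(\op{B}_{\Nis}\op{G}_2)$. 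Second, I will verify the equality after $2$-fold contraction, where $\mathbf{S}_4$ becomes $\K^{\op{M}}_2/6$ by Lemma~\ref{lem:structureofs4} and the connecting morphism reduces to an explicit map of Milnor and Milnor-Witt K-theory sheaves whose image I can pin down directly.
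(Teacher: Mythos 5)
Your opening reduction is sound, and it is essentially the paper's own diagram chase repackaged via the snake lemma: the identification of the composite $\K^{\MW}_3 \to \bpi_3^{\aone}(\op{B}_{\Nis}\op{SL}_3) \to \K^{\op{Q}}_3$ with $\pm\mu_3\circ\varphi$ through Lemma~\ref{lem:q6chernclass} and Steps 2--4 of Proposition~\ref{prop:pi3bspin7} is exactly how the paper sets things up, and it correctly yields an extension $0 \to \mathbf{S}_4/J \to \bpi_3^{\aone}(\op{B}_{\Nis}\op{G}_2) \to \K^{\op{ind}}_3 \to 0$. The genuine gap is the identification $J = 3\mathbf{S}_4$, which is the actual content of the theorem (it is what distinguishes $\mathbf{S}_4/3$ from, say, $\mathbf{S}_4$ or $\mathbf{S}_4/2$), and your plan for it is not a proof. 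The connecting morphism $\K^{\MW}_3 \to \bpi_3^{\aone}(\op{B}_{\Nis}\op{SL}_3)$ of the Zorn fibration is not an ``explicit map of Milnor and Milnor--Witt K-theory sheaves'' that can be pinned down by formal manipulation: its restriction to $\ker(\mu_3\circ\varphi)$ encodes attaching data of the fibration $\op{Q}_6 \to \op{B}_{\Nis}\op{SL}_3 \to \op{B}_{\Nis}\op{G}_2$ that no amount of contraction bookkeeping will produce. The paper's resolution is to note that $\mathbf{S}_4$ is itself a quotient of $\K^{\MW}_4$ (via the connecting map of the row fibration $\op{Q}_7 \to \op{B}_{\Nis}\op{SL}_3 \to \op{B}_{\Nis}\op{Spin}_6$), so the unknown reduces to a single element: applying $\bpi_{3,4}^{\aone}$ to the Zorn column gives $\mathbf{W} \to \Z/6\Z \to \bpi_{3,4}^{\aone}(\op{B}_{\Nis}\op{G}_2) \to 0$ as $\K^{\MW}_0$-modules, and the decisive input is Lemma~\ref{lem:nontrivialq7toq6}: the image of $\eta$ is the $2$-torsion element $[3]\in\Z/6\Z$. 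That lemma is proved by complex (resp.\ \'etale) realization against the classical fact that $\eta$ maps to the order-$2$ element under $\pi_7(S^6) \to \pi_7(BSU(3)) \cong \Z/6\Z$; this non-formal topological input is indispensable and is entirely absent from your proposal.

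There is also a secondary flaw in the proposed verification itself: checking $J = 3\mathbf{S}_4$ ``after $2$-fold contraction'' does not suffice, since distinct subsheaves of $\mathbf{S}_4$ may have the same contraction. What legitimizes a contraction argument in the paper is that the relevant data are morphisms out of $\K^{\MW}_4$ (equivalently $\K^{\MW}_3$ after looping once more), and such morphisms are detected by $\hom(\K^{\MW}_m,\mathbf{M}) \cong \mathbf{M}_{-m}(k)$ via Proposition~\ref{prop:mapsoutofspheresI}; you would need to make that reduction explicit rather than contract twice and stop. By contrast, your first step (realizing $\mathbf{S}_4/J$ as the image of $\K^{\MW}_4 = \bpi_3^{\aone}(\op{Q}_7)$ in $\bpi_3^{\aone}(\op{B}_{\Nis}\op{G}_2)$) and your final deduction of the epimorphism $\K^{\op{M}}_4/3 \to \mathbf{S}_4/3$ and its behavior under $2$-fold contraction from Lemma~\ref{lem:structureofs4} are fine.
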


\begin{proof}
The homotopy Cartesian square of Proposition \ref{prop:square} yields a commutative diagram of exact sequences of the form:
\[
\xymatrix{
                        & \K^{\MW}_3 \ar@{=}[r]\ar[d]                          & \K^{\MW}_3 \ar[d] & \\
\K^{\MW}_4 \ar[r] \ar@{=}[d] & \bpi_3^{\aone}(\op{B}_{\Nis}\op{SL}_3) \ar[r] \ar[d] & \bpi_3^{\aone}(\op{B}_{\Nis}\op{Spin}_6) \ar[r]\ar[d] & 0\\
\K^{\MW}_4 \ar[r]  & \bpi_3^{\aone}(\op{B}_{\Nis}\op{G}_2) \ar[r]\ar[d] & \bpi_3^{\aone}(\op{B}_{\Nis}\op{Spin}_7) \ar[r]\ar[d] & 0 \\
                  &   0                                                & 0.
}
\]
We now analyze this diagram more carefully.  \newline

\noindent {\bf Step 1.} The $\aone$-fiber sequence $\op{Q}_7 \to \op{B}_{\Nis}\op{G}_2 \to \op{B}_{\Nis}\op{Spin}_7$ in conjunction with the computation of Proposition \ref{cor:pi3bspinn} yields an exact sequence of the form
\[
\K^{\MW}_4 \longrightarrow \bpi_3^{\aone}(\op{B}_{\Nis}\op{G}_2) \longrightarrow \K^{\op{ind}}_3 \longrightarrow 0.
\]

\noindent {\bf Step 2.} By appeal to Proposition \ref{prop:hyperbolicvsstabilization}, under the exceptional isomorphism $\op{B}_{\Nis}\op{SL}_4 \cong \op{B}_{\Nis}\op{Spin}_6$ the map $\bpi_3^{\aone}(\op{B}_{\Nis}\op{SL}_3) \to \bpi_3^{\aone}(\op{B}_{\Nis}\op{Spin}_6)$ coincides with the map $\op{B}_{\Nis}\op{SL}_3 \to \op{B}_{\Nis}\op{SL}_4$ induced by stabilization.  As a consequence, the row
\[
\K^{\MW}_4 \longrightarrow \bpi_3^{\aone}(\op{B}_{\Nis}\op{SL}_3) \longrightarrow \bpi_3^{\aone}(\op{B}_{\Nis}\op{Spin}_6) \longrightarrow 0
\]
coincides with the exact sequence used to compute $\bpi_3^{\aone}(\op{B}_{\Nis}\op{SL}_3)$ in \cite[\S 3]{AsokFaselSpheres}.  In particular, we conclude that the map $\bpi_3^{\aone}(\op{B}_{\Nis}\op{SL}_3) \to \bpi_3^{\aone}(\op{B}_{\Nis}\op{Spin}_6) = \K^{\op{Q}}_3$ is surjective with kernel isomorphic to $\mathbf{S}_4$ and that the image of $\K^{\MW}_4$ in $\bpi_3^{\aone}(\op{B}_{\Nis}\op{SL}_3)$ is precisely $\mathbf{S}_4$. By a diagram chase, we conclude that $\bpi_3^{\aone}(\op{B}_{\Nis}\op{G}_2)$ is an extension of the sheaf $\K^{\op{ind}}_3$ by a quotient of $\mathbf{S}_4$. \newline

\noindent {\bf Step 3.} To determine the precise quotient of $\mathbf{S}_4$ that appears, we analyze the first column of the diagram.  The discussion of the two previous steps shows that the image of $\mathbf{S}_4$ in $\bpi_3^{\aone}(\op{B}_{\Nis}\op{G}_2)$ coincides with the image of $\K^{\MW}_4$ in the bottom row.  By appeal to Proposition \ref{prop:mapsoutofspheresI}, $\hom(\K^{\MW}_4,-)$ can be described by studying maps from $\op{Q}_7$ into the target.  Thus, we apply $\bpi_{3,4}^{\aone}(-)$ to the first column of the diagram to obtain an exact sequence of the form:
\[
\xymatrix{
\bpi_{3,4}^{\aone}(\op{Q}_6) \ar[r]\ar@{=}[d] & \bpi_{3,4}^{\aone}(\op{B}_{\Nis}\op{SL}_3) \ar[r]\ar@{=}[d] & \bpi_{3,4}^{\aone}(\op{B}_{\Nis}\op{G}_2) \ar[r] & 0 \\
\mathbf{W} & \Z/6\Z,
}
\]
where we have identified $\bpi_{3,4}^{\aone}(\op{B}_{\Nis}\op{SL}_3) \cong (\mathbf{S}_4)_{-4} = \Z/6\Z$.

By precomposition with endomorphisms of $\op{S}^3 \wedge \gm{\sma 4}$, this exact sequence is an exact sequence of $\K^{\MW}_0$-modules.  The homomorphism $\K^{\MW}_4 \to \bpi_3^{\aone}(\op{B}_{\Nis}\op{SL}_3)$ defining $\mathbf{S}_4$ is a generator of the latter as a $\K^{\MW}_0$-module.  On the other hand, $\bpi_{3,4}^{\aone}(\op{Q}_6) \cong \op{Q}_7$ is generated as a $\K^{\MW}_0$-module by a class $\eta$.  Thus, it suffices to understand the image of $\eta$ in $\Z/6\Z$.  Over a field having characteristic unequal to $2$, by appeal to Lemma \ref{lem:nontrivialq7toq6}, the image is precisely the $2$-torsion element $[3]$ of $\Z/6\Z$.  Therefore, the image of $\mathbf{S}_4$ in $\bpi_3^{\aone}(\op{B}_{\Nis}\op{G}_2)$ coincides with $\mathbf{S}_4/3$, which is what we wanted to show.
\end{proof}

\begin{rem}
One consequence of the above result is that the extension describing $\bpi_3^{\aone}(\op{B}_{\Nis}\op{SL}_3)$ must be non-split.
\end{rem}

\subsubsection*{Homotopy theory of low rank Lie groups}
Suppose $k$ is a field admitting an embedding $\iota$ in either the real or complex numbers.  If $G$ is a smooth $k$-group scheme, then we may consider the Lie groups $G(\real)$ or $G(\cplx)$ using the embedding.  The groups $G(\real)$ or $G(\cplx)$ have maximal compact subgroups, and it is well-known that the inclusion of a maximal compact subgroup is a homotopy equivalence.  The low-degree homotopy groups of compact Lie groups have been studied by many authors, but we will be particularly interested in the cases where $G = \op{G}_2, \op{Spin}_7, \op{SL}_3$ and $\op{SL}_4 \cong \op{Spin}_6$; the complex Lie groups $G(\cplx)$ are homotopy equivalent to, respectively, the compact Lie groups $G_2$, $Spin(7), SU(3)$ and $SU(4) = Spin(6)$.  The following result summarizes the known low-degree homotopy groups of each of these compact Lie groups; in very low degrees, the results are due to many authors, though we summarize the results from \cite{MimuraToda} \cite{Mimura}.

\begin{thm}
\label{thm:mimuratoda}
The homotopy groups of $SU(3), Spin(6) \cong SU(4), Spin(7)$, and $G_2$ in degrees $\leq 10$ are summarized in the following table:
\begin{center}
\begin{tabular}{ c | c | c | c | c | c | c | c | c }
$G \backslash i$   & 3 & 4 & 5 & 6 & 7 & 8 & 9 & 10 \\
\hline
$SU(3)$ & $\Z$ & $0$ & $\Z$ & $\Z/6\Z$  & 0  & $\Z/12\Z$  & $\Z/3\Z$  & $\Z/30\Z$ \\
\hline
$SU(4)$ & $\Z$ & $0$ & $\Z$ & $0$ & $\Z$ & $\Z/24\Z$ & $\Z/2\Z$ & $\Z/120\Z \oplus \Z/2\Z$\\
\hline
$Spin(7)$ & $\Z$ & $0$ & $0$ & $0$ & $\Z$ & $\Z/2\Z \oplus \Z/2\Z$ & $\Z/2\Z \oplus \Z/2\Z$ & $\Z/8\Z$ \\
\hline
$G_2$ & $\Z$ & $0$ & $0$ & $\Z/3\Z$ & $0$ & $\Z/2\Z$ & $\Z/6\Z$ & $0$.
\end{tabular}
\end{center}
\end{thm}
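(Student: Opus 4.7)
These computations are classical and appear in the cited references \cite{MimuraToda,Mimura}; the plan is to sketch the strategy. The key inputs are the principal fibrations of compact Lie groups arising from the transitive actions on spheres discussed in Section~\ref{ss:homogeneousspaces} (now applied topologically): $SU(2) = S^3 \to SU(3) \to S^5$ from $SU(3)/SU(2) \cong S^5$, $SU(3) \to SU(4) \to S^7$ from $SU(4)/SU(3) \cong S^7$, $SU(3) \to G_2 \to S^6$ from Borel's identification $G_2/SU(3) \cong S^6$, and $G_2 \to Spin(7) \to S^7$ from $Spin(7)/G_2 \cong S^7$. Together with the exceptional isomorphism $Spin(6) \cong SU(4)$, these four fibrations reduce the problem inductively to standard knowledge of the homotopy groups of spheres $\pi_i(S^n)$ for $n \leq 7$ and $i \leq 10$, all of which are tabulated in Toda's book.

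The computation is then carried out in the following order. First, $\pi_*(SU(3))$ is extracted from the long exact sequence associated with $S^3 \to SU(3) \to S^5$. Next, the fibration $SU(3) \to SU(4) \to S^7$ determines $\pi_*(SU(4)) = \pi_*(Spin(6))$. Then $SU(3) \to G_2 \to S^6$ yields $\pi_*(G_2)$. Finally, $G_2 \to Spin(7) \to S^7$ yields $\pi_*(Spin(7))$. At each stage the long exact sequence produces short exact sequences of the form $0 \to \op{coker} \to \pi_i(E) \to \ker \to 0$, and one reads off the answer from the previous step together with $\pi_i(S^n)$.

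The main obstacle, as always in computations of this type, is twofold: pinning down the connecting homomorphisms $\partial\colon \pi_i(B) \to \pi_{i-1}(F)$, which are generally nonzero and encode subtle multiplicative information (Whitehead and Samelson products, Hopf invariants), and resolving the extension problems that arise when the kernel and cokernel of $\partial$ are both nontrivial. For instance, to obtain $\pi_6(G_2) = \mathbb{Z}/3$ one must verify that the image of $\partial\colon \pi_7(S^6) = \mathbb{Z}/2 \to \pi_6(SU(3)) = \mathbb{Z}/6$ is the unique subgroup of order $2$, whereas to obtain $\pi_9(G_2) = \mathbb{Z}/6$ one must solve a nonsplit extension of $\mathbb{Z}/3$ by $\mathbb{Z}/2$. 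Both of these identifications, and the analogous ones for $Spin(7)$, are the content of Mimura and Mimura--Toda's analysis, which proceeds by comparing with the Serre spectral sequence of the associated classifying-space fibrations and invoking known values of unstable Hopf invariants.
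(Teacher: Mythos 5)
Your proposal takes essentially the same route as the paper, which offers no proof of this table beyond citing Mimura and Mimura--Toda and remarking that the computations rest on a careful analysis of exactly the fiber sequences you list, so the sketch is appropriate and correct in outline. One small correction: for $\pi_9(G_2)$ there is no nonsplit extension to resolve --- any extension of $\Z/2\Z$ by $\Z/3\Z$ (or vice versa) splits since the orders are coprime, so the group is automatically $\Z/6\Z$ once the relevant subquotients are known; the delicate point there is instead the determination of the connecting homomorphism $\pi_9(S^6)\cong\Z/24\Z \to \pi_8(SU(3))\cong\Z/12\Z$, whose kernel must be shown to have order $2$.
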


These results are established using a number of techniques, but among others, the careful analysis of fiber sequences involving the stated compact Lie groups.

\subsubsection*{Realizations of $\op{B}_{\Nis}\op{G}_2$}
\begin{thm}
\label{thm:complexrealizationofpi34bg2}
If $k$ is a separably closed field contained in $\cplx$ (resp. having characteristic unequal to $2$), then complex (resp. \'etale) realization induces isomorphisms of the form:
\[
\begin{split}
\bpi_{2,2}^{\aone}(\op{B}_{\Nis}\op{G}_2)(k) &\isomto \Z, \text{ and } \\
\bpi_{3,4}^{\aone}(\op{B}_{\Nis}\op{G}_2)(k) &\isomto \Z/3\Z.
\end{split}
\]
\end{thm}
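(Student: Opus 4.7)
The plan is to compute both $\aone$-homotopy sheaves algebraically using the results already established in this section, to observe that the abstract answers match the classical topological expectation from Theorem~\ref{thm:mimuratoda}, and to verify that the realization maps are isomorphisms by naturality. Algebraically, Proposition~\ref{prop:lowdegree} yields $\bpi_{2,2}^{\aone}(\op{B}_{\Nis}\op{G}_2)(k) = (\K^{\op{M}}_2)_{-2}(k) = \Z$. For the second group, I would contract the exact sequence of Theorem~\ref{thm:pi3bg2} four times: Lemma~\ref{lem:k3indsheaf} kills $(\K^{\op{ind}}_3)_{-4}$, and Lemma~\ref{lem:structureofs4} identifies $(\mathbf{S}_4/3)_{-4}$ with $(\K^{\op{M}}_4/3)_{-4}(k) = \Z/3\Z$. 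On the topological side, Theorem~\ref{thm:mimuratoda} yields $\pi_4(BG_2) = \pi_3(G_2) = \Z$ and $\pi_7(BG_2) = \pi_6(G_2) = \Z/3\Z$; in the \'etale setting the same answers appear after $\ell$-completion via Lemmas~\ref{lem:etalerealizationofquadrics} and~\ref{lem:etalerealizationofclassifyingspaces}, where one takes $\ell = 3$ to detect the second group.

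For the first assertion, I would reduce to a known statement via the long-root embedding $\op{B}_{\Nis}\op{SL}_3 \to \op{B}_{\Nis}\op{G}_2$. On $\bpi_2^{\aone}$, this map is the identity of $\K^{\op{M}}_2$ by Proposition~\ref{prop:lowdegree}, while topologically $\pi_4(BSU(3)) \to \pi_4(BG_2)$ is the identity $\Z \to \Z$ (both are $\pi_3$ of the group). Thus it suffices to know that the realization map $\bpi_{2,2}^{\aone}(\op{B}_{\Nis}\op{SL}_3)(k) \to \pi_4(BSU(3))$ is an isomorphism, which is the second Chern class comparison essentially already carried out in Lemma~\ref{lem:diagonalmapgenerator} (compare the Suslin matrix argument with the topological generator of $\pi_4(BSU)$).

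For the second assertion, I would realize the $\aone$-fiber sequence $\op{Q}_6 \to \op{B}_{\Nis}\op{SL}_3 \to \op{B}_{\Nis}\op{G}_2$ of Proposition~\ref{prop:sl3g2fibration} via Lemmas~\ref{lem:complexrealizationofspheres} and~\ref{lem:complexrealizationofclassifyingspaces} to the classical fiber sequence $S^6 \to BSU(3) \to BG_2$. Applying $\bpi_{3,4}^{\aone}(-)(k)$ and $\pi_7$ respectively gives a commutative ladder
\[
\xymatrix{
\bpi_{3,4}^{\aone}(\op{Q}_6)(k) \ar[r]\ar[d] & \bpi_{3,4}^{\aone}(\op{B}_{\Nis}\op{SL}_3)(k) \ar[r]\ar[d] & \bpi_{3,4}^{\aone}(\op{B}_{\Nis}\op{G}_2)(k) \ar[r]\ar[d] & 0 \\
\pi_7(S^6) \ar[r] & \pi_7(BSU(3)) \ar[r] & \pi_7(BG_2) \ar[r] & 0,
}
\]
with exact rows, where exactness of the top row at its rightmost position uses that $\op{Q}_6$ is $\aone$-$2$-connected (so $\bpi_{2,4}^{\aone}(\op{Q}_6) = 0$). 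The left vertical arrow is an isomorphism by (the proof of) Lemma~\ref{lem:nontrivialq7toq6}, the middle by \cite[Theorem 5.5]{AsokFaselSpheres}, and a five-lemma argument completes the proof. The \'etale case is handled identically after $\ell$-completion at $\ell \neq \op{char}(k)$, replacing Lemmas~\ref{lem:complexrealizationofspheres} and~\ref{lem:complexrealizationofclassifyingspaces} by Lemmas~\ref{lem:etalerealizationofquadrics} and~\ref{lem:etalerealizationofclassifyingspaces}. The main obstacle will be verifying the naturality of the realization functors with respect to the $\aone$-fiber sequences that underlie both the algebraic and topological computations; this is largely a bookkeeping issue since the nontrivial identifications of algebraic Hopf elements and Suslin matrices with their topological counterparts have already been performed in the earlier lemmas.
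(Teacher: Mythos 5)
Your proposal is correct and follows essentially the same route as the paper: the first isomorphism reduces via the $2$-equivalence $\op{B}_{\Nis}\op{SL}_3 \to \op{B}_{\Nis}\op{G}_2$ to the Bott periodicity comparison for $\op{B}_{\Nis}\op{SL}_3$ (the paper cites the proof of \cite[Theorem 5.5]{AsokFaselSpheres} rather than Lemma~\ref{lem:diagonalmapgenerator}, which concerns $\bpi_{3,3}^{\aone}$ of $\op{B}_{\Nis}\op{SL}_4$), and the second follows from Lemma~\ref{lem:nontrivialq7toq6} together with functoriality of realization applied to the fiber sequence $\op{Q}_6 \to \op{B}_{\Nis}\op{SL}_3 \to \op{B}_{\Nis}\op{G}_2$, exactly as you set up. The only adjustment is in the \'etale case at $\ell = 3$, where the left vertical arrow $\mathbf{W}(k) \to \pi_7(S^6)^{\wedge}_3 = 0$ is not an isomorphism, so rather than a literal five lemma you should note that the image of $\bpi_{3,4}^{\aone}(\op{Q}_6)(k)$ in $\Z/6\Z$ is the $2$-torsion (Lemma~\ref{lem:nontrivialq7toq6}), which is killed by $3$-completion, whence the induced map on cokernels is still an isomorphism.
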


\begin{proof}
For the first statement, observe that by appeal to Proposition \ref{prop:lowdegree}, we conclude that $\bpi_{2,2}^{\aone}(\op{B}_{\Nis}\op{G}_2) = \Z = \bpi_{2,2}^{\aone}(\op{B}_{\Nis}\op{SL}_3)$.  The result then follows by appeal to Bott periodicity; see the proof of \cite[Theorem 5.5]{AsokFaselSpheres} (we leave the details to the reader).  The second statement follows immediately from Lemma \ref{lem:nontrivialq7toq6} and functoriality of complex or \'etale realization.

\end{proof}

\begin{rem}
Note that our computations also see the {\em trivial} homotopy groups of ${\op{B}}\op{G}_2$ and ${\op{B}}\op{Spin}_7$ up to degree $7$.  Indeed, $\bpi_{2,3}^{\aone}(\op{B}_{\Nis}\op{G}_2) = (\K^{\op{M}}_2)_{-3} = 0$.  Likewise, one can show as above that $\bpi_{3,3}^{\aone}(\op{B}_{\Nis}\op{G}_2)(k) = 0$.  Similarly, $\bpi_{3,4}^{\aone}(\op{B}_{\Nis}\op{Spin}_7) = (\K_3^{\op{ind}})_{-4} = 0$ .
\end{rem}

\section{Classifying octonion algebras}
\label{sec:octonionclassification}

With the description of the first few $\A^1$-homotopy groups of ${\op{B}_{\Nis}}\op{G}_2$ in hand, we now turn to the question of classifying generically split octonion algebras over low-dimensional smooth affine varieties.  We begin by recalling a classification result for octonion algebras.

\subsection{Classifying generically split octonion algebras}
\label{ss:classificationinlowdimensions}
In this section, we give the general set-up for classifying octonion algebras: obstruction theory.  We begin by placing the problem in the context of $\aone$-homotopy theory by means of an affine representability theorem for octonion bundles.  Then, we explain how to describe $\aone$-homotopy classes of maps using the Postnikov tower.

\subsubsection*{Affine representability for octonion algebras}
The following result turns the problem of describing octonion algebras over smooth affine $k$-schemes over an infinite field into a problem of $\aone$-homotopy theory.

\begin{thm}
\label{thm:representability}
Assume $k$ is an infinite field.  If $R$ is any smooth affine $k$-algebra, then there is a bijection
\[
[\Spec R,\op{B}_{\Nis}\op{G}_2]_{\aone} \isomto \{ \text{generically split octonion algebras over } R\};
\]
this bijection is functorial in $R$.
\end{thm}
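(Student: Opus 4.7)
The plan is to reduce the statement to the main affine representability theorem of \cite{AffineRepresentabilityII} applied to the split group scheme $\op{G}_2$, using the identifications already built up in Section~\ref{sec:octonionprelims}. First I would recall that by Theorem~\ref{thm:g2torsorsandoctonionalgebras}, the groupoid of octonion algebras over $R$ is naturally equivalent to the groupoid of \'etale locally trivial $\op{G}_2$-torsors over $\Spec R$, under the functor $O\mapsto \underline{\op{Isom}}(Z,O)$. Under this equivalence, generically split octonion algebras correspond to $\op{G}_2$-torsors that become trivial after restriction to $\Spec k(R)$. Since $R$ is a smooth (hence regular) domain and $\op{G}_2$ is semi-simple, Nisnevich's theorem \cite{Nisnevich} (already invoked in the introduction) ensures that a $\op{G}_2$-torsor over $\Spec R$ is generically trivial if and only if it is Nisnevich locally trivial. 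Hence taking $\pi_0$ of isomorphism classes one obtains a bijection between generically split octonion algebras on $R$ and the pointed set $\op{H}^1_{\Nis}(\Spec R,\op{G}_2)$, functorial in $R$.

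Next I would invoke the affine representability theorem of \cite{AffineRepresentabilityII} to identify $\op{H}^1_{\Nis}(\Spec R,\op{G}_2)$ with $[\Spec R,\op{B}_{\Nis}\op{G}_2]_{\aone}$. This is where the hypothesis that $k$ be infinite enters: the results of \cite{AffineRepresentabilityII} require $\aone$-invariance of the Nisnevich cohomology presheaf $U\mapsto \op{H}^1_{\Nis}(U,\op{G}_2)$ on smooth affine schemes, which is known for split (more generally isotropic) reductive group schemes over infinite fields. Granted this $\aone$-invariance, the general machinery from \cite[\S 2.2--2.3]{AffineRepresentabilityII} gives a functorial bijection
\[
[\Spec R,\op{B}_{\Nis}\op{G}_2]_{\aone}\isomto \op{H}^1_{\Nis}(\Spec R,\op{G}_2).
\]
Combining the two bijections yields the desired identification, and functoriality in $R$ is manifest from both steps.

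The only nontrivial input is the $\aone$-invariance of $\op{G}_2$-torsors on smooth affine schemes over an infinite field, which is exactly the ingredient flagged in one of the introductory remarks; this is the step that forces the restriction to infinite $k$. Everything else is either a repackaging of Theorem~\ref{thm:g2torsorsandoctonionalgebras} or a direct appeal to \cite{Nisnevich} together with the general formalism of \cite{AffineRepresentabilityII}. I would therefore present the proof as a short two-step assembly: (i) translate octonion algebras into $\op{G}_2$-torsors and generic splitness into Nisnevich local triviality, (ii) apply the affine representability theorem.
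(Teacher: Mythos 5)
Your proof is correct and follows essentially the same route as the paper: the paper's argument is precisely to combine Theorem~\ref{thm:g2torsorsandoctonionalgebras} (octonion algebras as $\op{G}_2$-torsors, with generically split algebras corresponding via Nisnevich's theorem to Nisnevich locally trivial torsors) with the affine representability theorem of \cite{AffineRepresentabilityII}, whose infinite-field hypothesis enters exactly where you say it does. Your write-up simply spells out the intermediate identifications that the paper leaves implicit.
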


\begin{proof}
This result follows immediately by combining \cite[Theorem 1]{AffineRepresentabilityII} with Theorem \ref{thm:g2torsorsandoctonionalgebras}.
\end{proof}

\begin{rem}
If ``homotopy invariance" held for Nisnevich-locally trivial $\op{G}_2$-torsors over finite fields, then Theorem \ref{thm:representability} would hold over an arbitrary field.
\end{rem}

We may also slightly improve \cite[Theorem 4.2.2]{AffineRepresentabilityII}.

\begin{thm}
If $k$ is an infinite field, then the simplicial presheaf $\op{R}_{\Zar}\Singaone \op{Q}_6$ is Nisnevich local and $\aone$-invariant. If $R$ is any smooth $k$-algebra, then there is a functorial bijection
\[
\pi_0(\Singaone \op{Q}_6)(R) \isomto [\Spec R,\op{Q}_6]_{\aone}.
\]
\end{thm}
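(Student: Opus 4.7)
The plan is to exploit the $\aone$-fiber sequence
\[
\op{Q}_6 \longrightarrow \op{B}_{\Nis}\op{SL}_3 \longrightarrow \op{B}_{\Nis}\op{G}_2
\]
from Proposition~\ref{prop:sl3g2fibration}, combined with affine representability for both classifying spaces on the right. For $\op{B}_{\Nis}\op{SL}_3$ this is standard since $\op{SL}_3$ is special (so Nisnevich-local torsors are already Zariski-local); for $\op{B}_{\Nis}\op{G}_2$ this is Theorem~\ref{thm:representability}. In particular, the simplicial presheaves $\op{R}_{\Zar}\Singaone \op{B}_{\Nis}\op{SL}_3$ and $\op{R}_{\Zar}\Singaone \op{B}_{\Nis}\op{G}_2$ are already Nisnevich local and $\aone$-invariant, and thus serve as $\aone$-fibrant replacements of the respective classifying spaces.

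The main step is to check that $\op{R}_{\Zar}\Singaone$ transforms this $\aone$-fiber sequence into an honest homotopy fiber sequence, or equivalently, that the natural comparison map
\[
\op{R}_{\Zar}\Singaone \op{Q}_6 \longrightarrow \operatorname{hofib}\bigl(\op{R}_{\Zar}\Singaone \op{B}_{\Nis}\op{SL}_3 \longrightarrow \op{R}_{\Zar}\Singaone \op{B}_{\Nis}\op{G}_2\bigr)
\]
is an equivalence. This will rest on the Zariski-local triviality of the $\op{SL}_3$-torsor $\op{G}_2 \to \op{G}_2/\op{SL}_3 \cong \op{Q}_6$ (Theorem~\ref{thm:g2sl3andspin7}(4)) together with the compatibility of $\Singaone$ with Zariski-locally trivial torsors under special groups, as developed in the machinery of \cite[\S 2--4]{AffineRepresentabilityII}. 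Since the class of Nisnevich-local $\aone$-invariant simplicial presheaves is closed under homotopy limits, the right-hand side of the above map is automatically Nisnevich local and $\aone$-invariant, and the corresponding assertion for $\op{R}_{\Zar}\Singaone \op{Q}_6$ follows.

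Granted the first assertion, the second is essentially formal. Once $\op{R}_{\Zar}\Singaone \op{Q}_6$ is known to be an $\aone$-fibrant replacement of $\op{Q}_6$, one has $[\Spec R, \op{Q}_6]_{\aone} = \pi_0(\op{R}_{\Zar}\Singaone \op{Q}_6)(R)$ for any smooth $k$-algebra $R$. Since $\pi_0$ of the simplicial presheaf $\Singaone \op{Q}_6$ already satisfies Zariski descent on smooth affine schemes (as in the argument of \cite[Theorem 4.2.2]{AffineRepresentabilityII}), the Zariski sheafification does not change its value on such $R$, giving the asserted bijection. The main obstacle in the entire argument will be verifying compatibility of $\op{R}_{\Zar}\Singaone$ with the fiber sequence; the remaining ingredients — affine representability for $\op{SL}_3$- and $\op{G}_2$-torsors, closure of Nisnevich-local $\aone$-invariant presheaves under homotopy limits, and Zariski descent for $\pi_0$ on smooth affines — are either established earlier in the excerpt or entirely standard.
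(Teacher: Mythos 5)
Your outline is sound in substance, and it is worth noting that the paper's own proof is essentially a citation: it combines the identification $\op{Q}_6 \cong \op{G}_2/\op{SL}_3 \cong \op{Spin}_7/\op{Spin}_6$ from Theorem~\ref{thm:g2sl3andspin7} (whose point (4) also records local triviality of the quotient torsors) with \cite[Theorems 2.3.2 and 3.3.6]{AffineRepresentabilityII}, the first being the general representability criterion for quotients by Nisnevich-locally trivial torsors under groups with $\aone$-invariant torsor theory, the second supplying that $\aone$-invariance over an infinite field. Your route through the $\aone$-fiber sequence $\op{Q}_6 \to \op{B}_{\Nis}\op{SL}_3 \to \op{B}_{\Nis}\op{G}_2$ and a homotopy-fiber comparison of $\aone$-naive models of the classifying spaces is a re-derivation of that machinery in this special case, so the two approaches buy the same thing. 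The one caution concerns your ``main step'': it is not a formal consequence of the fiber sequence plus closure of Nisnevich-local $\aone$-invariant presheaves under homotopy limits. Knowing that $\op{R}_{\Zar}\Singaone \op{Q}_6$ and the homotopy fiber are both $\aone$-weakly equivalent to $\op{Q}_6$ does not make the comparison map a sectionwise (or even Zariski-local) equivalence, since a space $\aone$-weakly equivalent to an $\aone$-local space need not itself be $\aone$-local; that non-formal content is exactly what \cite[Theorem 2.3.2]{AffineRepresentabilityII} provides when applied to the Zariski-locally trivial $\op{SL}_3$-torsor $\op{G}_2 \to \op{Q}_6$, and your closing appeal to Zariski descent for $\pi_0(\Singaone \op{Q}_6)$ on smooth affines is part of the same package rather than an independent standard fact. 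Filled in honestly, your argument therefore reduces to the citation the paper makes; as written, the decisive step is correctly located but not proved.
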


\begin{proof}
Combine point (2) of Theorem \ref{thm:g2sl3andspin7} and \cite[Theorems 2.3.2 and 3.3.6]{AffineRepresentabilityII}.
\end{proof}

\begin{rem}[Free vs. pointed]
\label{rem:freevspointed}
If $\mathscr{X}$ is a $k$-space, then we seek to describe $[\mathscr{X},\op{B}_{\Nis}\op{G}]$ where $\op{G}$ is a smooth $k$-group scheme which is $\mathbb{A}^1$-connected.  The usual techniques one uses in this situation naturally describe {\em pointed} $\aone$-homotopy classes and thus we quickly detour to describe the link between free and pointed homotopy classes.  We write $\mathscr{X}_+$ for $\mathscr{X}$ with a disjoint $k$-point attached.  Adjoining a disjoint base-point is left adjoint to the functor from pointed spaces to spaces given by forgetting the base-point.  This adjunction induces a bijection
\[
[\mathscr{X}_+,\op{B}_{\Nis}\op{G}]_{\aone} \longrightarrow [\mathscr{X},\op{B}_{\Nis}\op{G}]_{\aone}.
\]
We will use this identification routinely below to phrase our classification results in terms of {\em free} homotopy classes.
\end{rem}

\subsubsection*{The $\aone$-Postnikov tower of $\op{B}_{\Nis}\op{G}_2$}
By analogy with the situation in classical topology, we will use obstruction theory via the $\aone$-Postnikov tower to study classification problems.  The general theory of Postnikov towers in $\aone$-homotopy theory has been summarized in \cite[Theorem 6.1]{AsokFasel}; we specialize this discussion to the case of $\op{B}_{\Nis}\op{G}_2$.  The $\aone$-Postnikov tower of $\op{B}_{\Nis}\op{G}_2$ builds this space as a sequence of iterated principal fibrations, with fibers that are Eilenberg--Mac Lane spaces based on strictly $\aone$-invariant sheaves.  Briefly, there are spaces $\tau_{\leq i}(\op{B}_{\Nis}\op{G}_2)$ having $\aone$-homotopy sheaves concentrated in degrees $\leq i$, maps $\op{B}_{\Nis}\op{G}_2 \to \tau_{\leq i}(\op{B}_{\Nis}\op{G}_2)$ inducing isomorphisms on $\aone$-homotopy sheaves in degree $\leq i$ and maps $\tau_{\leq i}(\op{B}_{\Nis}\op{G}_2) \to \tau_{\leq i-1}(\op{B}_{\Nis}\op{G}_2)$ inducing isomorphisms on $\aone$-homotopy sheaves in degrees $\leq i-1$ that fit into a tower of the form
\[
\xymatrix{
& & \ar[dl] \op{B}_{\Nis}\op{G}_2 \ar[d] \ar[dr] & & \\
\cdots \ar[r]& \tau_{\leq i+1}(\op{B}_{\Nis}\op{G}_2) \ar[r] & \tau_{\leq i}(\op{B}_{\Nis}\op{G}_2) \ar[r] & \tau_{\leq i-1}(\op{B}_{\Nis}\op{G}_2) \ar[r] & \cdots.
}
\]
The following result, which is simply a reformulation of Proposition \ref{prop:lowdegree}, refines the observation that $\op{B}_{\Nis}\op{G}_2$ is $\aone$-$1$-connected.

\begin{prop}
\label{prop:secondstageBG2}
If $k$ is an infinite field, then there is an equivalence $\tau_{\leq 2}(\op{B}_{\Nis}\op{G}_2) \isomt \op{K}(\K^{\op{M}}_2,2)$.  The composite map
\[
\op{B}_{\Nis}\op{G}_2 \longrightarrow \tau_{\leq 2}(\op{B}_{\Nis}\op{G}_2) \isomto \op{K}(\K^{\op{M}}_2,2)
\]
is called the universal second Chern class.
\end{prop}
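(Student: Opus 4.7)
The plan is to obtain this proposition as an immediate consequence of Proposition \ref{prop:lowdegree}. Recall that Proposition \ref{prop:lowdegree} establishes that $\op{B}_{\Nis}\op{G}_2$ is $\aone$-$1$-connected and that there is a canonical isomorphism $\bpi_2^{\aone}(\op{B}_{\Nis}\op{G}_2) \isomt \K^{\op{M}}_2$. Consequently, the $\aone$-Postnikov section $\tau_{\leq 2}(\op{B}_{\Nis}\op{G}_2)$ has $\bpi_i^{\aone} = 0$ for $i = 0,1$ and only one non-vanishing $\aone$-homotopy sheaf, namely $\K^{\op{M}}_2$ in degree $2$. Note that $\K^{\op{M}}_2$ is strictly $\aone$-invariant, so by Morel's results (as summarized in the preliminaries) it is abelian and admits a well-behaved Eilenberg--Mac Lane space $\op{K}(\K^{\op{M}}_2,2)$ that is already $\aone$-local and has $\K^{\op{M}}_2$ as its unique non-vanishing $\aone$-homotopy sheaf, concentrated in degree $2$.

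Given these two inputs, I would invoke the standard uniqueness statement for pointed $\aone$-connected spaces in the $\aone$-homotopy category: any pointed $\aone$-$(n-1)$-connected space whose sole non-vanishing $\aone$-homotopy sheaf in degree $n$ is a strictly $\aone$-invariant sheaf $\mathbf{A}$ is canonically $\aone$-weakly equivalent to $\op{K}(\mathbf{A},n)$. Applied to $n=2$ and $\mathbf{A}=\K^{\op{M}}_2$ this yields the desired equivalence $\tau_{\leq 2}(\op{B}_{\Nis}\op{G}_2) \isomt \op{K}(\K^{\op{M}}_2,2)$. The composite with the Postnikov projection then produces a canonical map $\op{B}_{\Nis}\op{G}_2 \to \op{K}(\K^{\op{M}}_2,2)$; the terminology \emph{universal second Chern class} is justified by the Nesterenko--Suslin--Totaro identification $[X,\op{K}(\K^{\op{M}}_2,2)]_{\aone} = \op{H}^2(X,\K^{\op{M}}_2) \cong \op{CH}^2(X)$ for smooth $X$, so that this classifying map recovers a Chow-theoretic invariant of the associated octonion algebra.

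There is no substantive obstacle here: the result is essentially a repackaging of Proposition \ref{prop:lowdegree} into the language of the $\aone$-Postnikov tower, and the only nontrivial ingredients (strict $\aone$-invariance of $\K^{\op{M}}_2$, existence and uniqueness of Eilenberg--Mac Lane spaces for strictly $\aone$-invariant sheaves, and convergence of the Postnikov tower) have already been recorded in the preliminaries.
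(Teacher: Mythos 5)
Your proposal is correct and matches the paper, which gives no separate argument but simply presents this proposition as a reformulation of Proposition \ref{prop:lowdegree}: the truncation $\tau_{\leq 2}(\op{B}_{\Nis}\op{G}_2)$ has a single non-vanishing $\aone$-homotopy sheaf, the strictly $\aone$-invariant sheaf $\K^{\op{M}}_2$ in degree $2$, hence is equivalent to $\op{K}(\K^{\op{M}}_2,2)$. Your additional remarks on the Eilenberg--Mac Lane identification and the Chow-theoretic interpretation are exactly the standard inputs the paper takes for granted.
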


\begin{rem}
\label{rem:secondchernclass}
Suppose $X$ is a smooth affine $k$-scheme of dimension $2$ and $O$ is any generically split octonion algebra on $X$.  The underlying rank $8$ vector bundle has a second Chern class which is, in general, unequal to the second Chern class described above.  Indeed, suppose $\mathscr{E}$ is an oriented rank $3$ vector bundle on $X$ and let $\op{Zorn}(\mathscr{E})$ be the associated Zorn algebra.  The second Chern class of the underlying vector bundle is the second Chern class of $\O_X^{\oplus 2} \oplus \mathscr{E} \oplus \mathscr{E}^{\vee}$; since $c_2(\mathscr{E}^{\vee}) = c_2(\mathscr{E})$, the second Chern class of this rank $8$ module is $2c_2(\mathscr{E})$.  On the other hand, the map $\op{B}_{\Nis}\op{SL}_3 \to \op{B}_{\Nis}\op{G}_2$ is a $2$-equivalence and therefore the universal second Chern class of $\op{Zorn}(\mathscr{E})$ coincides with the second Chern class of $\mathscr{E}$ itself.

In general, the second Chern class of any associated vector bundle to a $\op{G}_2$-torsor is proportional to the second Chern class of the $\op{G}_2$-torsor, and the constant of proportionality may be described in entirely root-theoretic terms.  Note also that the universal second Chern class in $\op{H}^2(\op{B}_{\Nis}\op{G}_2,\K^{\op{M}}_2)$ corresponds to a central extension of the Nisnevich sheaf $\op{G}_2$ by $\K^{\op{M}}_2$ by the general theory of \cite{Giraud}.  As mentioned in \cite[Remark A.8]{MFM}, this central extension coincides with that constructed in \cite[\S 4]{BrylinksiDeligne} under our assumptions ($k$ is infinite).
\end{rem}

The next stage of the $\aone$-Postnikov tower yields an $\aone$-fiber sequence of the form
\[
\tau_{\leq 3}(\op{B}_{\Nis}\op{G}_2) \longrightarrow \tau_{\leq 2}(\op{B}_{\Nis}\op{G}_2)\stackrel{k_3}{\longrightarrow} \op{K}(\bpi_3^{\aone}(\op{B}_{\Nis}\op{G}_2),4),
\]
where the map $k_3$ is a so-called $k$-invariant.  Using this $\aone$-fiber sequence, we obtain the following result.

\begin{thm}
\label{thm:g2classification}
Assume $k$ is an infinite field and $X$ is a smooth affine $k$-scheme.
\begin{enumerate}[noitemsep,topsep=1pt]
\item If $X$ has dimension $\leq 2$, then the map $c_2: [X,\op{B}_{\Nis}\op{G}_2] \to \op{CH}^2(X)$ is bijective.
\item If $X$ has dimension $\leq 3$ and, furthermore, $k$ has characteristic unequal to $2$, then there is an exact sequence of the form
\[
\op{H}^1(X,\K^{\op{M}}_2) \xrightarrow{\Omega k_3} \op{H}^{3}(X,\K^{\op{M}}_4/3) \longrightarrow [X,\op{B}_{\Nis}\op{G}_2] \stackrel{c_2}{\longrightarrow} \op{CH}^2(X) \longrightarrow 0.
\]
In particular, if for every closed point $x \in X$, the residue field $\kappa_x$ is $3$-divisible (e.g., if $k$ is algebraically closed), then $c_2$ is bijective.
\end{enumerate}
\end{thm}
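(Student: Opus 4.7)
The plan is to run obstruction theory along the $\aone$-Postnikov tower of $\op{B}_{\Nis}\op{G}_2$, whose first two nontrivial stages are controlled by Proposition \ref{prop:secondstageBG2} and Theorem \ref{thm:pi3bg2}. The estimate I will apply repeatedly is that for any strictly $\aone$-invariant sheaf $\mathbf{M}$ and any smooth $k$-scheme $X$, one has $\op{H}^i(X, \mathbf{M}) = 0$ whenever $i > \dim X$, and more generally the degree-$p$ term of the Rost--Schmid complex computing $\op{H}^p(X, \mathbf{M})$ is a direct sum of values of $\mathbf{M}_{-p}$ on residue fields at codimension-$p$ points.

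For part (1), with $\dim X \leq 2$, every obstruction to lifting a map $X \to K(\K^{\op{M}}_2, 2) = \tau_{\leq 2}(\op{B}_{\Nis}\op{G}_2)$ up the Postnikov tower, and every ambiguity among such lifts, lies in a group $\op{H}^i(X, \bpi_j^{\aone}(\op{B}_{\Nis}\op{G}_2))$ with $i \geq 3$. All such groups vanish by the dimension bound. Composition with the universal second Chern class therefore yields a bijection
\[
[X, \op{B}_{\Nis}\op{G}_2] \isomto \op{H}^2(X, \K^{\op{M}}_2) \cong \op{CH}^2(X),
\]
the last identification being the Nesterenko--Suslin--Totaro theorem.

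For part (2), with $\dim X \leq 3$, the same argument collapses the tower above stage $3$, giving $[X, \op{B}_{\Nis}\op{G}_2] \isomto [X, \tau_{\leq 3}(\op{B}_{\Nis}\op{G}_2)]$. The principal Postnikov fibration
\[
K(\bpi_3^{\aone}(\op{B}_{\Nis}\op{G}_2), 3) \longrightarrow \tau_{\leq 3}(\op{B}_{\Nis}\op{G}_2) \longrightarrow K(\K^{\op{M}}_2, 2)
\]
classified by $k_3$ produces the obstruction sequence
\[
\op{H}^1(X, \K^{\op{M}}_2) \xrightarrow{\Omega k_3} \op{H}^3(X, \bpi_3^{\aone}(\op{B}_{\Nis}\op{G}_2)) \longrightarrow [X, \tau_{\leq 3}(\op{B}_{\Nis}\op{G}_2)] \stackrel{c_2}{\longrightarrow} \op{CH}^2(X) \longrightarrow \op{H}^4(X, \bpi_3^{\aone}(\op{B}_{\Nis}\op{G}_2)),
\]
whose rightmost group vanishes by the dimension bound, so $c_2$ is surjective. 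It remains to identify $\op{H}^3(X, \bpi_3^{\aone}(\op{B}_{\Nis}\op{G}_2)) \cong \op{H}^3(X, \K^{\op{M}}_4/3)$. Feeding the extension $0 \to \mathbf{S}_4/3 \to \bpi_3^{\aone}(\op{B}_{\Nis}\op{G}_2) \to \K^{\op{ind}}_3 \to 0$ of Theorem \ref{thm:pi3bg2} into the long exact sequence in Nisnevich cohomology, and invoking Lemma \ref{lem:k3indsheaf} (which via Rost--Schmid forces $\op{H}^i(X, \K^{\op{ind}}_3) = 0$ for $i \geq 1$), gives $\op{H}^3(X, \mathbf{S}_4/3) \isomto \op{H}^3(X, \bpi_3^{\aone}(\op{B}_{\Nis}\op{G}_2))$. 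Lemma \ref{lem:structureofs4} supplies an epimorphism $\K^{\op{M}}_4/3 \to \mathbf{S}_4/3$ whose kernel has vanishing $(-i)$-contraction for $i \geq 2$, so Rost--Schmid again delivers $\op{H}^i(X, \K^{\op{M}}_4/3) \isomto \op{H}^i(X, \mathbf{S}_4/3)$ for $i \geq 2$, completing the identification. For the closing assertion, when every closed-point residue field of $X$ is $3$-divisible (automatic for $k$ algebraically closed, since closed-point residue fields then equal $k$), the codimension-$3$ term $\bigoplus_{x \in X^{(3)}} \kappa_x^\times/(\kappa_x^\times)^3$ of the Rost--Schmid complex for $\K^{\op{M}}_4/3$ vanishes, hence $\op{H}^3(X, \K^{\op{M}}_4/3) = 0$ and $c_2$ is bijective.

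The main technical obstacle is precisely the identification $\op{H}^3(X, \bpi_3^{\aone}(\op{B}_{\Nis}\op{G}_2)) \cong \op{H}^3(X, \K^{\op{M}}_4/3)$, which rests on the fine sheaf-theoretic input of Theorem \ref{thm:pi3bg2} together with the contraction-vanishing statements of Lemmas \ref{lem:k3indsheaf} and \ref{lem:structureofs4}, translated into cohomology vanishings via Rost--Schmid; the remaining obstruction-theoretic bookkeeping is routine given the cohomological dimension estimate.
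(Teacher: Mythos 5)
Your proposal is correct and follows essentially the same route as the paper: truncate at the third Postnikov stage (the paper cites \cite[Proposition 6.2]{AsokFasel} for the collapse in dimension $\leq 3$), run the exact sequence of the principal fibration classified by $k_3$ with $\tau_{\leq 2}(\op{B}_{\Nis}\op{G}_2) \simeq \op{K}(\K^{\op{M}}_2,2)$, and identify $\op{H}^3(X,\bpi_3^{\aone}(\op{B}_{\Nis}\op{G}_2)) \cong \op{H}^3(X,\K^{\op{M}}_4/3)$ via Theorem \ref{thm:pi3bg2}, Lemma \ref{lem:k3indsheaf} and the Gersten/Rost--Schmid resolution. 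The only cosmetic differences are that you dispose of part (1) directly by cohomological-dimension vanishing for all higher homotopy sheaves (which neatly sidesteps any use of the characteristic-$\neq 2$ computation of $\bpi_3^{\aone}$ there) and that you verify the $3$-divisibility vanishing by inspecting the codimension-$3$ term of the Rost--Schmid complex rather than citing \cite[Proposition 5.4]{AsokFasel}.
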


\begin{proof}
Mapping a smooth affine scheme $X := \Spec R$ into the fiber sequence from the $\mathbb{A}^1$-Postnikov tower yields an exact sequence of the form
\[
\begin{split}
\cdots \longrightarrow [X,\Omega \tau_{\leq 2}(\op{B}_{\Nis}\op{G}_2)]_{\aone} &\longrightarrow [X,\Omega \op{K}(\bpi_3^{\aone}(\op{B}_{\Nis}\op{G}_2),4)]_{\aone} \longrightarrow \\
[X,\tau_{\leq 3}(\op{B}_{\Nis}\op{G}_2)] \longrightarrow [X,\tau_{\leq 2}(\op{B}_{\Nis}\op{G}_2)]_{\aone} &\longrightarrow [X,\op{K}(\bpi_3^{\aone}(\op{B}_{\Nis}\op{G}_2),4)]_{\aone}
\end{split}
\]
This description may be simplified by appeal to Proposition \ref{prop:secondstageBG2} and using the fact that $\Omega \op{K}(\mathbf{A},n) = \op{K}(\mathbf{A},n-1)$.  In that case, we obtain an exact sequence of the form
\[
\begin{split}
\cdots \longrightarrow \op{H}^1(X,\K^{\op{M}}_2) &\longrightarrow \op{H}^3(X,\bpi_3^{\aone}(\op{B}_{\Nis}\op{G}_2)) \longrightarrow \\
[X,\tau_{\leq 3}(\op{B}_{\Nis}\op{G}_2)] &\longrightarrow \op{CH}^2(X) \longrightarrow [X,\op{K}(\bpi_3^{\aone}(\op{B}_{\Nis}\op{G}_2),4)]_{\aone}
\end{split}
\]
Since $X$ has dimension $\leq 3$, it follows that $[X,\op{K}(\bpi_3^{\aone}(\op{B}_{\Nis}\op{G}_2),4)]_{\aone} = \op{H}^4(X,\bpi_3^{\aone}(\op{B}_{\Nis}\op{G}_2)) = 0$.  On the other hand, by \cite[Proposition 6.2]{AsokFasel}, the map
\[
[X,\op{B}_{\Nis}\op{G}_2] \longrightarrow [X,\tau_{\leq 3}(\op{B}_{\Nis}\op{G}_2)]
\]
is a bijection.

It remains to describe $\op{H}^3(X,\bpi_3^{\aone}(\op{B}_{\Nis}\op{G}_2))$.  By Theorem \ref{thm:pi3bg2}, there is an exact sequence of the form
\[
0 \longrightarrow \mathbf{S}_4/3 \longrightarrow \bpi_3^{\aone}(\op{B}_{\Nis}\op{G}_2) \longrightarrow \K_3^{\op{ind}} \longrightarrow 0.
\]
Since $\K_3^{\op{ind}}$ is killed by a single contraction, it follows from the Gersten resolution that for {\em any} smooth $k$-scheme, the groups $\op{H}^i(X,\K^{\op{ind}}_3)$ vanish for $i \geq 1$.  Thus, the long exact sequence in cohomology attached to the above short exact sequence yields an isomorphism
\[
\op{H}^3(X,\bpi_3^{\aone}(\op{B}_{\Nis}\op{G}_2)) \isomto \op{H}^3(X,\mathbf{S}_4/3).
\]
Moreover, it follows from Theorem \ref{thm:pi3bg2} that the map $\K^{\op{M}}_4/3 \to \mathbf{S}_4/3$ induces an isomorphism after two contractions, and therefore from the Gersten resolution, we conclude that
\[
\op{H}^3(X,\K^{\op{M}}_4/3) \isomto \op{H}^3(X,\mathbf{S}_4/3).
\]
The group $\op{H}^3(X,\K^{\op{M}}_4/3)$ is trivial if
\begin{itemize}[noitemsep,topsep=1pt]
\item $X$ has dimension $\leq 2$
\item $X$ has dimension $3$ and the residue field $\kappa_x$ of any closed point $x \in X$ is $3$-divisible (e.g., if $k$ is algebraically closed).
\end{itemize}
The former statement is immediate, and the latter statement follows from \cite[Proposition 5.4]{AsokFasel}.
\end{proof}

\begin{rem}
The group $\op{H}^3(X,\K^{\op{M}}_4/3)$ is isomorphic to the motivic cohomology group $\op{H}^{7,4}(X,\Z/3\Z)$.  The operation $\Omega k_3$ even factors through a natural transformation of motivic cohomology groups.  However, we do not know if the map $\Omega k_3$ is {\em a priori} given by a stable motivic cohomology operation.
\end{rem}

Combining Theorem \ref{thm:g2classification} with Lemma \ref{lem:generictrivialitylowdimensions} one immediately deduces the following result.

\begin{cor}
\label{cor:splitinlowdimensions}
Let $X$ be a smooth affine variety of dimension $\leq 2$ over a field $k$.
\begin{enumerate}[noitemsep,topsep=1pt]
\item If $k(X)$ has $2$-cohomological dimension $\leq 2$, then an octonion algebra is uniquely determined by $c_2$.
\item If $X$ has the $\aone$-homotopy type of a smooth scheme of dimension $1$ and $k(X)$ has $2$-cohomological dimension $\leq 2$, then any octonion algebra over $X$ is split.
\end{enumerate}
\end{cor}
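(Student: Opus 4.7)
The plan is to derive both parts as essentially formal consequences of Theorem~\ref{thm:g2classification}(1), combined with the generic triviality result of Lemma~\ref{lem:generictrivialitylowdimensions}, together with (for (2)) the elementary observation that $\op{CH}^2$ vanishes on schemes of $\aone$-homotopy dimension one. Throughout the argument I take $k$ to be an infinite field, as required for Theorem~\ref{thm:g2classification}; the statement should be read under this implicit hypothesis.

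For part (1), I would first invoke Lemma~\ref{lem:generictrivialitylowdimensions} (or its characteristic-$2$ variant in Remark~\ref{rem:characteristic2generictriviality}) to conclude that the hypothesis on the $2$-cohomological dimension of $k(X)$ forces every octonion algebra on $X$ to be generically split. Via Theorem~\ref{thm:g2torsorsandoctonionalgebras} and the affine representability result Theorem~\ref{thm:representability}, the set of isomorphism classes of octonion algebras on $X$ is then identified with $[X,\op{B}_{\Nis}\op{G}_2]_{\aone}$, pointed by the split octonion algebra. Theorem~\ref{thm:g2classification}(1) now gives that $c_2$ is a bijection from this set to $\op{CH}^2(X)$, which yields (1) directly.

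For part (2), I would observe in addition that $\op{CH}^2(-)$ is an $\aone$-homotopy invariant of smooth affine schemes, being representable in $\ho{k}$ by $\op{K}(\K^{\op{M}}_2,2)$. Therefore the hypothesis that $X$ has the $\aone$-homotopy type of a smooth scheme $Y$ of dimension $1$ yields $\op{CH}^2(X) \cong \op{CH}^2(Y) = 0$ for trivial codimension reasons. Combined with (1), every octonion algebra on $X$ corresponds to the trivial class in $\op{CH}^2(X)$, and since the split octonion algebra realizes this trivial class, bijectivity of $c_2$ forces every octonion algebra over $X$ to be split.

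There is no genuine obstacle here beyond the bookkeeping required to ensure that the reduction to generically split algebras, the pointed bijection between octonion algebras and $[X,\op{B}_{\Nis}\op{G}_2]_{\aone}$, and the distinguished basepoints relevant to $c_2$ are all compatibly aligned; the substantive content is entirely packaged in Theorem~\ref{thm:g2classification}(1) and Lemma~\ref{lem:generictrivialitylowdimensions}.
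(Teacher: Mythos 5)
Your proposal is correct and follows exactly the route the paper intends: the paper deduces this corollary simply by combining Theorem~\ref{thm:g2classification} with Lemma~\ref{lem:generictrivialitylowdimensions}, which is precisely your argument for (1), and your observation that $\op{CH}^2(X)=\op{H}^2(X,\K^{\op{M}}_2)$ is an $\aone$-homotopy invariant vanishing on schemes of $\aone$-homotopy dimension $1$ is the intended (and correct) way to get (2). The only caveats you already flag yourself, namely the implicit infiniteness of $k$ and the characteristic-$2$ variant via Remark~\ref{rem:characteristic2generictriviality}, are consistent with the paper's standing hypotheses.
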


\subsubsection*{Examples}
\begin{ex}
\label{ex:c2examples}
Assume $k = \cplx$.  In that case, for any smooth affine $k$-surface $X$, one knows that $\op{CH}^2(X)$ is a uniquely divisible group by Roitman's theorem.  In particular, if $X$ admits a non-zero $2$-form (e.g., $X$ is an open subscheme of an abelian surface or a K3 surface), then $\op{CH}^2(X)$ is a non-trivial uniquely divisible group.  The manifold $X(\cplx)$ has the homotopy type of a CW-complex of dimension $\leq 2$ and thus $\op{H}^4(X,\Z) = 0$.  Therefore, the cycle class map $\op{CH}^2(X) \to \op{H}^4(X,\Z)$ is the trivial map.  One may give a topological classification of $\op{G}_2(\cplx)$-torsors over $X(\cplx)$ using obstruction-theoretic techniques.  In particular, since the map ${\op{B}}\op{SU}(3) \to {\op{B}}\op{G}_2$ is a $5$-equivalence, it follows that all $\op{G}_2(\cplx)$-torsors over $X(\cplx)$ are trivial.  By appeal to Grauert's Oka-principle, one concludes that there exist smooth complex affine surfaces with infinitely many pairwise non-isomorphic algebraic octonion algebras that become isomorphic upon passing to the holomorphic category (cf. Example \ref{ex:holomorphicsetting}).
\end{ex}

\begin{ex}
\label{ex:q6}
Assume we work over an infinite field having characteristic unequal to $2$.  It is easy to classify octonion algebras on $\op{Q}_6$.  We may always fix a base-point $\ast$ in $\op{Q}_6$.  Because $\op{B}_{\Nis}\op{G}_2$ is $\aone$-simply-connected by Proposition \ref{prop:lowdegree}, it follows, e.g., from \cite[Lemma 2.1]{AsokFaselSpheres} that the map from pointed to free homotopy classes
\[
[(\op{Q}_6,\ast),\op{B}_{\Nis}\op{G}_2]_{\aone} \longrightarrow [\op{Q}_6,\op{B}_{\Nis}\op{G}_2]_{\aone}
\]
is a bijection.  The left-hand side can be identified with the group of sections
\[
\bpi_{3,3}^{\aone}(\op{B}_{\Nis}\op{G}_2)(k) = \bpi_3^{\aone}(\op{B}_{\Nis}\op{G}_2)_{-3}(k).
\]
Using Theorem \ref{thm:pi3bg2} it remains to compute the $3$-fold contractions of $\K^{\op{ind}}_3$ and $\mathbf{S}_4/3$.  By Lemma~\ref{lem:k3indsheaf}, we conclude that $(\K^{\op{ind}}_3)_{-i} = 0$ for $i \geq 1$.  On the other hand, there is a map $\K^{\op{M}}_4/3 \to \mathbf{S}_4/3$ that becomes an isomorphism after $2$-fold contraction.  Therefore, $(\mathbf{S}_4/3)_{-3} \cong \K^{\op{M}}_1/3$ (see, e.g., \cite[Lemma 2.7]{AsokFaselSpheres}) and we conclude that there is a bijection
\[
k^{\times}/k^{\times 3} \isomto [\op{Q}_6,\op{B}_{\Nis}\op{G}_2]_{\aone}.
\]
\end{ex}

\begin{ex}
\label{ex:nontrivialiftingclass}
Assume we work over an infinite field having characteristic unequal to $2$.  It is easy to classify octonion algebras on $\op{Q}_7$.  As in Example~\ref{ex:q6} there are bijections $\bpi_{3,4}^{\aone}(\op{B}_{\Nis}\op{G}_2)(k) \isomt [\op{Q}_7,\op{B}_{\Nis}\op{G}_2]_{\aone}$. Appealing again to Theorem \ref{thm:pi3bg2} and the contraction computations in the preceding example, we conclude that there are bijections
\[
\Z/3\Z \isomto [\op{Q}_7,\op{B}_{\Nis}\op{G}_2]_{\aone}
\]
We may augment this example by observing that the ``lifting" coset in Theorem \ref{thm:g2classification} is actually non-trivial.  Indeed, by \cite[Lemma 4.5]{AsokFasel}, we see that $\op{H}^1(\op{Q}_7,\K^{\op{M}}_2) = 0$, while $\op{H}^3(\op{Q}_7,\K^{\op{M}}_4/3) \isomto \Z/3\Z$.  Thus, $\Omega k_3$ is necessarily the zero map and each lifting class determines an isomorphism class of octonion algebras over $\op{Q}_7$.  For example, the ``universal" octonion algebra $\op{Spin}_7 \to \op{Spin}_7/\op{G}_2$ is a non-trivial torsor and therefore corresponds to a non-trivial lifting class (see Proposition \ref{prop:notsplit} for a detailed proof of this last fast).
\end{ex}

\subsection{When are octonion algebras isomorphic to Zorn algebras?}
\label{sec:zorncomparison}
We now analyze the question of when a generically split octonion algebra over a smooth affine scheme lies in the image of $\op{Zorn}(-)$.  By appeal to the affine representability results discussed at the beginning of Section \ref{ss:classificationinlowdimensions}, and the analysis of the morphism $\op{Zorn}(-)$ in Section \ref{ss:aonefibersequences}, the question: ``when is a generically split octonion $R$-algebra a Zorn algebra" is equivalent to the question: ``when does a class in $[\Spec R,\op{B}_{\Nis}\op{G}_2]_{\aone}$ lift along the map $\op{Zorn}: \op{B}_{\Nis}\op{SL}_3 \to \op{B}_{\Nis}\op{G}_2$?

This lifting question is typically analyzed by appeal to the Moore--Postnikov factorization of the map $\op{Zorn}$.  In our setting, this can be viewed as an analysis of the $\aone$-Postnikov tower of the homotopy fiber of $\op{Zorn}$, which we know by Proposition \ref{prop:cartesiang2sl3}.  The theory of Moore--Postnikov factorizations of a map in $\aone$-homotopy theory in general is summarized in \cite[Theorem 6.1.1]{AsokFaselSplitting}, we specialize this discussion to the case of interest, which simplifies because all spaces under consideration are $\aone$-simply-connected.

\subsubsection*{The primary obstruction}
Fix an infinite field $k$ and a smooth affine $k$-scheme $X$.  Consider the lifting problem:
\[
\xymatrix{
& \op{B}_{\Nis}\op{SL}_3 \ar[d]^-{\op{Zorn}} \\
X \ar[r]\ar@{-->}[ur]^-{\exists ?} & \op{B}_{\Nis}\op{G}_2.
}
\]
Since Proposition \ref{prop:sl3g2fibration} identifies the $\aone$-homotopy fiber of $\op{Zorn}$ with $\op{Q}_6$ , by \cite[Theorem 6.1.1]{AsokFaselSplitting}, there is a well-defined {\em primary} obstruction to the existence of a lift.

\begin{thm}
\label{thm:primaryobstructiontobeingazornalgebra}
Assume $k$ is an infinite field and $X$ is a smooth affine $k$-scheme.
\begin{enumerate}[noitemsep,topsep=1pt]
\item If $X$ has the $\aone$-homotopy type of a smooth scheme of dimension $\leq 3$, then the map $[X,\op{B}_{\Nis}\op{SL}_3] \to [X,\op{B}_{\Nis}\op{G}_2]$ is a surjection. It is a bijection if $\dim X\leq 2$.
\item If $X$ has the $\aone$-homotopy type of a smooth scheme of dimension $\leq 4$, then there is well-defined primary obstruction to lifting a generically split octonion algebra classified by a map $X \to \op{B}_{\Nis}\op{G}_2$ to a map $X \to \op{B}_{\Nis}\op{SL}_3$. The primary obstruction takes values in
  \[
  \coker (\op{CH}^3(X) \longrightarrow \op{H}^4(X,\mathbf{I}^4)).
  \]
  In particular, if $k$ is algebraically closed, then the primary obstruction always vanishes.
\end{enumerate}
\end{thm}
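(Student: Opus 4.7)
The approach is to apply Moore--Postnikov obstruction theory to the map $\op{Zorn}$ using the $\aone$-fiber sequence
\[
\op{Q}_6 \longrightarrow \op{B}_{\Nis}\op{SL}_3 \stackrel{\op{Zorn}}{\longrightarrow} \op{B}_{\Nis}\op{G}_2
\]
from Proposition \ref{prop:sl3g2fibration}, following the general format of \cite[Theorem 6.1.1]{AsokFaselSplitting}. Since $\op{Q}_6 \simeq \op{S}^3 \wedge \gm{\sma 3}$ is $\aone$-$2$-connected, Morel's theorem \cite[Theorem 6.40]{MField} identifies $\bpi_3^{\aone}(\op{Q}_6) \cong \K^{\MW}_3$. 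The obstructions to the existence of a lift of a given classifying map $X \to \op{B}_{\Nis}\op{G}_2$ along $\op{Zorn}$ lie in $\op{H}^{i+1}(X,\bpi_i^{\aone}(\op{Q}_6))$ for $i \geq 3$, while the corresponding obstructions to its uniqueness lie in $\op{H}^i(X,\bpi_i^{\aone}(\op{Q}_6))$.

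Part (1) reduces to dimension counting: when $\dim X \leq 3$, every $\op{H}^{i+1}(X,\mathbf{F})$ with $i \geq 3$ and $\mathbf{F}$ strictly $\aone$-invariant vanishes, so all lifting obstructions vanish and $\op{Zorn}$ is surjective. When in addition $\dim X \leq 2$, the groups $\op{H}^i(X,\mathbf{F})$ for $i \geq 3$ likewise vanish, so the lift is unique and $\op{Zorn}$ is bijective. For part (2) with $\dim X \leq 4$, the higher obstructions vanish for the same dimension reason, leaving only the primary obstruction in $\op{H}^4(X,\K^{\MW}_3)$. To identify this target group, I would use Morel's short exact sequence
\[
0 \longrightarrow \mathbf{I}^4 \longrightarrow \K^{\MW}_3 \longrightarrow \K^{\op{M}}_3 \longrightarrow 0
\]
and its associated long exact sequence: the Gersten resolution implies $\op{H}^4(X,\K^{\op{M}}_3) = 0$, and Bloch's formula identifies $\op{H}^3(X,\K^{\op{M}}_3) = \op{CH}^3(X)$, giving $\op{H}^4(X,\K^{\MW}_3) \cong \coker(\op{CH}^3(X)\to \op{H}^4(X,\mathbf{I}^4))$ as claimed.

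For the ``in particular'' assertion, when $k$ is algebraically closed I would prove that $\op{H}^4(X,\K^{\MW}_3) = 0$ by direct inspection of its Gersten complex. The top term is $\bigoplus_{y \in X^{(4)}}\mathbf{W}(\kappa(y))$, which equals $\bigoplus_{y \in X^{(4)}}\Z/2\Z$ since closed points have residue field $k$ itself; one then verifies that the incoming second-residue differential from $\bigoplus_{x \in X^{(3)}}\mathbf{GW}(\kappa(x))$ is surjective by choosing, for each closed point $y$, a smooth curve $C \subset X$ through $y$ and the class $\langle \pi_y\rangle$ at the generic point of $C$, with $\pi_y$ a local uniformizer at $y$, whose second residue at $y$ generates $\mathbf{W}(k) = \Z/2\Z$. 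The main obstacle I anticipate is making this surjectivity precise, since the constructed class on $C$ may contribute nonzero residues at other closed points of $C$, requiring an inductive or Bertini-type choice of curves to isolate each contribution; by comparison the identification of the obstruction group via Morel's sequence and the dimension-counting arguments of parts (1) and (2) are routine.
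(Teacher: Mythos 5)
Your treatment of parts (1) and (2) coincides with the paper's proof: Moore--Postnikov obstruction theory for the $\aone$-fiber sequence $\op{Q}_6 \to \op{B}_{\Nis}\op{SL}_3 \to \op{B}_{\Nis}\op{G}_2$, dimension counting to kill all but the primary obstruction (and, for $\dim X \leq 2$, the uniqueness groups), and the identification of $\op{H}^4(X,\K^{\MW}_3)$ with $\coker(\op{CH}^3(X)\to\op{H}^4(X,\mathbf{I}^4))$ via $0 \to \mathbf{I}^4 \to \K^{\MW}_3 \to \K^{\op{M}}_3 \to 0$ and the Gersten vanishing $\op{H}^4(X,\K^{\op{M}}_3)=0$. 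The only genuine divergence is the final clause: the paper disposes of the vanishing over an algebraically closed field in one line by citing \cite[Proposition 5.2]{AsokFasel}, whereas you propose to prove $\op{H}^4(X,\K^{\MW}_3)=0$ directly from the Rost--Schmid complex. The difficulty you flag is real as you have set it up, since $\langle \pi_y\rangle$ has a nonzero residue at every point of $C$ where $\pi_y$ has odd valuation, so you only hit $e_y$ plus extraneous terms; but it closes without any induction. Choose (by Bertini over the infinite perfect field $k$) a smooth closed curve $C\subset X$ through $y$; it is affine, and since $k$ is algebraically closed $\op{Pic}(C)$ is divisible, so $[y]=2[E]$ in $\op{Pic}(C)$ and there is $f\in k(C)^{\times}$ with $\op{div}(f)=y-2E$. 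Then $\langle f\rangle \in \mathbf{GW}(k(C))$ has residue the generator of $\mathbf{W}(\kappa(y))\cong \Z/2\Z$ at $y$ and trivial residue at every other closed point of $C$ (the orientation twists are harmless because all units in the residue fields are squares), so the top differential is surjective. Note two caveats: this is a top-degree argument, so it requires the group to be computed on an affine model of dimension $\leq 4$ (the same implicit reduction underlying the paper's citation), and it is in substance a re-proof of the input behind \cite[Proposition 5.2]{AsokFasel}, namely divisibility of zero-cycles on smooth affine varieties over algebraically closed fields (equivalently $\op{H}^4(X,\mathbf{I}^4)\cong\op{CH}^4(X)/2=0$ once $\op{H}^4(X,\mathbf{I}^5)$ vanishes). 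So your route is more self-contained but longer; citing the known vanishing, as the paper does, is the shorter path.
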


\begin{proof}
The primary obstruction lives in $\op{H}^4(X,\bpi_3^{\aone}(\op{Q}_6)) \cong \op{H}^4(X,\K^{\op{MW}}_3)$.  This group always vanishes if $X$ has Nisnevich cohomological dimension $\leq 3$.  Now, appeal to the exact sequence
\[
0 \longrightarrow \mathbf{I}^4 \longrightarrow \K^{\op{MW}}_3 \longrightarrow \K^{\op{M}}_3 \longrightarrow 0.
\]
Note that $\op{H}^4(X,\K^{\op{M}}_3) = 0$ by appeal to the Gersten resolution for $\K^{\op{M}}_3$.  Then, it follows that there is an exact sequence
\[
\op{H}^3(X,\K^{\op{M}}_3) = \op{CH}^3(X) \longrightarrow \op{H}^4(X,\mathbf{I}^4) \longrightarrow \op{H}^4(X,\K^{\op{MW}}_3) \longrightarrow 0.
\]
If $k$ is algebraically closed, the relevant vanishing follows immediately from \cite[Proposition 5.2]{AsokFasel}.
\end{proof}

\begin{ex}
Using the techniques above, we may characterize those oriented rank $3$ projective modules over a commutative unital ring $R$ such that the octonion algebra by applying the Zorn construction is split.  Indeed, suppose $X = \Spec R$ and $P$ is an oriented rank $3$ projective $R$-module classified by an element $[P]$ of $[X,\op{B}_{\Nis}\op{SL}_3]_{\aone}$.  There is an exact sequence of the form
\[
[X,\op{Q}_6]_{\aone} \longrightarrow [X,\op{B}_{\Nis}\op{SL}_3]_{\aone} \longrightarrow [X,\op{B}_{\Nis}\op{G}_2].
\]
To say that $\op{Zorn}(P)$ is split is to say that the class in $[X,\op{B}_{\Nis}\op{G}_2]$ is the base-point.  In that case, there exists a class in $[X,\op{Q}_6]_{\aone}$ lifting $[P]$.  In other words, the oriented rank $3$ projective modules $P$ over $R$ such that $\op{Zorn}(P)$ is trivial are precisely those rank $3$ vector bundles that arise by pullback of the universal example along $X \to \op{Q}_6$.
\end{ex}

\begin{ex}
Let $\overline{X}$ be a smooth projective variety of dimension $3$ over $\mathbb{C}$ such that $\op{H}^0(\overline{X},\omega_{\overline{X}})\neq 0$, i.e., there is a global non-trivial holomorphic $3$-form. Let $X$ be the complement of a divisor in $\overline{X}$. By \cite[Theorem 2]{MurthySwan} and \cite[Proposition 2.1 and Corollary 5.3]{BlochMurthySzpiro}, the Chow group $\op{CH}^3(X)$ is a divisible torsion-free group of uncountable rank. In particular, there are uncountably many isomorphism classes of rank $3$ vector bundles over $X$ with $\op{c}_1=\op{c}_2=0$. These will all have trivial Zorn algebra by Theorem~\ref{thm:main1}.
\end{ex}

\subsubsection*{Non-triviality of the primary obstruction}
We now show that the obstruction to lifting can be realized at least on smooth affine schemes of ``homotopical dimension $4$'', coming from quadratic forms.  Indeed, one knows that $\op{H}^4(\op{Q}_8,\K^{\MW}_3) \cong (\K^{\MW}_3)_{-4} = \mathbf{W}(k)$.  The next result identifies precisely which elements of $\mathbf{W}(k)$ may be realized as the primary obstruction to lifting.

\begin{prop}
\label{prop:zornreductionex}
If $k$ is an infinite field having characteristic unequal to $2$, then there is an exact sequence of the form
\[
[\op{Q}_8,{\op{B}}\op{SL}_3]_{\A^1} \longrightarrow [\op{Q}_8,{\op{B}_{\Nis}}\op{G}_2]_{\A^1} \longrightarrow \mathbf{I}(k) \longrightarrow 0.
\]
\end{prop}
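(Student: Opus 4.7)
The plan is to apply obstruction theory to the $\A^1$-fiber sequence
\[
\op{Q}_6 \longrightarrow \op{B}_{\Nis}\op{SL}_3 \xrightarrow{\op{Zorn}} \op{B}_{\Nis}\op{G}_2
\]
of Proposition~\ref{prop:sl3g2fibration}, and then carefully identify the image of the primary obstruction. Theorem~\ref{thm:primaryobstructiontobeingazornalgebra} attaches to each class in $[\op{Q}_8, \op{B}_{\Nis}\op{G}_2]_{\A^1}$ a primary obstruction to lifting through $\op{Zorn}$, landing in $\op{coker}(\op{CH}^3(\op{Q}_8) \to H^4(\op{Q}_8, \mathbf{I}^4))$. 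The cohomology of the split quadric is extremely restricted: as in the proof of Proposition~\ref{prop:mapsoutofquadricsascohomology}, for any strictly $\A^1$-invariant sheaf $\mathbf{M}$ one has $H^i(\op{Q}_{2n}, \mathbf{M}) = \mathbf{M}_{-n}(k)$ when $i = n$ and vanishes otherwise, so $\op{CH}^3(\op{Q}_8) = 0$ and $H^4(\op{Q}_8, \mathbf{I}^4) = (\mathbf{I}^4)_{-4}(k) = \mathbf{W}(k)$. Since all higher obstructions live in $H^{\geq 5}(\op{Q}_8, -) = 0$, the primary obstruction alone governs liftability, yielding an exact sequence
\[
[\op{Q}_8, \op{B}_{\Nis}\op{SL}_3]_{\A^1} \longrightarrow [\op{Q}_8, \op{B}_{\Nis}\op{G}_2]_{\A^1} \xrightarrow{\op{obstr}} \mathbf{W}(k).
\]

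The heart of the proof is identifying the image of $\op{obstr}$ with $\mathbf{I}(k) \subseteq \mathbf{W}(k)$. Four-fold contraction of the short exact sequence $0 \to \mathbf{I}^5 \to \mathbf{I}^4 \to \K^{\op{M}}_4/2 \to 0$ recovers the classical $0 \to \mathbf{I}(k) \to \mathbf{W}(k) \to \Z/2 \to 0$ whose right-hand map is rank modulo $2$. To show $\op{obstr}$ factors through $\mathbf{I}(k)$, I would check that its composition with the rank-mod-$2$ map vanishes; I would do this by pulling the $k$-invariant of the Moore--Postnikov tower of $\op{Zorn}$ back through the Cartesian square of Proposition~\ref{prop:square} and leveraging the computations $\bpi_3^{\A^1}(\op{B}_{\Nis}\op{Spin}_7) = \K^{\op{ind}}_3$ (Proposition~\ref{prop:pi3bspin7}) and the structure of $\bpi_3^{\A^1}(\op{B}_{\Nis}\op{G}_2)$ from Theorem~\ref{thm:pi3bg2}, using that the $2$-primary contribution from $\mathbf{S}_4/3$ disappears after four contractions.

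For surjectivity onto $\mathbf{I}(k)$, I would exhibit explicit generically split octonion algebras over $\op{Q}_8$ whose obstructions cover every element of $\mathbf{I}(k)$. One natural route is to use the identification $\op{Q}_8 \simeq \Sigma \op{Q}_7$ to reduce to constructing maps $\op{Q}_7 \to \op{G}_2$, and build these from left-multiplication by unit-norm octonions (Remark~\ref{rem:leftmultinspin8}); the classes produced this way should realize generators of $\mathbf{I}(k)$ coming from $1$-Pfister forms.

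The main obstacle will be the middle step---pinning down the image as \emph{precisely} $\mathbf{I}(k)$ rather than a larger subgroup of $\mathbf{W}(k)$. The upper containment requires careful tracking of how the first $k$-invariant passes through the fundamental-ideal filtration, while the lower containment requires an honest geometric construction of octonion algebras on $\op{Q}_8$ whose obstruction classes hit every Witt-ring element of even rank.
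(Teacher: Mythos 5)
Your obstruction-theoretic setup is fine as far as it goes: the primary obstruction for a class in $[\op{Q}_8,\op{B}_{\Nis}\op{G}_2]_{\aone}$ lives in $\op{H}^4(\op{Q}_8,\K^{\MW}_3)\cong(\K^{\MW}_3)_{-4}(k)=\mathbf{W}(k)$, and since $\op{H}^i(\op{Q}_8,-)$ vanishes for $i\geq 5$ the primary obstruction does govern liftability, so the liftable classes are exactly those with vanishing obstruction. But the heart of the statement --- that the image of this obstruction map is \emph{precisely} $\mathbf{I}(k)$, and in particular that the map onto $\mathbf{I}(k)$ is surjective --- is not established by your argument. The upper containment is only sketched (and the parenthetical about the ``$2$-primary contribution from $\mathbf{S}_4/3$'' is off: $(\mathbf{S}_4/3)_{-4}\cong\Z/3\Z$, so this sheaf carries no $2$-primary information in the relevant contraction), while your surjectivity construction does not work: left multiplication by unit-norm octonions is not an algebra automorphism, so Remark~\ref{rem:leftmultinspin8} produces morphisms $\op{Q}_7\to\op{Spin}_8$, not $\op{Q}_7\to\op{G}_2$, and there is no evident way to convert these into classes in $[\op{Q}_8,\op{B}_{\Nis}\op{G}_2]_{\aone}$ hitting arbitrary elements of $\mathbf{I}(k)$. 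Without that, the ``$\to\mathbf{I}(k)\to 0$'' part of the proposition is unproved.

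The paper's proof sidesteps both difficulties by never leaving the fiber sequence: map $\op{Q}_7$ into the looped $\aone$-fiber sequence of $\op{Q}_6\to\op{B}_{\Nis}\op{SL}_3\to\op{B}_{\Nis}\op{G}_2$ and use the loop--suspension adjunction $[\op{Q}_7,\Omega\op{B}_{\Nis}\op{G}_2]_{\aone}\cong[\op{Q}_8,\op{B}_{\Nis}\op{G}_2]_{\aone}$. Exactness of the resulting long exact sequence identifies the image of $[\op{Q}_8,\op{B}_{\Nis}\op{G}_2]_{\aone}\to[\op{Q}_7,\op{Q}_6]_{\aone}\cong\mathbf{W}(k)$ with the kernel of $\mathbf{W}(k)\to[\op{Q}_7,\op{B}_{\Nis}\op{SL}_3]_{\aone}\cong\Z/6\Z$, and Lemma~\ref{lem:nontrivialq7toq6} (the realization argument) shows that map is rank mod $2$ followed by the inclusion $\Z/2\Z\subset\Z/6\Z$, whose kernel is $\mathbf{I}(k)$; exactness at the middle term comes from the same sequence. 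So both containments you flag as the main obstacle are automatic once one works with the connecting map rather than an abstractly defined obstruction class. If you were to identify your obstruction map with this connecting map (which is the natural fix, using $\op{Q}_8\simeq\Sigma\op{Q}_7$), your argument would collapse into the paper's; as written, the image computation and the surjectivity are genuine gaps.
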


\begin{proof}
Mapping $\op{Q}_7$ into the loops on the $\aone$-fiber sequence
\[
\op{Q}_6 \longrightarrow \op{B}_{\Nis}\op{SL}_3 \longrightarrow {\op{B}_{\Nis}}\op{G}_2
\]
yields an exact sequence of the form
\[
[\op{Q}_7,\Omega \op{Q}_6]_{\aone} \longrightarrow [\op{Q}_7,\Omega\op{B}_{\Nis}\op{SL}_3 ]_{\aone} \longrightarrow [\op{Q}_7,\Omega {\op{B}_{\Nis}}\op{G}_2]_{\aone} \longrightarrow [\op{Q}_7,\op{Q}_6]_{\aone} \longrightarrow [\op{Q}_7,\op{B}_{\Nis}\op{SL}_3]_{\aone}.
\]
By adjunction, $[\op{Q}_7,\Omega \op{Q}_6]_{\aone} \cong [\op{Q}_8,\op{Q}_6]_{\aone}$, $[\op{Q}_7,\Omega\op{B}_{\Nis}\op{SL}_3 ]_{\aone} \cong [\op{Q}_8,\op{B}_{\Nis}\op{SL}_3 ]$, and $[\op{Q}_7,\Omega{\op{B}_{\Nis}}\op{G}_2]_{\aone} \cong [\op{Q}_8,{\op{B}_{\Nis}}\op{G}_2]_{\aone}$.

Now, $[\op{Q}_7,\op{Q}_6]_{\aone} \cong \bpi_{3,4}^{\aone}(\op{Q}_6)(k) \cong \mathbf{W}(k)$, and $[\op{Q}_7,\op{B}_{\Nis}\op{SL}_3]_{\aone} \cong \bpi_{3,4}^{\aone}(\op{B}_{\Nis}\op{SL}_3) \cong \Z/6\Z$ by appeal to \cite[Theorem 4.8]{AsokFaselSpheres}.  We claim the map $\mathbf{W}(k) \to \Z/6\Z$ is non-trivial and therefore factors $\mathbf{W}(k) \to \Z/2\Z \hookrightarrow \Z/6\Z$.  Indeed, this follows from the Proof of Theorem \ref{thm:pi3bg2}.  Since the kernel of the map $\mathbf{W}(k) \to \Z/2\Z$ is precisely the fundamental ideal $\mathbf{I}(k)$, we conclude by using the loop-suspension adjunction.
\end{proof}

\begin{rem}
To ``explain" the result above, we may appeal to real realization.  Indeed, $\op{SL}_3(\real)$ is homotopy equivalent to $SO(3)$ and $\op{G}_2(\real)$ is homotopy equivalent to $SU(2) \times SO(3)$.  The inclusion $\op{SL}_3(\real) \to \op{G}_2(\real)$ then corresponds to the inclusion of the second factor and up to homotopy a fiber sequence of the form
\[
SO(3) \longrightarrow SU(2) \times SO(3) \longrightarrow S^3
\]
Then, there is an exact sequence
\[
\pi_4(S^3) \longrightarrow \pi_4(BSO(3)) \longrightarrow \pi_4(BSU(2) \times BSO(3)) \longrightarrow \pi_3(S^3) \longrightarrow 0
\]
The left-hand map is necessarily trivial since $\pi_4(\op{S}^3) = \Z/2\Z$ and $\pi_4(BSO(3)) = \Z$, and the exact sequence in the middle becomes $0 \to \Z \to \Z \oplus \Z \to \Z \to 0$.  The real realization then induces a map from the exact sequence of the proposition to the above exact sequence.  In particular, $\mathbf{I}(\real) = \Z$.
\end{rem}

\begin{ex}
Proposition~\ref{prop:zornreductionex} provides a complete description of the set of octonion algebras over $\op{Q}_8$ which do not arise from the Zorn construction. In the case of number fields with several real embeddings, we actually see that different real embeddings give rise to algebraically nonisomorphic octonion algebras, evidenced by the signature morphisms from the Witt ring. The computation also implies that the abelian group of octonion algebras over $\op{Q}_8$ over a number field is not finitely generated. This demonstrates that in general the question of reducing an octonion algebra to a rank $3$ projective module is strongly related to the arithmetic of the base field. Note that Proposition~\ref{prop:zornreductionex} also implies that the non-reducible octonion algebras over $\op{Q}_8$ over a number field all become trivial upon base change to the algebraic (or quadratic) closure.
\end{ex}


\subsubsection*{The secondary obstruction}
Assuming the primary obstruction vanishes, we may continue our analysis of the lifting problem.  Indeed, return to the $\aone$-fiber sequence
\[
\op{Q}_6 \longrightarrow \op{B}_{\Nis}\op{SL}_3 \longrightarrow \op{B}_{\Nis}\op{G}_2.
\]
We know that $\bpi_3^{\aone}(\op{Q}_6) = \K^{\MW}_3$ and $\bpi_4^{\aone}(\op{Q}_6)$ has been determined under suitable additional hypotheses on the base field in \cite[Theorem 5.2.5]{AsokWickelgrenWilliams}.  The primary obstruction is determined by a $k$-invariant
\[
k_4: \tau_{\leq 3}(\op{B}_{\Nis}\op{SL}_3) \longrightarrow \op{K}(\bpi_3^{\aone}(\op{Q}_6),4)
\]
while the secondary obstruction to lifting is determined by a $k$-invariant
\[
k_5: \tau_{\leq 4}(\op{B}_{\Nis}\op{SL}_3) \longrightarrow \op{K}(\bpi_4^{\aone}(\op{Q}_6),5).
\]
Thus, we would like to analyze the groups $\op{H}^4(X,\bpi_3^{\aone}(\op{Q}_6))$ and $\op{H}^5(X,\bpi_4^{\aone}(\op{Q}_6))$ for $X$ a smooth affine scheme.

\begin{prop}
\label{prop:vanishingofsecondaryobstruction}
Assume $k$ is an algebraically closed field having characteristic $0$.  If $X$ is a smooth affine $k$-scheme of dimension $\leq 5$, then $\op{H}^5(X,\bpi_4^{\aone}(\op{Q}_6)) = 0$.
\end{prop}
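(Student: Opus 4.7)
The plan is to reduce the statement to a vanishing claim for top-degree Nisnevich cohomology on $X$ with coefficients in a short list of explicit strictly $\aone$-invariant sheaves, and then to exploit divisibility of residue fields at closed points of $X$ together with the standard Gersten resolution.

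First, I would invoke the explicit description of $\bpi_4^{\aone}(\op{Q}_6)$ provided by \cite[Theorem 5.2.5]{AsokWickelgrenWilliams}, which is available in characteristic zero. Since $\op{Q}_6 \cong \op{S}^3 \wedge \gm{\sma 3}$, that theorem presents $\bpi_4^{\aone}(\op{Q}_6)$ as a finite iterated extension whose graded pieces are strictly $\aone$-invariant sheaves of two types: Milnor K-theoretic quotients $\K^{\op{M}}_n/m$ for small integers $m$ (arising from motivic $\eta$-multiplication and the analog of the EHP sequence), and Witt-theoretic pieces controlled by the powers $\mathbf{I}^j$ of the fundamental ideal. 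Iteratively applying the long exact sequence in Nisnevich cohomology associated with this filtration reduces the claimed vanishing to showing $\op{H}^5(X,\mathbf{F})=0$ for each such graded piece $\mathbf{F}$.

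Second, for each graded piece $\mathbf{F}$, I would combine the Gersten resolution of a strictly $\aone$-invariant sheaf with the divisibility of residue fields. Since $X$ is smooth affine of Krull dimension $\leq 5$, the group $\op{H}^5(X,\mathbf{F})$ is computed as the cokernel of a differential whose target is $\bigoplus_{x}\mathbf{F}_{-5}(\kappa(x))$, the sum running over closed points of $X$. Since $k$ is algebraically closed of characteristic $0$, each residue field $\kappa(x)$ is algebraically closed of characteristic $0$, hence uniquely divisible. For graded pieces $\mathbf{F}=\K^{\op{M}}_n/m$, the vanishing follows from \cite[Proposition 5.4]{AsokFasel}, which was already invoked in the proof of Theorem~\ref{thm:g2classification}. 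For pieces of the form $\mathbf{I}^j$ or $\K^{\MW}_n$, I would use the fundamental exact sequences
\[
0 \longrightarrow \mathbf{I}^{j+1} \longrightarrow \mathbf{I}^j \longrightarrow \K^{\op{M}}_j/2 \longrightarrow 0 \quad\text{and}\quad 0 \longrightarrow \mathbf{I}^{n+1} \longrightarrow \K^{\MW}_n \longrightarrow \K^{\op{M}}_n \longrightarrow 0
\]
combined with \cite[Proposition 5.2]{AsokFasel} to reduce to the Milnor K-theory case; crucially, the Witt ring of an algebraically closed field of characteristic zero is annihilated by $2$, so the $\mathbf{I}^j$-contributions also die in top degree.

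The main obstacle is structural: to verify that \emph{every} graded piece appearing in the filtration of $\bpi_4^{\aone}(\op{Q}_6)$ falls into one of the two classes above and that the relevant torsion orders are small enough for divisibility of residue fields to be effective. This is precisely the input one extracts from \cite[Theorem 5.2.5]{AsokWickelgrenWilliams}; once the list of graded pieces is pinned down, the cohomological argument above applies uniformly and yields $\op{H}^5(X,\bpi_4^{\aone}(\op{Q}_6))=0$.
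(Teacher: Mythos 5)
Your overall strategy is the one the paper uses -- quote \cite[Theorem 5.2.5]{AsokWickelgrenWilliams} to present $\bpi_4^{\aone}(\op{Q}_6)$ by explicit strictly $\aone$-invariant pieces, run the long exact sequence in Nisnevich cohomology, and kill the top-degree groups via the Gersten resolution and divisibility of the (algebraically closed) residue fields at closed points -- but there is a genuine gap exactly at the point you flag as the ``main obstacle.'' The graded pieces are \emph{not} all of the form $\K^{\op{M}}_n/m$ or $\mathbf{I}^j$: the cited theorem gives a short exact sequence
\[
0 \longrightarrow \mathbf{F}_5' \longrightarrow \bpi_4^{\aone}(\op{Q}_6) \longrightarrow \mathbf{GW}^3_4 \longrightarrow 0,
\]
where $\mathbf{F}_5'$ receives an epimorphism from $\K^{\op{M}}_5/24$ that becomes an isomorphism after $4$-fold contraction (so your Milnor K-theory/divisibility argument via \cite[Proposition 5.4]{AsokFasel} does apply to that piece), but the quotient is a higher Grothendieck--Witt sheaf $\mathbf{GW}^3_4$, which is neither a Milnor K-theory quotient nor a power of the fundamental ideal. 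Your proposed reduction of ``Witt-theoretic pieces'' through the filtrations $0 \to \mathbf{I}^{j+1} \to \mathbf{I}^j \to \K^{\op{M}}_j/2 \to 0$ therefore never engages this piece. The input the paper uses instead is \cite[Proposition 3.4.3]{AsokFaselSplitting}: $\mathbf{GW}^3_4$ is killed by $5$-fold contraction, so the degree-$5$ term of its Gersten complex vanishes on any smooth scheme of dimension $\leq 5$, with no appeal to the base field at all. Without some substitute for this fact your argument does not close.

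A secondary issue: even for genuine $\mathbf{I}^j$-type coefficients, the justification ``the Witt ring of an algebraically closed field is annihilated by $2$'' does not give vanishing, since $\mathbf{W}(\kappa(x)) \cong \Z/2\Z \neq 0$; the contraction $(\mathbf{I}^j)_{-5}$ is $\mathbf{I}^{j-5}$, and the top-degree Gersten term vanishes only when $j-5 \geq 1$ (because the fundamental ideal of an algebraically closed field is zero), while for smaller $j$ one needs an extra argument, e.g.\ surjectivity of $\op{CH}^5(X) \to \op{CH}^5(X)/2$ as in the paper's treatment of the primary obstruction, or \cite[Proposition 5.2]{AsokFasel}. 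Since the actual decomposition contains no such piece this does not affect the final statement, but as written that step of your reduction is not correct.
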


\begin{proof}
By \cite[Theorem 5.2.5]{AsokWickelgrenWilliams} under the assumptions on $k$ we know there is an exact sequence of the form
\[
0 \longrightarrow \mathbf{F}_5' \longrightarrow \bpi_4^{\aone}(\op{Q}_6) \longrightarrow \mathbf{GW}^3_4 \longrightarrow 0,
\]
where $\mathbf{GW}^3_4$ is a Grothendieck--Witt sheaf and $\mathbf{F}_5'$ admits an epimorphism from $\K^{\op{M}}_5/24$ that becomes an isomorphism after $4$-fold contraction.  Therefore, by appeal to the long exact sequence in cohomology, it suffices to show that $\op{H}^5(X,\mathbf{GW}^3_4) = 0$ and $\op{H}^5(X,\K^{\op{M}}_5/24) = 0$ under the stated hypotheses.  By \cite[Proposition 3.4.3]{AsokFaselSplitting}, we know that $\mathbf{GW}^3_4$ is killed by $5$-fold contraction and thus the relevant vanishing follows immediately from the Gersten resolution for $\mathbf{GW}^3_4$.  On the other hand, $\op{H}^5(X,\K^{\op{M}}_5/24)$ vanishes by appeal to \cite[Proposition 5.4]{AsokFasel}.
\end{proof}

In contrast, on a smooth affine $5$-fold over an algebraically closed field, it is no longer necessarily the case that the primary obstruction always vanishes.  Indeed, we observed that there is an exact sequence
\[
\op{CH}^3(X) \longrightarrow \op{H}^4(X,\mathbf{I}^4) \longrightarrow \op{H}^4(X,\K^{\MW}_3) \longrightarrow 0.
\]
On the other hand, we know that $\op{H}^i(X,\mathbf{I}^5) = 0$ for $i \geq 4$ by \cite[Proposition 5.2]{AsokFasel}, and therefore, we conclude that the map
\[
\op{H}^4(X,\mathbf{I}^4) \longrightarrow \op{H}^4(X,\K^{\op{M}}_4/2)
\]
induced by the quotient map $\mathbf{I}^4 \to \K^{\op{M}}_4/2$ is an isomorphism.  Since $\op{H}^4(X,\K^{\op{M}}_4/2) = \op{CH}^4(X)/2$, one way to guarantee vanishing of the primary obstruction is to guarantee that $\op{Sq}^2: \op{CH}^3(X) \to \op{CH}^4(X)/2$ is surjective.

\begin{cor}
\label{cor:zornsecond}
Let $k$ be an algebraically closed field having characteristic $0$.  If $X$ is a smooth affine $k$-scheme of dimension $5$ for which $\op{CH}^4(X)/2$ is trivial, then any generically split octonion algebra over $X$ is isomorphic to the Zorn algebra of an oriented rank $3$ vector bundle on $X$.
\end{cor}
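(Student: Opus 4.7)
The plan is to apply the Moore--Postnikov obstruction-theoretic machinery to the map $\op{Zorn}: \op{B}_{\Nis}\op{SL}_3 \to \op{B}_{\Nis}\op{G}_2$, whose $\aone$-homotopy fiber is $\op{Q}_6$ by Proposition~\ref{prop:sl3g2fibration}. Since $\op{Q}_6 \cong \op{S}^3 \wedge \gm{\sma 3}$ is $\aone$-$2$-connected, the obstructions to lifting a class $[X \to \op{B}_{\Nis}\op{G}_2]_{\aone}$ through $\op{Zorn}$ live in $\op{H}^{i+1}(X, \bpi_i^{\aone}(\op{Q}_6))$ for $i \geq 3$. Because $X$ is smooth affine of dimension $5$, it has Nisnevich cohomological dimension $\leq 5$, so every obstruction with $i \geq 5$, i.e., those housed in $\op{H}^{i+1}$ for $i+1 \geq 6$, vanishes automatically. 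Hence only the primary ($i=3$) and secondary ($i=4$) obstructions need to be treated.

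For the primary obstruction, in the proof of Theorem~\ref{thm:primaryobstructiontobeingazornalgebra} we identified it as an element of $\op{H}^4(X, \K^{\MW}_3)$ and derived the exact sequence
\[
\op{CH}^3(X) \longrightarrow \op{H}^4(X, \mathbf{I}^4) \longrightarrow \op{H}^4(X, \K^{\MW}_3) \longrightarrow 0.
\]
Then I would combine this with the vanishing $\op{H}^i(X, \mathbf{I}^5) = 0$ for $i \geq 4$ (by \cite[Proposition 5.2]{AsokFasel}), which makes the quotient map $\mathbf{I}^4 \to \K^{\op{M}}_4/2$ induce an isomorphism $\op{H}^4(X, \mathbf{I}^4) \isomto \op{H}^4(X, \K^{\op{M}}_4/2) = \op{CH}^4(X)/2$. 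The hypothesis $\op{CH}^4(X)/2 = 0$ therefore forces $\op{H}^4(X, \K^{\MW}_3) = 0$, so the primary obstruction is automatically trivial for every class.

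For the secondary obstruction, which lives in $\op{H}^5(X, \bpi_4^{\aone}(\op{Q}_6))$, I would invoke Proposition~\ref{prop:vanishingofsecondaryobstruction} directly: its hypotheses (algebraically closed, characteristic zero, $\dim X \leq 5$) are exactly those of the corollary, so this group vanishes. Assembling these three inputs---trivial primary obstruction, trivial secondary obstruction, and vanishing of higher obstructions by dimension---every class in $[X, \op{B}_{\Nis}\op{G}_2]_{\aone}$ lifts through $\op{Zorn}$. Translating back via Theorem~\ref{thm:representability} and the discussion of $\op{Zorn}$ as a map of classifying spaces, this says precisely that every generically split octonion algebra on $X$ is isomorphic to the Zorn algebra of an oriented rank $3$ vector bundle. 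The only real subtlety is lining up the Moore--Postnikov tower with the indexing used in the cited results; once the primary/secondary cohomological obstruction groups have been identified the proof is essentially bookkeeping.
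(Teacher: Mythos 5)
Your proposal is correct and follows essentially the same route as the paper: the corollary is deduced from the Moore--Postnikov analysis of $\op{Zorn}$ with fiber $\op{Q}_6$, with the primary obstruction killed via the identification $\op{H}^4(X,\mathbf{I}^4)\cong \op{CH}^4(X)/2$ (using $\op{H}^i(X,\mathbf{I}^5)=0$ for $i\geq 4$) together with the hypothesis $\op{CH}^4(X)/2=0$, the secondary obstruction killed by Proposition~\ref{prop:vanishingofsecondaryobstruction}, and all higher obstructions vanishing for cohomological-dimension reasons. No gaps to report.
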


\begin{rem}
Note that $\op{H}^4_{\op{Nis}}(\op{Q}_6,\mathbf{K}^{\op{MW}}_3)\cong \mathbf{W}(k)$ is non-trivial, but this is not realizable as obstruction because $[\op{Q}_6,{\op{B}}_{\op{Nis}}\op{G}_2]_{\mathbb{A}^1}=0$.  In particular, non-trivial obstruction groups don't necessarily mean that lifting is impossible. Topologically, the first obvious example of a torsor which is non-liftable is the $2$-torsion element in $\pi_{10}({\op{B}}\op{G}_2)$ and then of course the non-torsion elements in $\pi_{12}({\op{B}}\op{G}_2)$. At the moment, it is not clear, what the expected range for the vanishing of obstructions should be over an algebraically closed field.
\end{rem}

\subsection{Octonion algebras with trivial spinor bundle}
\label{sec:spinorcomparison}
In this section, we discuss the relationship between generically split octonion algebras and their associated spinor bundles and norm forms.  As mentioned earlier, over a field, octonion algebras are determined up to isomorphism by their norm forms \cite[Theorem 33.19]{BookofInvolutions}.  Moreover, Bix showed that if $R$ is a local ring in which $2$ is invertible, then octonion algebras are determined up to isomorphism by their norm forms \cite[Lemma 1.1]{Bix}. The question of whether the same result holds over more general base rings was posed by H. Petersson in a lecture in Lens in 2012.  Gille answered Petersson's question negatively in \cite{GilleOctonion}, but his example is rather high-dimensional.  Here, we make a systematic analyis of Petersson's problem using our affine representability results.

First, the homomorphism $\op{G}_2 \to \op{O}_8$ discussed at the beginning of Section \ref{ss:homogeneousspaces} yields a map $\op{B}_{\Nis}\op{G}_2 \to \op{B}_{\Nis}\op{O}_8$.  If $X = \Spec R$, then the induced map
\[
[\Spec R,\op{B}_{\Nis}\op{G}_2]_{\aone} \to [\Spec R,\op{B}_{\Nis}\op{O}_8]_{\aone}
\]
is precisely the map sending a generically split octonion algebra to its associated norm form.

Because the morphism $\op{B}_{\Nis}\op{G}_2 \to \op{B}_{\Nis}\op{O}_8$ factors through $\op{B}_{\Nis}\op{Spin}_7$, the question: ``when is a generically split octonion algebra determined by its norm form" is equivalent to the question ``when is a generically split octonion algebra determined by its spinor bundle".  Moreover, the latter question is equivalent to: ``when is the map $[\Spec R,\op{B}_{\Nis}\op{G}_2]_{\aone} \to [\Spec R,\op{B}_{\Nis}\op{Spin}_7]_{\aone}$ injective?"  Using this translation, we are able to systematically analyze failure of injectivity and also to give a positive answer to Petersson's question over schemes of small dimension.

\subsubsection*{Analyzing the ``kernel"}
Consider the $\aone$-fiber sequence
\[
\op{Q}_7 \longrightarrow \op{B}_{\Nis}\op{G}_2 \longrightarrow \op{B}_{\Nis}\op{Spin}_7
\]
of Theorem \ref{thm:g2sl3andspin7}.  Given a smooth affine $k$-scheme $X$, mapping $X_+$ into this sequence yields an exact sequence of the form
\[
\cdots \longrightarrow [X,\op{Spin}_7]_{\aone} \longrightarrow [X,\op{Q}_7]_{\aone} \longrightarrow [X,\op{B}_{\Nis}\op{G}_2]_{\aone} \longrightarrow [X,\op{B}_{\Nis}\op{Spin}_7]_{\aone}.
\]
The kernel of the rightmost map of pointed sets consists (by construction) of generically trivial $\op{G}_2$-torsors whose associated $\op{Spin}_7$-torsor are trivial.  Moreover, by exactness, this kernel coincides with the quotient set
\[
[X,\op{Q}_7]_{\aone}/[X,\op{Spin}_7]_{\aone}.
\]
In particular, the kernel is trivial if the group $[X,\op{Spin}_7]_{\aone}$ acts transitively on $[X,\op{Q}_7]_{\aone}$.

The set $[X,\op{Q}_7]_{\aone}$ is $\aone$-weakly equivalent to the set of unimodular rows of length $n$ up to naive $\aone$-equivalence by \cite[Corollary 4.2.6 and Remark 4.2.7]{AffineRepresentabilityII}.  Moreover, if $X = \Spec R$, then the set $[X,\op{Spin}_7]_{\aone}$ coincides with the quotient of $\op{Spin}_7(R)$ by the normal subgroup of matrices $\aone$-homotopic to the identity by \cite[Theorem 4.3.1]{AffineRepresentabilityII}.  On the other hand, we observed before that there is an inclusion $\op{Spin}_6 \to \op{Spin}_7$.  Therefore, one way to guarantee that $[X,\op{Spin}_7]_{\aone}$ acts transitively on $[X,\op{Q}_7]_{\aone}$ is to require that the subgroup $[X,\op{Spin}_6]_{\aone} \cong [X,\op{SL}_4]_{\aone}$ acts transitively.

Mapping $X_+$ into the $\aone$-fiber sequence
\[
\op{Q}_7 \longrightarrow \op{B}_{\Nis}\op{SL}_3 \longrightarrow \op{B}_{\Nis}\op{SL}_4
\]
yields an exact sequence of the form
\[
[X,\op{SL}_4]_{\aone} \longrightarrow [X,\op{Q}_7]_{\aone} \longrightarrow [X,\op{B}_{\Nis}\op{SL}_3]_{\aone} \longrightarrow [\op{B}_{\Nis}\op{SL}_4]_{\aone}.
\]
Therefore, the quotient set $[X,\op{Q}_7]_{\aone}/[X,\op{SL}_4]_{\aone}$ coincides precisely with set of rank $3$ oriented vector bundles on $X$ that become trivial after addition of a trivial rank $1$ bundle.  Alternatively, if $X = \Spec R$, the set of rank $3$ oriented vector bundles that become trivial after addition of a trivial rank $1$ bundle coincides with the set of unimodular rows over $R$ of length $4$.  The next result uses these observations to show that under appropriate hypotheses on the base field, generically split octonion algebras over small dimensional smooth affine varieties with trivial spinor bundles are automatically split; this establishes Theorem \ref{thm:main3} stated in the introduction.

\begin{thm}
\label{thm:trivialnormform}
Assume $k$ is an infinite field.  If $X$ is a smooth affine $k$-variety, then generically split octonion algebras with trivial spinor bundles are split if all oriented rank $3$ vector bundles which become free after addition of a  free rank $1$ summand are already free.  In particular, generically split octonion algebras with trivial spinor bundle are always split if either
\begin{enumerate}[noitemsep,topsep=1pt]
\item $X$ has dimension $\leq 2$, or
\item $X$ has dimension $3$, (a) $k$ has characteristic unequal to $2$ or $3$ and has \'etale $2$ and $3$-cohomological dimension $\leq 2$, (b) $k$ is perfect of characteristic $2$ and has \'etale $3$-cohomological dimension $\leq 1$, (c) $k$ is perfect of characteristic $3$ and has \'etale $2$-cohomological dimension $\leq 1$, or
\item $k$ is algebraically closed, has characteristic unequal to $2$ or $3$ and $X$ has dimension $\leq 4$.
\end{enumerate}
\end{thm}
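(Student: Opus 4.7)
The plan is to read off the desired injectivity from the long exact sequence attached to the $\aone$-fiber sequence
\[
\op{Q}_7 \longrightarrow \op{B}_{\Nis}\op{G}_2 \longrightarrow \op{B}_{\Nis}\op{Spin}_7
\]
of Proposition~\ref{prop:sl3g2fibration}, and then to compare it with the analogous Zorn sequence via the homotopy Cartesian square of Proposition~\ref{prop:square}. Applying $[X_+,-]_{\aone}$ to the spinor fiber sequence, I obtain an exact sequence
\[
[X,\op{Spin}_7]_{\aone} \longrightarrow [X,\op{Q}_7]_{\aone} \longrightarrow [X,\op{B}_{\Nis}\op{G}_2]_{\aone} \longrightarrow [X,\op{B}_{\Nis}\op{Spin}_7]_{\aone},
\]
so the ``kernel'' of the map sending an octonion algebra to its spinor bundle is identified with the orbit set $[X,\op{Q}_7]_{\aone}/[X,\op{Spin}_7]_{\aone}$.

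Next, because the composite $\op{SL}_4 \cong \op{Spin}_6 \hookrightarrow \op{Spin}_7$ factors the action, this orbit set surjects onto $[X,\op{Q}_7]_{\aone}/[X,\op{SL}_4]_{\aone}$. Applying $[X_+,-]_{\aone}$ to the Zorn-style fiber sequence $\op{Q}_7 \to \op{B}_{\Nis}\op{SL}_3 \to \op{B}_{\Nis}\op{SL}_4$ (obtained by delooping the $\op{SL}_3$--$\op{SL}_4$ fibration; see Theorem~\ref{thm:g2sl3andspin7}(3) and the compatibility of the vertical fibers in Proposition~\ref{prop:square}), I get an exact sequence
\[
[X,\op{SL}_4]_{\aone} \longrightarrow [X,\op{Q}_7]_{\aone} \longrightarrow [X,\op{B}_{\Nis}\op{SL}_3]_{\aone} \longrightarrow [X,\op{B}_{\Nis}\op{SL}_4]_{\aone}.
\]
By the affine representability of Theorem~\ref{thm:representability} and \cite[Theorems 4.3.1 and 4.2.6]{AffineRepresentabilityII}, the quotient $[X,\op{Q}_7]_{\aone}/[X,\op{SL}_4]_{\aone}$ is naturally identified with the pointed set of isomorphism classes of oriented rank $3$ vector bundles on $X$ that become free after the addition of a free rank $1$ summand (equivalently, oriented stably free rank $3$ bundles). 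Chasing the diagram and noting that the base-point in $[X,\op{B}_{\Nis}\op{Spin}_7]$ corresponds to the trivial octonion algebra (which gives the trivial rank $3$ bundle), this yields the main assertion: if every stably free oriented rank $3$ bundle on $X$ is free, then the kernel of $[X,\op{B}_{\Nis}\op{G}_2]_{\aone} \to [X,\op{B}_{\Nis}\op{Spin}_7]_{\aone}$ is trivial, so every generically split octonion algebra with trivial spinor bundle is split.

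It remains to verify the hypothesis in each of the three listed situations, which is the arithmetic heart of the theorem. For (1), Bass cancellation applies since the rank $3$ exceeds $\dim X \leq 2$, so every rank $3$ stably free module is free. For (2), the relevant input is a theorem on stably free modules of rank equal to the dimension over smooth affine $3$-folds: under the stated cohomological dimension hypotheses, results of Fasel (building on Suslin, Mohan Kumar, and the Milnor conjecture) force every oriented stably free rank $3$ bundle on $X$ to be free; the separate cases (a), (b), (c) reflect what is needed to control the $2$-primary and $3$-primary obstructions (with the small-characteristic cases requiring perfectness to use the Bloch--Kato/Milnor--Witt framework). For (3), over an algebraically closed field of characteristic not $2$ or $3$ one uses the vanishing of the Euler-class type obstruction on smooth affine $4$-folds (for example \cite[Proposition 5.4]{AsokFasel} together with the identification of the obstruction to splitting off a free rank-$1$ summand from an oriented rank $3$ bundle) to conclude that stably free rank $3$ bundles are again free. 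The hard part is precisely this step: while the reduction to stably free bundles is a formal diagram chase, showing that every stably free oriented rank $3$ bundle is free in cases (2) and (3) requires invoking the known classification results, and the dimension bounds in the theorem are dictated by the boundaries of what those classification results cover.
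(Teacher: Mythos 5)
Your proposal follows essentially the same route as the paper: identify the kernel of $[X,\op{B}_{\Nis}\op{G}_2]_{\aone}\to[X,\op{B}_{\Nis}\op{Spin}_7]_{\aone}$ with the orbit set $[X,\op{Q}_7]_{\aone}/[X,\op{Spin}_7]_{\aone}$, reduce to transitivity of the $[X,\op{SL}_4]_{\aone}$-action using the sequence $\op{Q}_7\to\op{B}_{\Nis}\op{SL}_3\to\op{B}_{\Nis}\op{SL}_4$ (so the relevant set is that of oriented rank $3$ bundles trivialized by adding a free rank $1$ summand), and then invoke cancellation theorems in the three cases. Only minor points differ: your surjection between orbit sets is stated in the wrong direction (it is $[X,\op{Q}_7]_{\aone}/[X,\op{SL}_4]_{\aone}$ that surjects onto $[X,\op{Q}_7]_{\aone}/[X,\op{Spin}_7]_{\aone}$, which is what your conclusion actually uses), and for the arithmetic inputs the paper cites Suslin's cancellation theorem in dimension $3$ and the Fasel--Rao--Swan theorem together with Suslin's cancellation over algebraically closed fields in dimension $4$, where your attributions are vaguer but the intended results are the same.
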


\begin{proof}
The first statement is immediate from discussion preceding the statement of the theorem.  For the final statements, we proceed as follows.  If $X$ has dimension $\leq 2$, then any oriented rank $3$ bundle that becomes free after addition of free rank $1$ summand is automatically stably free, and thus free by the Bass cancellation theorem \cite[Theorem 9.3]{Bass}.  If $X$ has dimension $3$, then it follows from Suslin's cancellation theorem that a rank $3$ oriented vector bundle that becomes free upon addition of a free rank $1$ summand is free \cite[Theorem 2.4]{SuslinCancellation} under any of the hypotheses above.  Finally, if $X = \Spec R$ has dimension $4$ and one works over an algebraically closed field having characteristic unequal to $2$ or $3$, then the Fasel--Rao--Swan theorem \cite{FaselRaoSwan} shows that any oriented rank $3$ vector bundle that is stably free is free.  Suppose $P$ is an oriented rank $3$ module.  By assumption, the rank $4$ modules $P \oplus R$ and $R^{\oplus 4}$ are stably isomorphic so by the cancellation theorem for rank $4$ modules over algebraically closed fields \cite{SuslinCancellationclosed}, we conclude that $P \oplus R$ and $R^{\oplus 4}$ are isomorphic.  Thus, every stably free oriented rank $3$ vector bundle which becomes trivial after addition of a trivial rank $1$ bundle is already free, and the result follows.
\end{proof}

The results above also yield a universal example of an octonion algebra with trivial spinor bundle that is itself non-trivial.  Indeed, in the $\aone$-fiber sequence
\[
\op{Q}_7 \longrightarrow \op{B}_{\Nis}\op{G}_2 \longrightarrow \op{B}_{\Nis}\op{Spin}_7,
\]
the composite map is null $\aone$-homotopic.  The map $\op{Q}_7 \to \op{B}_{\Nis}\op{G}_2$ classifies the $\op{G}_2$-torsor $\op{Spin}_7 \to \op{Spin}_7/\op{G}_2$.  The octonion algebra associated with this torsor is trivial if and only if the map $\op{Q}_7 \to \op{B}_{\Nis}\op{G}_2$ is null-homotopic.  The following result shows that this is not the case.

\begin{prop}
\label{prop:notsplit}
If $k$ is an infinite field having characteristic unequal to $2$, then the map $\op{Q}_7 \to \op{B}_{\Nis}\op{G}_2$ just described is not null-homotopic.  In particular, there is a non-trivial generically split octonion algebra on $\op{Q}_7$ with trivial spinor bundle.
\end{prop}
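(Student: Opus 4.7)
The plan is to reduce non-triviality to a topological statement via realization, and then verify the topological statement using the long exact sequence of the fibration $G_2 \to \op{Spin}(7) \to \op{Spin}(7)/G_2 \cong S^7$ together with the homotopy groups recorded in Theorem~\ref{thm:mimuratoda}.

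First, I would identify $[\op{Q}_7,\op{B}_{\Nis}\op{G}_2]_{\aone}$ explicitly. Since $\op{B}_{\Nis}\op{G}_2$ is $\aone$-simply-connected (Proposition~\ref{prop:lowdegree}), free and pointed $\aone$-homotopy classes agree, and Proposition~\ref{prop:mapsoutofspheresI} gives a canonical identification $[\op{Q}_7,\op{B}_{\Nis}\op{G}_2]_{\aone}\cong\bpi_{3,4}^{\aone}(\op{B}_{\Nis}\op{G}_2)(k)$. Combining Theorem~\ref{thm:pi3bg2} with Lemmas~\ref{lem:k3indsheaf} and \ref{lem:structureofs4} and the fact that four-fold contraction of $\K^{\op{M}}_4/3$ is the constant sheaf $\Z/3\Z$, we see that $\bpi_{3,4}^{\aone}(\op{B}_{\Nis}\op{G}_2)$ is the constant sheaf $\Z/3\Z$, so the section group is $\Z/3\Z$ irrespective of the field. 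In particular, non-triviality of the class is unchanged under base extension to a separable closure $\kbar$.

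Next, after base change, I would apply \'etale realization at $\ell=2$ if $\op{char}(k)\ne 0$, or complex realization if $k\subset\cplx$. By Lemmas~\ref{lem:etalerealizationofquadrics} and \ref{lem:etalerealizationofclassifyingspaces} (resp.\ \ref{lem:complexrealizationofspheres} and \ref{lem:complexrealizationofclassifyingspaces}), the scheme $\op{Q}_7$ realizes to $S^7$, the space $\op{B}_{\Nis}\op{G}_2$ realizes to (an appropriate completion of) $BG_2$, and the map in question realizes to the topological classifying map $S^7\to BG_2$ of the principal $G_2$-bundle $\op{Spin}(7)\to\op{Spin}(7)/G_2\cong S^7$. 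By Theorem~\ref{thm:complexrealizationofpi34bg2}, realization identifies $\bpi_{3,4}^{\aone}(\op{B}_{\Nis}\op{G}_2)(\kbar)=\Z/3\Z$ with $\pi_7(BG_2)=\Z/3\Z$ (or its $2$-completion, which is still $\Z/3\Z$ as $3$ is invertible), so a non-zero topological class forces a non-zero algebraic class.

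Finally, I would show that the topological classifying map of $\op{Spin}(7)\to S^7$ is a generator of $\pi_7(BG_2)$. The long exact homotopy sequence of the fibration $G_2\to\op{Spin}(7)\to S^7$ yields
\[
\pi_7(\op{Spin}(7))\longrightarrow \pi_7(S^7)\stackrel{\partial}{\longrightarrow}\pi_6(G_2)\longrightarrow \pi_6(\op{Spin}(7)),
\]
which by Theorem~\ref{thm:mimuratoda} reads $\Z\to\Z\to\Z/3\Z\to 0$. Hence $\partial$ is surjective, and sends the identity of $S^7$ to a generator of $\pi_6(G_2)\cong\pi_7(BG_2)$; this generator is precisely the class of the classifying map $S^7\to BG_2$, proving non-triviality. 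The only mildly delicate point is ensuring the realization identifications made in Theorem~\ref{thm:complexrealizationofpi34bg2} really send our algebraic classifying map to the topological classifying map of $\op{Spin}(7)\to S^7$, but this is a formal consequence of the naturality of the construction sending a torsor to its classifying map and of the compatibility of quotient constructions with realization established in Theorem~\ref{thm:g2sl3andspin7}.
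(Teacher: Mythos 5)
Your strategy is genuinely different from the paper's, and most of its ingredients are sound: the identification $[\op{Q}_7,\op{B}_{\Nis}\op{G}_2]_{\aone}\cong\bpi_{3,4}^{\aone}(\op{B}_{\Nis}\op{G}_2)(k)\cong\Z/3\Z$ is correct (and is exactly Example~\ref{ex:nontrivialiftingclass}), and the topological endgame is right: since $\pi_6(Spin(7))=0$, the connecting map $\partial\colon\pi_7(S^7)\to\pi_6(G_2)\cong\Z/3\Z$ is surjective, so the classifying map of $Spin(7)\to S^7$ generates $\pi_7(BG_2)$. The genuine gap is in the realization step in positive characteristic. You propose \'etale realization at $\ell=2$, and your parenthetical claim that the $2$-completion of $\pi_7(BG_2)\cong\Z/3\Z$ ``is still $\Z/3\Z$ as $3$ is invertible'' is backwards: precisely because $3$ is a unit in $\Z_2$, the $2$-completion of $\Z/3\Z$ vanishes, so $\pi_7$ of the $2$-complete realization of $\op{B}_{\Nis}\op{G}_2$ is zero and cannot detect the $3$-torsion class you care about. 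The argument as written proves nothing whenever $\op{char}(k)=p>0$. Choosing $\ell=3$ repairs this only when $p\neq 3$, leaving characteristic $3$ (allowed by the statement, which only excludes characteristic $2$) uncovered; and even then the assertion that the realized class is the topological classifying map of $Spin(7)\to S^7$ needs the same kind of explicit compatibility check the paper carries out in Lemma~\ref{lem:diagonalmapgenerator}, not just the word ``formal''. In characteristic $0$ you should also reduce to $\kbar\subset\cplx$ by noting the map and the group are defined over the prime field.

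For comparison, the paper's proof is a short, purely algebraic argument that avoids realization entirely: in the long exact sequence of homotopy sheaves of the $\aone$-fibration $\op{Q}_7\to\op{B}_{\Nis}\op{G}_2\to\op{B}_{\Nis}\op{Spin}_7$, a null-homotopy of the fiber inclusion would force $\bpi_3^{\aone}(\op{B}_{\Nis}\op{G}_2)\to\bpi_3^{\aone}(\op{B}_{\Nis}\op{Spin}_7)\cong\K^{\op{ind}}_3$ to be an isomorphism, contradicting the nontrivial kernel $\mathbf{S}_4/3$ in Theorem~\ref{thm:pi3bg2}. That route works uniformly in all characteristics unequal to $2$ and gives non-triviality directly; your realization approach, once the prime $\ell$ is chosen correctly, buys the sharper statement that the class is a generator of $\Z/3\Z$, but at the cost of excluding characteristic $3$ unless supplemented by a separate argument (for instance the algebraic computation in Example~\ref{ex:nontrivialiftingclass}).
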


\begin{proof}
Consider the long exact sequence
\[
\bpi_3^{\aone}(\op{Q}_7) \longrightarrow \bpi_3^{\aone}(\op{B}_{\Nis}\op{G}_2) \longrightarrow \bpi_3^{\aone}(\op{B}_{\Nis}\op{Spin}_7) \longrightarrow 0.
\]
If the map in the statement was null $\aone$-homotopic, then the first map would be the zero map, and we would conclude that $\bpi_3^{\aone}(\op{B}_{\Nis}\op{G}_2) \to \bpi_3^{\aone}(\op{B}_{\Nis}\op{Spin}_7)$ is an isomorphism.  However, this contradicts the exact sequence in Theorem \ref{thm:pi3bg2}.
\end{proof}

\begin{rem}
Following an observation of Brion, Gille \cite[Concluding Remark (1)]{GilleOctonion} exhibited an example of an octonion algebra with hyperbolic norm form over the quadric $\op{Q}_7$, which corresponds to the $\op{G}_2$-torsor $\op{Spin}(7)\to \op{Spin}(7)/\op{G}_2\cong \op{Q}_7$.  Our result above shows that this example is, in an appropriate sense, the universal example of such an octonion algebra.  In a different direction, one might try to characterize those octonion algebras with a given associated spinor bundle.  As mentioned in the Remark \ref{rem:alsaodygilleremark} Alsaody and Gille have given precisely such a classification \cite{AlsaodyGille}.
\end{rem}

\subsubsection*{Characterizing generically split octonion algebras with trivial associated spinor bundle}
One also immediately deduces the following characterization of generically split octonion algebras whose associated spinor bundles are trivial.

\begin{prop}
\label{prop:explicitconstruction}
Assume $k$ is an infinite field, $X = \Spec R$ is a smooth affine $k$-scheme, and $O$ is a generically split octonion $R$-algebra.  The algebra $O$ has trivial associated spinor bundle if and only if there exists a unimodular row of length $4$ over $R$ with associated oriented projective $R$-module $P$ such that $O = \op{Zorn}(P)$.
\end{prop}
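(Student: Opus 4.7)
The plan is to exploit the $\aone$-homotopy Cartesian square of Proposition~\ref{prop:square}. Using the exceptional isomorphism $\op{Spin}_6\cong\op{SL}_4$, Proposition~\ref{prop:hyperbolicvsstabilization} (to identify the top map as the standard stabilization), and the identifications $\op{SL}_4/\op{SL}_3\cong\op{Q}_7\cong\op{Spin}_7/\op{G}_2$ from Theorem~\ref{thm:g2sl3andspin7}, that square extends to a commutative diagram of $\aone$-fiber sequences
\[
\xymatrix{
\op{Q}_7 \ar[r]\ar[d]^-{\simeq} & \op{B}_{\Nis}\op{SL}_3 \ar[r]\ar[d]^-{\op{Zorn}} & \op{B}_{\Nis}\op{SL}_4 \ar[d] \\
\op{Q}_7 \ar[r] & \op{B}_{\Nis}\op{G}_2 \ar[r] & \op{B}_{\Nis}\op{Spin}_7,
}
\]
in which the leftmost vertical arrow is an $\aone$-weak equivalence because the right-hand square is $\aone$-homotopy Cartesian. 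The ``dictionary'' I will keep in mind is that by \cite[Corollary~4.2.6 and Remark~4.2.7]{AffineRepresentabilityII}, $[X,\op{Q}_7]_{\aone}$ classifies naive equivalence classes of unimodular rows of length $4$ over $R$, and the top map $\op{Q}_7\to\op{B}_{\Nis}\op{SL}_3$ is the classifying map of the $\op{SL}_3$-torsor $\op{SL}_4\to\op{SL}_4/\op{SL}_3$; on $R$-points, this sends a unimodular row to the oriented kernel of the associated surjection $R^4\twoheadrightarrow R$.

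For the ``if'' direction, suppose $O=\op{Zorn}(P)$ with $P$ coming from a unimodular row of length $4$. Then $P$ is classified by some map $X\to\op{Q}_7\to\op{B}_{\Nis}\op{SL}_3$, and $O$ by its further composition with $\op{Zorn}$. Postcomposing with $\op{B}_{\Nis}\op{G}_2\to\op{B}_{\Nis}\op{Spin}_7$ and invoking commutativity of the diagram, the classifying map of the spinor bundle of $O$ factors as $X\to\op{Q}_7\to\op{B}_{\Nis}\op{SL}_4\to\op{B}_{\Nis}\op{Spin}_7$, and the composition $\op{Q}_7\to\op{B}_{\Nis}\op{SL}_4$ is null-$\aone$-homotopic because it is the composition of two consecutive maps in the top fiber sequence. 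Hence the spinor bundle of $O$ is trivial.

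For the ``only if'' direction, assume $O$ has trivial spinor bundle. The bottom fiber sequence yields an exact sequence of pointed sets
\[
[X,\op{Q}_7]_{\aone}\longrightarrow [X,\op{B}_{\Nis}\op{G}_2]_{\aone}\longrightarrow [X,\op{B}_{\Nis}\op{Spin}_7]_{\aone},
\]
so by exactness there is a class $f\in[X,\op{Q}_7]_{\aone}$ mapping to $[O]$. Using the $\aone$-weak equivalence between the top and bottom fiber $\op{Q}_7$, I transport $f$ to a class $\tilde f$ in the top $[X,\op{Q}_7]_{\aone}$; commutativity of the left-hand square ensures that composing $\tilde f$ with $\op{Q}_7\to\op{B}_{\Nis}\op{SL}_3\to\op{B}_{\Nis}\op{G}_2$ still produces $[O]$. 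By the dictionary, $\tilde f$ is represented by a unimodular row of length $4$, the image of $\tilde f$ in $[X,\op{B}_{\Nis}\op{SL}_3]_{\aone}$ is precisely the oriented rank~$3$ projective module $P$ associated to that row, and therefore $O\cong\op{Zorn}(P)$.

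The main obstacle I anticipate is verifying the dictionary compatibility, namely that the fiber map $\op{Q}_7\to\op{B}_{\Nis}\op{SL}_3$ classifying $\op{SL}_4\to\op{SL}_4/\op{SL}_3$ really does correspond, on $R$-points, to the assignment sending a unimodular row to the oriented kernel of the associated surjection, and that this identification is compatible with the fiber equivalence induced by Proposition~\ref{prop:square}. Both are essentially tautological consequences of how the classifying maps of these homogeneous-space torsors are defined, so once they are written down with care the remainder of the argument reduces to pure diagram chasing on the diagram above.
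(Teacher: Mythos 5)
Your argument is correct and follows essentially the same route as the paper: lift the classifying map of $O$ through the fiber $\op{Q}_7$ of $\op{B}_{\Nis}\op{G}_2\to\op{B}_{\Nis}\op{Spin}_7$, identify $[X,\op{Q}_7]_{\aone}$ with unimodular rows of length $4$, and use the compatibility square coming from Proposition~\ref{prop:square} to recognize the resulting algebra as $\op{Zorn}(P)$. Your write-up is only slightly more explicit than the paper's (spelling out the map of fiber sequences and the easy ``if'' direction), but the underlying proof is the same.
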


\begin{proof}
Assume we are given an octonion $R$-algebra $O$ as in the statement.  In that case, the classifying map of $O$ is a class in $[\Spec R,\op{B}_{\Nis}\op{G}_2]_{\aone}$ whose image in $[\Spec R,\op{B}_{\Nis}\op{Spin}_7]_{\aone}$ is the base-point.  As a consequence, the map lifts to a class in $[\Spec R,\op{Q}_7]_{\aone}$ (possibly non-uniquely).  However, elements of $[\Spec R,\op{Q}_7]_{\aone}$ are unimodular rows as in the statement.  The result then follows from the existence of the commutative diagram:
\[
\xymatrix{
[\Spec R,\op{Q}_7]_{\aone} \ar@{=}[d] \ar[r]& [\Spec R,\op{B}_{\Nis}\op{SL}_3]_{\aone} \ar[d]^-{\op{Zorn}} \\
[\Spec R,\op{Q}_7]_{\aone} \ar[r]            & [\Spec R,\op{B}_{\Nis}\op{G}_2]_{\aone},
}
\]
which arises from the $\aone$-homotopy Cartesian square in Proposition \ref{prop:square}.
\end{proof}

\begin{rem}
Gille asked about the existence of a mod $3$ invariant detecting octonion algebras with trivial norm form. Theorem \ref{thm:g2classification} in conjunction with Theorem \ref{thm:trivialnormform} provides one such manifestation of the existence of such an invariant.  From our point of view, the reason one might expect mod $3$ invariants to appear is a result of \cite[Theorem 3.1]{KumpelExceptional} giving a decomposition
\[
\pi_i(Spin(8))\cong\pi_i(G_2)\oplus\pi_i(Spin(8)/G_2)
\]
after inverting $3$.  This decomposition arises from the triality automorphism of $Spin(8)$ and, furthermore, implies that all topological octonion algebras with trivial norm form are related to $3$-torsion in the homotopy of $Spin(8)/G_2 \cong S^7 \times S^7$.
\end{rem}

\subsubsection*{Examples}
We discuss some examples of generically split octonion algebras with trivial associated spinor bundle; by appeal to Proposition~\ref{prop:explicitconstruction} such octonion algebras may be realized as Zorn algebras associated with oriented rank $3$ vector bundles associated with unimodular rows of length $4$.  It is then easy to show that the hypotheses in Theorem~\ref{thm:trivialnormform} are best possible by appeal to classical constructions in the theory of projective modules; the following example thus provides an answer to the question posed in \cite[Concluding Remarks 1]{GilleOctonion} about the minimal dimension of octonion algebras with split norm form.

\begin{thm}
\label{thm:mohankumar}
Assume $k$ is a field.  There exists a smooth affine $k$-scheme $X$ carrying a non-trivial generically split octonion algebra with trivial associated spinor bundle in either of the following situations:
\begin{enumerate}[noitemsep,topsep=1pt]
\item $k = F(t)$ for $F$ algebraically closed having characteristic unequal to $2$ and $X$ has dimension $\geq 4$, or
\item $k$ is algebraically closed having characteristic unequal to $2$ or $3$ and $X$ has dimension $\geq 5$.
\end{enumerate}
\end{thm}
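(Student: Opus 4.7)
The plan is to construct, in each case, a smooth affine $k$-scheme $X$ admitting a stably free oriented rank $3$ projective module $P$ that is not free; then form the Zorn algebra $O := \op{Zorn}(P)$ and verify that (i) $O$ is generically split (automatic since $P$ is generically free of rank $3$), (ii) the associated spinor bundle of $O$ is trivial, and (iii) $O$ is not isomorphic to the split octonion algebra. The principal obstacle is item (iii).

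First I would invoke Mohan Kumar's construction from \cite{MohanKumarstablyfree}: in case (1) this provides a smooth affine $4$-dimensional variety $X$ over $k = F(t)$ equipped with a stably free oriented rank $3$ projective module $P$ that is not free; in case (2) it yields such an example with $\dim X = 5$ over the algebraically closed field $k$. Choose an isomorphism $P \oplus R \isomto R^4$ (equivalently, a unimodular row of length $4$ over $R = \O(X)$), which shows that $P$ belongs to the image of $[X, \op{Q}_7]_{\aone} \to [X,\op{B}_{\Nis}\op{SL}_3]_{\aone}$.

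For (ii), apply Proposition~\ref{prop:square}: the classifying map of the spinor bundle of $\op{Zorn}(P)$ is the composite
\[
X \xrightarrow{[P]} \op{B}_{\Nis}\op{SL}_3 \longrightarrow \op{B}_{\Nis}\op{Spin}_6 \longrightarrow \op{B}_{\Nis}\op{Spin}_7,
\]
where the first two arrows together give the stabilization $[P] \mapsto [P \oplus R]$ valued in $[X,\op{B}_{\Nis}\op{SL}_4]_{\aone} \cong [X,\op{B}_{\Nis}\op{Spin}_6]_{\aone}$. Since $P \oplus R \cong R^4$, this intermediate class is trivial, and hence so is the spinor bundle of $O$.

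For (iii), suppose for contradiction that $O$ were split. By the $\aone$-fiber sequence $\op{Q}_6 \to \op{B}_{\Nis}\op{SL}_3 \xrightarrow{\op{Zorn}} \op{B}_{\Nis}\op{G}_2$ of Proposition~\ref{prop:sl3g2fibration}, the classifying map $X \to \op{B}_{\Nis}\op{SL}_3$ of $P$ would lift through some $\sigma : X \to \op{Q}_6$, so that $P \cong \sigma^* \mathscr{E}'$ with $\mathscr{E}'$ the universal oriented rank $3$ bundle of Lemma~\ref{lem:q6chernclass}. The hard part is to derive a contradiction: Mohan Kumar's $P$ carries a non-trivial Chow--Witt Euler class $e(P) \in \widetilde{\op{CH}}^3(X)$, which is precisely the obstruction to freeness of a stably free rank $3$ module, and I would exploit this through the identification $\bpi_3^{\aone}(\op{Q}_6) \cong \K^{\op{MW}}_3$ together with the $\aone$-Postnikov analysis of the lifting problem $X \to \op{Q}_6$ to show that such a $\sigma$ forces relations on $e(P)$ that are incompatible with Mohan Kumar's explicit construction. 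In case (2), one could alternatively use complex or $\ell$-adic \'etale realization: $\op{B}_{\Nis}\op{G}_2$ realizes to $BG_2$, and the Euler class of the topological rank $3$ bundle underlying Mohan Kumar's $P(\mathbb{C})$ detects a nonzero topological invariant of the realized $G_2$-bundle that cannot arise from the realization of a map through $\op{Q}_6 \simeq S^6$. Carrying out either of these arguments rigorously---tracking the precise invariants through the Postnikov tower of $\op{Zorn}$---is the main technical work.
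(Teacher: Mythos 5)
Your steps (i) and (ii) are fine and match the paper's setup: a stably free oriented rank $3$ module $P$ with $P\oplus R\cong R^{\oplus 4}$ gives a class coming from $[X,\op{Q}_7]_{\aone}$, and the spinor bundle of $\op{Zorn}(P)$ is trivial because the stabilized class in $[X,\op{B}_{\Nis}\op{SL}_4]_{\aone}\cong[X,\op{B}_{\Nis}\op{Spin}_6]_{\aone}$ is trivial. The genuine gap is exactly the step you flag as ``the main technical work'', and the route you sketch for it does not go through. Non-freeness of $P$ does \emph{not} imply non-splitness of $\op{Zorn}(P)$: the fiber sequence $\op{Q}_6\to\op{B}_{\Nis}\op{SL}_3\to\op{B}_{\Nis}\op{G}_2$ says precisely that every rank $3$ bundle pulled back from $\op{Q}_6$ has split Zorn algebra, and such bundles can be highly non-trivial (the universal $\mathscr{E}'$ generates $\tilde K_0(\op{Q}_6)=\Z$; on $\op{Q}_7$ the row $(x_1^3,x_2,x_3,x_4)$ gives a non-free module with split Zorn algebra, cf.\ Example~\ref{ex:q6q7}). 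The reason is quantified by Theorem~\ref{thm:pi3bg2}: the image of $\K^{\MW}_4=\bpi_3^{\aone}(\op{Q}_7)$ in $\bpi_3^{\aone}(\op{B}_{\Nis}\op{G}_2)$ is only $\mathbf{S}_4/3$, so for a unimodular row on a $4$-fold the octonion algebra is detected solely by the image of the associated class in $\op{H}^3(X,\K^{\op{M}}_4/3)$ --- a mod $3$ invariant, not the Chow--Witt Euler class. So a contradiction cannot be extracted from the mere existence of a lift $\sigma:X\to\op{Q}_6$ plus $e(P)\neq 0$; you must show that the specific class attached to Mohan Kumar's row has non-trivial reduction mod $3$. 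This is what the paper does: it proves (under $\op{cd}_2(k)\leq 1$) that $[X,\op{Q}_7]_{\aone}\to\op{H}^3(X,\K^{\op{M}}_4)$ is surjective, identifies the lifting-class group with $\op{H}^3(X,\K^{\op{M}}_4/3)$, and then uses Mohan Kumar's variety with $\op{CH}^4(Y)\cong\Z/3\Z$ together with a Mayer--Vietoris connecting map on $X=U\cap V$ to produce a class in $\op{H}^3(X,\K^{\op{M}}_4)$ that visibly survives mod $3$; case (2) follows by clearing denominators to descend from $F(t)$ to a $5$-fold over $F$.

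Two further concrete problems with your fallbacks. First, the complex/\'etale realization argument in case (2) cannot work: Mohan Kumar's rank $3$ bundles become holomorphically, hence topologically, trivial after complex realization (as remarked in the paper right after this proof), so no topological invariant of $P(\cplx)$ or of the realized $\op{G}_2$-bundle can detect anything; moreover the relevant topological obstruction lives in $\pi_7(BG_2)\cong\Z/3\Z$ and is not an Euler-class-type invariant. Second, your claim that the Chow--Witt Euler class in $\CH^3(X)$ ``is precisely the obstruction to freeness'' is unjustified here: $X$ has dimension $4>3=\op{rank}(P)$, so Morel's Euler-class theory does not apply in that form, and Mohan Kumar's own non-freeness argument is a mod $3$ Chow-group computation, which is exactly the input the paper's proof channels into the mod $3$ octonion invariant.
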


\begin{proof}
In both cases, we are interested in building unimodular rows of length $4$ on smooth affine varieties whose associated rank $3$ vector bundles are non-trivial and for which the Zorn algebras of these rank $3$ vector bundles are non-split octonion algebras.  \newline

\noindent {\bf Point 1.} We begin by discussing a general procedure for building elements of $[X,\op{Q}_7]_{\aone}$, i.e., unimodular rows of length $4$ up to elementary equivalence and then we give a procedure to construct such unimodular rows whose associated rank $3$ vector bundles are non-trivial and whose Zorn algebras are non-trivial octonion algebras. \newline

\noindent {\bf Step 1.} Assume $X$ is a smooth affine $4$-fold over a field $k$ having $2$-cohomological dimension $\leq 1$.  In that case, we claim that the composite map
\[
[X,\op{Q}_7]_{\aone} \longrightarrow [X,\tau_{\leq 3}\op{Q}_7]_{\aone} = \op{H}^3(X,\K^{\MW}_4) \longrightarrow \op{H}^3(X,\K^{\op{M}}_4)
\]
is surjective.  Indeed, for any smooth affine $4$-fold, the first map is surjective by virtue of \cite[Proposition 6.2]{AsokFasel}.  For the surjectivity of the last map, observe that there is an exact sequence of the form
\[
\cdots \longrightarrow \op{H}^3(X,\K^{\MW}_4) \longrightarrow \op{H}^3(X,\K^{\op{M}}_4) \longrightarrow \op{H}^4(X,\mathbf{I}^5) \longrightarrow \cdots
\]
and $\op{H}^4(X,\mathbf{I}^5) = 0$ under the assumption that $k$ has $2$-cohomological dimension $\leq 1$ by appeal to \cite[Proposition 5.2]{AsokFasel}.  \newline

\noindent {\bf Step 2.} We now need a method to guarantee that the associated rank $3$ bundles or octonion algebras are actually non-trivial.  Equivalently, we want to know that the image of our class in $[X,\op{Q}_7]_{\aone}$ remains non-trivial in $[X,\op{B}_{\Nis}\op{SL}_3]_{\aone}$ or $[X,\op{B}_{\Nis}\op{G}_2]$.

We know that $[\op{Q}_7,\op{B}_{\Nis}\op{SL}_3]_{\aone} = \Z/6\Z$ and that this group surjects onto $[\op{Q}_7,\op{B}_{\Nis}\op{G}_2]_{\aone} = \Z/3\Z$.  Moreover, by Proposition \ref{prop:mapsoutofquadricsascohomology}, we know
\begin{eqnarray*}
[\op{Q}_7,\op{B}_{\Nis}\op{SL}_3]_{\aone} &\isomto& \op{H}^3(\op{Q}_7,\bpi_3^{\aone}(\op{B}_{\Nis}\op{SL}_3)), \text{ and } \\ {[\op{Q}_7,\op{B}_{\Nis}\op{G}_2]_{\aone}} &\isomto& \op{H}^3(\op{Q}_7,\bpi_3^{\aone}(\op{B}_{\Nis}\op{G}_2)).
\end{eqnarray*}
Given a map $\varphi: X \to \op{Q}_7$, the pullback of a class in $\op{H}^3(\op{Q}_7,\bpi_3^{\aone}(\op{B}_{\Nis}\op{G}_2))$ along this map yields a class in $\op{H}^3(X,\bpi_3^{\aone}(\op{B}_{\Nis}\op{G}_2))$ and to establish non-triviality of the associated octonion algebra, it suffices to establish non-triviality of this pulled back class.

We may use the exact sequence of Theorem \ref{thm:pi3bg2} to describe $\op{H}^3(X,\bpi_3^{\aone}(\op{B}_{\Nis}\op{G}_2))$ in more detail.  Indeed, the long exact sequence in cohomology associated with the short exact sequence of the statement takes the form:
\[
\cdots \longrightarrow \op{H}^2(X,\K^{\op{ind}}_3) \longrightarrow \op{H}^3(X,\op{S}_4/3) \longrightarrow \op{H}^3(X,\bpi_3^{\aone}(\op{B}_{\Nis}\op{G}_2)) \longrightarrow \op{H}^3(X,\K^{\op{ind}}_3) \longrightarrow \cdots
\]
Lemma \ref{lem:k3indsheaf} asserts that $\K^{\op{ind}}_3$ is killed by a single contraction.  Since the relevant cohomology groups may be computed by appeal to the Gersten resolution, we see that $\op{H}^2(X,\K^{\op{ind}}_3) = \op{H}^3(X,\K^{\op{ind}}_3) = 0$.  Furthermore, Theorem \ref{thm:pi3bg2} also asserts that the canonical epimorphism $\K^{\op{M}}_4/3 \to \mathbf{S}_4/3$ becomes an isomorphism after $2$-fold contraction and it follows that the induced map $\op{H}^3(X,\K^{\op{M}}_4/3) \to \op{H}^3(X,\op{S}_4/3)$ is an isomorphism.  Putting this all together, we conclude that $\op{H}^3(X,\K^{\op{M}}_4/3) \isomt \op{H}^3(X,\bpi_3^{\aone}(\op{B}_{\Nis}\op{G}_2))$.

Combining the observations of the previous two paragraphs, for any $\varphi: X \to \op{Q}_7$, the lifting coset of the octonion algebra we are trying to build is determined by the image of
\[
\varphi^*: \Z/6\Z \longrightarrow \op{H}^3(X,\K^{\op{M}}_4/6) \longrightarrow \op{H}^3(X,\K^{\op{M}}_4/3).
\]
(Note that, by construction, the resulting class does not lie in the image of $\op{H}^1(X,\K^{\op{M}}_2)$ under $\Omega k_3$).

The map $\bpi_3^{\aone}(\op{Q}_7) \to \bpi_3^{\aone}(\op{B}_{\Nis}\op{SL}_3)$ is the surjection onto $\mathbf{S}_4$ and the map $\bpi_3^{\aone}(\op{Q}_7) \to \bpi_3^{\aone}(\op{B}_{\Nis}\op{G}_2)$ is the surjection onto $\mathbf{S}_4/3$.  These quotient maps induce a commutative triangle:
\[
\xymatrix{
\op{H}^3(X,\K^{\MW}_4) \ar[r]\ar[dr]& \op{H}^3(X,\K^{\op{M}}_4/6) \ar[d]\\
& \op{H}^3(X,\K^{\op{M}}_4/3)
}
\]
where all the maps are the reduction maps.  Therefore, to check that a class in $[X,\op{Q}_7]_{\aone}$ corresponds to a non-trivial generically split octonion algebra, it suffices to show that the associated class in $\op{H}^3(X,\K^{\MW}_4)$ is non-trivial upon taking its reduction to $\op{H}^3(X,\K^{\op{M}}_4/3)$.  Equivalently, after Step 1, both horizontal maps factor through the surjection $\op{H}^3(X,\K^{\MW}_4) \to \op{H}^3(X,\K^{\op{M}}_4)$.  Thus, after the conclusion of Step 1, to build our unimodular rows, it suffices to write down classes in $\op{H}^3(X,\K^{\op{M}}_4)$ that remain non-trivial in the quotient $\op{H}^3(X,\K^{\op{M}}_4/6)$ or $\op{H}^3(X,\K^{\op{M}}_4/3)$.\newline

\noindent {\bf Step 3.} After the two preceding steps, we want to write down smooth affine varieties of dimension $4$ over a field $k$ of $2$-cohomological dimension $1$ together with a class in $\op{H}^3(X,\K^{\op{M}}_4)$ that remains non-trivial in $\op{H}^3(X,\K^{\op{M}}_4/3)$.  To this end, take $F$ to be an algebraically closed field having characteristic unequal to $2$, and let $k = F(t)$, which is $C_1$ and, in particular, has \'etale $2$-cohomological dimension $\leq 1$.

To write down a non-trivial class in $\op{H}^3(X,\K^{\op{M}}_4)$ that remains non-trivial after reduction mod $3$, let us suppose there exists a smooth affine $k$-variety $Y$ of dimension $4$ admitting an open cover $Y = U \cup V$ together with a non-trivial class $c \in \op{CH}^4(Y)$ which restricts to zero in $\op{CH}^4(U)$ and $\op{CH}^4(V)$.  In that case, the connecting homomorphism in the Mayer-Vietoris sequence for cohomology with coefficients in $\K^{\op{M}}_4$ takes the form
\[
\cdots \longrightarrow \op{H}^3(U \cap V,\K^{\op{M}}_4) \longrightarrow \op{CH}^4(Y) \longrightarrow \op{CH}^4(U) \oplus \op{CH}^4(V) \longrightarrow \cdots
\]
and exactness shows that $c$ lifts to a non-trivial class $\tilde{c} \in \op{H}^3(U \cap V,\K^{\op{M}}_4)$.  If, furthermore, $\op{CH}^4(Y) \to \op{CH}^4(Y)/3$ is an isomorphism, it follows that $\tilde{c}$ has non-trivial reduction mod $3$ by functoriality of Mayer-Vietoris sequences.
In that case, we may take $X = U \cap V$ and $\tilde{c}$ corresponds to a unimodular row of length $4$ with non-trivial associated octonion algebra by the discussion of the two previous steps.  The proof is completed by appeal to Lemma \ref{lem:explicitexample}.  \newline

\noindent {\bf Point 2.}  In the proof of (1), we constructed a unimodular row of length $4$ over a smooth affine $k$-variety $X$ of dimension $4$ with $k = F(t)$ and $F$ algebraically closed whose associated Zorn algebra is non-split.  By ``clearing the denominators" this unimodular row of length $4$ becomes a unimodular row over a smooth affine $F$-variety of dimension $5$.  Since the Zorn algebra of this unimodular row is non-split upon restriction to the open affine subscheme $X$ by construction, it must be non-split.
\end{proof}

\begin{lem}
\label{lem:explicitexample}
There exists a smooth affine variety $Y$ of dimension $4$ over a field $k$ and an open cover $Y = U \cup V$  with $U$ and $V$ affine such that $\op{CH}^4(Y) = \Z/3\Z$, and $\op{CH}^4(U) = \op{CH}^4(V) = 0$.
\end{lem}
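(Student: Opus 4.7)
The plan is to build $Y$ following Mohan Kumar's construction in \cite{MohanKumarstablyfree}. Start with a smooth projective $4$-fold $\bar Y$ over a suitable base field, chosen so that its Chow group in codimension $4$ contains an order-$3$ class generated by a closed point $P$; such $\bar Y$ arise from known constructions involving 3-torsion zero-cycles (for instance via auxiliary elliptic factors or carefully chosen complete intersections). Remove a sufficiently ample divisor $D \subset \bar Y$ to obtain a smooth affine $Y = \bar Y \setminus D$ satisfying $\op{CH}^4(Y) \cong \Z/3\Z$, with the generator still the class $[P]$ of a closed point.

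To construct the cover, I would exhibit two closed points $P_1, P_2$ on $Y$ each rationally equivalent to $P$, whose supports lie on \emph{disjoint} principal divisors $V(f), V(g) \subset Y$ for some $f, g \in \O(Y)$. Since $V(f) \cap V(g) = \emptyset$, the principal opens $U := Y_f = Y \setminus V(f)$ and $V := Y_g = Y \setminus V(g)$ are affine and cover $Y$. The localization exact sequence
\[
\op{CH}_0(V(f)) \longrightarrow \op{CH}^4(Y) \longrightarrow \op{CH}^4(U) \longrightarrow 0
\]
then forces $\op{CH}^4(U) = 0$, because $[P] = [P_1]$ lies in the image of the first map by the choice of $f$; the symmetric argument with $g$ yields $\op{CH}^4(V) = 0$.

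The main obstacle is producing the package $(Y; f, g)$ together with the rational equivalences $[P] = [P_1] = [P_2]$ explicitly. The moving problem has to be solved so that the two chosen representatives $P_1, P_2$ lie on hypersurfaces which are not merely disjoint as subschemes of $Y$ but are cut out by global regular functions (so that $U$ and $V$ are principal, hence affine). This is essentially what Mohan Kumar's construction delivers: a unimodular row of length $4$ on $Y$ encodes exactly a two-chart affine cover with a non-trivial transition, and the mod-$3$ Chow-theoretic obstruction to triviality of the associated rank-$3$ bundle is precisely the datum we need. Thus the lemma reduces to unpacking \cite{MohanKumarstablyfree} and verifying the displayed localization sequence on the two resulting charts.
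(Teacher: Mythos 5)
Your formal skeleton is fine: if $U=Y\setminus V(f)$ and $V=Y\setminus V(g)$ are affine opens covering $Y$ and the generator of $\op{CH}^4(Y)$ is rationally equivalent to a zero-cycle supported on $V(f)$ and also to one supported on $V(g)$, then the localization sequences force $\op{CH}^4(U)=\op{CH}^4(V)=0$. But that is the easy, purely formal part of the lemma; the entire content is the existence of the package $(k,Y,f,g)$ with $\op{CH}^4(Y)=\Z/3\Z$, and your proposal both defers this (``the lemma reduces to unpacking \cite{MohanKumarstablyfree}'') and, where it sketches a mechanism, sketches the wrong one. The group $\Z/3\Z$ in Mohan Kumar's example does \emph{not} arise as a $3$-torsion zero-cycle class on a smooth projective $4$-fold (elliptic factors, special complete intersections) surviving the removal of a ``sufficiently ample divisor''; removing a divisor gives you no control whatsoever on $\op{CH}^4$ of the complement, and in particular no reason the whole group should become exactly $\Z/3\Z$ rather than, say, a large divisible group. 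The actual mechanism is arithmetic: one takes $k$ admitting a cubic $f$ with $f(0)\in k^\times$ and $f(x^{27})$ irreducible (e.g.\ $k=F(t)$, $F$ algebraically closed --- no such $f$ exists over an algebraically closed field, so pinning down $k$ is part of the problem), builds the iterated polynomials $F_1=F=x_1^3f(x_0/x_1)$, $F_{i+1}=F(F_i,a^{t_i}x_{i+1}^{3^i})$, and sets $Y=\mathbb{P}^4\setminus V(F_4)$. Then $\op{CH}^4(\mathbb{P}^4)=\Z$ surjects onto $\op{CH}^4(Y)$, and the quotient is $\Z/3\Z$ because the irreducibility hypothesis forces every closed point of the removed hypersurface $V(F_4)$ to have degree divisible by $3$, while it does carry a degree-$3$ point.

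The two charts are likewise explicit in that construction: $U=Y\setminus V(F_3)$ and $V=Y\setminus V(G)$ with $G=F_3-a^{t_3}x_4^{11}$, the point being that the common zeros of $F_3$ and $G$ lie inside $V(F_4)$, so $U\cup V=Y$, and Mohan Kumar proves directly that both restriction maps $\op{CH}^4(Y)\to\op{CH}^4(U)$ and $\op{CH}^4(Y)\to\op{CH}^4(V)$ vanish. Your ``moving'' step --- producing two representatives $P_1,P_2$ of the generator lying on disjoint principal divisors --- is exactly the part you cannot take for granted and do not supply; there is no general moving lemma that puts a zero-cycle class on a prescribed principal hypersurface of an affine $4$-fold. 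So as written the proposal establishes nothing beyond the localization formalities: to make it a proof you must either reproduce Mohan Kumar's explicit data (field, polynomials, cover) and verify the three facts above, or provide a genuinely different construction with those properties, which your sketch does not do.
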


\begin{proof}
We follow the construction of \cite{MohanKumarstablyfree}.  To start, suppose $k$ is an arbitrary field, and fix a polynomial $f$ of degree $3$ such that $f(0) \in k^{\times}$ and $f(x^{27})$ is irreducible.  Let $t_i = \frac{3^i-1}{3-1}$, and set $F(x_0,x_1) = x_1^3 f(\frac{x_0}{x_1})$.  Define polynomials $F_n(x_0,\ldots,x_n)$ inductively by the formula
\[
\begin{split}
F_1 &= F, \\
F_{i+1}(x_0,\ldots,x_{i+1}) &= F(F_i(x_0,\ldots,x_i),a^{t_i}x_{i+1}^{3^i})
\end{split}
\]
Let $Y$ be the complement in ${\mathbb P}^4$ of the variety defined by $F_4$.

We may cover $Y$ by the following two open sets.  Let $U$ be the intersection of $Y$ with the complement of the hypersurface defined $F_{3} = 0$.  Likewise, define $G = F_{3} - a^{t_{3}}x_{4}^{11}$ and set $V$ to be the complement of $G = 0$ intersected with $X$.  One checks that this is a Zariski open cover of $X$ (see \cite[p. 1441]{MohanKumarstablyfree}) .

As an open subscheme of ${\mathbb P}^4$, there is a degree-wise surjective ring map $\op{CH}^*({\mathbb P}^4) \to \op{CH}^*(Y)$; Mohan Kumar shows that $\op{CH}^4(Y) = \Z/p\Z$ generated by the class of a $k$-rational point of $Y$.  He also shows that the restriction map $\op{CH}^4(Y) \to \op{CH}^4(U)$ is zero and the repeating this argument one concludes also that $\op{CH}^4(Y) \to \op{CH}^4(U)$ is the zero map.
\end{proof}

\begin{rem}
The above argument essentially recasts \cite{MohanKumarstablyfree} (in the rank $3$ case) in the context of $\aone$-homotopy theory.  When working over $\cplx$, the rank $3$ vector bundles constructed above are actually holomorphically trivial as remarked in \cite{MohanKumarstablyfree}.  As a consequence the octonion algebras we describe are also holomorphically trivial.  In \cite{MKnote}, the third author will consider generalizations of the results above to constructing stably free non-free vector bundles of higher ranks; the arguments in these situations resemble the discussion above, but additional complications arise from the fact that $\bpi_n^{\aone}(\op{B}_{\Nis}\op{SL}_n)$ surjects onto $\K^{\op{Q}}_n$ rather than a quotient like $\K^{\op{ind}}_3$, which is cohomologically simpler.
\end{rem}

\begin{ex}
\label{ex:q6q7}
Any octonion bundle on either $\op{Q}_5$, $\op{Q}_6$ or $\op{Q}_7$ has trivial associated spinor bundle.  Indeed, the composite map of the classifying map $\op{Q}_i \to \op{B}_{\Nis}\op{G}_2$ of the octonion algebra with the map $\op{B}_{\Nis}\op{G}_2 \to \op{B}_{\Nis}\op{Spin}_7$ yields, by the same argument as in Example~\ref{ex:nontrivialiftingclass}, an element of the group $\bpi_{2,3}^{\aone}(\op{B}_{\Nis}\op{Spin}_7)(k)$, $\bpi_{3,3}^{\aone}(\op{B}_{\Nis}\op{Spin}_7)(k)$ or $\bpi_{3,4}^{\aone}(\op{B}_{\Nis}\op{Spin}_7)(k)$.  However, $\bpi_{2,3}^{\aone}(\op{B}_{\Nis}\op{Spin}_7) = (\K^{\op{M}}_2)_{-3}$ by Proposition \ref{prop:lowdegree} and the latter sheaf is trivial.  Similarly, $\bpi_{3,3}(\op{B}_{\Nis}\op{Spin}_7) = (\K^{\op{ind}}_3)_{-3}$ and $\bpi_{3,4}^{\aone}(\op{B}_{\Nis}\op{Spin}_7) = (\K^{\op{ind}}_3)_{-4}$ by Theorem \ref{thm:pi3bg2}, and both of these sheaves are trivial by Lemma~\ref{lem:k3indsheaf}.

It follows from Theorem \ref{thm:trivialnormform} that in each case above, non-trivial octonion algebras are Zorn algebras associated with oriented rank $3$ bundles arising from unimodular rows of length $4$.  Thus, by \cite[Corollary 4.7]{AsokFaselSpheres} since all oriented rank $3$ bundles on $\op{Q}_5$ are trivial, we conclude that all octonion algebras are split.  Similarly, on $\op{Q}_7$, representatives of all oriented rank $3$ vector bundles are described in \cite[Theorem 4.8]{AsokFaselSpheres}.  Indeed, they may be given by the unimodular rows $(x_1^n,x_2,x_3,x_4)$ for $1 \leq n \leq 6$.  Note, however, that not all of these unimodular rows yield non-trivial octonion algebras: indeed, the relevant octonion algebra will be non-trivial if and only if $n$ is not divisible by $3$.
\end{ex}

\subsubsection*{When are octonion algebras determined by their norm forms?}
Finally, we may analyze the question: when are octonion algebras determined by their norm forms.  We will show that this holds in sufficiently small dimensions as follows.  First, we show that under suitable dimension hypotheses on $X$, the set $[X,\op{B}_{\Nis}\op{G}_2]_{\aone}$ of generically split octonion algebras and $[X,\op{B}_{\Nis}\op{Spin}_7]_{\aone}$ of spinor bundles of rank $7$ are equipped with abelian group structures making the map $[X,\op{B}_{\Nis}\op{G}_2]_{\aone} \to [X,\op{B}_{\Nis}\op{Spin}_7]_{\aone}$ into a group homomorphism.  This group structure is the ``same" group structure one constructs in cohomotopy.  Then, we appeal to our determination of the kernel of this map above.

\begin{prop}
\label{prop:groupstructures}
Assume $k$ is a field.  If $X$ is a smooth affine $k$-scheme of dimension $d \leq 2$, then $[X,\op{B}_{\Nis}\op{G}_2]_{\aone}$ and $[X,\op{B}_{\Nis}\op{Spin}_7]_{\aone}$ admit functorial abelian group structures such that the map
\[
[X,\op{B}_{\Nis}\op{G}_2]_{\aone} \longrightarrow [X,\op{B}_{\Nis}\op{Spin}_7]_{\aone}
\]
is a homomorphism.
\end{prop}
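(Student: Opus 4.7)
The plan is to observe that under the dimension hypothesis, both mapping sets are in canonical bijection with the abelian group $\op{H}^2(X, \K^{\op{M}}_2) = \op{CH}^2(X)$ via the second $\aone$-Postnikov truncation, and that the map in the statement corresponds to the identity on this group under these bijections.

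First, by Proposition \ref{prop:lowdegree}, the classifying spaces $\op{B}_{\Nis}\op{G}_2$ and $\op{B}_{\Nis}\op{Spin}_7$ are both $\aone$-$1$-connected with second $\aone$-homotopy sheaf isomorphic to $\K^{\op{M}}_2$. Consequently, each admits a canonical $\aone$-equivalence between its second Postnikov truncation and the Eilenberg--Mac Lane space $\op{K}(\K^{\op{M}}_2, 2)$. Since $X$ has dimension $d \leq 2$, the Nisnevich cohomology of $X$ with coefficients in any sheaf vanishes in degree $\geq 3$. Hence the obstructions to existence and uniqueness of lifts from $\tau_{\leq 2}$ through the higher Postnikov stages of $\op{B}_{\Nis}\op{G}_2$ and $\op{B}_{\Nis}\op{Spin}_7$ vanish automatically, and the arguments used in Theorem \ref{thm:g2classification}(1) and relying on \cite[Proposition 6.2]{AsokFasel} yield functorial bijections
\[
[X, \op{B}_{\Nis}\op{G}_2]_{\aone} \isomto \op{H}^2(X, \K^{\op{M}}_2) \text{ and } [X, \op{B}_{\Nis}\op{Spin}_7]_{\aone} \isomto \op{H}^2(X, \K^{\op{M}}_2).
\]
The right-hand side carries a natural abelian group structure, functorial in $X$, and I transport this structure back through the two bijections above.

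For the compatibility assertion, I would observe that the induced map $\tau_{\leq 2}\op{B}_{\Nis}\op{G}_2 \to \tau_{\leq 2}\op{B}_{\Nis}\op{Spin}_7$ is an $\aone$-weak equivalence. Indeed, the proof of Proposition \ref{prop:lowdegree} shows that both $\op{B}_{\Nis}\op{SL}_3 \to \op{B}_{\Nis}\op{G}_2$ and $\op{B}_{\Nis}\op{SL}_3 \to \op{B}_{\Nis}\op{Spin}_6 \to \op{B}_{\Nis}\op{Spin}_7$ are $\aone$-$2$-equivalences, and Proposition \ref{prop:square} ensures that these factorizations are compatible; so the map $\bpi_2^{\aone}(\op{B}_{\Nis}\op{G}_2) \to \bpi_2^{\aone}(\op{B}_{\Nis}\op{Spin}_7)$ is an isomorphism (indeed the identity on $\K^{\op{M}}_2$ under the given identifications). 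Consequently, under our two bijections, the map of the statement becomes the identity of $\op{H}^2(X, \K^{\op{M}}_2)$, which is trivially a group homomorphism. I do not anticipate any serious obstacle here, as the argument reduces to the obstruction-theoretic computations already recorded and to the naturality of the Postnikov truncation.
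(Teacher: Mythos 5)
Your argument is correct, but it takes a more computational route than the paper. The paper's proof is a one-line appeal to a general result (\cite[Proposition 1.1.4]{AsokFaselcohomotopy}): for a pointed $\aone$-$1$-connected target and $X$ smooth affine of dimension $\leq 2$, the set $[X,-]_{\aone}$ carries a functorial abelian group structure, and functoriality in the target immediately makes the map to $[X,\op{B}_{\Nis}\op{Spin}_7]_{\aone}$ a homomorphism; the only input from the present paper is $\aone$-$1$-connectedness (Proposition \ref{prop:lowdegree}), plus a remark that the spaces are pulled back from the prime field so the field hypotheses in the cited result are harmless. You instead identify both mapping sets with $\op{H}^2(X,\K^{\op{M}}_2)=\op{CH}^2(X)$ via the $2$-truncation and transport the group structure, using Proposition \ref{prop:square} to see that $\tau_{\leq 2}\op{B}_{\Nis}\op{G}_2 \to \tau_{\leq 2}\op{B}_{\Nis}\op{Spin}_7$ is an equivalence, so the comparison map becomes the identity on $\op{CH}^2(X)$. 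This buys more than the statement asks for (in this range the map is in fact a bijection, which would make the injectivity in Theorem \ref{thm:octonionalgebrasaredeterminedbytheirnormforms}(1) immediate), at the price of depending on the explicit computation $\bpi_2^{\aone}\cong\K^{\op{M}}_2$ and on Proposition \ref{prop:lowdegree} and \cite[Proposition 6.2]{AsokFasel}, which are stated for infinite (resp.\ perfect) base fields; since the proposition only assumes $k$ is a field, you should add the same base-change-from-the-prime-field remark the paper makes (the prime field is perfect, and for finite $k$ one pulls back the needed identifications), and note explicitly that free and pointed classes agree here because the targets are $\aone$-simply-connected (Remark \ref{rem:freevspointed}). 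With those two small additions your proof is complete.
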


\begin{proof}
This is an immediate consequence of \cite[Proposition 1.1.4]{AsokFaselcohomotopy} since both $\op{B}_{\Nis}\op{G}_2$ and $\op{B}_{\Nis}\op{Spin}_7$ are $\aone$-$1$-connected by Proposition \ref{prop:lowdegree}.  Note that the hypotheses on the base field used there are irrelevant since $\op{B}_{\Nis}\op{G}_2$ and $\op{B}_{\Nis}\op{Spin}_7$ are pulled back from spaces defined over the prime field.
\end{proof}

\begin{thm}
\label{thm:octonionalgebrasaredeterminedbytheirnormforms}
Assume $k$ is an infinite field and $X$ is an irreducible smooth affine $k$-scheme of dimension $d \leq 2$.
\begin{enumerate}[noitemsep,topsep=1pt]
\item Any generically split octonion algebra on $X$ is determined by its norm form.
\item If, furthermore, $k$ has characteristic unequal to $2$ and $k(X)$ has $2$-cohomological dimension $\leq 2$, then {\em any} octonion algebra on $X$ is determined by its norm form.
\end{enumerate}
\end{thm}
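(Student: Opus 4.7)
The strategy is to deduce the theorem from material already in hand, chiefly Proposition~\ref{prop:groupstructures}, Theorem~\ref{thm:trivialnormform}(1), and Lemma~\ref{lem:generictrivialitylowdimensions}, following the recasting of Petersson's question that is discussed in the paragraphs preceding Theorem~\ref{thm:main3}. Namely, for generically split octonion algebras, the question of whether the algebra is determined by its norm form is equivalent to the injectivity of the natural transformation
\[
\Phi : [X, {\op{B}_{\Nis}}\op{G}_2]_{\aone} \longrightarrow [X, {\op{B}_{\Nis}}\op{Spin}_7]_{\aone}
\]
sending an octonion algebra to its associated spinor bundle. This equivalence rests on the spinor bundle being a canonical refinement of the norm form: the norm form decomposes as the trivial unit line plus the trace zero quadratic space, and for octonion Pfister forms the $\op{Spin}_7$-lift is canonically supplied by the Cayley--Dickson structure (see Remark~\ref{rem:spinorbundles}).

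For part (1), I would first invoke Proposition~\ref{prop:groupstructures}: since $\dim X \leq 2$ and both ${\op{B}_{\Nis}}\op{G}_2$ and ${\op{B}_{\Nis}}\op{Spin}_7$ are $\aone$-simply connected by Proposition~\ref{prop:lowdegree}, the sets $[X, {\op{B}_{\Nis}}\op{G}_2]_{\aone}$ and $[X, {\op{B}_{\Nis}}\op{Spin}_7]_{\aone}$ acquire functorial abelian group structures such that $\Phi$ is a group homomorphism. It then suffices to show $\ker\Phi = 0$, i.e.\ that every generically split octonion algebra on $X$ with trivial associated spinor bundle is split. This is precisely the content of Theorem~\ref{thm:trivialnormform}(1) in the range $\dim X \leq 2$, so $\Phi$ is injective and part (1) follows.

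For part (2), I would apply Lemma~\ref{lem:generictrivialitylowdimensions}: under the additional assumptions that $k$ has characteristic unequal to $2$ and $k(X)$ has $2$-cohomological dimension $\leq 2$, every octonion algebra over $X$ is generically split. Part (1) then applies to \emph{any} octonion algebra on $X$, yielding the conclusion.

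The main conceptual obstacle is the initial recasting of the problem: passing from the bare norm form (an $\op{O}_8$-torsor, to which Proposition~\ref{prop:groupstructures} does not directly apply because ${\op{B}_{\Nis}}\op{O}_8$ is not $\aone$-simply connected) to the spinor bundle (a $\op{Spin}_7$-torsor, where the group-structure machinery is available). Once that reduction is granted, the argument is an essentially formal combination of the group-structure result Proposition~\ref{prop:groupstructures} with the triviality-of-kernel statement Theorem~\ref{thm:trivialnormform}(1); the examples in Theorem~\ref{thm:mohankumar} show that any attempt to extend the dimension bound would require genuinely new input beyond these two ingredients.
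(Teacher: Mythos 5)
Your proposal is correct and follows essentially the same route as the paper: the paper likewise combines the group structure from Proposition~\ref{prop:groupstructures} with the trivial-kernel statement of Theorem~\ref{thm:trivialnormform} to get injectivity of the spinor-bundle map, and for part (2) invokes Lemma~\ref{lem:generictrivialitylowdimensions} (phrased there as the bijectivity of $\op{H}^1_{\Nis}(X,\op{G}_2)\to\op{H}^1_{\et}(X,\op{G}_2)$, which is the same generic-splitness input you use).
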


\begin{proof}
By Proposition \ref{prop:groupstructures} the map
\[
\op{H}^{1}_{\Nis}(X,\op{G}_2) = [X,\op{B}_{\Nis}\op{G}_2]_{\aone} \longrightarrow [X,\op{B}_{\Nis}\op{Spin}_7]_{\aone}
\]
is a homomorphism of abelian groups.  Therefore, this homomorphism is injective if and only if it has trivial kernel.  However, Theorem \ref{thm:trivialnormform} guarantees that the kernel is trivial in this situation.

For Point (2), it suffices to observe by appeal to Lemma \ref{lem:generictrivialitylowdimensions}, that the hypothesis on $k(X)$ ensures that the map
\[
\op{H}^{1}_{\Nis}(X,\op{G}_2) \longrightarrow \op{H}^1_{\et}(X,\op{G}_2)
\]
is a bijection.
\end{proof}

\begin{rem}
Theorem \ref{thm:octonionalgebrasaredeterminedbytheirnormforms} gives a positive answer to a question of Petersson and Gille, at least over schemes of small dimension.  In particular, if $X$ is a smooth affine curve over a field of $2$-cohomological dimension $1$, then octonion algebras are determined by their norm forms.  Similarly, if $X$ is a smooth affine surface over an algebraically closed field, then octonion algebras are determined by their norm forms.  We leave it to the interested reader to formulate an appropriate variant of Theorem \ref{thm:octonionalgebrasaredeterminedbytheirnormforms} in case $k$ has characteristic $2$ (see Remark \ref{rem:characteristic2generictriviality} for a hint).
\end{rem}

\begin{footnotesize}
\bibliographystyle{alpha}
\bibliography{octonion}
\end{footnotesize}
\Addresses
\end{document}